\documentclass[onefignum,onetabnum]{siamart171218} 

\usepackage{lipsum}
\usepackage{amsfonts}
\usepackage{graphicx}
\usepackage{epstopdf}
\usepackage{mathrsfs}
\usepackage{changes}
\usepackage{tikz-cd}
\usepackage{algorithmic}

\ifpdf
  \DeclareGraphicsExtensions{.eps,.pdf,.png,.jpg}
\else
  \DeclareGraphicsExtensions{.eps}
\fi

\newsiamremark{remark}{Remark}
\newsiamremark{hypothesis}{Hypothesis}
\crefname{hypothesis}{Hypothesis}{Hypotheses}
\newsiamthm{claim}{Claim}
\newsiamremark{assumption}{Assumption}

\newcommand{\la}{\langle}
\newcommand{\ra}{\rangle}

\newcommand{\ud}{\,\mathrm{d}}

\newcommand{\bR}{\mathbb{T}^{d}}

\newcommand{\eps}{\varepsilon}

{}

\headers{Homogenization of Time-Discrete Gradient Flows}{Y. Deng, Y. Gao, and L. Wang}

\title{Homogenization of Time-Discrete Gradient Flows\thanks{Submitted to the editors DATE.
\funding{LW is partially funded by NSF grant DMS-2513336 and Simons Foundation TSM. YG is partially funded by NSF grant DMS-2440651.}}}

\author{Yunhong Deng\thanks{School of Mathematics, University of Minnesota,
Minneapolis, MN 55455 (\email{deng0335@umn.edu}, \email{liwang@umn.edu}).}
\and Yuan Gao\thanks{Department of Mathematics, Purdue University, West Lafayette, IN 47907 (\email{gao662@purdue.edu}).}
\and Li Wang\footnotemark[2]}

\usepackage{amsopn}

\makeatletter
\newcommand*{\addFileDependency}[1]{
  \typeout{(#1)}
  \@addtofilelist{#1}
  \IfFileExists{#1}{}{\typeout{No file #1.}}
}
\makeatother

\newcommand*{\myexternaldocument}[1]{%
    \externaldocument{#1}%
    \addFileDependency{#1.tex}%
    \addFileDependency{#1.aux}%
}

\ifpdf
\hypersetup{
  pdftitle={Homogenization of Time-Discrete Gradient Flows},
  pdfauthor={Y. Deng, G. Yuan, and L. Wang}
}
\fi
\myexternaldocument{ex_supplement}

\begin{document}

\maketitle


\begin{abstract}In this work, we study the homogenization limit for Fokker--Planck equations on the flat torus with rapidly oscillating diffusion coefficients at the time-discrete level. We discretize the equation in time using a minimizing movement scheme and compare two choices of metric: a transport-type metric and a weighted $L^2$ metric. We prove that this choice is crucial: the two schemes lead to different homogenization limits as the scaling parameter tends to zero, and consequently to different evolution equations as the time step is sent to zero. In particular, only the scheme based on the weighted $L^2$ metric recovers the correct effective dynamics.
\end{abstract}

\begin{keywords}
homogenization, Fokker-Planck equation, gradient flow, minimizing movements.
\end{keywords}

\begin{AMS}
35A15, 35B27, 35Q84, 37M15, 46N40, 49Q22.
\end{AMS}

\section{Introduction}
This paper deals with the homogenization of two time-discrete approximations of Fokker-Planck equations on $d$-dimensional flat torus $\mathbb{T}^{d} := \mathbb{R}^{d}/\mathbb{Z}^{d}$. Given (non-constant) matrix field $A(x)$, and confinement potential $V_{\varepsilon}(x)$, we consider the $\varepsilon$-Fokker-Planck equation:
\begin{equation}\label{0}
    \frac{\partial \mu_{\varepsilon}}{\partial t} - \nabla \cdot \Big[A\Big(\frac{x}{\varepsilon}\Big) \big(\nabla \mu_{\varepsilon} + \mu_{\varepsilon} \nabla V_{\varepsilon}(x)\big)\Big] = 0\,,  \quad \text{for}~(x,t) \in  \mathbb{T}^{d} \times (0, \infty),
\end{equation}
where $0 < \varepsilon \ll 1$ is the ratio between the micro and macro scales of the system.

\begin{assumption}\rm\label{assumption}
We assume the model~\eqref{0} obeys the following.
\begin{enumerate}
    \item $A : \mathbb{R}^{d} \to \mathbb{R}^{d \times d}$ is $1$-periodic such that $A(x + e_{j}) = A(x)$ for all $x \in \mathbb{R}^{d}$, where $(e_{j})_{j = 1}^{d}$ are the standard basis vectors of $\mathbb{R}^{d}$.\label{A1}
    \item There exist positive constants $\Lambda_{0}, \Lambda_{1}$ such that $\Lambda_{0}I_{d \times d} \preceq A(x) \preceq \Lambda_{1} I_{d \times d}$ for all $x \in \mathbb{R}^{d}$.\label{A3}
    \item $A$ is smooth (at least of class $C^{2}$).
    \item $V_{\varepsilon}(x) \ge 0$ for all $x \in \mathbb{T}^{d}$ and $\varepsilon > 0$.\label{A5} 
    \item $V_{\varepsilon} \in C^{1}(\mathbb{T}^{d})$ and there exists a positive constant $\lambda$ such that $V_{\varepsilon}(x) \le \lambda$ and $|\nabla V_{\varepsilon}(x)| \le \lambda$ for all $x \in \mathbb{T}^{d}$ and $\varepsilon > 0$.\label{A4}
    \item There exists a limiting potential $V$ such that $\sup_{x \in \mathbb{T}^{d}}|V_{\varepsilon}(x) - V(x)| \to 0$ as $\varepsilon \to 0$.\label{A6}
\end{enumerate}
\end{assumption}

Under the assumptions above, the asymptotic analysis in homogenization theory (see, for example, \cite{allaire2007homogenization} and \cite[Appendix A]{gao2023homogenization}) suggests that solutions of the $\varepsilon$-Fokker-Planck equation~\eqref{0} converge, as $\varepsilon \to 0$, to solutions of the following effective Fokker-Planck equation,
\begin{equation}\label{effect}
    \frac{\partial \mu}{\partial t} - \nabla \cdot \Big[A_{\ast}\big(\nabla \mu + \mu\nabla V(x)\big)\Big] = 0 \text{ in } \mathbb{T}^{d} \times (0, \infty),
\end{equation}
where $A_{\ast}$ is the effective diffusion coefficient given by
\begin{equation}\label{B-hom}
    (A_{\ast})_{jk} = \int_{\mathbb{T}^{d}} A_{jk}(x) + \sum_{\ell = 1}^{d} A_{j\ell} \frac{\partial\chi_{k}(x)}{\partial x_{\ell}} \ud x,
\end{equation}
$\chi(x) \in \mathbb{R}^{d}$ is a corrector defined by the periodic solution of $-\nabla \cdot (A(x)\nabla \chi) = \nabla \cdot A(x)$ up to an additive constant, and
\begin{equation}\label{B-hom-1}
    \Lambda_{0}I_{d \times d} \preceq A_{\ast} \preceq \Lambda_{1} I_{d \times d}.
\end{equation}

Our starting point is the recent work by Gao and Yip \cite{gao2023homogenization}, in which they investigated the homogenization of~\eqref{0} from the perspective of Wasserstein gradient flows. Let $H_{\varepsilon}(\mu) = \int_{\mathbb{T}^{d}} \log(\mu) + V_{\varepsilon}(x) \ud \mu$. By writing $\varepsilon$-Fokker-Planck equation~\eqref{0} in the form:
\begin{equation}\label{0-copy}
    \frac{\partial \mu_{\varepsilon}}{\partial t} - \nabla\cdot\bigg[\mu_{\varepsilon} A\Big(\frac{x}{\varepsilon}\Big) \nabla \frac{\delta H_{\varepsilon}(\mu_{\varepsilon})}{\delta \mu_{\varepsilon}}\bigg] = 0 \text{ in } \mathbb{T}^{d} \times (0, \infty),
\end{equation}
one can view~\eqref{0} as the gradient flow of $H_{\varepsilon}(\mu)$ with respect to the $\varepsilon$-Wasserstein distance in the form of Benamou-Brenier formula as follows:
\begin{equation*}
    W_{\varepsilon}(\mu_{0}, \mu_{1})^{2} := \min_{\mu, v} \int_{0}^{1}\int_{\mathbb{T}^{d}} \big<A(x/\varepsilon)^{-1} v_{t}(x), v_{t}(x)\big> \ud \mu_{t} \ud t\,.
\end{equation*}
Here the minimum runs over all absolutely continuous curves $(\mu_{t})_{0 \le t \le 1}$ in the space of probability measures, and velocity fields $(v_{t})_{0 \le t\le 1}$ satisfying the continuity equation $\partial_{t}\mu_{t} + \nabla \cdot(\mu_{t}v_{t}) = 0$ in the sense of distribution with boundary conditions $\mu_{t = 0} = \mu_{0}$ and $\mu_{t = 1} = \mu_{1}$. For detailed discussions related to the gradient-flow formulation, we refer the reader to \cite{forkert2022evolutionary, gao2023homogenization}. This gradient flow formulation in Wasserstein spaces has yielded rich analytical results (see, for example, \cite{carrillo2003kinetic, carrillo2006contractions, carrillo2011global, ambrosio2008gradient, villani2008optimal}) for Fokker--Planck-type and aggregation--diffusion-type equations since seminal works of Jordan, Kinderlehrer and Otto \cite{jordan1998variational}, and Otto \cite{otto2001geometry}.

Based on the energy dissipation inequality (EDI) form of gradient flows and the method of $\Gamma$-convergence of gradient flows \cite{sandier2004gamma, serfaty2011gamma}, Gao and Yip \cite{gao2023homogenization} showed that the homogenization limit of~\eqref{0-copy} is the gradient flow of $H(\mu) = \int_{\mathbb{T}^{d}} \log(\mu) + V(x) \ud \mu$ with respect to an effective Wasserstein distance
\begin{equation*}
    W_{\ast}(\mu_{0}, \mu_{1})^{2} := \min_{\mu, v} \int_{0}^{1}\int_{\mathbb{T}^{d}} \la A_{\ast}^{-1} v_{t}(x), v_{t}(x) \ra \ud \mu_{t} \ud t,
\end{equation*}
where $A_{\ast}$ is the effective diffusion coefficient in~\eqref{B-hom}, and the minimum in the distance runs over the same constraints as that in the $\varepsilon$-Wasserstein distance above. Therefore, it indicates that the limiting Wasserstein gradient flow is consistent with the effective Fokker-Planck equation~\eqref{effect}. 

However, it is important to note that if one does not consider the dynamics, but instead studies only the homogenization limit of the metric $W_\varepsilon$, then the desired effective distance is not recovered. More precisely, as shown in \cite{gao2023homogenization},
\begin{equation*}
W_{\ast}(\mu,\nu)
\neq
\lim_{\varepsilon\to 0} W_{\varepsilon}(\mu,\nu)=: W_{\mathrm{GH}}(\mu, \nu)\,,
\end{equation*}
where $W_{\mathrm{GH}}(\mu, \nu)$ can be computed using the Gromov--Hausdorff convergence; see \cite[Theorem 28.6]{villani2008optimal} and \cite[Section 5.2]{gao2023homogenization}.

Our main interest is to determine whether the natural time-discrete counterpart of this gradient-flow formulation, namely the JKO scheme \cite{jordan1998variational}, preserves the homogenization limit~\eqref{effect} at the semi-discrete level. Through a detailed analysis, we find that the JKO scheme is not asymptotic-preserving: its homogenized limit differs from the effective limit~\eqref{effect}. Instead, as $\varepsilon\to 0$, it converges to the gradient flow of $H(\mu)$ with respect to the metric $W_{\mathrm{GH}}$. 
In contrast, if \eqref{0} is viewed as a gradient flow with respect to a weighted $L^2$ metric, then the corresponding minimizing movement scheme is asymptotic-preserving and recovers the correct effective limit~\eqref{effect}.

We now provide rigorous definitions of the two time-discrete schemes and summarize our main results concerning their homogenization limits for a fixed time step and the convergence in time of the resulting limit schemes.

\subsection{Variational structure with weighted $L^{2}$ metric} We first rewrite~\eqref{0} as a heat-type equation (see for example \cite[equation (1.9)]{liu2018positivity}, \cite[equation (3.2)]{gao2023homogenization} and \cite{carrillo2025positivity}): 
\begin{equation}\label{eqb}
    \begin{aligned}
        \frac{\partial \mu_{\varepsilon}}{\partial t} - \nabla\cdot\Big[\pi_{\varepsilon}(x)A\Big(\frac{x}{\varepsilon}\Big)\nabla \frac{\mu_{\varepsilon}}{\pi_{\varepsilon}}\Big] = 0\,, \qquad \text{where}\quad  \pi_{\varepsilon}(x) := e^{-V_{\varepsilon}(x)}\,.
    \end{aligned}
\end{equation}
Motivated by the fact that the heat equation is the $L^{2}$ gradient flow of the Dirichlet energy $\int_{\mathbb{T}^{d}} |\nabla \mu|^{2} \ud x$ \cite{strikwerda2004finite, jordan1998variational}, we introduce the following weighted $L^{2}$ scheme for~\eqref{eqb}.

\begin{definition}[weighted $L^{2}$ scheme]\label{L2-scheme-def} Given $\varepsilon, \tau$ and initial value $\mu_{0} \in H^{1}(\mathbb{T}^{d})$, we define the solution of the weighted $L^{2}$ scheme as follows:
\begin{equation}\label{0708}
    \mu_{\varepsilon, \tau} : (0, \infty) \to H^{1}(\mathbb{T}^{d}),\quad \mu_{\varepsilon, \tau}(t) = \mu^{n}_{\varepsilon} \quad \text{ for }~~ (n - 1)\tau < t \le n\tau,
\end{equation}
where $\mu^{0}_{\varepsilon} = \mu_{0}$ and $\mu_{\varepsilon}^{n}$ is defined iteratively by
\begin{equation}\label{L2-variational}
    \mu_{\varepsilon}^{n} = \arg\min_{\mu} Q_{\varepsilon}(\mu) + \frac{1}{2\tau} K_{\varepsilon}(\mu, \mu_{\varepsilon}^{n - 1})^{2} \text{ over } \mu \in H^{1}(\mathbb{T}^{d}).
\end{equation}
Here $Q_{\varepsilon}$ and $K_{\varepsilon}$ are the weighted Dirichlet energy and the weighted $L^{2}$ distance, respectively: 
\begin{align}
    Q_{\varepsilon}(\mu) := \frac{1}{2}\int_{\mathbb{T}^{d}} \Big<\pi_{\varepsilon}(x)A\Big(\frac{x}{\varepsilon}\Big)\nabla \varrho_\varepsilon, \nabla \varrho_\varepsilon\Big> \ud x,\quad K_{\varepsilon}(\mu, \nu) := \bigg[\int_{\mathbb{T}^{d}} \frac{(\mu - \nu)^{2}}{\pi_{\varepsilon}} \ud x\bigg]^{\frac{1}{2}},\label{disK}
\end{align}
where $\varrho_\varepsilon  := \mu/\pi_{\varepsilon}$. 
\end{definition}

The well-posedness and convergence of this scheme will be established in Section~\ref{sec2}. We assume without loss of generality that $\int_{\mathbb{T}^{d}} \mu_{0} \ud x = 1$ and $\mu_{0} \ge 0$. It is then straightforward to check that solutions to the weighted $L^{2}$ scheme remain in the space of probability measures, i.e.,
\begin{equation}\label{structure-preserving}
    \int_{\mathbb{T}^{d}} \mu_{\varepsilon, \tau}(x, t) \ud x = 1 ~\text{ and }~ \mu_{\varepsilon, \tau}(t) \ge 0 \text{ a.e. }
\end{equation}
for all $t \in (0, \infty)$.

\subsection{Variational structure with transport metric}
Let $P(\mathbb{T}^{d})$ be the space of probability measures on $\mathbb{T}^{d}$. We introduce the following $\varepsilon$-JKO scheme.
\begin{definition}[$\varepsilon$-JKO scheme]\label{JKO-scheme-def}Given $\varepsilon, \tau$ and initial value $\mu_{0} \in P(\mathbb{T}^{d})$ satisfying $S(\mu_{0}) < \infty$, we define the solution of the $\varepsilon$-JKO scheme as follows: 
\begin{equation}\label{JKO-solution}
    \mu_{\varepsilon, \tau} : (0, \infty) \to P(\mathbb{T}^{d}),\quad \mu_{\varepsilon, \tau}(t) = \mu^{n}_{\varepsilon} \quad \text{ for } ~~(n - 1)\tau < t \le n\tau,
\end{equation}
where $\mu^{0}_{\varepsilon} = \mu_{0}$ and $\mu^{n}_{\varepsilon}$ is defined iteratively by
\begin{equation} \label{JKO-WW}
    \mu^{n}_{\varepsilon} = \arg\min_{\mu} H_{\varepsilon}(\mu) + \frac{1}{2\tau} W_{\varepsilon}(\mu, \mu^{n - 1}_{\varepsilon})^{2} \text{ over } \mu \in P(\mathbb{T}^{d}).
\end{equation}
Here $H_{\varepsilon}$ and $W_{\varepsilon}$ are the free energy and the $\varepsilon$-Wasserstein distance, respectively:
\begin{equation}\label{Hvare}
    \begin{aligned}
        H_{\varepsilon}(\mu) &:= S(\mu) + \int_{\mathbb{T}^{d}} V_{\varepsilon}(x) \ud \mu,\\
        W_{\varepsilon}(\mu, \nu) &= \inf \bigg\{\int_{\mathbb{T}^{d} \times \mathbb{T}^{d}} c_{\varepsilon}(x, y) \ud \omega(x, y) : \omega \in \Pi(\mu, \nu)\bigg\}^{\frac{1}{2}},
    \end{aligned}
\end{equation}
where $S$ is the log-entropy functional defined by
\begin{equation*}
    S(\mu) := \begin{cases}
            \int_{\mathbb{T}^{d}} \varrho(x) \log(\varrho(x))\ud x &\text{ if } \mathrm{d}\mu = \varrho(x)\ud x\\
            \infty &\text{ otherwise }
        \end{cases}\,.
\end{equation*}
$\Pi(\mu, \nu)$ is the set of couplings between $\mu$ and $\nu$, i.e., the set of joint probability distributions, with marginal distribution $\mu$ and $\nu$ in each variable, and $c_\eps(x,y)$ is a cost function defined by the following least action,
\begin{equation}\label{eps-c}
    \begin{aligned}
        c_\eps(x,y) = \min_{z}\bigg\{\int_0^1 \big<A(z/\varepsilon)^{-1}\dot{z},\, \dot{z}\big> \ud t: z(0) = x,~z(1) = y, \text{ and } z \in H^{1}\big([0, 1]; \mathbb{T}^{d}\big)\bigg\},
    \end{aligned}
\end{equation}
where $\dot{z}$ is the time derivative of the curve $z$, and $H^{1}\big([0, 1]; \mathbb{T}^{d}\big)$ is the space of Sobolev curves.
\end{definition}

The well-posedness and convergence of $\varepsilon$-JKO scheme will be established in Section~\ref{sec3}. We refer readers to Section~\ref{sec1-1} below for a preliminary discussion on optimal transport.

\subsection{Homogenization}
Our first results concern the homogenization of the two schemes.
\begin{theorem}[weighted $L^{2}$ scheme]\label{homo-of-ie}Let $\mu_{\varepsilon, \tau}$ be the solution of the weighted $L^{2}$ scheme in Definition~\ref{L2-scheme-def} with initial value $\mu_{0} \in H^{1}(\mathbb{T}^{d})$, we have
\begin{equation*}
    \mu_{\varepsilon, \tau}(t) \to \mu_{\tau}(t) \text{ in } L^{2}(\mathbb{T}^{d}) \text{ as } \varepsilon \to 0 ~~ \text{ for all }~ t \in (0, \infty) \,,
\end{equation*}
where $\mu_{\tau}$ is the the homogenized solution with initial value $\mu_{0}$, and 
\begin{align} \label{L2-mu}
    \mu_{\tau}(t) = \mu^{n} \quad \text{for } (n - 1)\tau < t \le n\tau\,.
\end{align}
Here $\mu^n$ is iteratively defined by
\begin{equation}\label{limit-BE}
    \mu^{n} = \arg\min_{\mu} Q(\mu) + \frac{1}{2\tau}K(\mu, \mu^{n - 1})^{2} \text{ over } \mu \in H^1(\mathbb{T}^{d}),
\end{equation}
where 
\begin{equation}\label{limit-I}
    \begin{aligned}
        Q(\mu) := \frac{1}{2}\int_{\mathbb{T}^{d}}\big<\pi_{\ast}(x) A_{\ast}\nabla \varrho, \nabla \varrho\big> \ud x, \quad K(\mu, \nu) := \bigg[\int_{\mathbb{T}^{d}} \frac{(\mu - \nu)^{2}}{\pi_{\ast}} \ud x\bigg]^{\frac{1}{2}},
    \end{aligned}
\end{equation}
and $\varrho := \mu/\pi_{\ast}$ and $\pi_{\ast} := e^{-V}$. $A_{\ast}$ is the effective diffusion constant given in~\eqref{B-hom}.
\end{theorem}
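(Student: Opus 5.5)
Since the time step $\tau$ is fixed and $\mu_{\varepsilon,\tau}(t)$ is piecewise constant in time, the statement reduces to a single step that we iterate: if $\mu_{\varepsilon}^{n-1}\to\mu^{n-1}$ strongly in $L^{2}(\mathbb{T}^{d})$, then $\mu_{\varepsilon}^{n}\to\mu^{n}$ strongly in $L^{2}(\mathbb{T}^{d})$. The base case is trivial because $\mu_{\varepsilon}^{0}=\mu^{0}=\mu_{0}$.

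To analyse one step, we work with the Euler--Lagrange equation of \eqref{L2-variational} rather than with $\Gamma$-convergence of the functionals. The functional is strictly convex and quadratic in $\varrho_\varepsilon=\mu/\pi_\varepsilon$, so the unique minimizer $\varrho_\varepsilon^{n}:=\mu_\varepsilon^{n}/\pi_\varepsilon$ is the weak solution in $H^{1}(\mathbb{T}^{d})$ of
\begin{equation*}
\pi_\varepsilon\varrho^{n}_\varepsilon-\tau\nabla\cdot\big(\pi_\varepsilon A(x/\varepsilon)\nabla\varrho^{n}_\varepsilon\big)=\mu^{n-1}_\varepsilon .
\end{equation*}
The limit step \eqref{limit-BE} gives the analogous equation with $\pi_\ast$ and $A_\ast$. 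Testing this equation with $\varrho_\varepsilon^n$ and using $e^{-\lambda}\le\pi_\varepsilon\le1$ and $\Lambda_0 I\preceq A$ gives a bound on $\|\varrho^n_\varepsilon\|_{H^1}$ that is uniform in $\varepsilon$. By Rellich, along a subsequence $\varrho_\varepsilon^n\to\varrho$ strongly in $L^2$ and weakly in $H^1$. Since $\pi_\varepsilon\to\pi_\ast$ uniformly by Assumption~\ref{assumption}(\ref{A6}), this gives $\mu_\varepsilon^n=\pi_\varepsilon\varrho^n_\varepsilon\to\pi_\ast\varrho$ in $L^2$.

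The main obstacle is identifying the limit of the flux $\pi_\varepsilon A(x/\varepsilon)\nabla\varrho^n_\varepsilon$. This is a product of a weakly convergent gradient with an oscillating coefficient, and the weight $\pi_\varepsilon$ varies only on the macroscale. We would handle it with Tartar's oscillating test function method, or equivalently two-scale convergence.
\begin{itemize}
\item The flux is bounded in $L^2$, so it converges weakly to some $\xi$ along a subsequence.
\item For $\phi\in C^\infty(\mathbb{T}^d)$, we test the equation with $\phi$ and pass to the limit, using strong $L^2$ convergence of $\mu^{n-1}_\varepsilon$ and of $\pi_\varepsilon\varrho^n_\varepsilon$. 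This yields $\pi_\ast\varrho-\tau\nabla\cdot\xi=\mu^{n-1}$.
\item To show $\xi=\pi_\ast A_\ast\nabla\varrho$, we fix $e\in\mathbb{R}^d$ and set $w_\varepsilon(x)=e\cdot x+\varepsilon\chi^\ast_e(x/\varepsilon)$, with $\chi^\ast_e$ the corrector of the adjoint problem $-\nabla\cdot(A^{T}(\nabla\chi_e^\ast+e))=0$. Then $A^T(x/\varepsilon)\nabla w_\varepsilon$ is divergence free and converges weakly to $A_\ast^T e$, while $w_\varepsilon\to e\cdot x$ strongly.
\item We compare the equation tested with $\phi w_\varepsilon$ against $\phi\pi_\varepsilon A^T(x/\varepsilon)\nabla w_\varepsilon$ paired with $\nabla\varrho_\varepsilon^n$. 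The factor $\pi_\varepsilon$ is harmless because it converges uniformly and its gradient is bounded by Assumption~\ref{assumption}(\ref{A4}). The extra terms involving $\nabla\pi_\varepsilon$ and $\nabla\phi$ pair with strongly convergent factors, and the div--curl lemma handles the leading term.
\end{itemize}
This gives $\xi\cdot e=\pi_\ast A_\ast\nabla\varrho\cdot e$. A small complication is that $e\cdot x$ is not periodic on the torus, but the argument is local, so we can cut off with $\phi$ supported in a small cube.

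The limit $\varrho$ therefore solves the Euler--Lagrange equation of \eqref{limit-BE}, which is uniformly elliptic by \eqref{B-hom-1}. Its solution is unique, so $\varrho=\mu^n/\pi_\ast$ and the whole sequence converges. This completes the induction, and the convergence holds for every $t$ since $t\in((n-1)\tau,n\tau]$ for a single $n$. As an alternative to the oscillating test functions, one could invoke standard $\Gamma$-convergence of the weighted Dirichlet energies $Q_\varepsilon$ to $Q$ together with continuous convergence of $K_\varepsilon^2$; the key homogenization input is the same.
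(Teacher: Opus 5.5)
Your proof is correct, but it follows a different route from the paper's.

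\textbf{The paper's route.} The paper's argument is purely variational. It proves $\Gamma$-convergence of $\mathcal{I}_{\varepsilon,\tau}(\cdot,\mu^{n-1}_\varepsilon)$ to $\mathcal{I}_{\ast,\tau}(\cdot,\mu^{n-1})$. This combines a $\Gamma$-limit for the weighted Dirichlet energies with continuous convergence of $K_\varepsilon^2$. The $\Gamma$-limit of the energies is obtained from compactness of $\Gamma$-limits, freezing the weight $\pi_\varepsilon$ at Lebesgue points, and a truncation argument on top of classical periodic homogenization. Compactness comes from an $H^1$ bound uniform in both $\varepsilon$ and $\tau$, built from the energy decay $Q_\varepsilon(\mu^n_\varepsilon)\le Q_\varepsilon(\mu_0)$, a weighted Poincar\'e inequality and mass conservation. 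The fundamental theorem of $\Gamma$-convergence plus uniqueness of the limiting minimizer then closes each induction step.

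\textbf{Your route.} You instead exploit linearity of the optimality condition.
\begin{itemize}
\item Your per-step estimate depends on $\tau$, but it only needs $\|\mu^{n-1}_\varepsilon\|_{L^2}$ to be bounded, so $\mu_0\in L^2$ would suffice.
\item Your identification of the limit is the standard H-convergence argument. It is shorter than the localization proof and also gives weak convergence of the fluxes $\pi_\varepsilon A(x/\varepsilon)\nabla\varrho^n_\varepsilon$.
\end{itemize}
What the paper's route buys is a $\tau$-uniform estimate. It also gives a template that carries over to the $\varepsilon$-JKO scheme in Section~\ref{sec3}, where there is no linear Euler--Lagrange equation to exploit.

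\textbf{One justification to tighten.} Assumption~\ref{assumption} only gives $V_\varepsilon\to V$ uniformly with $|\nabla V_\varepsilon|\le\lambda$. So $\nabla\pi_\varepsilon$ converges merely weakly-$\ast$ and may oscillate on the $\varepsilon$-scale. A term such as $\int\phi\,\varrho^n_\varepsilon\,\nabla\pi_\varepsilon\cdot A^{T}(x/\varepsilon)\nabla w_\varepsilon\ud x$ is therefore not a strong-times-weak pairing, since it contains a product of two merely weakly convergent factors. It still has the expected limit, but the reason is compensated compactness, not strong convergence.

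The cleanest fix is to apply the div--curl lemma directly to the pair $\pi_\varepsilon A^{T}(x/\varepsilon)\nabla w_\varepsilon$ and $\nabla\varrho^n_\varepsilon$:
\begin{itemize}
\item The divergence of the first field is $\nabla\pi_\varepsilon\cdot A^{T}(x/\varepsilon)\nabla w_\varepsilon$. This is bounded in $L^2$, hence precompact in $H^{-1}_{\mathrm{loc}}$; this is exactly where the bound on $\nabla\pi_\varepsilon$ enters.
\item Its weak limit is $\pi_\ast A_\ast^{T}e$, because $\pi_\varepsilon\to\pi_\ast$ uniformly.
\end{itemize}
Combined with the div--curl limit of $\pi_\varepsilon A(x/\varepsilon)\nabla\varrho^n_\varepsilon\cdot\nabla w_\varepsilon$, this yields $\xi\cdot e=\pi_\ast A_\ast\nabla\varrho\cdot e$ as you claim.
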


\begin{theorem}[$\varepsilon$-JKO scheme]\label{homo-of-jko}Let $\mu_{\varepsilon, \tau}$ be the solution of the $\varepsilon$-JKO scheme in Definition~\ref{JKO-scheme-def} with initial value $\mu_{0} \in P(\mathbb{T}^{d})$, we have
\begin{equation}\label{homo-jko-solution-def}
    \mu_{\varepsilon, \tau}(t) \to \overline{\mu}_{\tau}(t) \text{ in $W_{2}$ distance}~~ \text{ as } ~\varepsilon \to 0  \quad \text{ for all }~ t \in (0, \infty)
\end{equation}
where $\overline{\mu}_{\tau}$ is the homogenized solution with initial value $\mu_{0}$, and
\begin{align*}
     \overline{\mu}_{\tau}(t) = \overline{\mu}^{n} \quad \text{for } (n - 1)\tau < t \le n\tau\,.
\end{align*}
Here $\overline{\mu}^n$ is iteratively defined by
\begin{equation}\label{limiting-JKO-def}
    \overline{\mu}^{n} = \arg\min_{\mu} H(\mu) + \frac{1}{2\tau} W_{\mathrm{GH}}(\mu, \overline{\mu}^{n - 1})^{2} \text{ over } \mu \in P(\mathbb{T}^{d}).
\end{equation}
$H$ and $W_{\mathrm{GH}}$ are the homogenized entropy and homogenized distance, respectively:
\begin{equation}\label{limiting_W}
    \begin{aligned}
        H(\mu) := S(\mu) &+ \int_{\mathbb{T}^{d}} V(x) \ud \mu,\\
        &W_{\mathrm{GH}}(\mu, \nu) := \inf \bigg\{\int_{\mathbb{T}^{d} \times \mathbb{T}^{d}} c_{\mathrm{hom}}(x, y) \ud \omega(x, y) : \omega \in \Pi(\mu, \nu)\bigg\}^{\frac{1}{2}},
    \end{aligned}
\end{equation}
where the point-wise limiting cost function $c_{\mathrm{hom}}(x,y)$ is defined by
\begin{equation*}
    c_{\mathrm{hom}}(x,y) = \min_{z}\bigg\{\int_{0}^{1}A_{\mathrm{hom}}(\dot{z})\ud t: z(0) = x,~z(1)=y, \text{ and } z\in H^{1}\big([0, 1]; \mathbb{T}^{d}\big)\bigg\},
\end{equation*}
and
\begin{equation}\label{Bast}
    \begin{aligned}
        A_{\mathrm{hom}}(\xi) := \liminf_{T \to \infty}\, \inf_{\varphi \in H^{1}_{0}([0, T]; \mathbb{R}^{d})} \frac{1}{T}\int_0^T \big<A(t\xi + \varphi)^{-1}(\xi + \dot{\varphi}), \xi + \dot{\varphi}\big> \ud t.
    \end{aligned}
\end{equation}
\end{theorem}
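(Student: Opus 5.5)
We argue by induction on $n$, showing at each step that $\mu^{n}_{\varepsilon} \to \overline{\mu}^{n}$ in $W_{2}$. The tool is $\Gamma$-convergence of the one-step functionals
\[
\Phi_{\varepsilon}(\mu) := H_{\varepsilon}(\mu) + \frac{1}{2\tau}W_{\varepsilon}(\mu,\mu^{n-1}_{\varepsilon})^{2}
\quad\text{to}\quad
\Phi(\mu) := H(\mu) + \frac{1}{2\tau}W_{\mathrm{GH}}(\mu,\overline{\mu}^{n-1})^{2},
\]
combined with equicoercivity and uniqueness of minimizers. Uniqueness of $\overline{\mu}^{n}$ follows from strict convexity of $S$ (linear interpolation) together with convexity of $\mu \mapsto W_{\mathrm{GH}}(\mu,\nu)^{2}$ along linear interpolations. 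The base case $n=0$ is trivial. Once $\Gamma$-convergence is established, the fundamental theorem of $\Gamma$-convergence gives $\mu^{n}_{\varepsilon} \to \overline{\mu}^{n}$, since $\mathbb{T}^{d}$ is compact and $P(\mathbb{T}^{d})$ is $W_{2}$-compact. It also gives $H_{\varepsilon}(\mu^{n}_{\varepsilon}) \to H(\overline{\mu}^{n})$.

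\textbf{Step 1: uniform convergence of the costs.} The key input is $\sup_{x,y}|c_{\varepsilon}(x,y) - c_{\mathrm{hom}}(x,y)| \to 0$. This is the Gromov--Hausdorff / periodic Riemannian homogenization statement referred to in the introduction (\cite[Theorem 28.6]{villani2008optimal}, \cite[Section 5.2]{gao2023homogenization}), namely the Burago-type result that the $\varepsilon$-periodic length metric converges uniformly to the stable norm metric. Since $c_{\varepsilon} = d_{\varepsilon}^{2}$ and the $d_{\varepsilon}$ are uniformly bounded on the compact torus (by ellipticity, $\Lambda_{1}^{-1}|x-y|^{2} \le c_{\varepsilon} \le \Lambda_{0}^{-1}|x-y|^{2}$ on the torus distance), uniform convergence of the distances yields uniform convergence of the squared costs. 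One has to check that the quantity $A_{\mathrm{hom}}$ in \eqref{Bast} is the square of the stable norm, so that $c_{\mathrm{hom}}$ is indeed the squared limit distance. Consequently:
\begin{itemize}
\item $|W_{\varepsilon}(\mu,\nu)^{2} - W_{\mathrm{GH}}(\mu,\nu)^{2}| \le \sup|c_{\varepsilon} - c_{\mathrm{hom}}|$, uniformly in $\mu,\nu$;
\item $W_{\mathrm{GH}}^{2}$ is continuous under weak convergence, since $c_{\mathrm{hom}}$ is continuous and Kantorovich costs with continuous costs are stable on a compact space;
\item hence, whenever $\mu_{\varepsilon} \to \mu$ and $\nu_{\varepsilon} \to \nu$ weakly, $W_{\varepsilon}(\mu_{\varepsilon},\nu_{\varepsilon})^{2} \to W_{\mathrm{GH}}(\mu,\nu)^{2}$.
\end{itemize}

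\textbf{Step 2: the two $\Gamma$-inequalities.} By the induction hypothesis, $\mu^{n-1}_{\varepsilon} \to \overline{\mu}^{n-1}$.
\begin{itemize}
\item \emph{Liminf.} Let $\mu_{\varepsilon} \to \mu$. Lower semicontinuity of $S$ holds under weak convergence. The potential term converges because $V_{\varepsilon} \to V$ uniformly, so $\int V_{\varepsilon}\ud\mu_{\varepsilon} \to \int V\ud\mu$. The distance term converges exactly by Step 1.
\item \emph{Recovery sequence.} Take the constant sequence $\mu_{\varepsilon} = \mu$. Then $S$ is unchanged, the potential term converges, and the distance term converges by Step 1.
\end{itemize}
Minimizers $\mu^{n}_{\varepsilon}$ exist by Section~\ref{sec3} and lie in a compact set. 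The standard argument then shows every limit point minimizes $\Phi$, so it equals $\overline{\mu}^{n}$.

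\textbf{Passing to all times.} Weak convergence on $\mathbb{T}^{d}$ is equivalent to $W_{2}$ convergence, which gives \eqref{homo-jko-solution-def} for each $t$ via the piecewise-constant interpolation. Existence of minimizers of the limit problem \eqref{limiting-JKO-def} follows from the direct method.

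\textbf{Main obstacle.} The hard part is Step 1: identifying the uniform limit of $c_{\varepsilon}$ as $c_{\mathrm{hom}}$, with the specific cell formula \eqref{Bast}. I would first try to cite the periodic metric homogenization result directly. If that does not apply cleanly, I would prove it by hand:
\begin{itemize}
\item \emph{Upper bound.} Build near-optimal curves by concatenating rescaled cell-problem minimizers along piecewise-linear paths.
\item \emph{Lower bound.} Use subadditivity of $T \mapsto \inf_{\varphi}\int_{0}^{T}\big<A(t\xi+\varphi)^{-1}(\xi+\dot{\varphi}),\, \xi+\dot{\varphi}\big>\ud t$, which makes the $\liminf$ a genuine limit, together with convexity and homogeneity of $A_{\mathrm{hom}}$ and a Jensen-type argument on short segments.
\item \emph{Uniformity} in $(x,y)$ comes from equi-Lipschitz bounds on $d_{\varepsilon}$.
\end{itemize}
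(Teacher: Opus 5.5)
Your proposal is correct and follows essentially the same route as the paper. Both argue by induction on $n$: $\Gamma$-convergence of the one-step functionals comes from $\Gamma$-convergence of $H_\varepsilon$ (with the constant recovery sequence) together with continuous convergence of $W_\varepsilon^2$, driven by uniform convergence $c_\varepsilon \to c_{\mathrm{hom}}$. Both then conclude via compactness of $(P(\mathbb{T}^d), W_2)$, the fundamental theorem of $\Gamma$-convergence, and uniqueness of the limiting minimizer.

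The differences are minor. The paper gets pointwise convergence of the costs from $\Gamma$-convergence of the curve energies plus $H^1$-compactness of minimizing curves, rather than citing the stable-norm (Burago-type) result. It then asserts uniformity from compactness of $\mathbb{T}^d\times\mathbb{T}^d$ alone, and your equi-Lipschitz remark is exactly the justification that step needs. Finally, the paper handles the moving endpoints via the triangle inequality and $W_\varepsilon \le \Lambda_0^{-1/2} W_2$, whereas you use weak continuity of $W_{\mathrm{GH}}$.
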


\begin{remark}\rm
This action function $A_{\mathrm{hom}}$ comes from the homogenization of Riemannian metrics (see, for example, \cite[Theorem 2.6]{braides2002riemannian}). From \cite{weinan1991class, braides2002riemannian}, it obeys
\begin{equation}\label{B-ast-1}
    \Lambda_{1}^{-1}|\xi|^{2} \le A_{\mathrm{hom}}(\xi) \le \Lambda_{0}^{-1}|\xi|^{2},\quad \text{ for all } \xi \in \mathbb{R}^{d},
\end{equation}
where $\Lambda_{0}$ and $\Lambda_{1}$ are the lower bound and the upper bound of $A(x)$ given in Assumption~\ref{A3}.
\end{remark}

\subsection{Time-continuous limit}
To determine whether the homogenized schemes are consistent with the time-continuous homogenization limit~\eqref{effect}, we identify the time-continuous limits of the homogenized solutions as $\tau\to 0$.

\begin{theorem}\label{prop-homo-l2}
Let $\mu_{\tau}$ be defined from \eqref{L2-mu}. Then $(\mu^n)_{n \in \mathbb{N}}$ solves 
\begin{equation*}
    \frac{\mu^{n} - \mu^{n - 1}}{\tau} - \nabla \cdot \Big[A_{\ast}\big(\nabla \mu^{n} + \mu^{n}\nabla V(x)\big)\Big] = 0,
\end{equation*}
in the $H^{1}$ weak sense. Moreover, we have 
\begin{equation*}
    \mu_{\tau}(t) \to \mu(t) \text{ in } L^{2}(\mathbb{T}^{d}) \quad \text{ as~ $\tau \to 0$, \quad for all~ $t \in (0, \infty)$}.
\end{equation*}
Here $\mu(t)$ is the solution of the effective Fokker-Planck equation~\eqref{effect} with initial value $\mu_{0}$.
\end{theorem}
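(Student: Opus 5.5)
The argument has two parts: derive the Euler--Lagrange equation of \eqref{limit-BE}, then run a standard backward Euler (Rothe) convergence argument in the weighted $L^{2}$ setting.

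\emph{Step 1: the Euler--Lagrange equation.} The functional $\mu\mapsto Q(\mu)+\frac{1}{2\tau}K(\mu,\mu^{n-1})^{2}$ is strictly convex and coercive on $H^{1}(\mathbb{T}^{d})$. Convexity holds because $Q$ is a quadratic form in $\varrho=\mu/\pi_{\ast}$, and $\pi_{\ast}=e^{-V}$ is $C^{1}$ with $e^{-\lambda}\le\pi_{\ast}\le 1$. Coercivity follows from \eqref{B-hom-1}. Hence $\mu^{n}$ exists and is unique. Take the first variation in a direction $\psi\in H^{1}$ and write $\psi=\pi_{\ast}\phi$; this map is a bijection of $H^{1}$ onto itself since $\pi_{\ast}^{\pm1}\in C^{1}$. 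The variation gives
\[
\int_{\mathbb{T}^{d}}\frac{\mu^{n}-\mu^{n-1}}{\tau}\,\phi\,\ud x+\int_{\mathbb{T}^{d}}\big\langle\pi_{\ast}A_{\ast}\nabla(\mu^{n}/\pi_{\ast}),\nabla\phi\big\rangle\ud x=0\quad\text{for all }\phi\in H^{1}(\mathbb{T}^{d}).
\]
Since $A_{\ast}$ is constant and $\pi_{\ast}\nabla(\mu/\pi_{\ast})=\nabla\mu+\mu\nabla V$, this is exactly the weak form of the stated implicit Euler step.

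\emph{Step 2: a priori estimates.} Test the minimality of $\mu^{n}$ against the competitor $\mu^{n-1}$. Summing over $n$ gives
\[
\sup_{n}Q(\mu^{n})+\frac{1}{2\tau}\sum_{n}K(\mu^{n},\mu^{n-1})^{2}\le Q(\mu_{0}).
\]
Because $\pi_{\ast}$ is bounded above and below, this yields uniform bounds: $\mu_{\tau}$ is bounded in $L^{\infty}(0,T;H^{1})$, and the piecewise linear interpolant $\hat\mu_{\tau}$ is bounded in $H^{1}(0,T;L^{2})$. In particular,
\[
\|\mu_{\tau}(t)-\mu_{\tau}(s)\|_{L^{2}}\lesssim(|t-s|+\tau)^{1/2}.
\]
Mass and positivity are preserved, as in \eqref{structure-preserving}.

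\emph{Step 3: compactness and identification of the limit.} Aubin--Lions, together with the Rellich embedding $H^{1}\hookrightarrow L^{2}$ on the torus, gives a subsequence with $\mu_{\tau}\to\mu$ in $C([0,T];L^{2})$ and $\mu_{\tau}\rightharpoonup\mu$ weakly-$*$ in $L^{\infty}(0,T;H^{1})$. The approximate equicontinuity from Step 2 upgrades this to pointwise-in-$t$ convergence in $L^{2}$. Write the identity of Step 1 as
\[
\int_{0}^{T}\!\!\int\partial_{t}\hat\mu_{\tau}\,\phi+\big\langle A_{\ast}(\nabla\mu_{\tau}+\mu_{\tau}\nabla V),\nabla\phi\big\rangle\,\ud x\,\ud t=0
\]
for test functions $\phi\in L^{2}(0,T;H^{1})$, and pass to the limit. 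Every term is linear and converges weakly, and $\nabla V\in L^{\infty}$ by Assumption~\ref{A6}. The identification $\mu(0)=\mu_{0}$ follows from the uniform convergence. The limit $\mu$ is therefore a weak solution of \eqref{effect} in $L^{2}(0,T;H^{1})\cap H^{1}(0,T;H^{-1})$.

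\emph{Step 4: uniqueness and full convergence.} Weak solutions of the linear uniformly parabolic equation \eqref{effect} are unique. To see this, take two solutions, test their difference $w$ with $w/\pi_{\ast}$, and apply Gronwall in the weighted $L^{2}$ norm $\int w^{2}/\pi_{\ast}$. Uniqueness implies that the whole family converges, for every $t>0$.

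The main point needing care is regularity. Assumption~\ref{A6} gives only uniform convergence $V_{\varepsilon}\to V$, whereas Step 1 and the identity $\pi_{\ast}\nabla(\mu/\pi_{\ast})=\nabla\mu+\mu\nabla V$ need $V$ to be Lipschitz. This follows from the uniform bound $|\nabla V_{\varepsilon}|\le\lambda$: by Arzelà--Ascoli, $V$ is $\lambda$-Lipschitz, so $\nabla V\in L^{\infty}$ and $\pi_{\ast}^{\pm1}\in W^{1,\infty}$. That is enough for every step above.
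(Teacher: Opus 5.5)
Your proposal is correct and follows the route the paper indicates. The paper gets the backward Euler equation from the Gateaux derivative of the minimized functional, and then defers the $\tau\to 0$ limit to standard implicit-Euler convergence for linear parabolic equations, exactly as for Proposition~\ref{IE-time}. Your Steps 2--4 (energy estimate, Aubin--Lions compactness, linear identification of the limit, uniqueness by testing with $w/\pi_{\ast}$) supply the details the paper omits, and your closing observation that $V$ is only $\lambda$-Lipschitz, so $\pi_{\ast}^{\pm1}\in W^{1,\infty}$ rather than $C^{1}$ as you first wrote in Step 1, is exactly the regularity needed throughout.
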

\begin{theorem}\label{prop-homo-jko}
If $A_{\mathrm{hom}}$ is uniformly strongly convex, let $\overline{\mu}_{\tau}$ be the solution of the homogenized JKO scheme in Theorem~\ref{homo-of-jko} with initial value $\mu_{0}$. Then there exists $\overline{\mu}(t)$ such that $\overline{\mu}_{\tau}(t) \to \overline{\mu}(t)$ in $W_{2}$ metric. If we further have
\begin{equation*}
    \overline{\mu}_{\tau}(t) \to \overline{\mu}(t) \text{ in } W^{1, 1}(\mathbb{T}^{d}) \quad  \text{ as }~ \tau \to 0, \quad \text{ for all }~ t \in (0, \infty),
\end{equation*}
then $\overline{\mu}(t)$ is the solution to a quasilinear parabolic equation in divergence form with initial value $\mu_{0}$,
\begin{equation}\label{GH-limit}
    \frac{\partial \overline{\mu}}{\partial t} - \nabla \cdot \Big[\big(\tfrac{1}{2}\nabla A_{\mathrm{hom}}\big)^{-1}\big(\nabla \overline{\mu} + \overline{\mu}\nabla V(x)\big)\Big] = 0.
\end{equation}    
\end{theorem}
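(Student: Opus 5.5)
The proof has two parts: first a compactness argument giving a limit curve $\overline{\mu}$ for the homogenized JKO scheme \eqref{limiting-JKO-def}, and then, under the extra $W^{1,1}$ convergence, identification of $\overline{\mu}$ through the Euler--Lagrange equation of each step.

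\emph{Compactness.} By \eqref{B-ast-1}, $c_{\mathrm{hom}}(x,y)$ is comparable to $|x-y|_{\mathbb{T}^d}^2$ with constants $\Lambda_1^{-1},\Lambda_0^{-1}$. Hence $W_{\mathrm{GH}}$ is equivalent to $W_2$ on $P(\mathbb{T}^d)$. Comparing $\overline{\mu}^n$ with the competitor $\overline{\mu}^{n-1}$ gives the standard JKO estimate
\[
H(\overline{\mu}^n)+\frac{1}{2\tau}W_{\mathrm{GH}}(\overline{\mu}^n,\overline{\mu}^{n-1})^2\le H(\overline{\mu}^{n-1}).
\]
Summing, and using that $H\ge -\lambda'$ is bounded below on the torus (since $S\ge 0$ for probability densities on $\mathbb{T}^d$ and $V\ge0$), we get $\sum_n W_2(\overline{\mu}^n,\overline{\mu}^{n-1})^2\le C\tau\,(H(\mu_0)+C)$. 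This yields the $\tfrac12$-Hölder bound $W_2(\overline{\mu}_\tau(t),\overline{\mu}_\tau(s))\le C(|t-s|+\tau)^{1/2}$. Since $P(\mathbb{T}^d)$ is $W_2$-compact, a refined Arzelà--Ascoli argument (as in Ambrosio--Gigli--Savaré) gives a subsequence and a limit curve $\overline{\mu}$ with $\overline{\mu}_\tau(t)\to\overline{\mu}(t)$ in $W_2$ for every $t$. Uniqueness of the limit, which would upgrade this to the whole family, will come from the PDE identification below together with uniqueness for \eqref{GH-limit}; alternatively it follows from $\lambda$-convexity of $H$ along generalized geodesics, which needs the strong convexity assumption. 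I would state the first claim at least along subsequences.

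\emph{Euler--Lagrange equation.} Here the uniform strong convexity of $A_{\mathrm{hom}}$ enters. I would first show that $c_{\mathrm{hom}}(x,y)=A_{\mathrm{hom}}(y-x)$, taking the shortest lift on the torus. This holds because $A_{\mathrm{hom}}$ is convex and $1$-homogeneous of degree $2$, so by Jensen the minimizing path is a straight line. Then $c_{\mathrm{hom}}$ is a strictly convex, $C^1$ cost satisfying the twist condition. By Gangbo--McCann, the optimal plan from $\overline{\mu}^n$ to $\overline{\mu}^{n-1}$ is induced by a map $T(x)=x-(\nabla A_{\mathrm{hom}})^{-1}(\nabla\phi)$ with $\phi$ a $c$-concave potential. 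Perturbing $\overline{\mu}^n$ by $(\mathrm{id}+s\zeta)_\#\overline{\mu}^n$ and differentiating as in JKO gives, for all smooth $\zeta$,
\[
\int\Big\langle \frac{1}{2\tau}\nabla A_{\mathrm{hom}}(x-T(x)),\zeta\Big\rangle\ud\overline{\mu}^n=\int \big(\nabla\cdot\zeta -\nabla V\cdot\zeta\big)\ud\overline{\mu}^n .
\]
Therefore $\tfrac{1}{2}\nabla A_{\mathrm{hom}}\big((x-T(x))/\tau\big)=-(\nabla\overline{\mu}^n+\overline{\mu}^n\nabla V)/\overline{\mu}^n$, where I use $2$-homogeneity of $A_{\mathrm{hom}}$, so that $\nabla A_{\mathrm{hom}}$ is $1$-homogeneous. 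Inverting, the discrete velocity is $v^n=(x-T(x))/\tau=(\tfrac12\nabla A_{\mathrm{hom}})^{-1}\big(-\nabla\log\overline{\mu}^n-\nabla V\big)$. Testing against $\psi\in C^2$ and Taylor expanding $\psi(T(x))$ gives the discrete weak form
\[
\int\psi\,\ud(\overline{\mu}^n-\overline{\mu}^{n-1})=-\tau\int \nabla\psi\cdot v^n\ud\overline{\mu}^n+O\big(\|\nabla^2\psi\|_\infty W_2(\overline{\mu}^n,\overline{\mu}^{n-1})^2\big).
\]

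\emph{Passage to the limit.} Summing over $n$ in a time window, the error terms vanish by the energy estimate. The main obstacle is the nonlinear flux $\overline{\mu}^n v^n$. Written as $\overline{\mu}^n(\tfrac12\nabla A_{\mathrm{hom}})^{-1}\big(-(\nabla\overline{\mu}^n+\overline{\mu}^n\nabla V)/\overline{\mu}^n\big)$, the map $(\tfrac12\nabla A_{\mathrm{hom}})^{-1}$ is $1$-homogeneous. So the flux equals $-(\tfrac12\nabla A_{\mathrm{hom}})^{-1}(\nabla\overline{\mu}^n+\overline{\mu}^n\nabla V)$ pointwise, and the factor $\overline{\mu}^n$ cancels, including where $\overline{\mu}^n=0$. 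This is exactly where the extra hypothesis is used: $W^{1,1}$ convergence makes $\nabla\overline{\mu}_\tau+\overline{\mu}_\tau\nabla V\to\nabla\overline{\mu}+\overline{\mu}\nabla V$ in $L^1$. The map $(\tfrac12\nabla A_{\mathrm{hom}})^{-1}$ is Lipschitz, by uniform strong convexity, so the flux converges in $L^1$. Dominated convergence in time, using the uniform in $n$ bound on $\int|\overline{\mu}^n v^n|\le (\int|v^n|^2\ud\overline{\mu}^n)^{1/2}$, which is square-summable in time by the dissipation estimate, then yields the weak form of \eqref{GH-limit}. The initial datum is attained because of the Hölder bound.
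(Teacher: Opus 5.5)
Your proposal follows the paper's proof: compactness of the minimizing-movement trajectories from the dissipation estimate, then the first-order optimality condition of each step, inverted via the $1$-homogeneity of $\nabla A_{\mathrm{hom}}$ (straight-line minimizers come from Jensen, so the Taylor remainder is controlled by $W_{\mathrm{GH}}^2$), and finally passage to the limit using the $W^{1,1}$ hypothesis together with the Lipschitz continuity of $(\tfrac12\nabla A_{\mathrm{hom}})^{-1}$; you are, if anything, more careful than the paper about subsequences and integrability in time. Fix one sign and one justification: since $\int\psi\,\mathrm{d}\overline{\mu}^{n-1}=\int\psi\circ T\,\mathrm{d}\overline{\mu}^{n}$ and $v^n=(x-T(x))/\tau$, the discrete weak form is $\int\psi\,\mathrm{d}(\overline{\mu}^n-\overline{\mu}^{n-1})=+\tau\int\nabla\psi\cdot v^n\,\mathrm{d}\overline{\mu}^n+O(\cdot)$ (your minus sign, combined with your otherwise correct $v^n$, would yield the time-reversed equation), and the limit of the time integral follows from Vitali's theorem using the uniform $L^2(0,T)$ bound on the flux, not from dominated convergence.
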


\begin{remark}
In general, as shown in Section~\ref{sec:homo-jko} below, one can prove that the $W^{1,1}$-Sobolev norm of the solution $\overline{\mu}_{\tau}(t)$ is uniformly bounded. Here, we further assume compactness of the time-discrete trajectories in the stronger space $W^{1,1}(\mathbb{T}^{d})$. Establishing such compactness is a technical issue that lies beyond the scope of this work. We note, however, that stronger convergence results for JKO schemes in the vanishing time-step limit have been established, for example, in \cite{lee2015jordan, santambrogio2024strong, coudreuse2025li} for standard Fokker--Planck equations on the torus.
\end{remark}

\subsection{Discussion and organization}\label{sec:dicuss}
From Theorems~\ref{prop-homo-l2} and~\ref{prop-homo-jko}, it is clear that the weighted $L^{2}$ scheme preserves the time-continuous homogenization limit~\eqref{effect} of the $\varepsilon$-Fokker--Planck equation~\eqref{0}. On the other hand, equation~\eqref{GH-limit} does not, in general, coincide with~\eqref{effect}. In particular, in one dimension, the computations in \cite{gao2023homogenization} give
\begin{equation*}
\big(\tfrac{1}{2}\nabla A_{\mathrm{hom}}\big)^{-1}(\xi)
=
\bigg[\int_{0}^{1}\frac{1}{A(x)^{1/2}}\ud x\bigg]^{-2}\xi,
\qquad
\text{whereas}
\qquad
A_{\ast}
=
\bigg[\int_{0}^{1}\frac{1}{A(x)}\ud x\bigg]^{-1}.
\end{equation*}
This shows that the $\varepsilon$-JKO scheme is not compatible with the time-continuous homogenization limit~\eqref{effect}. In summary, we draw the following diagram:
\[\begin{tikzcd}
	{\text{weighted } L^{2} \text{ scheme}} && {\text{homogenized scheme}} \\
	{\varepsilon\text{-Fokker Planck}} && {\text{effective limit}} \\
	{\varepsilon\text{-JKO scheme}} & {\text{homogenized JKO scheme}} & {\text{a limiting equation}}
	\arrow["{\varepsilon \to 0}", from=1-1, to=1-3]
	\arrow["{\tau \to 0}", from=1-1, to=2-1]
	\arrow["{\tau \to 0}", from=1-3, to=2-3]
	\arrow["{\varepsilon \to 0}", from=2-1, to=2-3]
	\arrow["{\tau \to 0}"', from=3-1, to=2-1]
	\arrow["{\varepsilon \to 0}", from=3-1, to=3-2]
	\arrow["{\tau \to 0}", from=3-2, to=3-3]
	\arrow["\neq"{description}, no head, from=3-3, to=2-3]
\end{tikzcd}\]
Throughout the proof of homogenization below, we observe that the effective Fokker--Planck equation~\eqref{effect} arises from a sequence of elliptic homogenization problems over the spatial domain at each time. In contrast, the homogenization of the $\varepsilon$-JKO scheme is based on the homogenization of particle trajectories. This is inconsistent with the time-continuous homogenization process, and consequently the corresponding homogenized solution differs from the effective limit~\eqref{effect}. Another possible explanation is the non commutativity between the limits $\varepsilon\to 0$ and $\tau\to 0$ in the $\varepsilon$-JKO scheme; we discuss this point further in Remark~\ref{remark-5}. We note that a similar non-commutativity result between spatial and temporal parameters was also observed in the convergence analysis of finite-volume approximations of Fokker--Planck equations \cite{forkert2022evolutionary, gladbach2020scaling}.

\begin{figure}[t!]
    \centering
    \includegraphics[width=\linewidth]{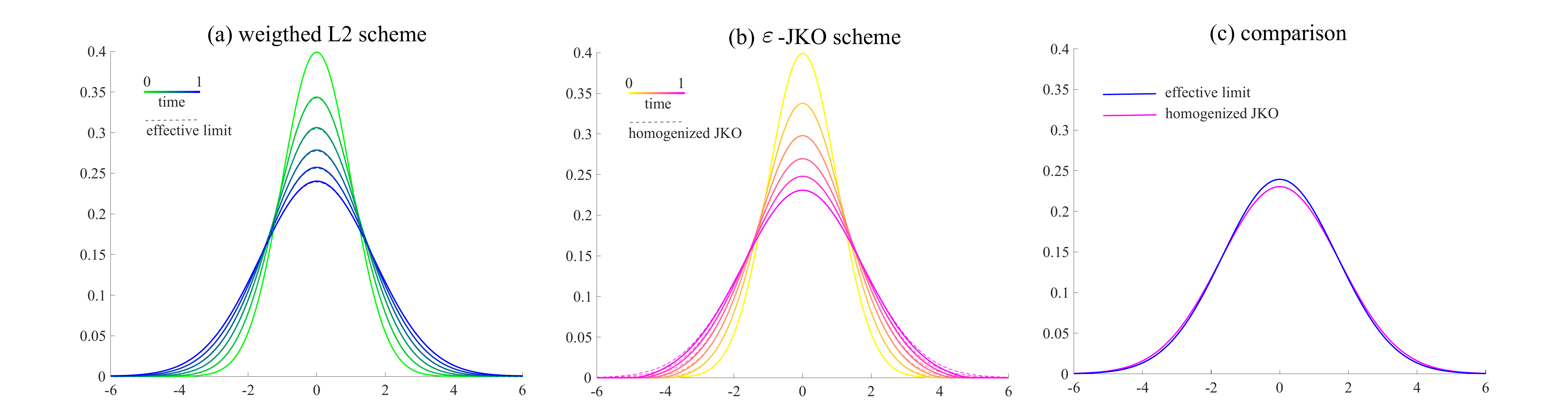}
    \caption{\rm Results of the time-discrete schemes with $A(x) = (1 + \frac{1}{2}\sin(2\pi x))^{2}$, $V_{\varepsilon}(x) = 0$ and $\varepsilon = 10^{-5}$. (a) Numerical solution of the weighted $L^{2}$ scheme, which closely matches the solution of the effective limit~\eqref{effect} illustrated by dash-line. (b) Numerical solution of the $\varepsilon$-JKO scheme, which closely matches the reference given by the solution of~\eqref{GH-limit} illustrated by dash-line. (c) Comparison between solutions of the homogenized weighted $L^{2}$ scheme and the homogenized JKO scheme at $t = 1$. For further details in the model and the implementation, we refer the readers to Appendix~\ref{appendix a}.}
    \label{fig:1}
\end{figure}

The rest of the paper is organized as follows. In Section~\ref{sec1}, we collect preliminary facts on optimal transport and $\Gamma$-convergence. In Sections~\ref{sec2} and~\ref{sec3}, we analyze the weighted $L^{2}$ scheme and the $\varepsilon$-JKO scheme, respectively. Finally, Section~\ref{sec4} contains our conclusions and discusses future directions.

\section{Preliminaries}\label{sec1}
\subsection{Optimal transport}\label{sec1-1}
In this subsection, we collect several preliminary results from optimal transport. For a thorough treatment of the subject, we refer the reader to the monographs \cite{ambrosio2008gradient, villani2008optimal}.

We mainly focus on the optimal transport on the flat tours $\mathbb{T}^{d}$, which is a compact manifold without boundary. Consequently, any probability measure $\mu \in P(\mathbb{T}^{d})$ has finite $p$ momentum for any $p \in \mathbb{N}$. Let $\mu, \nu \in P(\mathbb{T}^{d})$ be two probability measures and $\boldsymbol{\mathrm{r}} : \mathbb{T}^{d} \to \mathbb{T}^{d}$ be a measurable map. We say that $\boldsymbol{\mathrm{r}}$ push-forward (or transports) $\mu$ to $\nu$, denoted by $\nu = \boldsymbol{\mathrm{r}}\# \mu$, if $\nu[A] = \mu[\boldsymbol{\mathrm{r}}^{-1}(A)]$ for all Boreal sets $A$. The push-forward can be characterized using duality as follows:
\begin{equation*}
    \int_{\mathbb{T}^{d}} \varphi(\boldsymbol{\mathrm{r}}(x)) \ud \mu = \int_{\mathbb{T}^{d}} \varphi(x) \ud \nu, \text{ for all } \varphi \in C(\mathbb{T}^{d}).
\end{equation*}

Denote $W_{2}$ as the 2-Wasserstein distance:
\begin{equation}\label{W2}
    W_{2}(\mu, \nu) := \inf \bigg\{\int_{\mathbb{T}^{d} \times \mathbb{T}^{d}} d_{\mathrm{Eu}}(x, y)^{2} \ud \omega(x, y) : \omega \in \Pi(\mu, \nu)\bigg\}^{\frac{1}{2}},
\end{equation}
where $d_{\mathrm{Eu}}(x, y) = \min_{z \in \mathbb{Z}^{d}} |x - y + z|$ is the natural distance on flat torus $\mathbb{T}^{d}$, i.e., the distance given by the shortest path between two points on $\mathbb{T}^{d}$, and $\Pi(\mu, \nu)$ is the set of couplings between $\mu$ and $\nu$, i.e., the set of probability measures on $\mathbb{T}^{d} \times \mathbb{T}^{d}$, with marginal distribution $\mu$ and $\nu$ in each variable.

In general, let $c : \mathbb{T}^{d} \times \mathbb{T}^{d} \to \mathbb{R}$ be a squared distance, the Wasserstein distance associated with the cost $c(x, y)$ between two probability measures $\mu$ and $\nu$ is defined by the Monge-Kantrovich problem given as follows \cite[Chapter 6]{villani2008optimal},
\begin{equation*}
    W_{c}(\mu, \nu) = \inf \bigg\{\int_{\mathbb{T}^{d} \times \mathbb{T}^{d}} c(x, y) \ud \omega(x, y) : \omega \in \Pi(\mu, \nu)\bigg\}^{\frac{1}{2}}.
\end{equation*}
An alternative way of defining the Wasserstein distance is by minimizing the transport cost under the Monge problem,
\begin{equation*}
    W_{c}(\mu, \nu) = \inf\bigg\{\int_{\mathbb{T}^{d}} c(x, \boldsymbol{\mathrm{r}}(x)) \ud \mu : \boldsymbol{\mathrm{r}} \# \mu = \nu\bigg\}^{\frac{1}{2}}.
\end{equation*}

Let us recall the following solvability of Monge problems \cite[Theorem 10.28]{villani2008optimal}.
\begin{lemma}\label{monge}Let $c(x, y)$ be a squared distance on $\mathbb{T}^{d}$. Given absolutely continuous measures $\mu, \nu \in P(\mathbb{T}^{d})$, there exists a unique transport map $\boldsymbol{\mathrm{r}}_{\ast} : \mathbb{T}^{d} \to \mathbb{T}^{d}$ defined $\mu$ almost everywhere that solves the Monge problem
\begin{equation*}
    \min_{\boldsymbol{\mathrm{r}}} \int_{\mathbb{T}^{d}} c(x, \boldsymbol{\mathrm{r}}(x)) \ud \mu, \text{ s.t. } \boldsymbol{\mathrm{r}} \# \mu = \nu.
\end{equation*}
Furthermore, the optimal coupling $\omega_{\ast}$ in the corresponding Monge-Kantrovich problem is given by $\omega_{\ast} = (\mathrm{id}, \boldsymbol{\mathrm{r}}_{\ast}) \# \mu$.
\end{lemma}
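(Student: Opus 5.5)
The plan is the classical route for costs of the form $c(x,y)=h(x-y)$ (Gangbo--McCann), adapted to a general squared distance on the compact manifold $\mathbb{T}^{d}$. We first establish existence of an optimal coupling. Then, through Kantorovich duality, we show that every optimal coupling is concentrated on the graph of a map. Uniqueness follows from a convexity argument.

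\emph{Existence.} Since $\mathbb{T}^{d}$ is compact, $\Pi(\mu,\nu)$ is tight and weakly compact. The cost $c$ is a squared distance, hence continuous and bounded. So $\omega\mapsto\int c\,\mathrm{d}\omega$ is weakly continuous, and a minimizer $\omega_\ast$ exists. Kantorovich duality for bounded continuous costs then provides a $c$-concave potential $\psi$ and its $c$-transform $\psi^{c}$. These satisfy $\psi(x)+\psi^{c}(y)\le c(x,y)$ everywhere, with equality on $\operatorname{supp}\omega_\ast$, which is $c$-cyclically monotone.

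\emph{Graph structure.} Assume, as the intended applications require, that $c$ is locally semiconcave, e.g.\ a squared Riemannian distance; this holds for $c_\varepsilon$ and $c_{\mathrm{hom}}$. Then $\psi$, as an infimum of functions $c(\cdot,y)-\psi^{c}(y)$ that are uniformly semiconcave in $x$, is semiconcave and hence Lipschitz. By Rademacher's theorem (or Alexandrov), $\psi$ is differentiable Lebesgue-a.e., hence $\mu$-a.e. because $\mu$ is absolutely continuous. Fix such a differentiability point $x$ and suppose $(x,y)\in\operatorname{supp}\omega_\ast$. Since $x$ minimizes $c(\cdot,y)-\psi$, the function $c(\cdot,y)$ is superdifferentiable at $x$ with $\nabla\psi(x)$ in its superdifferential. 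The key step is the \emph{twist condition}: the map $y\mapsto\nabla_x c(x,y)$ is injective. For a squared distance this holds away from the cut locus, because $\nabla_x c(x,y)=-2\dot z(0)$ for the unique minimizing geodesic $z$ from $x$ to $y$. The initial velocity determines $y$ through the exponential map, so $y=\exp_x(-\tfrac12\nabla\psi(x))=:\boldsymbol{\mathrm{r}}_\ast(x)$. Hence $\omega_\ast=(\mathrm{id},\boldsymbol{\mathrm{r}}_\ast)\#\mu$, and $\boldsymbol{\mathrm{r}}_\ast$ solves the Monge problem, because its cost equals the Kantorovich minimum.

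\emph{Main obstacle.} The difficulty is to justify the twist condition when $y$ lies in the cut locus of $x$, where $c(\cdot,y)$ is not differentiable. Following McCann's argument, I would use the semiconcavity of $c(\cdot,y)$. At a point $x$ where both $\psi$ is differentiable and $c(\cdot,y)-\psi$ attains its minimum, the function $c(\cdot,y)$ must in fact be differentiable: a semiconcave function touched from below by a differentiable function is differentiable at the contact point. This gives $x\notin\mathrm{cut}(y)$, so the geodesic argument applies.

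\emph{Uniqueness.} If $\boldsymbol{\mathrm{r}}_1$ and $\boldsymbol{\mathrm{r}}_2$ are both optimal, then $\tfrac12\big[(\mathrm{id},\boldsymbol{\mathrm{r}}_1)\#\mu+(\mathrm{id},\boldsymbol{\mathrm{r}}_2)\#\mu\big]$ is an optimal coupling, because the problem is linear in $\omega$. By the graph structure, this coupling is supported on a graph, which forces $\boldsymbol{\mathrm{r}}_1=\boldsymbol{\mathrm{r}}_2$ $\mu$-a.e. The same argument shows that the optimal coupling is unique and equals $(\mathrm{id},\boldsymbol{\mathrm{r}}_\ast)\#\mu$. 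The absolute continuity of $\nu$ is not needed for this direction; it only yields invertibility of $\boldsymbol{\mathrm{r}}_\ast$.
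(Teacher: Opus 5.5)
Your proposal is correct and is essentially the argument behind the result the paper simply cites (Villani, Theorem 10.28). That argument runs through Kantorovich duality, $\mu$-a.e.\ differentiability of the $c$-concave potential, local semiconcavity of $c$, and the twist condition; adding a semiconcavity/Riemannian hypothesis is a fair reading of the loosely stated ``squared distance''. One correction: at a differentiability point of $\psi$, $\nabla\psi(x)$ is a \emph{sub}gradient of $c(\cdot,y)$, not a supergradient, and combined with semiconcavity this gives differentiability of $c(\cdot,y)$ at $x$ and hence a unique minimizing geodesic, but not $x\notin\mathrm{cut}(y)$ (excluding conjugate cut points with a unique minimizing geodesic needs second-order, Alexandrov-type information); since uniqueness of the geodesic is all your twist step uses, the conclusion still holds.
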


Many cost functions $c(x,y)$ of interest arise from the least action principle
 (see, for example, \cite[Chapter 7]{villani2008optimal}):
\begin{equation*}
    c(x,y) = \min_{z}\bigg\{\int_0^1 L(z, \dot{z}) \ud t:\quad z(0) = x,~z(1)=y, \text{ and } z \in H^{1}\big([0, 1]; \mathbb{T}^{d}\big)\bigg\},
\end{equation*}
with a Lagrangian function $L(x, v)$ quadratic with respect to $v$. Minimizers of action functional $\int_0^1 L(z, \dot{z}) \ud t$ will be referred to as action minimizing curves. For smooth action function $L(x, v)$, the action minimizing curves $z(t)$ are given by the following Euler-Lagrange equation
\begin{equation*}
    \frac{\mathrm{d}}{\mathrm{d}t} (\nabla_{v}L)(z, \dot{z}) - \nabla_{x}L(z, \dot{z}) = 0.
\end{equation*}
In the following, we collect several facts in Riemannian geometry on action minimizing curves.
\begin{definition}
Let $L(x, v)$ be a Lagrangian action on $\mathbb{T}^{d}$, we define exponential map at $x$ as
\begin{equation*}
    \exp_{x} : \mathbb{R}^{d} \to \mathbb{T}^{d} \text{ such that } \xi \mapsto \exp_{x}(\xi) = z(1)\,,
\end{equation*}
where $z$ is the solution to the Euler-Lagrange equation with initial value $z(0) = x$ and $\dot{z}(0) = \xi$.
\end{definition}

On compact manifolds with Lagrangian action $L(x, v) = \langle A(x)v, v\rangle$, the exponential map $\exp_{x}$ is well-defined for all $\xi \in \mathbb{R}^{d}$ at every $x$, and any two given points can be connected via an action minimizing curve \cite[Chapter 1.5]{jost2005riemannian}.
\begin{lemma}[\cite{jost2005riemannian} Lemma 1.4.2]\label{velcoti-c}Given action function $L(x, v) = \langle A(x)v, v\rangle$ with matrix field $A$. Let $z$ be an action minimizing curve connecting $x$ to $y$, we have
\begin{equation*}
    v(t) := \la A(z(t))\dot{z}(t), \dot{z}(t) \ra \text{ remains constant.}
\end{equation*}
In particular, let $\xi_{x \to y} = \dot{z}(0)$ and $c(x, y)$ be the cost function given by the least action $\int_{0}^{1}L(z, \dot{z}) \ud t$, we can compute
\begin{equation*}
    c(x, y) = \la A(x)\xi_{x \to y}, \xi_{x \to y}\ra.
\end{equation*}
\end{lemma}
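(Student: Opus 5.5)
The plan is to read the statement as the conservation of the Hamiltonian (Beltrami identity) for an autonomous Lagrangian that is $2$-homogeneous in the velocity. Throughout, $A$ is symmetric, uniformly positive definite and $C^{2}$. We work on the universal cover $\mathbb{R}^{d}$: lift the minimizing curve $z$ and take $A$ periodic, so all computations are in Euclidean coordinates.

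\textbf{Step 1 (regularity and Euler--Lagrange).} The minimizer $z$ is a priori only in $H^{1}([0,1];\mathbb{R}^{d})$, so the first task is to show that $z\in C^{2}$ and satisfies
\begin{equation*}
\frac{\mathrm{d}}{\mathrm{d}t}\big(2A(z)\dot z\big)-\nabla_{x}\la A(x)\dot z,\dot z\ra\big|_{x=z}=0
\end{equation*}
classically.
\begin{itemize}
\item Taking variations $z+s\phi$ with $\phi\in C_{c}^{\infty}$ gives the weak equation, in du Bois-Reymond form:
\begin{equation*}
2A(z)\dot z=\int_{0}^{t}\nabla_{x}\la A(x)\dot z,\dot z\ra\big|_{x=z}\,\mathrm{d}s+\mathrm{const}.
\end{equation*}
\item The integrand on the right is in $L^{1}$, since $\nabla A$ is bounded and $\dot z\in L^{2}$. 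Hence $A(z)\dot z$ is absolutely continuous, so bounded.
\item Because $A(z)^{-1}$ is bounded and continuous, $\dot z$ is bounded, and the same identity then shows $\dot z$ is continuous.
\item Bootstrapping once more with $A\in C^{2}$ gives $\dot z\in C^{1}$, i.e.\ $z\in C^{2}$.
\end{itemize}
This is the only genuinely technical step and the one I expect to need care; the rest is algebra.

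\textbf{Step 2 (conservation).} Set $L(x,v)=\la A(x)v,v\ra$. By Euler's relation for $2$-homogeneous functions, $\nabla_{v}L(x,v)\cdot v=2L(x,v)$, so the energy is
\begin{equation*}
E(t):=\nabla_{v}L(z,\dot z)\cdot\dot z-L(z,\dot z)=L(z,\dot z)=v(t).
\end{equation*}
Differentiate $E$ using the chain rule. Since $L$ has no explicit time dependence,
\begin{equation*}
\dot E=\Big(\frac{\mathrm{d}}{\mathrm{d}t}\nabla_{v}L\Big)\cdot\dot z+\nabla_{v}L\cdot\ddot z-\nabla_{x}L\cdot\dot z-\nabla_{v}L\cdot\ddot z=\Big(\frac{\mathrm{d}}{\mathrm{d}t}\nabla_{v}L-\nabla_{x}L\Big)\cdot\dot z=0
\end{equation*}
by the Euler--Lagrange equation. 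Hence $v(t)$ is constant on $[0,1]$.

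\textbf{Step 3 (cost formula).} Since $z$ realizes the least action,
\begin{equation*}
c(x,y)=\int_{0}^{1}L(z,\dot z)\,\mathrm{d}t=\int_{0}^{1}v(t)\,\mathrm{d}t=v(0)=\la A(x)\xi_{x\to y},\xi_{x\to y}\ra,
\end{equation*}
using $z(0)=x$ and $\dot z(0)=\xi_{x\to y}$.

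The existence of a minimizer is taken from the paper's preceding discussion (Jost's result on compact manifolds). Alternatively, it follows from the direct method: coercivity comes from $A\succeq\Lambda_{0}I$, and the action is weakly lower semicontinuous because it is convex in $\dot z$.
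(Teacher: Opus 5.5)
Your proof is correct. The paper gives no argument of its own and only cites Jost; the standard proof there is the same computation you perform: differentiate $\langle A(z)\dot z,\dot z\rangle$ along a solution of the Euler--Lagrange (geodesic) equation and check that the derivative vanishes. Your proposal is therefore a faithful self-contained version, and it also supplies the $H^{1}\to C^{2}$ regularity of minimizers via du Bois-Reymond, which the citation leaves implicit.

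A small bonus of your Step~2 is that it uses only that $L$ is autonomous and $2$-homogeneous in $v$ (through $\nabla_{v}L\cdot v=2L$). So, once the Euler--Lagrange equation is available, it also justifies the paper's later appeal to this lemma for the $x$-independent action $A_{\mathrm{hom}}$ in the regularity corollary.
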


\begin{lemma}[\cite{jost2005riemannian} Equation (1.2.48)]\label{exp}Given smooth action function $L(x, v)$, and let $\exp$ be the corresponding exponential map, we have
\begin{equation*}
    \nabla \exp_{x}(0) = I_{d \times d}.
\end{equation*}
Hence, we have $\nabla (\varphi \circ \exp_{x})(0) = \nabla \varphi(x)$ for all $\varphi \in C^{1}(\mathbb{T}^{d})$.
\end{lemma}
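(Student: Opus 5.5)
The plan is to reduce the claim to the rescaling property of action minimizing curves, combined with smooth dependence of ODE solutions on their initial data. Throughout I take $L(x,v)=\langle A(x)v,v\rangle$, the setting in which the exponential map was introduced, with $A$ smooth and uniformly positive definite. More generally, any smooth $L$ that is positively $2$-homogeneous in $v$ with $\nabla_v^2 L$ invertible would do.

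\emph{Step 1: the Euler--Lagrange equation is a regular ODE.} Expanding $\frac{\mathrm{d}}{\mathrm{d}t}(\nabla_v L)(z,\dot z)-\nabla_x L(z,\dot z)=0$ gives $2A(z)\ddot z=F(z,\dot z)$. Here $F$ is smooth and quadratic in $\dot z$. Since $A$ is invertible, this is an explicit second-order system $\ddot z=G(z,\dot z)$ with $G$ smooth. Standard ODE theory then gives that the solution $z_{x,\xi}(t)$ with $z(0)=x$, $\dot z(0)=\xi$ depends in a $C^1$ way on $(t,\xi)$. Global existence on $[0,1]$ holds on the compact torus, as quoted from \cite{jost2005riemannian}. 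Hence $\xi\mapsto\exp_x(\xi)=z_{x,\xi}(1)$ is $C^1$ near $\xi=0$, and $\exp_x(0)=x$ because $z\equiv x$ solves the system.

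\emph{Step 2: rescaling.} Because $G(z,\cdot)$ is homogeneous of degree $2$, for $s\in\mathbb{R}$ the curve $w(t):=z_{x,\xi}(st)$ satisfies $\ddot w=s^2\ddot z=G(w,\dot w)$, with $w(0)=x$ and $\dot w(0)=s\xi$. By uniqueness, $z_{x,s\xi}(t)=z_{x,\xi}(st)$, and therefore $\exp_x(s\xi)=z_{x,\xi}(s)$.

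\emph{Step 3: differentiate.} Fix $\xi$ and differentiate Step 2 in $s$ at $s=0$. This gives $\nabla\exp_x(0)\,\xi=\dot z_{x,\xi}(0)=\xi$. Since $\xi$ is arbitrary and $\exp_x$ is differentiable at $0$ by Step 1, we get $\nabla\exp_x(0)=I_{d\times d}$. For $\varphi\in C^1(\mathbb{T}^d)$, the chain rule then yields $\nabla(\varphi\circ\exp_x)(0)=\nabla\exp_x(0)^{\top}\nabla\varphi(\exp_x(0))=\nabla\varphi(x)$.

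The only step needing care is Step 1: justifying differentiability of $\exp_x$ at the origin rather than just the existence of directional derivatives. I would handle it by citing smooth dependence on initial conditions for $\ddot z=G(z,\dot z)$. The rescaling identity itself is a direct uniqueness argument.
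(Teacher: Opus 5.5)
Your proof is correct and is the standard argument behind the cited fact in \cite{jost2005riemannian}; the paper itself gives no proof and only cites it. The argument combines $C^1$ dependence of the Euler--Lagrange flow on its initial data with the rescaling identity $\exp_x(s\xi)=z_{x,\xi}(s)$, differentiated at $s=0$. Your restriction to Lagrangians quadratic (or $2$-homogeneous) in $v$ is needed and not just a convenience: for a general smooth $L$ such as $|v|^2-U(x)$ the constant curve need not solve the Euler--Lagrange equation, so $\exp_x(0)\neq x$ and the identity fails.
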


We will frequently use the following equivalence of distances in our analysis (see, for example, \cite[Chapter 1.4]{jost2005riemannian}).
\begin{lemma}[equivalence between costs]\label{c-eqi}Let $L(x, v)$ be a Lagrangian action function on $\mathbb{T}^{d}$ such that $\Lambda_{0}|v|^{2} \le L(x, v) \le \Lambda_{1}|v|^{2}$, and $c(x, y)$ be the cost given by least action of $L(x, v)$, we have 
\begin{equation}\label{c-eqi-eq}
    \Lambda_{0}d_{\mathrm{Eu}}(x, y)^{2} \le c(x, y) \le \Lambda_{1}d_{\mathrm{Eu}}(x, y)^{2},
\end{equation}
where $d_{\mathrm{Eu}}(x, y) = \min_{z \in \mathbb{Z}^{d}} |x - y + z|$ is the natural distance on flat torus $\mathbb{T}^{d}$.
\end{lemma}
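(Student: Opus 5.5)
The plan is to pass to the universal cover and compare curve actions directly. Write $\Lambda_{0}|v|^{2} \le L(x,v) \le \Lambda_{1}|v|^{2}$. Since the cost $c(x,y)$ is defined as an infimum of $\int_0^1 L(z,\dot z)\ud t$ over $H^{1}$ curves on $\mathbb{T}^{d}$ joining $x$ to $y$, each such curve lifts to a curve $\tilde z$ in $\mathbb{R}^{d}$ with $|\dot{\tilde z}| = |\dot z|$. The lift starts at a fixed lift $\tilde x$ of $x$ and ends at some lift $\tilde y + k$, $k \in \mathbb{Z}^{d}$, of $y$. Conversely, every $H^{1}$ curve in $\mathbb{R}^{d}$ from $\tilde x$ to some $\tilde y + k$ projects to an admissible curve on the torus. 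The key identity I will use is the Euclidean one:
\begin{equation*}
\min_{k \in \mathbb{Z}^{d}} \inf\Big\{\int_0^1 |\dot{\tilde z}|^{2}\ud t : \tilde z(0)=\tilde x,\ \tilde z(1)=\tilde y+k\Big\} = \min_{k}|\tilde x-\tilde y-k|^{2} = d_{\mathrm{Eu}}(x,y)^{2}.
\end{equation*}
This follows from Cauchy--Schwarz, $|\tilde z(1)-\tilde z(0)|^{2} \le \int_0^1|\dot{\tilde z}|^{2}\ud t$, with equality for the straight segment at constant speed.

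For the lower bound, take any admissible curve $z$. Then
\begin{equation*}
\int_0^1 L(z,\dot z)\ud t \ge \Lambda_{0}\int_0^1|\dot z|^{2}\ud t \ge \Lambda_{0}\,|\tilde z(1)-\tilde z(0)|^{2} \ge \Lambda_{0}\, d_{\mathrm{Eu}}(x,y)^{2}.
\end{equation*}
Taking the infimum over $z$ gives $c(x,y) \ge \Lambda_{0} d_{\mathrm{Eu}}(x,y)^{2}$.

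For the upper bound, choose $k_{\ast}$ attaining $\min_{k}|\tilde x-\tilde y-k|$. Let $z$ be the projection of the segment $\tilde z(t)=\tilde x+t(\tilde y+k_{\ast}-\tilde x)$. This is admissible, and its action is at most $\Lambda_{1}\int_0^1|\dot z|^{2}\ud t=\Lambda_{1} d_{\mathrm{Eu}}(x,y)^{2}$, so $c(x,y)\le\Lambda_{1}d_{\mathrm{Eu}}(x,y)^{2}$.

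The only point requiring care is the lifting step: an $H^{1}$ (hence continuous) curve on $\mathbb{T}^{d}$ lifts uniquely once its starting point is fixed, and the lift has the same speed almost everywhere. This holds because the covering map $\mathbb{R}^{d}\to\mathbb{T}^{d}$ is a local isometry, so it is standard. The argument needs neither the existence of minimizers nor the exponential map, so no smoothness of $L$ is used.
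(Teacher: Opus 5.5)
Your proof is correct, and it is the standard curve-comparison argument that the paper does not write out but delegates to its citation of Jost, Chapter 1.4: bound the integrand between $\Lambda_{0}|\dot z|^{2}$ and $\Lambda_{1}|\dot z|^{2}$, then use the fact that on the universal cover the minimal Euclidean energy of a curve from $\tilde x$ to $\tilde y + k$ is $|\tilde x - \tilde y - k|^{2}$, attained by the constant-speed segment. Your version has a small advantage: it uses only the two-sided bound on $L$, with no quadratic or Riemannian structure, no smoothness and no existence of minimizers, which is exactly what is needed when the paper applies the lemma to the Finsler-type cost $c_{\mathrm{hom}}$ in Proposition~\ref{HJKO-existence}, where a Riemannian-geometry reference does not literally apply.
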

\begin{lemma}[equivalence between Wasserstein distances]\label{w-eqi}Let $c(x, y)$ be a cost satisfying~\eqref{c-eqi-eq} and let $W_{c}(\mu, \nu)$ be the Wasserstein distance associated with $c(x, y)$. For any $\mu$ and $\nu$, we have
\begin{equation*}
    \Lambda_{0}W_{2}(\mu, \nu)^{2} \le W_{c}(\mu, \nu)^{2} \le \Lambda_{1}W_{2}(\mu, \nu)^{2},
\end{equation*}
where $W_{2}(\mu, \nu)$ is the 2-Wasserstein distance defined in~\eqref{W2}.
\end{lemma}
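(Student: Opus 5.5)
The plan is to compare the two Kantorovich problems coupling by coupling, using the pointwise cost bound \eqref{c-eqi-eq}. First, note that $\Pi(\mu,\nu)$ is the same admissible set for $W_c$ and for $W_2$; only the integrand changes.

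For the upper bound, fix any $\omega \in \Pi(\mu,\nu)$. Integrating the right inequality in \eqref{c-eqi-eq} against $\omega$ gives
\[
W_c(\mu,\nu)^2 \le \int c \ud\omega \le \Lambda_1 \int d_{\mathrm{Eu}}^2 \ud\omega .
\]
Taking the infimum over $\omega$ on the right-hand side yields $W_c^2 \le \Lambda_1 W_2^2$.

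For the lower bound, the same argument runs in the other direction. For every $\omega \in \Pi(\mu,\nu)$,
\[
\int c \ud\omega \ge \Lambda_0 \int d_{\mathrm{Eu}}^2 \ud\omega \ge \Lambda_0 W_2(\mu,\nu)^2 .
\]
Taking the infimum over $\omega$ on the left-hand side gives $W_c^2 \ge \Lambda_0 W_2^2$.

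There is no genuine obstacle in this argument. The only points to check are measurability and integrability. The cost $c$ is continuous on the compact set $\mathbb{T}^d\times\mathbb{T}^d$ (it is bounded by $\Lambda_1 \operatorname{diam}^2$), so every integral above is finite and well defined. No existence of optimal plans is needed, since we work only with infima.
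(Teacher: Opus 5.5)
Your argument is correct and is the standard coupling-by-coupling comparison; the paper states this lemma without proof right after the pointwise bound \eqref{c-eqi-eq}, so there is no separate route to compare. Beyond \eqref{c-eqi-eq}, the only ingredient is Borel measurability of $c$, so that $\int c \ud\omega$ makes sense, and this is already implicit in the definition of $W_c$. Your continuity claim does not follow from boundedness, as your parenthetical suggests, but it does hold here because $c$ is a squared distance: by the triangle inequality $\sqrt{c}$ is $\sqrt{\Lambda_1}$-Lipschitz in each variable with respect to $d_{\mathrm{Eu}}$.
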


Finally, we collect some discussions on the topology of $P(\mathbb{T}^{d})$.
\begin{definition}
We say a sequence of probability measures $(\mu_{\ell})$ narrowly converges to a measure $\mu$, if
\begin{equation}\label{narrow-convergence}
    \lim_{\ell \to \infty} \int_{\mathbb{T}^{d}} \varphi(x) \ud \mu_{\ell} = \int_{\mathbb{T}^{d}} \varphi(x) \ud \mu, \text{ for all } \varphi \in C(\mathbb{T}^{d}).
\end{equation}
\end{definition}

We note that the commonly used definition for narrow convergence is in duality with the space of bounded continuous functions $C_{b}(\mathbb{T}^{d})$. But, since $\mathbb{T}^{d}$ is compact, $C_{b}(\mathbb{T}^{d}) = C(\mathbb{T}^{d})$.

\begin{definition}We say that a sequence of probability measures $(\mu_{\ell})$ converges to $\mu$ in $W_{2}$ if
\begin{equation*}
    \lim_{\ell \to \infty} W_{2}(\mu_{\ell}, \mu) = 0.
\end{equation*}
\end{definition}

\begin{lemma}[Definition 6.8 (iii) in \cite{villani2008optimal} and Theorem 6.9 in \cite{villani2008optimal}]\label{equvial-W-N}$(\mu_{\ell})$ converges to $\mu$ in $W_{2}$ if and only if the following is satisfied for some $x_{0} \in \mathbb{T}^{d}$:
\begin{equation*}
    \mu_{\ell} \to \mu \text{ narrowly, and } \lim_{R \to \infty} \limsup_{\ell \to \infty} \int_{d_{\mathrm{Eu}}(x, x_{0}) \ge R} d_{\mathrm{Eu}}(x, x_{0})^{2} \ud \mu_{\ell} = 0.
\end{equation*}
Therefore, on a compact domain $\mathbb{T}^{d}$, the metric topology is equivalent to the narrowly topology since $\sup_{x \in \mathbb{T}^{d}}d_{\mathrm{Eu}}(x, x_{0}) < \infty$.
\end{lemma}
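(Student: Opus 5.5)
The statement to prove is the last lemma: $W_2$ convergence on $\mathbb{T}^d$ is equivalent to narrow convergence together with uniform integrability of second moments, and on the compact torus this reduces to narrow convergence alone. Since the first part is cited from \cite[Definition 6.8, Theorem 6.9]{villani2008optimal}, the plan is to invoke that theorem for the general equivalence. The only thing to verify is the compactness consequence, though I would also sketch a direct argument specific to $\mathbb{T}^d$ for completeness.

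\emph{Tail condition.} Set $D:=\sup_{x\in\mathbb{T}^d} d_{\mathrm{Eu}}(x,x_0)\le \sqrt d/2<\infty$. For every $R>D$ the set $\{x: d_{\mathrm{Eu}}(x,x_0)\ge R\}$ is empty. Hence the integral in the tail condition vanishes for all $\ell$, and the condition holds trivially for any sequence. The equivalence from Villani's theorem therefore collapses to: $W_2$ convergence holds if and only if narrow convergence holds.

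\emph{Direct proof, $W_2\Rightarrow$ narrow.} Take $\varphi\in C(\mathbb{T}^d)$. By density it suffices to treat Lipschitz $\varphi$, with constant $L$. Let $\omega_\ell$ be an optimal coupling. Then
\[
\Big|\int\varphi\,\mathrm{d}\mu_\ell-\int\varphi\,\mathrm{d}\mu\Big|\le L\int d_{\mathrm{Eu}}\,\mathrm{d}\omega_\ell\le L\,W_2(\mu_\ell,\mu)
\]
by Cauchy--Schwarz. For general continuous $\varphi$, uniform approximation by Lipschitz functions finishes this direction, since all measures involved are probabilities.

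\emph{Direct proof, narrow $\Rightarrow W_2$.} This is the step needing an actual argument. Let $\omega_\ell$ be optimal couplings between $\mu_\ell$ and $\mu$. Since $\mathbb{T}^d\times\mathbb{T}^d$ is compact, Prokhorov's theorem gives a subsequence $\omega_\ell\to\omega$ narrowly. The marginals pass to the limit, so $\omega\in\Pi(\mu,\mu)$.

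Optimal couplings are stable under narrow convergence (\cite[Theorem 5.20]{villani2008optimal}), so $\omega$ is optimal for $(\mu,\mu)$, and therefore $\int d_{\mathrm{Eu}}^2\,\mathrm{d}\omega=W_2(\mu,\mu)^2=0$. The cost $d_{\mathrm{Eu}}^2$ is bounded and continuous, so narrow convergence gives
\[
W_2(\mu_\ell,\mu)^2=\int d_{\mathrm{Eu}}^2\,\mathrm{d}\omega_\ell\to 0
\]
along the subsequence. The limit is independent of the subsequence chosen, so the whole sequence converges.

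The main subtlety is the stability of optimality under narrow limits. On a compact space one can avoid citing it, using the dual (Kantorovich) formulation instead. The Kantorovich potentials are $d_{\mathrm{Eu}}^2$-concave, hence uniformly Lipschitz with a bound depending only on $D$, and form a compact family by Arzel\`a--Ascoli. Narrow convergence is uniform over such an equicontinuous, uniformly bounded family. This yields $W_2(\mu_\ell,\mu)^2\to W_2(\mu,\mu)^2=0$ directly.
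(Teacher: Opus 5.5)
Your proposal is correct and follows the paper's own route: the paper proves nothing beyond citing Villani's Theorem 6.9, and obtains the compact-case consequence exactly as you do, namely that the tail integral vanishes identically once $R > \sup_{x} d_{\mathrm{Eu}}(x,x_0)$, which is at most $\sqrt{d}/2$. Your two direct arguments are valid self-contained additions that the paper does not include: Lipschitz test functions with Cauchy--Schwarz for one direction, and either stability of optimal plans or compactness of the $d_{\mathrm{Eu}}^2$-concave potentials for the other.
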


\begin{lemma}[Remark 6.19 in \cite{villani2008optimal}]\label{comp-1}The metric space $(P(\mathbb{T}^{d}), W_{2})$ is compact.
\end{lemma}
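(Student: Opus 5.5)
Compactness of $(P(\mathbb{T}^{d}), W_{2})$ follows by combining Lemma~\ref{equvial-W-N} with Prokhorov's theorem. First I reduce to narrow sequential compactness. Since $(P(\mathbb{T}^{d}), W_{2})$ is a metric space, compactness is equivalent to sequential compactness. Lemma~\ref{equvial-W-N} says that on $\mathbb{T}^{d}$, convergence in $W_{2}$ is equivalent to narrow convergence: the tail condition
\begin{equation*}
\lim_{R\to\infty}\limsup_{\ell\to\infty}\int_{d_{\mathrm{Eu}}(x,x_0)\ge R} d_{\mathrm{Eu}}(x,x_0)^{2}\ud\mu_\ell = 0
\end{equation*}
holds automatically, because the integration region is empty once $R>\operatorname{diam}\mathbb{T}^{d}=\sqrt{d}/2$. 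It therefore suffices to show that every sequence $(\mu_\ell)\subset P(\mathbb{T}^{d})$ has a narrowly convergent subsequence whose limit lies in $P(\mathbb{T}^{d})$.

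For this I use duality. Because $\mathbb{T}^{d}$ is a compact metric space, $C(\mathbb{T}^{d})$ is separable. By the Riesz representation theorem, the finite signed Radon measures form its dual. Probability measures sit in the closed unit ball of this dual, which is weak-$\ast$ sequentially compact by Banach--Alaoglu together with separability (equivalently, a diagonal argument over a countable dense set of test functions). This gives a subsequence $\mu_{\ell_k}$ and a measure $\mu$ with $\int\varphi\ud\mu_{\ell_k}\to\int\varphi\ud\mu$ for all $\varphi\in C(\mathbb{T}^{d})$, which is exactly narrow convergence in the sense of~\eqref{narrow-convergence}.

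It remains to check that $\mu$ is a probability measure. Testing with $\varphi\ge0$ gives $\int\varphi\ud\mu\ge0$, so $\mu$ is nonnegative. Testing with $\varphi\equiv1$, which is continuous on the compact space, gives $\mu(\mathbb{T}^{d})=1$, so no mass escapes. Hence $\mu\in P(\mathbb{T}^{d})$.

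Finally, Lemma~\ref{equvial-W-N} upgrades narrow convergence to $W_{2}(\mu_{\ell_k},\mu)\to0$. A more direct route is also available: $d_{\mathrm{Eu}}^{2}$ is bounded and continuous on $\mathbb{T}^{d}\times\mathbb{T}^{d}$, so stability of optimal couplings gives the same conclusion. There is no real obstacle here; the only point needing care is confirming that the limit retains total mass one, and compactness of $\mathbb{T}^{d}$ gives this immediately.
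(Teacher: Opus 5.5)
Your proof is correct and follows the same route as the paper. Both arguments get sequential compactness of $P(\mathbb{T}^{d})$ in the narrow topology, then use Lemma~\ref{equvial-W-N} (whose tail condition is vacuous on the bounded torus) to upgrade narrow convergence to $W_{2}$ convergence. The one difference is that you obtain narrow compactness from the Riesz representation theorem and Banach--Alaoglu, checking by hand that the limit is a probability measure, whereas the paper cites tightness and Prokhorov's theorem; on a compact space these amount to the same argument.
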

\begin{proof}
Since $\mathbb{T}^{d}$ is compact, any sequence of probability measures is tight. Therefore, by Prokhorov's theorem (see, for example, \cite[page 43]{villani2008optimal}), $P(\mathbb{T}^{d})$ is compact with respect to the narrow topology. As the result of the equivalence in Lemma~\ref{equvial-W-N}, the metric space $(P(\mathbb{T}^{d}), W_{2})$ is compact.
\end{proof}

\subsection{$\Gamma$-convergence}
In this subsection, we briefly recall the theory of the $\Gamma$-convergence. For detailed discussions on the subject of $\Gamma$-convergence, we refer the reader to the monographs \cite{braides2002gamma, dal2012introduction}.
\begin{definition}[Definition 1.5 in \cite{braides2002gamma} and Definition 4.1 in \cite{dal2012introduction}]Given a sequence of functionals $\{F_{\varepsilon}\}$ on metric space $(X, d)$, we say $F_{\varepsilon}$ $\Gamma$-converges to a functional $F$, if for all $u \in X$, the following two conditions holds true
\begin{enumerate}
    \item \textnormal{($\liminf$ inequality)} for all sequence $\{u_{\varepsilon}\}$ converging to $u$ in $X$,
    \begin{equation*}
        \liminf_{\varepsilon \to 0} F_{\varepsilon}(u_{\varepsilon}) \ge F(u).
    \end{equation*}
    \item \textnormal{(existence of a recovery sequence)} there exists a sequence $\{u_{\varepsilon}\}$ converging to $u$ in $X$ such that
    \begin{equation*}
        \limsup_{\varepsilon \to 0} F_{\varepsilon}(u_{\varepsilon}) \le F(u).
    \end{equation*}
\end{enumerate}
The functional $F$ is the $\Gamma$-limit of $(F_{\varepsilon})$, and we write $F = \Gamma\text{-}\lim_{\varepsilon \to 0} F_{\varepsilon}$.
\end{definition}

We recall the following notion of continuous convergence.
\begin{definition}[Proposition 2.3 in \cite{braides2014local}]Given a sequence of continuous functionals $\{G_{\varepsilon}\}$ on metric space $(X, d)$, we say $G_{\varepsilon}$ continuously converges to $G$, if for all $u \in X$ and for all $u_{\varepsilon} \to u$ in $X$, we have $G_{\varepsilon}(u_{\varepsilon}) \to G(u)$.
\end{definition}
An important property of the $\Gamma$-convergence is its stability under continuous perturbations \cite[Proposition 2.3]{braides2014local}.
\begin{lemma}\label{stability-Gamma-thm}
Let $\{F_{\varepsilon}\}$ and $\{G_{\varepsilon}\}$ be a sequence of functionals and continuous functionals, respectively. If $F_{\varepsilon}$ $\Gamma$-converges to $F$, and $G_{\varepsilon}$ continuously converges to $G$ in the metric $d$, we have
\begin{equation}\label{stability-Gamma}
    \Gamma\text{-}\lim_{\varepsilon \to 0} (F_{\varepsilon} + G_{\varepsilon}) = F + G.
\end{equation}
\end{lemma}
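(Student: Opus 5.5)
We verify the two defining conditions of $\Gamma$-convergence for $F_{\varepsilon}+G_{\varepsilon}$ separately. Both follow from the corresponding properties of $F_{\varepsilon}$, combined with the fact that continuous convergence makes $G_{\varepsilon}(u_{\varepsilon})$ behave like a convergent real sequence along any convergent sequence $u_{\varepsilon}\to u$. Throughout, we assume $G$ is real-valued, so that $G(u)$ is finite. This avoids $\infty-\infty$ ambiguities: continuous convergence gives a finite limit $G_{\varepsilon}(u_{\varepsilon})\to G(u)$.

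\emph{Liminf inequality.} Fix $u\in X$ and any sequence $u_{\varepsilon}\to u$ in $X$. Continuous convergence gives $\lim_{\varepsilon\to 0} G_{\varepsilon}(u_{\varepsilon}) = G(u)$. Since the second sequence has a genuine finite limit, the elementary identity
\begin{equation*}
\liminf_{\varepsilon\to 0}\big(a_{\varepsilon}+b_{\varepsilon}\big) = \liminf_{\varepsilon\to 0} a_{\varepsilon} + \lim_{\varepsilon\to 0} b_{\varepsilon}, \qquad b_{\varepsilon}\to b\in\mathbb{R},
\end{equation*}
holds, including when $\liminf a_{\varepsilon}=+\infty$. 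Applying it with $a_{\varepsilon}=F_{\varepsilon}(u_{\varepsilon})$ and $b_{\varepsilon}=G_{\varepsilon}(u_{\varepsilon})$, and using the liminf inequality for $F_{\varepsilon}$, we get
\begin{equation*}
\liminf_{\varepsilon\to 0}\big(F_{\varepsilon}(u_{\varepsilon})+G_{\varepsilon}(u_{\varepsilon})\big) \ge F(u)+G(u).
\end{equation*}

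\emph{Recovery sequence.} Fix $u\in X$. Take the recovery sequence $u_{\varepsilon}\to u$ for $F_{\varepsilon}$, so that $\limsup F_{\varepsilon}(u_{\varepsilon})\le F(u)$. The same sequence serves for the sum. It converges to $u$, so continuous convergence again yields $G_{\varepsilon}(u_{\varepsilon})\to G(u)$. By the analogous identity for $\limsup$, we obtain $\limsup(F_{\varepsilon}+G_{\varepsilon})(u_{\varepsilon}) \le F(u)+G(u)$. If $F(u)=+\infty$, the inequality is trivial.

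There is no real obstacle here. The only point requiring care is the extended-real arithmetic: we need the limit of $G_{\varepsilon}(u_{\varepsilon})$ to be finite, so that adding it commutes with $\liminf$ and $\limsup$. This is exactly what continuous convergence to a real-valued $G$ provides, and it is what makes the result stronger than what mere $\Gamma$-convergence of $G_{\varepsilon}$ would give. In the paper's applications, $G_{\varepsilon}$ will be the distance term $\frac{1}{2\tau}K_{\varepsilon}(\cdot,\mu^{n-1}_{\varepsilon})^{2}$ or the potential-energy term, which is finite.
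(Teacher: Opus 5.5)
Your argument is correct and is the standard proof. The paper does not prove this lemma itself but cites \cite[Proposition 2.3]{braides2014local}, where the result follows directly from the definitions exactly as you do it: split the $\liminf$ and $\limsup$ off the convergent real sequence $G_{\varepsilon}(u_{\varepsilon})$, and reuse the recovery sequence of $F_{\varepsilon}$. Your caveat that $G$ be real-valued is the right one, and it holds in the paper's applications, where $G_{\varepsilon}$ is the finite distance term $\frac{1}{2\tau}K_{\varepsilon}(\cdot,\mu_{\varepsilon}^{n-1})^{2}$ or $\frac{1}{2\tau}W_{\varepsilon}(\cdot,\mu_{\varepsilon}^{n-1})^{2}$.
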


We recall the following Urysohn property of $\Gamma$-convergence \cite[Proposition 1.44]{braides2002gamma}.
\begin{lemma}\label{Urysohn} Let $\{F_{\varepsilon}\}$ be a sequence of functionals, the following two are equivalent
\begin{enumerate}
    \item $\Gamma\textnormal{-}\lim_{\varepsilon \to 0} F_{\varepsilon} = F$.
    \item for every subsequence $\{\varepsilon_{\ell}\}$ 
    there exists a further subsequence $\{\varepsilon_{\ell(\nu)}\}$, such that $\Gamma\textnormal{-}\lim_{\nu \to \infty} F_{\varepsilon_{\ell(\nu)}} = F$.
\end{enumerate}
\end{lemma}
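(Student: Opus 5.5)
The Urysohn property of Lemma~\ref{Urysohn} follows from the two defining conditions of $\Gamma$-convergence and a simple contradiction argument. I take the ambient space $(X,d)$ to be metric, as in the definition, so that sequences suffice.

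\emph{Step 1: (1) implies (2).} Fix any subsequence $\{\varepsilon_\ell\}$. I claim it already $\Gamma$-converges to $F$, so the further subsequence can be taken to be itself. For the $\liminf$ inequality, let $u_\ell\to u$. I extend it to a family indexed by all $\varepsilon$ by setting $u_\varepsilon=u$ for $\varepsilon\notin\{\varepsilon_\ell\}$. Then $u_\varepsilon \to u$, and the $\liminf$ over a subsequence dominates the full $\liminf$, so $\liminf_\ell F_{\varepsilon_\ell}(u_\ell)\ge \liminf_{\varepsilon} F_\varepsilon(u_\varepsilon)\ge F(u)$. For the recovery sequence, I restrict a full recovery sequence $u_\varepsilon$ to $\varepsilon_\ell$. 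The $\limsup$ along a subsequence is bounded by the full $\limsup$, which is at most $F(u)$.

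\emph{Step 2: (2) implies (1), $\liminf$ part.} Suppose some $u_\varepsilon\to u$ has $\liminf_\varepsilon F_\varepsilon(u_\varepsilon)<F(u)$. I choose a subsequence $\varepsilon_\ell$ that realizes this $\liminf$, so $\lim_\ell F_{\varepsilon_\ell}(u_{\varepsilon_\ell})<F(u)$. By (2) there is a further subsequence that $\Gamma$-converges to $F$. Along it the values still converge to the same number, which is below $F(u)$. This contradicts the $\liminf$ inequality for that further subsequence.

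\emph{Step 3: (2) implies (1), recovery part.} This is the main obstacle, since the recovery sequences of different subsequences must be glued together. I use the $\Gamma$-upper limit
\[
F''(u):=\inf\Big\{\limsup_{\varepsilon\to0}F_\varepsilon(u_\varepsilon): u_\varepsilon\to u\Big\},
\]
and need $F''(u)\le F(u)$. A diagonal argument in metric spaces gives the standard characterization
\[
F''(u)=\lim_{\delta\to0}\limsup_{\varepsilon\to0}\ \inf_{d(v,u)<\delta}F_\varepsilon(v).
\]
This is the step I would verify carefully: for each $\varepsilon$, pick $v_\varepsilon$ nearly attaining the infimum with $\delta=\delta(\varepsilon)\downarrow0$ slowly enough, and the characterization follows. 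Now suppose $F''(u)>F(u)$. Then there is a $\delta_0>0$ and a subsequence $\varepsilon_\ell$ with
\[
\inf_{d(v,u)<\delta}F_{\varepsilon_\ell}(v)\ge c>F(u)
\]
for all small $\delta$ and large $\ell$; this is possible by monotonicity in $\delta$. A further subsequence given by (2) has a recovery sequence $v_\nu\to u$ with $\limsup_\nu F(v_\nu)\le F(u)<c$, reading $F$ as $F_{\varepsilon_{\ell(\nu)}}$ here. Eventually $d(v_\nu,u)<\delta$, so $F_{\varepsilon_{\ell(\nu)}}(v_\nu)\ge c$, which is a contradiction. Hence $F''\le F$, and I extract the recovery sequence by a diagonal choice from the characterization of $F''$. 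The case $F(u)=+\infty$ is trivial, since then any sequence converging to $u$ is a recovery sequence.
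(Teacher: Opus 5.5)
Your proof is correct. Step 1 and the subsequence-extraction argument of Step 2 are fine. In Step 3, the ball characterization $F''(u)=\sup_{\delta>0}\limsup_{\varepsilon\to0}\inf_{d(v,u)<\delta}F_\varepsilon(v)$ that you flag is standard. The inequality $\ge$ is immediate, and your diagonal choice with a slowly shrinking $\delta(\varepsilon)$ proves $\le$ and at the same time produces the recovery sequence.

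The paper does not prove this lemma itself; it quotes it from Braides (Proposition~1.44 of \emph{$\Gamma$-convergence for Beginners}). Your argument is essentially the standard one behind that reference.
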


We recall the following equi-coercive condition which implies the compactness of a sequence of minimizers $(u_{\varepsilon})$.
\begin{definition}[coercivity]We say a sequence of functionals $\{F_{\varepsilon}\}$ on metric space $(X, d)$ is equi-coercive, if there exists a non-empty compact subset $K$ such that $\inf_{X} F_{\varepsilon} = \inf_{K} F_{\varepsilon}$ for all $\varepsilon > 0$.
\end{definition}

We also recall the following fundamental theorem of $\Gamma$-convergence (see \cite[Theorem 1.21]{braides2002gamma} and \cite[Theorem 2.1]{braides2014local}).
\begin{theorem}[fundamental theorem]\label{fund-G}Let $\{F_{\varepsilon}\}$ be a sequence of equi-coercive functionals on $(X, d)$ which $\Gamma$-converges $F$, we have
\begin{enumerate}
    \item F admits a minimum, and  $\min F = \lim_{\varepsilon \to 0} \min F_{\varepsilon}$.
    \item Let $u_{\varepsilon}$ be a minimizer of $F_\varepsilon$ for any $\varepsilon$, then there exists a subsequence $\{u_{\varepsilon_\ell}\}$
    such that $u_{\varepsilon_{\ell}} \to u_{\ast}$ in $X$ as $\ell \to \infty$, where $u_\ast$ is a minimizer of $F(u)$. If, in addition, the minimizer of $F$ is unique, we have $u_{\varepsilon} \to u_{\ast}$ as $\varepsilon \to 0$.
\end{enumerate}
\end{theorem}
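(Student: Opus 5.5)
We prove the two statements of Theorem~\ref{fund-G} in order, starting with (1).

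\emph{Step 1: existence of a minimizer of $F$.} Fix the compact set $K$ from equi-coercivity. For each $\varepsilon$ we have $\inf_{X} F_{\varepsilon} = \inf_{K} F_{\varepsilon}$, so we can choose $u_{\varepsilon} \in K$ with
\begin{equation*}
F_{\varepsilon}(u_{\varepsilon}) \le \inf_{X} F_{\varepsilon} + \varepsilon.
\end{equation*}
Take a subsequence $\varepsilon_{\ell}$ along which $F_{\varepsilon_\ell}(u_{\varepsilon_\ell}) \to \liminf_{\varepsilon\to 0}\inf_X F_\varepsilon$. Since $K$ is compact in the metric space $(X,d)$, we may pass to a further subsequence with $u_{\varepsilon_{\ell}} \to u_{\ast} \in K$.

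\emph{Step 2: the $\liminf$ bound.} The $\liminf$ inequality is stated for full sequences $u_\varepsilon \to u$. To apply it along a subsequence, extend the subsequence to a full sequence converging to $u_\ast$: set $v_\varepsilon = u_{\varepsilon_\ell}$ when $\varepsilon=\varepsilon_\ell$, and $v_\varepsilon = $ a recovery sequence for $u_\ast$ otherwise. The liminf over the full family is at most the liminf along the subsequence. Alternatively, use Lemma~\ref{Urysohn}, which shows that $\Gamma$-convergence passes to subsequences. Either way we get
\begin{equation*}
F(u_{\ast}) \le \liminf_{\ell} F_{\varepsilon_\ell}(u_{\varepsilon_\ell}) = \liminf_{\varepsilon\to 0} \inf_X F_{\varepsilon}.
\end{equation*}

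\emph{Step 3: the $\limsup$ bound.} For any $v \in X$, take a recovery sequence $v_\varepsilon \to v$. Then
\begin{equation*}
\limsup_{\varepsilon\to 0} \inf_X F_\varepsilon \le \limsup_{\varepsilon\to 0} F_\varepsilon(v_\varepsilon) \le F(v).
\end{equation*}
Combining with Step 2,
\begin{equation*}
F(u_\ast) \le \liminf \inf F_\varepsilon \le \limsup \inf F_\varepsilon \le \inf_X F \le F(u_\ast).
\end{equation*}
Hence $u_\ast$ minimizes $F$, and $\min F = \lim_{\varepsilon\to 0} \min F_\varepsilon$. Here $\min F_\varepsilon$ is read as the infimum; when minimizers of $F_\varepsilon$ exist, as in (2), it is attained.

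\emph{Step 4: statement (2), compactness of minimizers.} Let $u_\varepsilon$ be exact minimizers of $F_\varepsilon$. The main obstacle is that equi-coercivity only says the infimum is attained on $K$; it does not force an arbitrary minimizer to lie in $K$. I would handle this in one of two ways.
\begin{enumerate}
\item Interpret equi-coercivity in the standard sense of \cite{braides2002gamma}: the sublevel sets $\{F_\varepsilon \le t\}$ lie in a common compact set. Then the minimizers $u_\varepsilon$ lie in a compact set and have a convergent subsequence.
\item In the applications, note that the minimizers can be selected in $K$.
\end{enumerate}
Given a subsequence $u_{\varepsilon_\ell} \to u_\ast$, the argument of Steps 2--3 gives $F(u_\ast) \le \lim \min F_{\varepsilon_\ell} = \min F$. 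So $u_\ast$ is a minimizer of $F$.

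\emph{Step 5: uniqueness gives full convergence.} Assume the minimizer $u_\ast$ of $F$ is unique. Every subsequence of $(u_\varepsilon)$ has a further subsequence that converges, by Step 4, to some minimizer of $F$, which must be $u_\ast$. By the standard subsequence principle in metric spaces, the whole family satisfies $u_\varepsilon \to u_\ast$.
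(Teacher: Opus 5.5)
Your proof is correct. The paper gives no proof of its own; it only cites Braides, and your Steps~1--3 and~5 follow the standard proof behind those references: almost-minimizers taken in $K$, the $\liminf$ inequality along a subsequence, recovery sequences for the upper bound, and the subsequence principle for uniqueness.

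Your caveat in Step~4 is well founded, and it is a defect of the statement rather than of your proof. With the paper's (mild) definition of equi-coercivity, part~(2) is false. On $X=\mathbb{R}$ take $F_\varepsilon(u)=\min\{u^2,(u-1/\varepsilon)^2\}$.
\begin{itemize}
\item Mild coercivity holds: $\inf_X F_\varepsilon=\inf_{\{0\}}F_\varepsilon=0$.
\item $F_\varepsilon$ converges continuously, hence in the $\Gamma$ sense, to $F(u)=u^2$, whose minimizer $0$ is unique.
\item Yet $u_\varepsilon=1/\varepsilon$ minimizes $F_\varepsilon$ and has no convergent subsequence.
\end{itemize}
So part~(2) genuinely needs one of two extra hypotheses.
\begin{itemize}
\item The sublevel-set form of equi-coercivity, which is your option~(a). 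Placing exact minimizers in a single sublevel set also uses $\limsup_{\varepsilon\to0}\min F_\varepsilon<\infty$. Your Step~3 provides this only when $F\not\equiv+\infty$.
\item An explicit precompactness assumption on $(u_\varepsilon)$, which is the usual formulation under mild coercivity.
\end{itemize}

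Your option~(b) is better phrased in the second form. In the paper's applications the minimizers are unique, so nothing is being selected. Instead, precompactness is proved separately each time: Lemma~\ref{u-compact}, the compactness of $(P(\mathbb{T}^{d}),W_2)$ in Lemma~\ref{comp-1}, and the bound \eqref{grad-w} in the proof of Lemma~\ref{converge-c-lemma}. In fact all three functionals used there satisfy the sublevel-set form of coercivity, so your option~(a) covers every use of the theorem in the paper.
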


\section{Weighted $L^{2}$ scheme}\label{sec2}
In this section, we analyze the weighted $L^{2}$ scheme constructed in Definition~\ref{L2-scheme-def}, where the solutions are defined according to,
\begin{equation}\label{k}
    \min_{H^{1}(\mathbb{T}^{d})} \mathcal{I}_{\varepsilon, \tau}(\mu, \mu^{n - 1}_{\varepsilon}) := Q_{\varepsilon}(\mu) + \frac{1}{2\tau} K_{\varepsilon}(\mu, \mu_{\varepsilon}^{n - 1})^{2},
\end{equation}
where $Q_{\varepsilon}$ and $K_{\varepsilon}$ are the energy and distance defined in~\eqref{disK}.

Based on ellipticity of $A$ and regularities of $V_{\varepsilon}$ from Assumptions~\ref{A3},~\ref{A5},~and~\ref{A4}, the existence of a unique minimizer to~\eqref{k} is guaranteed by standard direct methods in the calculus of variations \cite[Chapter 1]{struwe2000variational} and \cite[Chapter 3]{dacorogna2008direct}. Therefore, we have the well-posedness of the weighted $L^{2}$ scheme as follows.
\begin{proposition}[well-posedness]\label{WL2-existence}Given $\varepsilon, \tau > 0$ and $\mu_{0} \in L^{2}(\mathbb{T}^{d})$, there exists a unique solution $\mu_{\varepsilon, \tau}$ to the weighted $L^{2}$ scheme in Definition~\ref{L2-scheme-def} with initial value $\mu_{0}$.
\end{proposition}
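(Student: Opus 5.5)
The plan is to argue by induction on $n$: at each step, show that the functional $\mathcal{I}_{\varepsilon,\tau}(\cdot,\mu^{n-1}_\varepsilon)$ in~\eqref{k} has exactly one minimizer in $H^1(\mathbb{T}^d)$, using the direct method of the calculus of variations. The piecewise constant interpolation~\eqref{0708} then defines $\mu_{\varepsilon,\tau}$ uniquely. Only $\mu_0\in L^2(\mathbb{T}^d)$ is needed, since the previous iterate enters only through $K_\varepsilon(\mu,\mu^{n-1}_\varepsilon)$. So we may assume $\mu^{n-1}_\varepsilon\in L^2$ and produce $\mu^n_\varepsilon\in H^1\subset L^2$.

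\textbf{Bounds on the coefficients.} By Assumptions~\ref{A5} and~\ref{A4}, $0\le V_\varepsilon\le\lambda$, so $e^{-\lambda}\le\pi_\varepsilon\le 1$, and $|\nabla\pi_\varepsilon|\le\lambda$. Hence the map $\mu\mapsto\varrho_\varepsilon=\mu/\pi_\varepsilon$ is a linear isomorphism of $H^1(\mathbb{T}^d)$ and of $L^2(\mathbb{T}^d)$, with constants depending only on $\lambda$. It is therefore convenient to work in the variable $\varrho$. In terms of $\varrho$,
\[
\mathcal{I}_{\varepsilon,\tau}=\tfrac12\int\langle \pi_\varepsilon A(x/\varepsilon)\nabla\varrho,\nabla\varrho\rangle\ud x+\tfrac{1}{2\tau}\int \pi_\varepsilon\big(\varrho-\varrho^{n-1}_\varepsilon\big)^2\ud x .
\]

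\textbf{Coercivity and existence.} Assumption~\ref{A3} gives
\[
\mathcal{I}_{\varepsilon,\tau}\ge \tfrac{e^{-\lambda}}{2}\Big(\Lambda_0\|\nabla\varrho\|_{L^2}^2+\tau^{-1}\|\varrho-\varrho^{n-1}_\varepsilon\|_{L^2}^2\Big).
\]
Using $\|\varrho\|^2\le 2\|\varrho-\varrho^{n-1}_\varepsilon\|^2+2\|\varrho^{n-1}_\varepsilon\|^2$, this bounds $\|\varrho\|_{H^1}^2$ by a constant times $\mathcal{I}_{\varepsilon,\tau}$ plus a constant depending on $\|\mu^{n-1}_\varepsilon\|_{L^2}$. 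A minimizing sequence is therefore bounded in $H^1$. It has a subsequence converging weakly in $H^1$, and strongly in $L^2$ by Rellich on the compact torus. Both terms of $\mathcal{I}_{\varepsilon,\tau}$ are continuous convex quadratic forms on $H^1$, because the weights $\pi_\varepsilon A(x/\varepsilon)$ and $\pi_\varepsilon^{-1}$ are bounded. They are thus weakly lower semicontinuous, so the weak limit is a minimizer. The infimum is finite, since the test function $\mu=\pi_\varepsilon$ (i.e. $\varrho=1$) gives a finite value.

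\textbf{Uniqueness.} The second term is strictly convex in $\mu$, because $\pi_\varepsilon^{-1}>0$ makes $\mu\mapsto\int(\mu-\nu)^2/\pi_\varepsilon$ a strictly convex quadratic. The first term is convex. Hence $\mathcal{I}_{\varepsilon,\tau}$ is strictly convex and the minimizer is unique. This completes the induction.

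I do not expect a real obstacle. The only point needing care is that $\varrho_\varepsilon\in H^1$ exactly when $\mu\in H^1$, which relies on $\pi_\varepsilon\in C^1$ with bounds uniform in $x$; this is exactly what Assumption~\ref{A4} provides. Note also that mass conservation and positivity, as in~\eqref{structure-preserving}, are not part of this statement and need not be addressed here.
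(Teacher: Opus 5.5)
Your proposal is correct and takes the same route as the paper, which simply asserts that existence and uniqueness of the minimizer at each step follow from the direct method of the calculus of variations, using the ellipticity of $A$ and the bounds on $V_\varepsilon$. Your argument writes that out in detail: coercivity from $\pi_\varepsilon \ge e^{-\lambda}$ and $A \succeq \Lambda_0 I$, weak lower semicontinuity of the quadratic terms, strict convexity of the $K_\varepsilon$ term for uniqueness, and induction on $n$.
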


By taking Gateaux derivative of~\eqref{k} in $H^{1}(\mathbb{T}^{d})$ at the minimizer $\mu_{\varepsilon}^{n}$, one can check that $(\mu_{\varepsilon}^{n})_{n \in \mathbb{N}}$ obeys the backward Euler scheme of~\eqref{0} in the $H^{1}$ weak sense
\begin{equation}\label{L2-scheme}
    \begin{aligned}
        \frac{\mu^{n}_{\varepsilon} - \mu^{n - 1}_{\varepsilon}}{\tau} &- \nabla \cdot \Big[A\Big(\frac{x}{\varepsilon}\Big) \nabla \mu^{n}_{\varepsilon} + \mu^{n}_{\varepsilon}A\Big(\frac{x}{\varepsilon}\Big)\nabla V_{\varepsilon}(x)\Big] = 0.
    \end{aligned}
\end{equation}
As the convergence of the backward Euler scheme~\eqref{L2-scheme} to the linear parabolic equation~\eqref{0} is well established in numerical analysis \cite{marcellini1978periodic}, we state the following result without proof.
\begin{proposition}[convergence]\label{IE-time}Given $\varepsilon, \tau > 0$ and initial value $\mu_{0} \in L^{2}(\mathbb{T}^{d})$. Let $\mu_{\varepsilon, \tau}$ be the solution of the weighted $L^{2}$ scheme in Definition~\ref{L2-scheme-def}, we have
\begin{equation*}
    \mu_{\varepsilon, \tau}(t) \to \mu_{\varepsilon}(t) \text{ in } L^{2}(\mathbb{T}^{d}) \text{ as } \tau \to 0,
\end{equation*}
where $\mu_{\varepsilon}(t)$ is the solution of~\eqref{0} with initial value $\mu_{0}$.
\end{proposition}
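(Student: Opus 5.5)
The plan is to fix $\varepsilon>0$ and treat the weighted $L^{2}$ scheme as the implicit Euler discretization of a linear semigroup generated by a nonnegative self-adjoint operator. Convergence then follows from the spectral theorem, without any nonlinear machinery. The underlying Hilbert space is $\mathcal{H}_{\varepsilon}:=L^{2}(\mathbb{T}^{d};\pi_{\varepsilon}^{-1}\ud x)$ with inner product $(\mu,\nu)_{\varepsilon}=\int_{\mathbb{T}^{d}}\mu\nu/\pi_{\varepsilon}\ud x$, so that $K_{\varepsilon}(\mu,\nu)=\|\mu-\nu\|_{\mathcal{H}_{\varepsilon}}$. By Assumptions~\ref{A5} and~\ref{A4} we have $e^{-\lambda}\le\pi_{\varepsilon}\le 1$. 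Hence $\mathcal{H}_{\varepsilon}$ and $L^{2}(\mathbb{T}^{d})$ coincide as sets and have equivalent norms, and it suffices to prove convergence in $\mathcal{H}_{\varepsilon}$.

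\textbf{Step 1: the operator.} The quadratic form $Q_{\varepsilon}$ has domain $H^{1}(\mathbb{T}^{d})$, and we view it as a form on $\mathcal{H}_{\varepsilon}$. The map $\mu\mapsto\mu/\pi_{\varepsilon}$ is an isomorphism of $H^{1}$, because $V_{\varepsilon}\in C^{1}$ with bounded gradient. Together with Assumption~\ref{A3}, this shows that $Q_{\varepsilon}$ is densely defined, nonnegative, closed and symmetric. Let $B_{\varepsilon}$ be the associated nonnegative self-adjoint operator, with $(B_{\varepsilon}\mu,\nu)_{\varepsilon}=2Q_{\varepsilon}(\mu,\nu)$. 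Computing $\pi_{\varepsilon}\nabla(\mu/\pi_{\varepsilon})=\nabla\mu+\mu\nabla V_{\varepsilon}$ gives
\[
B_{\varepsilon}\mu=-\nabla\cdot\big[A(x/\varepsilon)(\nabla\mu+\mu\nabla V_{\varepsilon})\big]
\]
in the weak sense. The Euler--Lagrange equation~\eqref{L2-scheme} is then exactly $(I+\tau B_{\varepsilon})\mu_{\varepsilon}^{n}=\mu_{\varepsilon}^{n-1}$. Consequently $\mu_{\varepsilon,\tau}(t)=(I+\tau B_{\varepsilon})^{-n}\mu_{0}$ with $n=\lceil t/\tau\rceil$. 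Since $B_{\varepsilon}$ is nonnegative and self-adjoint, the resolvent is a contraction, so the scheme makes sense for all $\mu_{0}\in L^{2}$. This matches Proposition~\ref{WL2-existence}.

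\textbf{Step 2: spectral convergence.} Let $E$ be the spectral measure of $B_{\varepsilon}$ on $[0,\infty)$. Then
\[
\|\mu_{\varepsilon,\tau}(t)-e^{-tB_{\varepsilon}}\mu_{0}\|_{\mathcal{H}_{\varepsilon}}^{2}=\int_{[0,\infty)}\big|(1+\tau s)^{-n}-e^{-ts}\big|^{2}\ud(E_{s}\mu_{0},\mu_{0})_{\varepsilon}.
\]
The integrand is bounded by $4$. It tends to $0$ for every fixed $s$ because $n\tau\to t$ and $n\log(1+\tau s)\to ts$. Dominated convergence then gives the limit $0$ for every $\mu_{0}\in L^{2}$, with no regularity assumption on $\mu_{0}$.

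\textbf{Step 3: identification.} It remains to show that $\mu_{\varepsilon}(t):=e^{-tB_{\varepsilon}}\mu_{0}$ is the weak solution of~\eqref{0}. Standard semigroup theory gives $\mu_{\varepsilon}\in C([0,\infty);L^{2})\cap L^{2}_{\mathrm{loc}}(0,\infty;H^{1})$ and $\partial_{t}\mu_{\varepsilon}+B_{\varepsilon}\mu_{\varepsilon}=0$. This is~\eqref{0} tested against $H^{1}$ functions. Weak solutions in this class are unique: testing the difference of two solutions with itself divided by $\pi_{\varepsilon}$ gives decay of the $\mathcal{H}_{\varepsilon}$-norm. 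So the limit coincides with the solution of~\eqref{0}, and the proposition follows.

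I expect Step 1 to be the only delicate point. One must verify the form identification carefully, namely that the weak Euler--Lagrange equation of~\eqref{k} tested against all of $H^{1}$ is exactly the resolvent equation for the form operator. One must also check the closedness of $Q_{\varepsilon}$ in $\mathcal{H}_{\varepsilon}$, which relies on $\pi_{\varepsilon}$ being bounded above and below and on $\nabla V_{\varepsilon}$ being bounded. Once this is in place, Steps 2 and 3 are routine.
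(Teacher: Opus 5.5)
Your proposal is correct, and it differs from what the paper does: the paper states Proposition~\ref{IE-time} without proof and defers to the numerical-analysis literature on backward Euler for linear parabolic equations. You give a self-contained argument. Its key observation is that $B_{\varepsilon}\mu=-\nabla\cdot[A(x/\varepsilon)(\nabla\mu+\mu\nabla V_{\varepsilon})]$ is not symmetric in unweighted $L^{2}$ because of the drift, but it is the nonnegative self-adjoint operator of the closed form $2Q_{\varepsilon}$ on $L^{2}(\mathbb{T}^{d};\pi_{\varepsilon}^{-1}\ud x)$. This is exactly the structure the metric $K_{\varepsilon}$ encodes, so the scheme is literally $(I+\tau B_{\varepsilon})^{-\lceil t/\tau\rceil}\mu_{0}$, and the claim reduces to the exponential formula, which you prove by spectral calculus.

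This route gives convergence for arbitrary $L^{2}$ data, which is the proposition's hypothesis and weaker than the $H^{1}$ data of Definition~\ref{L2-scheme-def}, and it needs no regularity theory. It also transfers essentially verbatim to Theorem~\ref{prop-homo-l2}, with $A_{\ast},\pi_{\ast}$ in place of $A(x/\varepsilon),\pi_{\varepsilon}$. Your Step 1 survives because $V$ is $\lambda$-Lipschitz as a uniform limit of the $V_{\varepsilon}$.

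The argument can also be sharpened at no cost. Replace dominated convergence by the elementary bounds $\sup_{x\ge0}|(1+x)^{-n}-e^{-nx}|\le C/n$ and $\sup_{s\ge0}|e^{-n\tau s}-e^{-ts}|\le\tau/(et)$, and use $1/n\le\tau/t$. This gives $\|\mu_{\varepsilon,\tau}(t)-\mu_{\varepsilon}(t)\|_{L^{2}}\le C e^{\lambda/2}(\tau/t)\|\mu_{0}\|_{L^{2}}$ with $C$ independent of $\varepsilon$. A bare citation does not provide this uniformity, and it is exactly what closes the top square of the paper's diagram. Combined with Theorems~\ref{homo-of-ie} and~\ref{prop-homo-l2}, an interchange-of-limits argument then yields $\mu_{\varepsilon}(t)\to\mu(t)$ in $L^{2}$ for $\mu_{0}\in H^{1}$.

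The classical energy-estimate route the paper alludes to does not use self-adjointness. It would therefore survive non-symmetric or time-dependent perturbations of the operator, at the price of needing smoother data to get rates.
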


We note that Theorem~\ref{prop-homo-l2} can be proved using the same argument as the present proposition. We therefore focus on the proof of Theorem~\ref{homo-of-ie} in this section, which consists of two main steps: establishing the $\Gamma$-convergence (Section~\ref{sec31}) and proving the required compactness (Section~\ref{sec:2-2}). These results then allow us to invoke the fundamental theorem of $\Gamma$-convergence, namely, Theorem~\ref{fund-G}.

\subsection{$\Gamma$-convergence} \label{sec31}
In this subsection, we analyze the homogenization of the weighted $L^{2}$ scheme in Theorem~\ref{homo-of-ie} with a fixed $\tau$. We first establish the following $\Gamma$-convergence result.  
\begin{theorem}\label{Gamma-i-lemma}
If $\mu^{n - 1}_{\varepsilon} \to \mu^{n - 1}$ in $L^{2}(\mathbb{T}^{d})$, we have the following $\Gamma$-convergence (as functional over $\mu$) in the $L^{2}$ metric,
\begin{equation}\label{Gamma-i}
    \Gamma\text{-}\lim_{\varepsilon \to 0} \mathcal{I}_{\varepsilon, \tau}(\mu, \mu_{\varepsilon}^{n - 1}) = \mathcal{I}_{\ast, \tau}(\mu, \mu^{n - 1}) := Q(\mu) + \frac{1}{2\tau}K(\mu, \mu^{n - 1})^{2}
\end{equation}
where $Q$ and $K$ are homogenized energy and distance defined in~\eqref{limit-I}.
\end{theorem}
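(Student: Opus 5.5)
The plan is to split $\mathcal{I}_{\varepsilon,\tau}(\cdot,\mu^{n-1}_\varepsilon)$ into $Q_\varepsilon$ plus the distance term and to use Lemma~\ref{stability-Gamma-thm}. Take $X=L^{2}(\mathbb{T}^{d})$, with $Q_\varepsilon(\mu)=+\infty$ off $H^{1}$.

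\textbf{Step 1: the distance term converges continuously.} Let $\mu_\varepsilon\to\mu$ and $\mu^{n-1}_\varepsilon\to\mu^{n-1}$ in $L^2$. By Assumptions~\ref{A5},~\ref{A4} and~\ref{A6}, $1/\pi_\varepsilon=e^{V_\varepsilon}$ is bounded by $e^{\lambda}$ and converges uniformly to $1/\pi_\ast$. Writing
\[
K_\varepsilon(\mu_\varepsilon,\mu^{n-1}_\varepsilon)^2-K(\mu,\mu^{n-1})^2=\int (e^{V_\varepsilon}-e^{V})(\mu_\varepsilon-\mu_\varepsilon^{n-1})^2\,\mathrm{d}x+\int e^{V}\big[(\mu_\varepsilon-\mu^{n-1}_\varepsilon)^2-(\mu-\mu^{n-1})^2\big]\,\mathrm{d}x,
\]
both terms vanish. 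So $\frac{1}{2\tau}K_\varepsilon(\cdot,\mu^{n-1}_\varepsilon)^2$ converges continuously to $\frac1{2\tau}K(\cdot,\mu^{n-1})^2$. This step is routine.

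\textbf{Step 2: $\Gamma$-convergence of $Q_\varepsilon$ to $Q$ in $L^2$.} Substitute $\varrho=\mu/\pi_\varepsilon$. Because $\pi_\varepsilon$ is only $C^1$ with uniform bounds, the map $\mu\mapsto\varrho$ converts $L^2$ convergence of $\mu_\varepsilon$ into $L^2$ convergence of $\varrho_\varepsilon=\mu_\varepsilon/\pi_\varepsilon$ to $\varrho=\mu/\pi_\ast$. The functional becomes $\frac12\int \pi_\varepsilon A(x/\varepsilon)\nabla\varrho\cdot\nabla\varrho$.

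\emph{Liminf.} Assume the $\liminf$ is finite and pass to a subsequence attaining it. Ellipticity and $\pi_\varepsilon\ge e^{-\lambda}$ give $\varrho_\varepsilon$ bounded in $H^1$, so $\varrho_\varepsilon\rightharpoonup\varrho$ weakly in $H^1$. I would then use the oscillating test function method of Tartar (or two-scale convergence). With $w_\varepsilon(x)=\xi\cdot x+\varepsilon\chi_\xi(x/\varepsilon)$ built from the corrector in \eqref{B-hom}, use the convexity inequality
\[
\pi_\varepsilon A\nabla\varrho_\varepsilon\cdot\nabla\varrho_\varepsilon\ge 2\pi_\varepsilon A\nabla\varrho_\varepsilon\cdot\phi_\varepsilon-\pi_\varepsilon A\phi_\varepsilon\cdot\phi_\varepsilon,
\]
where $\phi_\varepsilon=\sum_j\psi_j(x)\nabla w^{(j)}_\varepsilon$ and $\psi_j$ are smooth. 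The term $\pi_\varepsilon$ converges uniformly to $\pi_\ast$, so it only multiplies the $L^1$-bounded integrands by a factor $1+o(1)$ and can be replaced by $\pi_\ast$. The cross term converges by div-curl compensated compactness: $\nabla\varrho_\varepsilon$ is curl-free, and $A^{T}(x/\varepsilon)\nabla w_\varepsilon$ is divergence-free for the adjoint corrector. Its limit is $2\int\pi_\ast A_\ast\nabla\varrho\cdot\psi$, and the last term tends to $\int\pi_\ast A_\ast\psi\cdot\psi$. Optimizing over $\psi\to\nabla\varrho$ then gives $\liminf Q_\varepsilon\ge Q$.

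\emph{Recovery sequence.} For smooth $\varrho$, take
\[
\varrho_\varepsilon=\varrho+\varepsilon\sum_k\chi_k(x/\varepsilon)\partial_k\varrho,\qquad \mu_\varepsilon=\pi_\varepsilon\varrho_\varepsilon .
\]
Then $\mu_\varepsilon\to\pi_\ast\varrho=\mu$ in $L^2$. The energy converges to $Q(\mu)$ by the standard computation: the gradient is $(I+\nabla\chi)(x/\varepsilon)\nabla\varrho+O(\varepsilon)$, the weak limits of periodic functions are their averages, and the average identity $\int (I+\nabla\chi)^TA(I+\nabla\chi)=A_\ast$ follows from the corrector equation. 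A general $\mu\in H^1$ is handled by density together with the lower semicontinuity of the $\Gamma$-limsup.

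\textbf{Step 3: conclude.} Lemma~\ref{stability-Gamma-thm} then yields \eqref{Gamma-i}. The main obstacle is the liminf inequality in Step~2, where two difficulties combine: the slowly varying weight $\pi_\varepsilon$, which itself depends on $\varepsilon$, and the fact that the corrector is defined for $A$ rather than for a symmetric part. I would handle the weight first by the uniform replacement $\pi_\varepsilon\to\pi_\ast$ described above. If $A$ is not symmetric, I would replace $A$ by its symmetric part, which leaves the quadratic form unchanged. I would then check that the resulting effective matrix agrees with the symmetric part of \eqref{B-hom}; this is all that $Q$ sees in any case.
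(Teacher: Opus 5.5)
Your proof is correct, and its skeleton matches the paper's. Step~1 is Lemma~\ref{c-converge-K}: you expand the difference of squares instead of using the triangle inequality for $K_\varepsilon$, which is equivalent. Step~3 is the appeal to Lemma~\ref{stability-Gamma-thm}.

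The real difference is Step~2, the $\Gamma$-convergence of the weighted Dirichlet energy (the paper's Lemma~\ref{lemma34}). The paper argues by localization:
\begin{itemize}
\item An abstract $\Gamma$-compactness result for the localized energies $E\mapsto Q_\varepsilon(\varrho,E)$ gives, along a subsequence, a limit of the form $\frac{1}{2}\int_E h\,\mathrm{d}x$.
\item At a Lebesgue point $x_0$ the weight is frozen to $\pi_\varepsilon(x_0)$, and the constant-weight result (Lemma~\ref{eh-lemma}) is applied as a black box.
\item The freezing error is bounded by $C\,\mathrm{diam}(E)\int_E|\nabla\varrho|^2$ using the equi-Lipschitz bound on $\pi_\varepsilon$, with a truncation argument for the lower bound.
\item The Urysohn property then removes the subsequence.
\end{itemize}
You instead work directly with the cell correctors. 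For the $\liminf$ you use oscillating test functions with a div--curl passage to the limit. For the recovery sequence you use the two-scale ansatz plus density and lower semicontinuity of the $\Gamma$-$\limsup$. The weight is disposed of in one line, because $\pi_\varepsilon\to\pi_\ast$ uniformly while all integrands stay bounded in $L^1$.

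Your computations check out for symmetric $A$:
\begin{itemize}
\item the cross term tends to $2\int\pi_\ast A_\ast\nabla\varrho\cdot\psi$;
\item the quadratic term tends to $\int\pi_\ast A_\ast\psi\cdot\psi$;
\item the mean of $(I+\nabla\chi)^{T}A(I+\nabla\chi)$ equals $A_\ast$ by the corrector equation.
\end{itemize}
Your route is more elementary and self-contained, and it needs only uniform convergence of the weight. The paper's route uses nothing about quadratic structure or correctors beyond Lemma~\ref{eh-lemma}, so it carries over to general convex integrands $f(x/\varepsilon,\nabla\varrho)$ with a slowly varying prefactor.

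One correction: drop your closing remark on non-symmetric $A$, because the check you propose would fail. Replacing $A$ by $A^{s}=\frac{1}{2}(A+A^{T})$ leaves $Q_\varepsilon$ unchanged, so the $\Gamma$-limit is governed by the homogenized matrix $(A^{s})_\ast$ of $A^{s}$. Testing the corrector equation with $\chi_\xi$, however, gives
$\xi\cdot A_\ast\xi=\int(\xi+\nabla\chi_\xi)\cdot A^{s}(\xi+\nabla\chi_\xi)\,\mathrm{d}y\ge \xi\cdot(A^{s})_\ast\xi$,
and the inequality is strict in general. For example, take $A=I+b(y)J$ in two dimensions, with $J$ the rotation by $\pi/2$ and $b$ non-constant. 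Then $(A^{s})_\ast=I$, while $\xi\cdot A_\ast\xi=|\xi|^{2}+\int|\nabla\chi_\xi|^{2}$ with $\chi_\xi$ non-constant whenever $\nabla b\cdot J\xi\not\equiv 0$. So for non-symmetric $A$ the statement itself would fail with $A_\ast$ taken from \eqref{B-hom}. This does not affect your argument, since in the paper $A$ is symmetric: it is ordered by $\preceq$, and its inverse defines the Riemannian metric underlying $W_\varepsilon$.
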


Due to the ellipticity of $A_{\ast}$ in~\eqref{B-hom-1} and regularities of $V$ in Assumptions~\ref{A4} and~\ref{A6}, the existence of a unique minimizer is guaranteed by direct methods.
\begin{proposition}[well-posedness]\label{exist-limit-u}Given $\varepsilon, \tau > 0$ and $\mu^{n - 1} \in L^{2}(\mathbb{T}^{d})$, there exists a unique solution $\mu^{n} \in H^{1}(\mathbb{T}^{d})$ to the $\Gamma$-limit $\min_{\mu}\mathcal{I}_{\ast, \tau}(\mu, \mu^{n - 1})$ in~\eqref{Gamma-i}. Hence, the homogenized weighted $L^{2}$ scheme in Theorem~\ref{homo-of-ie} is well-posed.
\end{proposition}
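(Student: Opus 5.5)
The claim is the well-posedness of the homogenized step $\min_{\mu}\mathcal{I}_{\ast,\tau}(\mu,\mu^{n-1})$ over $H^{1}(\mathbb{T}^{d})$, together with the fact that iterating it defines the homogenized scheme. I would prove it by the direct method, using strict convexity of $\mathcal{I}_{\ast,\tau}$ for uniqueness.

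\textbf{Reformulation.} It is convenient to change variables to $\varrho=\mu/\pi_{\ast}$. By Assumptions~\ref{A5}, \ref{A4} and~\ref{A6}, the limit $V$ satisfies $0\le V\le\lambda$ and is Lipschitz with constant $\lambda$: it is the uniform limit of functions with $|\nabla V_{\varepsilon}|\le\lambda$. Hence
\[
e^{-\lambda}\le \pi_{\ast}\le 1, \qquad \pi_{\ast}\in W^{1,\infty}(\mathbb{T}^{d}).
\]
Consequently $\mu\mapsto\varrho$ is an isomorphism of $H^{1}(\mathbb{T}^{d})$ and of $L^{2}(\mathbb{T}^{d})$. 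In this variable,
\[
\mathcal{I}_{\ast,\tau}=\tfrac12\int\la\pi_{\ast}A_{\ast}\nabla\varrho,\nabla\varrho\ra\ud x+\tfrac{1}{2\tau}\int\pi_{\ast}\big(\varrho-\mu^{n-1}/\pi_{\ast}\big)^{2}\ud x .
\]

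\textbf{Coercivity.} Using \eqref{B-hom-1} and the bounds on $\pi_{\ast}$, I get
\[
\mathcal{I}_{\ast,\tau}\ge c\big(\|\nabla\varrho\|_{L^{2}}^{2}+\|\varrho\|_{L^{2}}^{2}\big)-C\|\mu^{n-1}\|_{L^{2}}^{2}.
\]
Therefore every minimizing sequence is bounded in $H^{1}$. The infimum is finite, since for example $\mu=\mu^{n-1}$ smoothed, or simply $\mu=0$, gives a finite value.

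\textbf{Existence.} Extract a subsequence converging weakly in $H^{1}$ and, by Rellich on the compact torus, strongly in $L^{2}$. The quadratic gradient term is convex and continuous on $H^{1}$, because $\pi_{\ast}A_{\ast}$ is bounded and positive definite; hence it is weakly lower semicontinuous. The $K$-term is continuous under strong $L^{2}$ convergence. So the weak limit is a minimizer.

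\textbf{Uniqueness.} The functional is the sum of a convex quadratic form and the strictly convex term $\tfrac{1}{2\tau}\int\pi_{\ast}(\varrho-\cdot)^{2}$, which is strictly convex in $\varrho$ because $\pi_{\ast}\ge e^{-\lambda}>0$. Two distinct minimizers would then give a strictly smaller value at their midpoint, a contradiction.

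\textbf{Iteration.} Since each $\mu^{n}\in H^{1}\subset L^{2}$, the step can be applied repeatedly starting from $\mu^{0}=\mu_{0}$. This gives the well-posedness of the homogenized scheme.

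I expect no real obstacle. The only points needing care are the regularity and positivity of $\pi_{\ast}$, which follow from the uniform convergence in Assumption~\ref{A6}, and the observation that Lemma~\ref{stability-Gamma-thm}-type machinery is not needed here; the direct method suffices.
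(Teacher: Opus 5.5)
Your proposal is correct and follows the same route as the paper, which only says that existence and uniqueness follow from the direct method, using the ellipticity \eqref{B-hom-1} of $A_{\ast}$ and the regularity of $V$ from Assumptions~\ref{A4} and~\ref{A6}. Your argument supplies the details the paper leaves implicit: the Lipschitz bounds $e^{-\lambda}\le\pi_{\ast}\le 1$, coercivity, weak lower semicontinuity via Rellich, and strict convexity of the $K$-term for uniqueness.
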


To prove the $\Gamma$-convergence in Theorem~\ref{Gamma-i-lemma}, it is sufficient to establish the following two lemmas, by virtue of the stability of $\Gamma$-convergence under continuous perturbations stated in Lemma~\ref{stability-Gamma-thm}. The proofs of these lemmas are provided in the next two subsections.
\begin{lemma}\label{E-gamma}
In the $L^{2}$ metric, we have
\begin{equation*}
    \Gamma\text{-}\lim_{\varepsilon \to 0} Q_{\varepsilon} = Q.
\end{equation*}
\end{lemma}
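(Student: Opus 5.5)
We have to show $\Gamma$-convergence of $Q_\varepsilon(\mu)=\frac12\int \pi_\varepsilon A(x/\varepsilon)\nabla(\mu/\pi_\varepsilon)\cdot\nabla(\mu/\pi_\varepsilon)\ud x$ to $Q$ in the $L^2$ topology. We extend $Q_\varepsilon, Q$ by $+\infty$ off $H^1(\mathbb{T}^d)$. The plan is to reduce to the classical $\Gamma$-convergence of periodic quadratic forms, $\frac12\int \langle a(x)A(x/\varepsilon)\nabla u,\nabla u\rangle$ with a continuous positive weight $a$. This is standard in periodic homogenization (Marcellini, or Braides--Defranceschi). The homogenized integrand is $a(x)\langle A_\ast\xi,\xi\rangle$, because the slowly varying weight factors out of the cell problem.

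\textbf{Step 1 (change of variables).} Set $\varrho=\mu/\pi_\varepsilon$. Since $\pi_\varepsilon=e^{-V_\varepsilon}\in C^1$ with $e^{-\lambda}\le\pi_\varepsilon\le 1$, $|\nabla\pi_\varepsilon|\le\lambda$, and $\pi_\varepsilon\to\pi_\ast$ uniformly (Assumption~\ref{A6}), we have the following. If $\mu_\varepsilon\to\mu$ in $L^2$, then $\varrho_\varepsilon=\mu_\varepsilon/\pi_\varepsilon\to\varrho=\mu/\pi_\ast$ in $L^2$, and conversely $\pi_\varepsilon\varrho_\varepsilon\to\pi_\ast\varrho$. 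So it suffices to prove $\Gamma$-convergence in $L^2$ of $F_\varepsilon(\varrho)=\frac12\int\pi_\varepsilon\langle A(x/\varepsilon)\nabla\varrho,\nabla\varrho\rangle$ to $F(\varrho)=\frac12\int \pi_\ast\langle A_\ast\nabla\varrho,\nabla\varrho\rangle$.

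Next we replace $\pi_\varepsilon$ by $\pi_\ast$: $|F_\varepsilon(\varrho)-\tilde F_\varepsilon(\varrho)|\le \|\pi_\varepsilon-\pi_\ast\|_\infty e^{\lambda}\tilde F_\varepsilon(\varrho)$, where $\tilde F_\varepsilon$ uses the weight $\pi_\ast$. This multiplicative error vanishes uniformly on sublevel sets, so both $\Gamma$-inequalities transfer. Here $\pi_\ast$ is Lipschitz, since $V$ is a uniform limit of uniformly Lipschitz functions.

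\textbf{Step 2 (liminf).} Take $\varrho_\varepsilon\to\varrho$ in $L^2$ with $\liminf \tilde F_\varepsilon(\varrho_\varepsilon)<\infty$. By uniform ellipticity, $\varrho_\varepsilon$ is bounded in $H^1$ along a subsequence, so $\varrho_\varepsilon\rightharpoonup\varrho$ in $H^1$. We use the oscillating test field $w_\varepsilon=\phi+\varepsilon\sum_k\partial_k\phi\,\chi_k(x/\varepsilon)$ with $\phi$ smooth, so that $\nabla w_\varepsilon=(I+\nabla\chi)(x/\varepsilon)\nabla\phi+O(\varepsilon)$. Expanding $\langle A\nabla\varrho_\varepsilon,\nabla\varrho_\varepsilon\rangle\ge 2\langle A\nabla w_\varepsilon,\nabla\varrho_\varepsilon\rangle-\langle A\nabla w_\varepsilon,\nabla w_\varepsilon\rangle$ (pointwise convexity) gives
$$\tilde F_\varepsilon(\varrho_\varepsilon)\ge\int\pi_\ast\langle A(x/\varepsilon)\nabla w_\varepsilon,\nabla\varrho_\varepsilon\rangle-\tilde F_\varepsilon(w_\varepsilon).$$
The last term converges to $F(\phi)$ by Riemann--Lebesgue averaging of periodic functions, using the identity $\int_{\mathbb{T}^d}\langle A(I+\nabla\chi)\xi,(I+\nabla\chi)\xi\rangle=\langle A_\ast\xi,\xi\rangle$, which follows from the corrector equation.

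The cross term is the main obstacle, since it is a product of weakly converging factors. We handle it by div-curl: the flux $A(y)(I+\nabla\chi(y))\xi$ is divergence-free in $y$ by the corrector equation. Hence $\pi_\ast A(x/\varepsilon)\nabla w_\varepsilon$ has divergence bounded in $H^{-1}$ (compact), because $\pi_\ast$ is Lipschitz and the $O(\varepsilon)$ terms are harmless. Since $\nabla\varrho_\varepsilon$ is curl-free, compensated compactness (or integrating by parts against $\varrho_\varepsilon\to\varrho$ strongly in $L^2$) gives convergence to $\int\pi_\ast\langle A_\ast\nabla\phi,\nabla\varrho\rangle$. Smoothness of $A$ gives $\chi\in C^1$, which simplifies the remainders. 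We obtain $\liminf\tilde F_\varepsilon(\varrho_\varepsilon)\ge 2F(\phi,\varrho)-F(\phi)$, with $F(\cdot,\cdot)$ the associated bilinear form. Letting $\phi\to\varrho$ in $H^1$ yields $\ge F(\varrho)$.

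\textbf{Step 3 (recovery).} For smooth $\varrho$, take $\varrho_\varepsilon=\varrho+\varepsilon\sum_k\partial_k\varrho\,\chi_k(x/\varepsilon)$. Then $\varrho_\varepsilon\to\varrho$ in $L^2$ and $\tilde F_\varepsilon(\varrho_\varepsilon)\to F(\varrho)$ by the same averaging. For general $\varrho\in H^1$, use density, continuity of $F$ in $H^1$, and a diagonal argument (lower semicontinuity of the $\Gamma$-limsup). Finally, we translate back to $\mu_\varepsilon=\pi_\varepsilon\varrho_\varepsilon$, which converges to $\mu$ in $L^2$ by Step 1.
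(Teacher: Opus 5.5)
Your proof is correct, and it takes a different route from the paper's. Both proofs begin with the same change of variables $\varrho=\mu/\pi_\varepsilon$; your Step 1 reduction is exactly the paper's passage from Lemma~\ref{E-gamma} to Lemma~\ref{lemma34}. After that the approaches diverge.

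The paper proves the weighted statement by localization. It invokes $\Gamma$-compactness and an integral representation $\frac12\int_E h\,\ud x$ of the limit on an algebra of sets. It then freezes the weight at a Lebesgue point $x_0$ and controls the freezing error by $C\operatorname{diam}(E)\int_E|\nabla\varrho|^2$, using the equi-Lipschitz bound on $\pi_\varepsilon$. For the frozen functional it uses constant-coefficient homogenization (Lemma~\ref{eh-lemma}) as a black box. Finally it identifies $h(x_0)=\langle \pi_\ast(x_0)A_\ast\nabla\varrho(x_0),\nabla\varrho(x_0)\rangle$, with a truncation argument for the lower bound.

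You instead remove the $\varepsilon$-dependence of the weight globally, via the multiplicative bound $|F_\varepsilon-\tilde F_\varepsilon|\le e^{\lambda}\|\pi_\varepsilon-\pi_\ast\|_\infty\tilde F_\varepsilon$. You then prove the fixed-weight homogenization directly by the oscillating test function (energy) method:
\begin{itemize}
\item the convexity inequality with the corrected field $w_\varepsilon$;
\item the divergence-free cell flux $A(y)(e_k+\nabla\chi_k(y))$, together with the Lipschitz continuity of $\pi_\ast$, to pass to the limit in the cross term;
\item the explicit first-order ansatz plus density and lower semicontinuity of the $\Gamma$-limsup for the recovery sequence.
\end{itemize}

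Your route is self-contained and gives explicit recovery sequences $\pi_\varepsilon\bigl(\varrho+\varepsilon\sum_k\partial_k\varrho\,\chi_k(x/\varepsilon)\bigr)$. It avoids abstract integral-representation results, but it relies on the quadratic structure (linear corrector, divergence-free flux) and on the regularity of $A$ (so that $\chi\in C^1$) and of $\pi_\ast$. The paper's localization treats the slowly varying weight as a perturbation of a frozen-coefficient problem, so it would extend to non-quadratic convex integrands, at the cost of heavier machinery.

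One wording fix: the cross term passes to the limit because the divergence of the leading part of $\pi_\ast A(x/\varepsilon)\nabla w_\varepsilon$ is bounded in $L^2$, hence precompact in $H^{-1}$. Boundedness in $H^{-1}$ alone would not suffice. Your integration-by-parts variant against $\varrho_\varepsilon\to\varrho$ strongly in $L^2$ makes this explicit.
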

\begin{lemma}\label{c-converge-K}
The distances $K_{\varepsilon}$ and $K$ obey the following,
\begin{equation*}
    \lim_{\varepsilon \to 0}K_{\varepsilon}(\mu_{\varepsilon}, \mu_{\varepsilon}^{n - 1})^{2} = K(\mu, \mu^{n - 1})^{2}
\end{equation*}
for all sequences $\{\mu_{\varepsilon}\}$ and $\{\mu_{\varepsilon}^{n - 1}\}$ such that $\mu_{\varepsilon} \to \mu$ and $\mu^{n - 1}_{\varepsilon} \to \mu^{n - 1}$ in $L^{2}(\mathbb{T}^{d})$.
\end{lemma}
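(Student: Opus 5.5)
The statement to prove is Lemma~\ref{c-converge-K}, which is an elementary continuous-convergence result. The plan is to compare $K_\varepsilon(\mu_\varepsilon,\mu^{n-1}_\varepsilon)^2$ with $K(\mu,\mu^{n-1})^2$ by splitting the difference into a part where the weight changes and a part where the arguments change. Set $f_\varepsilon := \mu_\varepsilon - \mu^{n-1}_\varepsilon$ and $f := \mu - \mu^{n-1}$. By assumption $f_\varepsilon \to f$ in $L^{2}(\mathbb{T}^{d})$. The weights are $1/\pi_\varepsilon = e^{V_\varepsilon}$ and $1/\pi_\ast = e^{V}$. We then write
\begin{equation*}
    K_\varepsilon(\mu_\varepsilon,\mu^{n-1}_\varepsilon)^2 - K(\mu,\mu^{n-1})^2 = \int_{\mathbb{T}^{d}} \big(e^{V_\varepsilon} - e^{V}\big) f_\varepsilon^2 \ud x + \int_{\mathbb{T}^{d}} e^{V}\big(f_\varepsilon^2 - f^2\big) \ud x.
\end{equation*}

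For the first term, Assumptions~\ref{A5} and~\ref{A4} give $0 \le V_\varepsilon \le \lambda$. Assumption~\ref{A6} gives uniform convergence $V_\varepsilon \to V$, so $0\le V\le\lambda$ as well. By the mean value theorem, $\|e^{V_\varepsilon} - e^{V}\|_{L^\infty} \le e^{\lambda}\sup_x|V_\varepsilon - V| \to 0$. Since $f_\varepsilon$ converges in $L^2$, the sequence $\|f_\varepsilon\|_{L^2}^2$ is bounded. Hence the first term is at most $\|e^{V_\varepsilon}-e^{V}\|_{L^\infty}\sup_\varepsilon\|f_\varepsilon\|_{L^2}^2 \to 0$.

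For the second term, write $f_\varepsilon^2 - f^2 = (f_\varepsilon - f)(f_\varepsilon + f)$ and apply Cauchy--Schwarz. This bounds the term by $e^{\lambda}\|f_\varepsilon - f\|_{L^2}\big(\|f_\varepsilon\|_{L^2}+\|f\|_{L^2}\big) \to 0$. Combining the two estimates proves the lemma.

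There is no real obstacle here. The only point needing care is that $e^{V}$ must be bounded, and this follows from passing the bound $V_\varepsilon\le\lambda$ to the uniform limit. As a consequence, $K_\varepsilon(\cdot,\mu^{n-1}_\varepsilon)^2$ converges continuously to $K(\cdot,\mu^{n-1})^2$ in $L^2$, which is exactly what Lemma~\ref{stability-Gamma-thm} requires.
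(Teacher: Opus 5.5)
Your proof is correct and follows essentially the paper's argument: split the error into a weight-change part, controlled by the uniform convergence $e^{V_\varepsilon}\to e^{V}$, and an argument-change part, controlled by $L^{2}$ convergence together with the bound $e^{\lambda}$ on the weights. The only difference is bookkeeping. The paper handles the argument-change part with the triangle inequality for $K_\varepsilon$ on unsquared distances, whereas your difference-of-squares and Cauchy--Schwarz estimate works directly with the squared quantities, so it avoids the passage from unsquared to squared convergence that the paper leaves implicit.
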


\subsubsection{Homogenization of energy}
We prove the $\Gamma$-convergence in Lemma~\ref{E-gamma} by regarding the energies as functionals on $\varrho_\varepsilon := \mu/\pi_{\varepsilon}$ and $\varrho_{\ast} := \mu/\pi_{\ast}$ via a change of variable:
\begin{align*}
    \widehat{Q}_{\varepsilon}(\varrho_\varepsilon) &:= \frac{1}{2}\int_{\mathbb{T}^{d}}\Big<\pi_{\varepsilon}(x) A\Big(\frac{x}{\varepsilon}\Big)\nabla \varrho_\varepsilon, \,\nabla \varrho_\varepsilon\Big> \ud x,\\
    \widehat{Q}(\varrho_{\ast}) &:= \frac{1}{2}\int_{\mathbb{T}^{d}}\big<\pi_{\ast}(x) A_{\ast}\nabla \varrho_{\ast}, \nabla \varrho_{\ast}\big> \ud x.
\end{align*}

Instead of directly proving Lemma~\ref{E-gamma}, we prove the following lemma and obtain Lemma~\ref{E-gamma} as a corollary. 
\begin{lemma}\label{lemma34}
For any $\varrho \in H^{1}(\mathbb{T}^{d})$, we have the following $\Gamma$-convergence,
\begin{equation}\label{E-gamma-w}
    \Gamma\text{-}\lim_{\varepsilon \to 0} \widehat{Q}_{\varepsilon}(\varrho) = \widehat{Q}(\varrho),
\end{equation}
in the $L^{2}$ metric.
\end{lemma}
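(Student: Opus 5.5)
The plan is to reduce Lemma~\ref{lemma34} to classical $\Gamma$-convergence of periodic quadratic integral functionals, treating the slowly varying weight $\pi_{\varepsilon}$ as a perturbation. Set $a_{\varepsilon}(x) := \pi_{\varepsilon}(x)A(x/\varepsilon)$ and $\pi_{\ast}(x)A_{\ast}$. By Assumption~\ref{A4}--\ref{A6}, $\pi_{\varepsilon}\to\pi_{\ast}$ uniformly, with $e^{-\lambda}\le\pi_{\varepsilon},\pi_{\ast}\le 1$. Hence
\begin{equation*}
\Big|\widehat{Q}_{\varepsilon}(\varrho)-\frac{1}{2}\int_{\mathbb{T}^{d}}\Big<\pi_{\ast}(x)A\Big(\frac{x}{\varepsilon}\Big)\nabla\varrho,\nabla\varrho\Big>\ud x\Big|\le \frac{\Lambda_{1}}{2}\|\pi_{\varepsilon}-\pi_{\ast}\|_{\infty}\|\nabla\varrho\|_{L^{2}}^{2}.
\end{equation*}
This error is negligible on any sequence with bounded energy, and such sequences have uniformly bounded $\|\nabla\varrho\|_{L^2}$ by ellipticity. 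Both the liminf and limsup inequalities only require sequences with bounded energy. So it suffices to prove that $G_{\varepsilon}(\varrho):=\frac12\int\langle\pi_{\ast}(x)A(x/\varepsilon)\nabla\varrho,\nabla\varrho\rangle\ud x$ $\Gamma$-converges to $\widehat{Q}$ in $L^{2}$, with the value $+\infty$ outside $H^{1}$.

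\textbf{Liminf.} Take $\varrho_{\varepsilon}\to\varrho$ in $L^{2}$ with $\liminf G_{\varepsilon}(\varrho_{\varepsilon})<\infty$. Along a subsequence the gradients are bounded, so $\varrho_{\varepsilon}\rightharpoonup\varrho$ in $H^{1}$. I would use the oscillating test functions built from the corrector $\chi$ of \eqref{B-hom}. For smooth $\phi$, set $w_{\varepsilon}:=\phi+\varepsilon\sum_{k}\chi_{k}(x/\varepsilon)\partial_{k}\phi$. By convexity,
\begin{equation*}
G_{\varepsilon}(\varrho_{\varepsilon})\ge G_{\varepsilon}(w_{\varepsilon})+\int\langle\pi_{\ast}A(x/\varepsilon)\nabla w_{\varepsilon},\nabla\varrho_{\varepsilon}-\nabla w_{\varepsilon}\rangle\ud x.
\end{equation*}
The flux $A(x/\varepsilon)(I+\nabla\chi(x/\varepsilon))\nabla\phi$ is divergence-free in the fast variable by the cell problem. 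The div--curl lemma (compensated compactness) then identifies the limit of the cross term as $\int\langle\pi_{\ast}A_{\ast}\nabla\phi,\nabla\varrho-\nabla\phi\rangle$. Periodic averaging gives $G_{\varepsilon}(w_{\varepsilon})\to\widehat{Q}(\phi)$, using the energy characterization of $A_{\ast}$ from \eqref{B-hom}; the smooth weight $\pi_{\ast}$ is harmless here. Letting $\phi\to\varrho$ in $H^{1}$ yields $\liminf G_{\varepsilon}(\varrho_{\varepsilon})\ge\widehat{Q}(\varrho)$.

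\textbf{Limsup.} For smooth $\varrho$ take $\varrho_{\varepsilon}=w_{\varepsilon}$ with $\phi=\varrho$. Then $\varrho_{\varepsilon}\to\varrho$ in $L^{2}$, since $\chi\in C^{1}$ by the $C^{2}$ regularity of $A$. Expanding gives $\nabla\varrho_{\varepsilon}=(I+\nabla_{y}\chi)(x/\varepsilon)\nabla\varrho+O(\varepsilon)$. The energy converges, by the Riemann--Lebesgue averaging lemma for periodic functions times continuous functions, to $\frac12\int\pi_{\ast}\langle\overline{A(I+\nabla\chi)}\nabla\varrho,\nabla\varrho\rangle$. Since the corrector equation gives $\int\langle A(I+\nabla\chi)\xi,(I+\nabla\chi)\xi\rangle\ud y=\langle A_{\ast}\xi,\xi\rangle$, this equals $\widehat{Q}(\varrho)$. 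For general $\varrho\in H^{1}$, use density of smooth functions together with lower semicontinuity of the $\Gamma$-limsup and continuity of $\widehat{Q}$ in $H^{1}$, via a diagonal argument.

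The main obstacle is the liminf step: passing to the limit in the product of two weakly converging sequences. I would handle it with the div--curl lemma as above. Alternatively, one could simply cite the classical $\Gamma$-convergence of periodic integrands with continuous macroscopic dependence (Braides--Defranceschi). Using Lemma~\ref{Urysohn} avoids issues with subsequences.
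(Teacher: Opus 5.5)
Your proof is correct, and it takes a genuinely different route from the paper.

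\textbf{The paper's route.} The paper argues by localization and blow-up, in the spirit of E.
\begin{itemize}
\item It invokes the abstract compactness of $\Gamma$-convergence. Along a subsequence this gives an integral representation $\frac12\int_E h\ud x$ on rational cubes.
\item It identifies $h(x_0)$ at Lebesgue points by freezing the weight at $\pi_\varepsilon(x_0)$ and applying the constant-weight result (Lemma~\ref{eh-lemma}) to the frozen functional.
\item It bounds the localization error by $C\operatorname{diam}(E)\int_E|\nabla\varrho|^2\ud x$, using the equi-Lipschitz bound on $\pi_\varepsilon$. A truncation argument handles the lower bound.
\item Lemma~\ref{Urysohn} then removes the dependence on the subsequence.
\end{itemize}

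\textbf{Your route.} You replace all of this by one global perturbation, $|\widehat{Q}_\varepsilon - G_\varepsilon| \le C\|\pi_\varepsilon-\pi_\ast\|_\infty\, G_\varepsilon$. This needs only the uniform convergence of $V_\varepsilon$ and the bound $\pi_\ast\ge e^{-\lambda}$. You then treat the fixed-weight functional directly, with Tartar's oscillating-test-function argument and an explicit corrector-based recovery sequence.

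\textbf{Comparison.} Your approach is more elementary and self-contained. It needs no integral-representation theorem, no truncation and no blow-up. It also does not need Lemma~\ref{Urysohn}, since your argument works along every sequence $\varepsilon\to 0$, and it produces the recovery sequence explicitly. The paper's approach uses the quadratic structure only through the frozen-coefficient result. It therefore transfers more directly to nonlinear convex integrands with slowly varying macroscopic dependence. You would recover that generality instead by citing Braides--Defranceschi.

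\textbf{One small correction.} $\pi_\ast$ is in general only Lipschitz, being a uniform limit of the equi-Lipschitz $\pi_\varepsilon$; it need not be smooth. This is harmless, and it even lets you avoid div--curl. Set $\sigma_\varepsilon := A(x/\varepsilon)(I+\nabla_y\chi(x/\varepsilon))\nabla\phi$ and integrate by parts: $\int\pi_\ast\langle\sigma_\varepsilon,\nabla(\varrho_\varepsilon - w_\varepsilon)\rangle\ud x = -\int(\varrho_\varepsilon - w_\varepsilon)\,\nabla\cdot(\pi_\ast\sigma_\varepsilon)\ud x$. By the cell problem and the Lipschitz bound, $\nabla\cdot(\pi_\ast\sigma_\varepsilon)$ is bounded and weakly convergent in $L^2$. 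Meanwhile $\varrho_\varepsilon - w_\varepsilon$ converges strongly in $L^2$, so the product converges.
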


\begin{proof}[Proof of Lemma~\ref{E-gamma}]
Let $(\mu_{\varepsilon})$ be any sequence converging to $\mu$ in $L^{2}(\mathbb{T}^{d})$, by the uniform convergence of $V_{\varepsilon} \to V$ in Assumption~\ref{A6}, we have $\pi_{\varepsilon}^{-1} \to \pi_{\ast}^{-1}$ uniformly, and therefore,
\begin{equation*}
    \mu_{\varepsilon} \to \mu \text{ in $L^{2}(\mathbb{T}^{d})$ } \Longrightarrow \varrho_{\varepsilon} = \mu_{\varepsilon}/\pi_{\varepsilon} \to \varrho_{\ast} = \mu/\pi_{\ast} \text{ in $L^{2}(\mathbb{T}^{d})$}\,.
\end{equation*}
Combining the $\liminf$-inequality from the $\Gamma$-convergence in~\eqref{E-gamma-w}, we have
\begin{equation*}
    \liminf_{\varepsilon \to 0} Q_{\varepsilon}(\mu_{\varepsilon}) = \liminf_{\varepsilon \to 0} \widehat{Q}_{\varepsilon}(\varrho_{\varepsilon}) \ge \widehat{Q}(\varrho_{\ast}) = Q(\mu).
\end{equation*}
On the other hand, given any $\mu$, we let $\varrho_{\ast} = \mu/\pi_{\ast}$, and let $\{\varrho_{\varepsilon}\}$ be a recovery sequence for $\varrho_{\ast}$ according to the $\Gamma$-convergence of $\widehat{Q}_{\varepsilon}(\varrho)$. Let $ \mu_{\varepsilon} = \varrho_{\varepsilon} \pi_{\varepsilon}$, then we have
\begin{equation*}
    \limsup_{\varepsilon \to 0} Q_{\varepsilon}(\mu_{\varepsilon}) = \limsup_{\varepsilon \to 0} \widehat{Q}_{\varepsilon}(\varrho_{\varepsilon}) \le \widehat{Q}(\varrho) = Q(\mu).
\end{equation*}
\end{proof}

Now, we prove~\eqref{E-gamma-w} as the generalization of the following classical elliptic homogenization \cite{marcellini1978periodic, pavliotis2008multiscale, bensoussan2011asymptotic}, and \cite[Chatper 4]{alouges2016introduction}, by incorporating the inhomogeneity given by $\pi_{\varepsilon}$ and $\pi_{\ast}$ according to \cite[Section 6]{weinan1991class}.
\begin{lemma}[elliptic homogenization]\label{eh-lemma}Let
\begin{equation*}
    \begin{aligned}
        F_{\varepsilon}(\varrho) = \int_{\mathbb{T}^{d}}\Big<A\Big(\frac{x}{\varepsilon}\Big)\nabla \varrho, \nabla \varrho\Big> \ud x \text{ and } F(\varrho) = \int_{\mathbb{T}^{d}}\big<A_{\ast}\nabla \varrho, \nabla \varrho\big> \ud x,
    \end{aligned}
\end{equation*}
where $A_{\ast}$ is the effective diffusion coefficient defined in~\eqref{B-hom}, we have $\Gamma\text{-}\lim_{\varepsilon \to 0} F_{\varepsilon} = F$ in the $L^{2}$ metric.
\end{lemma}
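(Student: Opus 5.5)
We prove Lemma~\ref{eh-lemma} in two parts, a $\liminf$ inequality and the construction of recovery sequences, using the classical oscillating test function (energy) method of Tartar together with the cell corrector $\chi$ from~\eqref{B-hom}.

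\textbf{Preliminary remark.} If $\varrho_\varepsilon\to\varrho$ in $L^2(\mathbb{T}^d)$ and $\sup_\varepsilon F_\varepsilon(\varrho_\varepsilon)<\infty$, then by Assumption~\ref{A3} the gradients $\nabla\varrho_\varepsilon$ are bounded in $L^2$. Hence $\varrho\in H^1(\mathbb{T}^d)$ and $\varrho_\varepsilon\rightharpoonup\varrho$ weakly in $H^1$. If instead $\varrho\notin H^1$, then $F(\varrho)=+\infty$ and $F_\varepsilon(\varrho_\varepsilon)\to\infty$ along any such sequence, so both inequalities are trivial. We therefore work with $\varrho\in H^1(\mathbb{T}^d)$ only. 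To avoid compatibility issues between the period $\varepsilon$ and the unit torus, we take $\varepsilon=1/m$ with $m\in\mathbb{N}$, or we argue locally.

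\textbf{Recovery sequence.} First take $\varrho\in C^\infty(\mathbb{T}^d)$ and set
\[
\varrho_\varepsilon(x)=\varrho(x)+\varepsilon\sum_{k}\chi_k(x/\varepsilon)\,\partial_k\varrho(x).
\]
Then $\varrho_\varepsilon\to\varrho$ in $L^2$, and
\[
\nabla\varrho_\varepsilon=(I+\nabla\chi)(x/\varepsilon)\,\nabla\varrho+O(\varepsilon).
\]
Because $A$ is bounded, the integrand of $F_\varepsilon(\varrho_\varepsilon)$ has the form $\langle A(y)(I+\nabla\chi(y))\nabla\varrho(x),(I+\nabla\chi(y))\nabla\varrho(x)\rangle$ evaluated at $y=x/\varepsilon$, plus $o(1)$. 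This is an oscillating function of $(x,x/\varepsilon)$, and the Riemann--Lebesgue lemma for periodic functions gives weak-$*$ convergence to its cell average. The cell identity, obtained by testing the corrector equation $-\nabla\cdot(A\nabla\chi)=\nabla\cdot A$ with $\chi$, gives
\[
\int_{\mathbb{T}^d}\langle A(I+\nabla\chi)\xi,(I+\nabla\chi)\xi\rangle\,dy=\langle A_\ast\xi,\xi\rangle .
\]
Therefore $F_\varepsilon(\varrho_\varepsilon)\to F(\varrho)$. For general $\varrho\in H^1$ we approximate by smooth functions and use a diagonal argument. This relies on the fact that the $\Gamma$-$\limsup$ is $L^2$-lower semicontinuous and that $F$ is continuous in $H^1$.

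\textbf{Liminf inequality.} Take $\varrho_\varepsilon\to\varrho$ in $L^2$ with bounded energy, so $\varrho_\varepsilon\rightharpoonup\varrho$ in $H^1$ by the preliminary remark. Fix a smooth test field $\xi(x)=\nabla\phi(x)$ and let $w_\varepsilon$ be the corresponding corrected function built from $\phi$ as in the recovery step. Convexity of the quadratic form gives
\[
F_\varepsilon(\varrho_\varepsilon)\ge F_\varepsilon(w_\varepsilon)+2\int_{\mathbb{T}^d}\langle A(x/\varepsilon)\nabla w_\varepsilon,\nabla\varrho_\varepsilon-\nabla w_\varepsilon\rangle\,dx .
\]
The flux $A(x/\varepsilon)\nabla w_\varepsilon$ converges weakly in $L^2$ to $A_\ast\nabla\phi$. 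Its divergence is $-\nabla\cdot\big(A(y)(I+\nabla\chi(y))\nabla\phi(x)\big)$ at $y=x/\varepsilon$, and the $y$-divergence part vanishes by the cell equation. So the divergence is compact in $H^{-1}$, up to terms involving $\varepsilon\nabla^2\phi$ that tend to zero. Meanwhile $\nabla\varrho_\varepsilon-\nabla w_\varepsilon$ is a gradient that converges weakly in $L^2$. The div–curl lemma, or equivalently integration by parts against the strongly converging $\varrho_\varepsilon-w_\varepsilon$, then identifies the limit of the cross term as $\int\langle A_\ast\nabla\phi,\nabla\varrho-\nabla\phi\rangle\,dx$. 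Hence
\[
\liminf_{\varepsilon\to 0}F_\varepsilon(\varrho_\varepsilon)\ge F(\phi)+2\int_{\mathbb{T}^d}\langle A_\ast\nabla\phi,\nabla\varrho-\nabla\phi\rangle\,dx .
\]
Letting $\phi\to\varrho$ in $H^1$ yields $\liminf_{\varepsilon\to0}F_\varepsilon(\varrho_\varepsilon)\ge F(\varrho)$.

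The main obstacle is justifying this passage to the limit in the cross term, that is, the compensated-compactness step. The argument also needs $\chi\in W^{1,\infty}$, or at least enough integrability that the products $\nabla\chi(x/\varepsilon)\nabla\phi$ are controlled. Since $A\in C^2$, elliptic regularity gives $\chi\in C^{1,\alpha}$, which suffices. Because this is standard, we could alternatively simply cite \cite{marcellini1978periodic} and \cite[Chapter 4]{alouges2016introduction} for the result.
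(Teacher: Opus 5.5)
Your proof is correct, but it follows a different route from the paper, which gives no proof of this lemma at all. The paper quotes it as the classical periodic homogenization result (Marcellini; Pavliotis--Stuart; Bensoussan; Alouges, Ch.~4) and uses it as a black box, in localized form on sets $E\in\mathscr{D}$, inside the proof of Lemma~\ref{lemma34}.

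You instead give a self-contained version of Tartar's energy method, in three pieces:
\begin{itemize}
\item the explicit corrector $\varrho+\varepsilon\chi(x/\varepsilon)\cdot\nabla\varrho$ for the upper bound;
\item the cell identity $\int_{\mathbb{T}^d}\langle A(I+\nabla\chi)\xi,(I+\nabla\chi)\xi\rangle\,dy=\langle A_\ast\xi,\xi\rangle$, which ties the energy to \eqref{B-hom};
\item a tangent-plane inequality combined with div--curl for the lower bound.
\end{itemize}
The citation buys brevity and generality, since Marcellini handles general convex integrands. Your route buys transparency, and it exposes two hypotheses the statement leaves implicit.

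\textbf{Symmetry of $A$.} Your inequality $F_\varepsilon(\varrho_\varepsilon)\ge F_\varepsilon(w_\varepsilon)+2\int\langle A(x/\varepsilon)\nabla w_\varepsilon,\nabla\varrho_\varepsilon-\nabla w_\varepsilon\rangle\,dx$ uses $A=A^{T}$, and this is genuinely needed. $F_\varepsilon$ only sees the symmetric part $A^{s}$ of $A$. For non-symmetric $A$, one has $\langle A_\ast\xi,\xi\rangle=\int\langle A^{s}(\xi+\nabla w_\xi),\xi+\nabla w_\xi\rangle\,dy$, and this can be strictly larger than $\langle (A^{s})_\ast\xi,\xi\rangle$. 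For example, take $A=I+b(y)J$ in two dimensions with $J$ the rotation by $\pi/2$ and $b$ non-constant. So the assumption $\Lambda_0 I\preceq A\preceq\Lambda_1 I$ must be read as including symmetry.

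\textbf{Periodicity versus $\varepsilon$.} The coefficient $A(x/\varepsilon)$ and your correctors are $\mathbb{Z}^d$-periodic only when $1/\varepsilon\in\mathbb{N}$. Taking $\varepsilon=1/m$ therefore gives the $\Gamma$-limit only along that sequence. For the full statement you need your \emph{argue locally} option:
\begin{itemize}
\item for the upper bound, multiply the corrector by a cutoff vanishing in a boundary layer of width $\delta$, at an energy cost of $O(\delta)+O(\varepsilon/\delta)$;
\item for the lower bound, test the div--curl argument against compactly supported cutoffs.
\end{itemize}
This local version is also exactly what the paper invokes in \eqref{E-homo}.

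\textbf{A small precision.} After the $y$-divergence cancels, the remaining term $[A(I+\nabla\chi)]_{jk}(x/\varepsilon)\,\partial_{jk}\phi$ in $\nabla\cdot(A(x/\varepsilon)\nabla w_\varepsilon)$ is of order one, not small. It is bounded in $L^2$, hence precompact in $H^{-1}$, which is all the div--curl step needs.
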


\begin{proof}[Proof of Lemma~\ref{lemma34}]Given $H^{1}(\mathbb{T}^{d})$ function $\varrho$ and measurable set $E \subseteq \mathbb{T}^{d}$, we let
\begin{equation*}
    \widehat{Q}_{\varepsilon}(\varrho, E) = \frac{1}{2}\int_{E} \Big<\pi_{\varepsilon}(x)A\Big(\frac{x}{\varepsilon}\Big)\nabla \varrho, \nabla \varrho\Big> \ud x.
\end{equation*}
To prove the convergence in the homogenization limit $\varepsilon \to 0$, we use the Urysohn property of $\Gamma$-convergence stated in Lemma~\ref{Urysohn}. Specifically, we show that for every subsequence $\{\varepsilon_{\ell}\}$ satisfying $\varepsilon_{\ell} \to 0$ as $\ell \to \infty$, there exists a further subsequence ${\varepsilon_{\ell(\nu)}}$ such that $\Gamma\text{-}\lim_{\nu \to \infty} \widehat{Q}_{\varepsilon_{\ell(\nu)}} \to \widehat{Q}$.

To this end, we first invoke the compactness of functionals $\widehat{Q}_{\varepsilon_{\ell}}$ under $\Gamma$-converges (see, for example, \cite[Section 3]{weinan1991class}) to get
\begin{equation}\label{gamma-loc-h}
    \Gamma\text{-}\lim_{\nu \to \infty} \widehat{Q}_{\varepsilon_{\ell(\nu)}}(\varrho, E) = \frac{1}{2}\int_{E} h(x) \ud x, \text{ for all } E \in \mathscr{D},
\end{equation}
for a further subsequence $\{\varepsilon_{\ell(\nu)}\}$, where $\mathscr{D}$ is the algebra generated by all open cubes with rational vertices, and $h \in L^{1}_{\text{loc}}(\mathbb{R}^{d})$ is an integrant to be identified in the following.

Let $x_{0}$ be a Lebesgue point of $h$, and
\begin{equation*}
    \widehat{Q}_{\varepsilon}^{x_{0}}(\varrho, E) := \frac{1}{2} \int_{E} \Big<\pi_{\varepsilon}(x_{0}) A\Big(\frac{x}{\varepsilon}\Big)\nabla \varrho, \nabla \varrho\Big> \ud x,
\end{equation*}
be the functional localized at $x_{0}$. According to Lemma~\ref{eh-lemma} and the uniform convergence $\pi_{\varepsilon} \to \pi_{\ast}$, we can show that
\begin{equation}\label{E-homo}
    \Gamma\text{-}\lim_{\varepsilon \to 0} \widehat{Q}_{\varepsilon}^{x_{0}}(\varrho, E) = \widehat{Q}^{x_{0}}(\varrho, E) := \frac{1}{2}\int_{E} \big<\pi_{\ast}(x_{0})A_{\ast}\nabla \varrho, \nabla \varrho\big> \ud x.
\end{equation}
Since $\pi_{\varepsilon} = e^{-V_{\varepsilon}}$ is equi-Lipschitz continuous by Assumption~\ref{A4}, there exists a constant $C$ such that the error of localization can be controlled by,
\begin{equation}\label{E1-3}
    \begin{aligned}
        \big|\widehat{Q}_{\varepsilon}(\varrho, E) - \widehat{Q}_{\varepsilon}^{x_{0}}(\varrho, E)\big| &\le \frac{1}{2}\int_{E} \big|\pi_{\varepsilon}(x) - \pi_{\varepsilon}(x_{0})\big| \Big<A\Big(\frac{x}{\varepsilon}\Big)\nabla \varrho, \nabla \varrho\Big> \ud x\\
        &\le C\int_{E} |x - x_{0}| \times |\nabla \varrho|^{2} \ud x\\
        &\le C \operatorname{diam}(E)\int_{E} |\nabla \varrho|^{2} \ud x,
    \end{aligned}
\end{equation}
where $\operatorname{diam}(E)$ is the diameter of the set $E$.
\vspace{6pt}

\noindent
i) (upper bound) By the $\Gamma$-convergence of the localized functional $\widehat{Q}_{\varepsilon}^{x_{0}}(\varrho)$ in~\eqref{E-homo}, we can find a recovery sequence $(\varrho_{\varepsilon_{\ell(\nu)}}) \subseteq H^{1}(\mathbb{T}^{d})$ for $\varrho$ such that $\varrho_{\varepsilon_{\ell(\nu)}} \to \varrho$ in $L^{2}(\mathbb{T}^{d})$ with
\begin{equation*}
    \lim_{\nu \to \infty} \widehat{Q}_{\varepsilon_{\ell(\nu)}}^{x_{0}}(\varrho_{\varepsilon_{\ell(\nu)}}, E) = \widehat{Q}^{x_{0}}(\varrho, E) \text{ for } E \in \mathscr{D}.
\end{equation*}
According to the $\liminf$-inequality from~\eqref{gamma-loc-h}, we have
\begin{equation*}
    \liminf_{\nu \to \infty} \widehat{Q}_{\varepsilon_{\ell(\nu)}}(\varrho_{\varepsilon_{\ell(\nu)}}, E) \ge \frac{1}{2}\int_{E} h(x) \ud x.
\end{equation*}
Hence, by taking the above two limits into~\eqref{E1-3}, we can get
\begin{equation}\label{Qub}
    \begin{aligned}
        \frac{1}{2}\int_{E} h(x) \ud x &\le \liminf_{\nu \to \infty} \widehat{Q}_{\varepsilon_{\ell(\nu)}}(\varrho_{\varepsilon_{\ell(\nu)}}, E)\\
        &\le \liminf_{\nu \to \infty} \widehat{Q}_{\varepsilon_{\ell(\nu)}}^{x_{0}}(\varrho_{\varepsilon_{\ell(\nu)}}, E) + C \operatorname{diam}(E)\int_{E} |\nabla \varrho_{\varepsilon_{\ell(\nu)}}|^{2} \ud x\\
        &= \lim_{\nu \to \infty} \widehat{Q}_{\varepsilon_{\ell(\nu)}}^{x_{0}}(\varrho_{\varepsilon_{\ell(\nu)}}, E) + \liminf_{\nu \to \infty} C \operatorname{diam}(E)\int_{E} |\nabla \varrho_{\varepsilon_{\ell(\nu)}}|^{2} \ud x\\
        &\le \widehat{Q}^{x_{0}}(\varrho, E) + C_{0} \operatorname{diam}(E) \times |E|
    \end{aligned}
\end{equation}
for a constant $C_{0}$.
\vspace{6pt}

\noindent
ii) (lower bound) On the other hand, by the definition of $\Gamma$-convergence, we can find a recovery sequence $(\varrho_{\varepsilon_{\ell(\nu)}}) \subseteq H^{1}(\mathbb{T}^{d})$ for $\varrho$ from~\eqref{gamma-loc-h}, such that $\varrho_{\varepsilon_{\ell(\nu)}} \to \varrho$ in $L^{2}(\mathbb{T}^{d})$ with 
\begin{equation*}
    \lim_{\nu \to \infty} \widehat{Q}_{\varepsilon_{\ell(\nu)}}(\varrho_{\varepsilon_{\ell(\nu)}}, E) = \frac{1}{2}\int_{E} h(x) \ud x \text{ for } E \in \mathscr{D}.
\end{equation*}
In the following, we derive a lower bound for $\frac{1}{2}\int_{E} h(x) \ud x$ using $\widehat{Q}^{x_{0}}(\varrho)$.

To simply the notation, we first fixed a scale $\varepsilon := \varepsilon_{\ell(\nu)}$. 

Given a positive real number $M$, we denote (where $u \wedge v := \min(u, v)$ and $u \vee v := \max(u, v)$)
\begin{equation*}
    \varrho_{\varepsilon, M} := \big(\varrho_{\varepsilon} \wedge (\varrho + 1/M)\big) \vee (\varrho - 1/M),
\end{equation*}
and
\begin{equation*}
    E_{M, \varepsilon} := \big\{x \in E : |\varrho_{\varepsilon}(x) - \varrho(x)| > 1/M\big\}.
\end{equation*}
To estimate the error introduced by truncation, we split the integral,
\begin{equation}\label{E-0}
    \begin{aligned}
        \widehat{Q}_{\varepsilon}(\varrho_{\varepsilon, M}, E) &= \frac{1}{2} \int_{E} \Big<\pi_{\varepsilon}(x) A\Big(\frac{x}{\varepsilon}\Big)\nabla \varrho_{\varepsilon, M}, \nabla \varrho_{\varepsilon, M}\Big> \ud x\\
        &= \widehat{Q}_{\varepsilon}(\varrho_{\varepsilon}, E - E_{M, \varepsilon}) + \frac{1}{2}\int_{E_{M, \varepsilon}} \Big<\pi_{\varepsilon}(x) A\Big(\frac{x}{\varepsilon}\Big)\nabla \varrho_{\varepsilon, M}, \nabla \varrho_{\varepsilon, M}\Big> \ud x
    \end{aligned}
\end{equation}
By the definition of $E_{M, \varepsilon}$, we have $\varrho_{\varepsilon, M} = \varrho_{\varepsilon}$ in $E - E_{M, \varepsilon}$. Therefore,
\begin{equation}\label{E-1}
    \begin{aligned}
        \widehat{Q}_{\varepsilon}(\varrho_{\varepsilon}, E - E_{M, \varepsilon}) = \frac{1}{2} \int_{E - E_{M, \varepsilon}} \Big<\pi_{\varepsilon}(x) A\Big(\frac{x}{\varepsilon}\Big)\nabla \varrho_{\varepsilon, M}, \nabla \varrho_{\varepsilon, M}\Big> \ud x.
    \end{aligned}
\end{equation}
Furthermore, since $A(x) \le \Lambda_{1} I_{d \times d}$ and $\pi_{\varepsilon}(x) \le 1$ by Assumptions~\ref{A3} and~\ref{A5}, we have
\begin{equation}\label{E-2}
    \begin{aligned}
        \int_{E_{M, \varepsilon}} \Big<\pi_{\varepsilon}(x) A\Big(\frac{x}{\varepsilon}\Big)\nabla \varrho_{\varepsilon, M},\, &\nabla \varrho_{\varepsilon, M}\Big> \ud x\\
        &\le \Lambda_{1}\int_{E_{M, \varepsilon}} |\nabla \varrho_{\varepsilon, M}|^{2} \ud x = \Lambda_{1} \int_{E_{M, \varepsilon}} |\nabla \varrho|^{2} \ud x
    \end{aligned}
\end{equation}
Hence, by taking~\eqref{E-1} and~\eqref{E-2} into the integral~\eqref{E-0}, we can get
\begin{equation}\label{E1-1}
    \widehat{Q}_{\varepsilon}(\varrho_{\varepsilon, M}, E) \le \widehat{Q}_{\varepsilon}(\varrho_{\varepsilon}, E - E_{M, \varepsilon}) + \frac{\Lambda_{1}}{2} \int_{E_{M, \varepsilon}} |\nabla \varrho|^{2} \ud x.
\end{equation}

Now, taking~\eqref{E1-1} into the localization error in~\eqref{E1-3}, we can show that,
\begin{equation*}
    \begin{aligned}
        \widehat{Q}_{\varepsilon}(\varrho_{\varepsilon}, E - E_{M, \varepsilon}) &\ge \widehat{Q}_{\varepsilon}^{x_{0}}(\varrho_{\varepsilon, M}, E)\\
        &\quad - \frac{\Lambda_{1}}{2} \int_{E_{M, \varepsilon}} |\nabla \varrho|^{2} \ud x - C \text{diam}(E)\int_{E} |\nabla \varrho_{\varepsilon, M}|^{2} \ud x.
    \end{aligned}
\end{equation*}
Hence, taking $\varepsilon = \varepsilon_{\ell(\nu)}$ in the above inequality, we can get
\begin{equation}\label{E-5}
    \begin{aligned}
        \widehat{Q}_{\varepsilon_{\ell(\nu)}}(\varrho_{\varepsilon_{\ell(\nu)}}, E - E_{M, \varepsilon_{\ell(\nu)}}) &\ge \widehat{Q}_{\varepsilon_{\ell(\nu)}}^{x_{0}}(\varrho_{\varepsilon_{\ell(\nu)}, M}, E)\\
        &\quad - \frac{\Lambda_{1}}{2} \int_{E_{M, \varepsilon_{\ell(\nu)}}} |\nabla \varrho|^{2} \ud x - C_{1} \text{diam}(E) \times |E|
    \end{aligned}
\end{equation}
for a constant $C_{1}$. By its definition, the convergence $\varrho_{\varepsilon_{\ell(\nu)}} \to \varrho$ in $L^{2}$ implies that
\begin{equation*}
    |E_{M, \varepsilon_{\ell(\nu)}}| \le M^{2}\int_{E_{M, \varepsilon_{\ell(\nu)}}} |\varrho_{\varepsilon_{\ell(\nu)}} - \varrho|^{2} \ud x \le M^{2}\int_{\mathbb{T}^{d}} |\varrho_{\varepsilon_{\ell(\nu)}} - \varrho|^{2} \ud x \to 0 \text{ as $\nu \to \infty$.}
\end{equation*}
Hence, if we let $\nu \to \infty$ and then let $M \to \infty$ in~\eqref{E-5}, we get
\begin{equation}\label{Elb}
    \begin{aligned}
        \frac{1}{2}\int_{E} h(x) \ud x &= \lim_{M \to \infty}\lim_{\nu \to \infty}\widehat{Q}_{\varepsilon_{\ell(\nu)}}(\varrho_{\varepsilon_{\ell(\nu)}}, E - E_{M, \varepsilon_{\ell(\nu)}})\\
        &= \liminf_{M \to \infty}\liminf_{\nu \to \infty}\widehat{Q}_{\varepsilon_{\ell(\nu)}}(\varrho_{\varepsilon_{\ell(\nu)}}, E - E_{M, \varepsilon_{\ell(\nu)}})\\
        &\ge \widehat{Q}^{x_{0}}(\varrho, E) - C_{1} \text{diam}(E) \times |E|,
    \end{aligned}
\end{equation}
where the last inequality is given by the $\liminf$-inequality from the $\Gamma$-convergence of the localized functional in~\eqref{E-homo}.

\vspace{6pt}

\noindent
iii) Finally, by combining~\eqref{Qub} and~\eqref{Elb}, there exists a constant $C$ such that
\begin{equation*}
    \frac{\big|\frac{1}{2}\int_{E} h(x) \ud x - \widehat{Q}^{x_{0}}(\varrho, E)\big|}{|E|} \le C \text{diam}(E),
\end{equation*}
Let $E = B_{R}(x_{0})$, and then let $R \to 0$, we have
\begin{equation*}
    \begin{aligned}
        h(x_{0}) :=& \lim_{R \to 0}\frac{1}{|B_{R}(x_{0})|} \widehat{Q}(\varrho, B_{R}(x_{0}))\\
        =& \lim_{R \to 0}\frac{1}{|B_{R}(x_{0})|} \widehat{Q}^{x_{0}}(\varrho, B_{R}(x_{0})) = \big<\pi_{\ast}(x_{0})A_{\ast}\nabla \varrho(x_{0}), \nabla \varrho(x_{0})\big>.
    \end{aligned}
\end{equation*}
Since $h$ is independent of the choice of a subsequence $(\varepsilon_{\ell(\nu)})$, we have, from~\eqref{gamma-loc-h},
\begin{equation*}
    \Gamma\text{-}\lim_{\varepsilon \to 0} \widehat{Q}_{\varepsilon}(\varrho, E) = \frac{1}{2}\int_{E} h(x) \ud x = \widehat{Q}(\varrho, E),
\end{equation*}
which proves the claim~\eqref{E-gamma-w}.
\end{proof}

\subsubsection{Convergence of distance}
\begin{proof}[Proof of Lemma~\ref{c-converge-K}]
By using Assumption~\ref{A6}, we have $\pi_{\varepsilon}^{-1} \to \pi^{-1}_{\ast}$ in $L^{\infty}$. Therefore,
\begin{equation*}
    \begin{aligned}
        \lim_{\varepsilon \to 0}K_{\varepsilon}(\mu, \mu^{n - 1})^{2} = \lim_{\varepsilon \to 0} \int_{\mathbb{T}^{d}} \frac{(\mu - \mu^{n - 1})^{2}}{\pi_{\varepsilon}} \ud x = \int_{\mathbb{T}^{d}} \frac{(\mu - \mu^{n - 1})^{2}}{\pi_{\ast}} \ud x = K(\mu, \mu^{n - 1})^{2}.
    \end{aligned}
\end{equation*}
To conclude the proof, we show that
\begin{equation}\label{converge-K}
    \lim_{\varepsilon \to 0} K_{\varepsilon}(\mu_{\varepsilon}, \mu_{\varepsilon}^{n - 1}) - K_{\varepsilon}(\mu, \mu^{n - 1}) = 0.
\end{equation}
By using the triangle inequality of the weighted distance $K_{\varepsilon}(\mu, \nu)$, we have
\begin{equation*}
    \begin{aligned}
        |K_{\varepsilon}(\mu_{\varepsilon}, \mu_{\varepsilon}^{n - 1}) - K_{\varepsilon}(\mu, \mu^{n - 1})| &\le \big|K_{\varepsilon}(\mu_{\varepsilon}, \mu_{\varepsilon}^{n - 1}) - K_{\varepsilon}(\mu_{\varepsilon}, \mu^{n - 1})|\\
        &\quad + |K_{\varepsilon}(\mu_{\varepsilon}, \mu^{n - 1}) - K_{\varepsilon}(\mu, \mu^{n - 1})\big|\\
        &\le K_{\varepsilon}(\mu_{\varepsilon}^{n - 1}, \mu^{n - 1}) + K_{\varepsilon}(\mu_{\varepsilon}, \mu)\\
        &\le e^{\lambda/2} \|\mu_{\varepsilon}^{n - 1} - \mu^{n - 1}\|_{L^{2}} + e^{\lambda/2} \|\mu_{\varepsilon} - \mu\|_{L^{2}},
    \end{aligned}
\end{equation*}
where the last inequality is due to the fact that $\pi_{\varepsilon}^{-1}$ and $\pi_{\ast}^{-1}$ are bounded above by $e^{\lambda}$ according to Assumption~\ref{A4}. Hence, we have~\eqref{converge-K} since $\mu_{\varepsilon}^{n - 1} \to \mu^{n - 1}$ and $\mu_{\varepsilon} \to \mu$ in $L^{2}(\mathbb{T}^{d})$.
\end{proof}

\subsection{Compactness}\label{sec:2-2}
To conduct the homogenization of the solution $\mu_{\varepsilon, \tau}(t)$ to the weighted $L^{2}$ scheme, we establish the following $H^{1}$ estimate which gives us the compactness of $(\mu_{\varepsilon, \tau})_{\varepsilon > 0}$ in $L^{2}$.
\begin{lemma}[$H^{1}$ estimates]\label{u-compact}Given $\varepsilon, \tau > 0$. Let $\mu_{\varepsilon, \tau}(t)$ be the time-discrete solution of the weighted $L^{2}$ scheme in Definition~\ref{L2-scheme-def} with initial value $\mu_{0} \in H^{1}(\mathbb{T}^{d})$, then there exists a constant $C$ independent of $\varepsilon$ and $\tau$ such that
\begin{equation*}
    \|\mu_{\varepsilon, \tau}(t)\|_{H^{1}} \le C \text{ for all } t \in (0, \infty).
\end{equation*}
\end{lemma}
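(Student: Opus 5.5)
The plan is to derive everything from a single energy monotonicity and then convert the resulting bound on $\nabla \varrho_\varepsilon$ into an $H^{1}$ bound on $\mu$. We may assume $\int_{\mathbb{T}^{d}}\mu_0\ud x=1$ as stated in the introduction. Throughout, write $\varrho^{n}_{\varepsilon}:=\mu^{n}_{\varepsilon}/\pi_{\varepsilon}$. By Assumption~\ref{A4} and Assumption~\ref{A5}, $e^{-\lambda}\le \pi_{\varepsilon}\le 1$ and $|\nabla \pi_{\varepsilon}|\le \lambda$, uniformly in $\varepsilon$. All constants below depend only on $\lambda,\Lambda_0,\Lambda_1,d$ and $\|\mu_0\|_{H^1}$.

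\emph{Step 1 (energy decay).} Compare the minimizer $\mu^{n}_{\varepsilon}$ of~\eqref{k} with the competitor $\mu=\mu^{n-1}_{\varepsilon}$, for which the distance term vanishes. This gives
\begin{equation*}
Q_{\varepsilon}(\mu^{n}_{\varepsilon})\le Q_{\varepsilon}(\mu^{n}_{\varepsilon})+\tfrac{1}{2\tau}K_{\varepsilon}(\mu^{n}_{\varepsilon},\mu^{n-1}_{\varepsilon})^{2}\le Q_{\varepsilon}(\mu^{n-1}_{\varepsilon}).
\end{equation*}
Iterating, we get $Q_{\varepsilon}(\mu^{n}_{\varepsilon})\le Q_{\varepsilon}(\mu_0)$ for all $n$. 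Next, $\nabla(\mu_0/\pi_\varepsilon)=\nabla\mu_0/\pi_\varepsilon+\mu_0\nabla V_\varepsilon/\pi_\varepsilon$, so $Q_{\varepsilon}(\mu_0)\le \tfrac{\Lambda_1}{2}\|\nabla(\mu_0/\pi_\varepsilon)\|_{L^2}^2\le C\|\mu_0\|_{H^1}^2$. By the lower bounds $A\succeq\Lambda_0 I$ and $\pi_\varepsilon\ge e^{-\lambda}$, this yields
\begin{equation*}
\|\nabla\varrho^{n}_{\varepsilon}\|_{L^2}^2\le \frac{2e^{\lambda}}{\Lambda_0}\,Q_{\varepsilon}(\mu^{n}_{\varepsilon})\le C,
\end{equation*}
uniformly in $n$, $\varepsilon$ and $\tau$.

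\emph{Step 2 (mass conservation).} Take the Gateaux derivative of~\eqref{k} at $\mu^{n}_{\varepsilon}$ in the direction $\psi=\pi_\varepsilon\in H^1(\mathbb{T}^{d})$. The $Q_\varepsilon$-part involves $\nabla(\psi/\pi_\varepsilon)=\nabla 1=0$, so it drops out. The distance part gives $\tau^{-1}\int(\mu^{n}_\varepsilon-\mu^{n-1}_\varepsilon)\ud x=0$. Hence $\int\mu^{n}_{\varepsilon}\ud x=1$ for all $n$, consistent with~\eqref{structure-preserving}.

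\emph{Step 3 (control of the mean).} This is the only mildly delicate point, since mass conservation fixes $\int \pi_\varepsilon\varrho$ rather than $\int\varrho$. Decompose $\varrho^{n}_\varepsilon=m+w$, where $m$ is the mean of $\varrho^{n}_\varepsilon$ and $w$ has zero mean. Poincar\'e--Wirtinger on $\mathbb{T}^{d}$ gives $\|w\|_{L^2}\le C_P\|\nabla\varrho^{n}_\varepsilon\|_{L^2}\le C$. Mass conservation reads
\begin{equation*}
1=m\int\pi_\varepsilon\ud x+\int\pi_\varepsilon w\ud x,
\end{equation*}
with $\int\pi_\varepsilon\ud x\ge e^{-\lambda}$ and $|\int\pi_\varepsilon w\ud x|\le\|w\|_{L^2}$. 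Therefore $|m|\le e^{\lambda}(1+C)$, and so $\|\varrho^{n}_\varepsilon\|_{L^2}\le C$. This argument does not even require positivity of $\mu^{n}_\varepsilon$.

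\emph{Step 4 (back to $\mu$).} We have $\mu^{n}_\varepsilon=\pi_\varepsilon\varrho^{n}_\varepsilon$ and
\begin{equation*}
\nabla\mu^{n}_\varepsilon=\pi_\varepsilon\nabla\varrho^{n}_\varepsilon+\varrho^{n}_\varepsilon\nabla\pi_\varepsilon.
\end{equation*}
Using $\pi_\varepsilon\le1$ and $|\nabla\pi_\varepsilon|\le\lambda$, it follows that $\|\mu^{n}_\varepsilon\|_{H^1}\le C(\|\varrho^{n}_\varepsilon\|_{L^2}+\|\nabla\varrho^{n}_\varepsilon\|_{L^2})\le C$. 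Since $\mu_{\varepsilon,\tau}(t)=\mu^n_\varepsilon$ on $((n-1)\tau,n\tau]$, the claimed bound holds for all $t\in(0,\infty)$. The constant is independent of $\varepsilon$ and $\tau$, because only the uniform bounds from Assumption~\ref{assumption} were used.
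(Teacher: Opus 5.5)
Your proof is correct and follows the same outline as the paper's: energy monotonicity $Q_\varepsilon(\mu^n_\varepsilon)\le Q_\varepsilon(\mu_0)$, then a Poincar\'e-type inequality plus mass conservation to bound $\|\varrho^n_\varepsilon\|_{L^2}$, then the product rule to return to $\mu^n_\varepsilon$. The differences are minor improvements over the paper:
\begin{itemize}
\item You replace the weighted Poincar\'e inequality with weight $\pi_\varepsilon/Z_\varepsilon$, whose constant the paper never shows to be uniform in $\varepsilon$, by the unweighted Poincar\'e--Wirtinger inequality with an explicit bound on the mean.
\item You actually derive mass conservation, by testing the Euler--Lagrange equation with $\pi_\varepsilon$; the paper only asserts it.
\item Your gradient bound uses the right ingredients, $\pi_\varepsilon\ge e^{-\lambda}$ and the factor $\Lambda_0^{-1}$. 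As written, \eqref{grad-bound-r} invokes $\pi_\varepsilon\le 1$ and the factor $\Lambda_0$, both of which point the wrong way.
\end{itemize}
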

By combining Theorem~\ref{Gamma-i-lemma} and Lemma~\ref{u-compact}, we prove the homogenization theorem in Theorem~\ref{homo-of-ie}. The proof of Lemma~\ref{u-compact} will be put after the proof of Theorem~\ref{homo-of-ie}.

\begin{proof}[Proof of Theorem~\ref{homo-of-ie}]
Fix a time node $t^n$. By Lemma~\ref{u-compact}, we have
\begin{equation*}
    \|\mu_{\varepsilon}^{n}\|_{H^{1}} = \|\mu_{\varepsilon, \tau}(t_{n})\|_{H^{1}} \le C \,, \quad \forall \varepsilon > 0.
\end{equation*}
Since the embedding $H^{1}(\mathbb{T}^{d}) \hookrightarrow L^{2}(\mathbb{T}^{d})$ is compact, the family
$\{\mu_{\varepsilon}^{n}\}_{\varepsilon>0}$ is precompact in $L^{2}(\mathbb{T}^{d})$. Hence, there exists $\mu^{n}_{\ast} \in L^{2}(\mathbb{T}^{d})$ such that, along a subsequence $\varepsilon_\ell \to 0$,
\begin{equation}\label{compact-u-1}
    \mu_{\varepsilon_{\ell}}^{n} \to \mu^{n}_{\ast} \text{ in } L^{2}(\mathbb{T}^{d}) \text{ as } \ell \to \infty\,.
\end{equation}
It remains to identify the limit $\mu^{n}_{\ast}$. Since $\mu_{\varepsilon}^{0}=\mu^{0}$, the $\Gamma$-convergence result in Theorem~\ref{Gamma-i-lemma}, together with the fundamental theorem of $\Gamma$-convergence, Theorem~\ref{fund-G}, implies that $\mu^{1}_{\ast}=\mu^{1}$, where 
\begin{equation*}
    \mu^{1} = \arg\min_{\mu}\mathcal{I}_{\ast, \tau}(\mu, \mu^{0}) \text{ over } \mu \in H^{1}(\mathbb{T}^{d}).
\end{equation*}
In addition, according to the uniqueness of $\mu^{1}$, we have $\mu_{\varepsilon}^{1} \to \mu^{1}$ as $\varepsilon \to 0$, independent of the choice of a subsequence $(\varepsilon_{\ell})$. Apply this argument iteratively at each time step, we have $\mu^n_\ast = \mu^n$ with $\mu^n$ is defined in \eqref{limit-BE}, and
\begin{equation*}
    \mu_{\varepsilon, \tau}(t) \to \mu_{\tau}(t) \text{ in } L^{2}(\mathbb{T}^{d}) \text{ as $\varepsilon \to 0$ for all $t$,}
\end{equation*}
according to the definition of the time discrete trajectory $\mu_{\tau}(t)$ and $\mu_{\varepsilon, \tau}(t)$ from $\mu^n$ and $\mu_\varepsilon^n$, respectively in \eqref{L2-mu} and \eqref{0708}.

\end{proof}

To establish the $H^{1}$ estimate in Lemma~\ref{u-compact}. Let us prove the following.
\begin{lemma}\label{grad-esti-u}Given $\varepsilon, \tau > 0$, and let $\mu_{\varepsilon, \tau}$ be the time-discrete solution of the weighted $L^{2}$ scheme in Definition~\ref{L2-scheme-def}. Let $\varrho_{\varepsilon, \tau}(t) = \mu_{\varepsilon, \tau}(t)/\pi_{\varepsilon}$, there exists a constant $C$ independent of $\varepsilon$ and $\tau$ such that
\begin{equation*}
    \|\nabla \varrho_{\varepsilon, \tau}(t)\|_{L^{2}} \le C \text{ for all } t \in (0, \infty).
\end{equation*}
\end{lemma}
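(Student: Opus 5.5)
Since the scheme is a minimizing movement, the natural route is the energy-decreasing property. The functional $Q_{\varepsilon}$ is expressed through $\varrho_{\varepsilon}^{n} = \mu_{\varepsilon}^{n}/\pi_{\varepsilon}$, and by ellipticity (Assumption~\ref{A3}) together with $\pi_{\varepsilon} \ge e^{-\lambda}$ (Assumptions~\ref{A5} and~\ref{A4}),
\begin{equation*}
    Q_{\varepsilon}(\mu) \ge \frac{\Lambda_{0} e^{-\lambda}}{2}\int_{\mathbb{T}^{d}} |\nabla \varrho_{\varepsilon}|^{2} \ud x .
\end{equation*}
So it suffices to bound $Q_{\varepsilon}(\mu_{\varepsilon}^{n})$ uniformly in $n$, $\varepsilon$ and $\tau$.

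\textbf{Step 1 (monotonicity).} Compare the minimizer $\mu_{\varepsilon}^{n}$ of~\eqref{k} with the competitor $\mu = \mu_{\varepsilon}^{n-1}$. This gives
\begin{equation*}
    Q_{\varepsilon}(\mu_{\varepsilon}^{n}) + \frac{1}{2\tau}K_{\varepsilon}(\mu_{\varepsilon}^{n},\mu_{\varepsilon}^{n-1})^{2} \le Q_{\varepsilon}(\mu_{\varepsilon}^{n-1}).
\end{equation*}
Hence $Q_{\varepsilon}(\mu_{\varepsilon}^{n}) \le Q_{\varepsilon}(\mu_{\varepsilon}^{n-1}) \le \dots \le Q_{\varepsilon}(\mu_{0})$ for every $n$, independently of $\tau$.

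\textbf{Step 2 (initial energy).} Bound $Q_{\varepsilon}(\mu_{0})$ uniformly in $\varepsilon$. Write $\nabla(\mu_{0}/\pi_{\varepsilon}) = \nabla\mu_{0}/\pi_{\varepsilon} + \mu_{0}\nabla V_{\varepsilon}/\pi_{\varepsilon}$, using $\nabla \pi_{\varepsilon}^{-1} = \pi_{\varepsilon}^{-1}\nabla V_{\varepsilon}$. Then use $\pi_{\varepsilon}^{-1} \le e^{\lambda}$, $|\nabla V_{\varepsilon}| \le \lambda$ and $A \preceq \Lambda_{1} I$ to get
\begin{equation*}
    Q_{\varepsilon}(\mu_{0}) \le \frac{\Lambda_{1}}{2}\int_{\mathbb{T}^{d}} \pi_{\varepsilon}\,|\nabla(\mu_{0}/\pi_{\varepsilon})|^{2} \ud x \le \Lambda_{1} e^{\lambda}\big(\|\nabla\mu_{0}\|_{L^{2}}^{2} + \lambda^{2}\|\mu_{0}\|_{L^{2}}^{2}\big),
\end{equation*}
which is finite since $\mu_{0} \in H^{1}(\mathbb{T}^{d})$. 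Combining Steps 1 and 2 with the coercivity bound gives $\|\nabla\varrho_{\varepsilon,\tau}(t)\|_{L^{2}}^{2} \le 2 e^{\lambda}\Lambda_{0}^{-1} Q_{\varepsilon}(\mu_{0}) \le C$ for all $t$, as claimed.

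The argument is essentially routine. The only point needing care is that every constant depends only on $\Lambda_{0}$, $\Lambda_{1}$, $\lambda$ and $\|\mu_{0}\|_{H^{1}}$. In particular the oscillation in $A(x/\varepsilon)$ enters only through the pointwise bounds, never through derivatives of $A$, so no $\varepsilon^{-1}$ factor appears. I expect no real obstacle here.

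The subsequent $H^{1}$ bound on $\mu$ itself (Lemma~\ref{u-compact}) will then follow from $\nabla\mu = \pi_{\varepsilon}\nabla\varrho - \pi_{\varepsilon}\varrho\nabla V_{\varepsilon}$. For that one also needs an $L^{2}$ bound on $\varrho$, which comes from mass conservation~\eqref{structure-preserving} and Poincaré, or from $K_{\varepsilon}$ monotonicity.
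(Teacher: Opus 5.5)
Your proof is correct and follows the paper's own argument step for step: energy monotonicity $Q_\varepsilon(\mu^n_\varepsilon)\le Q_\varepsilon(\mu_0)$ from testing with $\mu^{n-1}_\varepsilon$, a uniform bound $Q_\varepsilon(\mu_0)\le C\|\mu_0\|_{H^1}^2$ via $\nabla(\mu_0/\pi_\varepsilon)=\pi_\varepsilon^{-1}(\nabla\mu_0+\mu_0\nabla V_\varepsilon)$, and coercivity of $Q_\varepsilon$ in $\nabla\varrho$. Your coercivity constant $2e^{\lambda}\Lambda_0^{-1}$, which comes from the lower bound $\pi_\varepsilon\ge e^{-\lambda}$, is the correct one; the paper's corresponding display instead invokes $\pi_\varepsilon\le 1$ and writes the constant as $2\Lambda_0$, which goes in the wrong direction, although the conclusion is unaffected.
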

\begin{proof}
It is sufficient to prove this estimate at each time node $t = t_{n}$ for time-discrete solutions. Let $\mu_{\varepsilon}^{n} = \mu_{\varepsilon, \tau}(t_{n})$ for all $n$. Since $\mu_{\varepsilon}^{n}$ minimizes $\mathcal{I}_{\varepsilon, \tau}(\mu, \mu_{\varepsilon}^{n - 1})$, we have
\begin{equation*}
    Q_{\varepsilon}(\mu_{\varepsilon}^{n}) \le Q_{\varepsilon}(\mu_{\varepsilon}^{n - 1}) + \frac{1}{2\tau} K_{\varepsilon}(\mu_{\varepsilon}^{n - 1}, \mu_{\varepsilon}^{n - 1})^{2} = Q_{\varepsilon}(\mu_{\varepsilon}^{n - 1}).
\end{equation*}
By iterating the above inequality to the initial value, it implies that
\begin{equation}\label{Qbound}
    Q_{\varepsilon}(\mu_{\varepsilon}^{n}) \le Q_{\varepsilon}(\mu_{0}) \text{ for all } n.
\end{equation}
Denote $\varrho_{\varepsilon}^{n} = \mu_{\varepsilon}^{n}/\pi_{\varepsilon}$. Since $\Lambda_{0} I_{d \times d} \preceq A(x)$ and $\pi_{\varepsilon} \le 1$ by Assumptions~\ref{A3} and~\ref{A5}, we have
\begin{equation}\label{grad-bound-r}
    \begin{aligned}
        \int_{\mathbb{T}^{d}} |\nabla \varrho_{\varepsilon}^{n}|^{2} \ud x \le \Lambda_{0} \int_{\mathbb{T}^{d}} \Big<\pi_{\varepsilon}(x)A\Big(\frac{x}{\varepsilon}\Big) \nabla \varrho_{\varepsilon}^{n}, \nabla \varrho_{\varepsilon}^{n}\Big> \ud x = 2\Lambda_{0} Q_{\varepsilon}(\mu_{\varepsilon}^{n}) \le 2\Lambda_{0} Q_{\varepsilon}(\mu_{0}).
    \end{aligned}
\end{equation}
We conclude the estimate by showing that $Q_{\varepsilon}(\mu_{0})$ can be bounded from above by a constant independent of $\varepsilon$. By using a direct calculation, we have
\begin{equation*}
    Q_{\varepsilon}(\mu_{0}) = \frac{1}{2}\int_{\mathbb{T}^{d}} \bigg<\frac{1}{\pi_{\varepsilon}(x)}A\Big(\frac{x}{\varepsilon}\Big)\big(\nabla \mu_{0} + \mu_{0} \nabla V_{\varepsilon}\big), \nabla \mu_{0} + \mu_{0} \nabla V_{\varepsilon}\bigg> \ud x.
\end{equation*}
As $A(x) \preceq \Lambda_{1} I_{d \times d}$ and $\pi_{\varepsilon}^{-1} \le e^{\lambda}$ for $\pi_{\varepsilon}^{-1} = e^{V_{\varepsilon}}$ by Assumptions~\ref{A3} and~\ref{A4}, there exists a constant $C$ such that
\begin{equation*}
    \begin{aligned}
        Q_{\varepsilon}(\mu_{0}) &\le \frac{e^{\lambda}\Lambda_{1}}{2}\int_{\mathbb{T}^{d}} |\nabla \mu_{0} + \mu_{0} \nabla V_{\varepsilon}|^{2} \ud x \le e^{\lambda}\Lambda_{1}\int_{\mathbb{T}^{d}} |\nabla \mu_{0}|^{2} + |\mu_{0} \nabla V_{\varepsilon}|^{2} \ud x\\
        &\le C\|\mu_{0}\|_{H^{1}}^{2},
    \end{aligned}
\end{equation*}
which concludes the prove since $\mu_{0} \in H^{1}(\mathbb{T}^{d})$.
\end{proof}

Finally, we pass the estimate of $\|\nabla \varrho_{\varepsilon, \tau}\|_{L^{2}}$ (Lemma~\ref{grad-esti-u}) to the estimate of $\|\mu_{\varepsilon}\|_{H^{1}}$ in Lemma~\ref{u-compact}. We recall the following weighted Poincar\'e inequality (see, for example, \cite{pechstein2013weighted}).
\begin{lemma}[weighted Poincar\'e inequality]\label{wpi}Let $\Omega$ be open bounded domain with Lipchitz boundary, and let $\nu \in L^{\infty}(\Omega)$ be a weight function with $\int_{\Omega} \nu \ud x = 1$ and $\inf_{x \in \Omega} \nu(x) > 0$, there exists a positive constant $C$ depending on $\nu$ and $\Omega$ such that
\begin{equation*}
    \|u - \overline{u}\|_{L^{2}(\nu)}^{2} \le C\|\nabla u\|_{L^{2}(\nu)}^{2} \text{ where } \overline{u} = \int_{\Omega} u(x) \nu(x) \ud x.
\end{equation*}
Here the weighted $L^{2}$ norm is defined by $\|u\|_{L^{2}(\nu)} = \big[\int_{\Omega} |u(x)|^{2} \nu(x) \ud x\big]^{\frac{1}{2}}$.
\end{lemma}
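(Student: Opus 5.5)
This weighted Poincaré inequality can be reduced to the classical unweighted Poincaré--Wirtinger inequality on $\Omega$ by exploiting the two-sided bounds on the weight. Set $\nu_{0} := \inf_{\Omega}\nu > 0$ and $\nu_{1} := \|\nu\|_{L^{\infty}(\Omega)}$. Then for any $w$,
\begin{equation*}
    \nu_{0}\|w\|_{L^{2}(\Omega)}^{2} \le \|w\|_{L^{2}(\nu)}^{2} \le \nu_{1}\|w\|_{L^{2}(\Omega)}^{2}.
\end{equation*}
So the weighted and unweighted $L^{2}$ norms are equivalent, and the only remaining issue is the choice of centering constant.

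The first step is a variational characterization of $\overline{u}$. Because $\int_{\Omega}\nu \ud x = 1$, expanding the square gives, for every constant $c\in\mathbb{R}$,
\begin{equation*}
    \|u - c\|_{L^{2}(\nu)}^{2} = \|u - \overline{u}\|_{L^{2}(\nu)}^{2} + (\overline{u} - c)^{2}.
\end{equation*}
The cross term vanishes since $\int_{\Omega}(u-\overline{u})\nu \ud x = 0$. Hence $\overline{u}$ minimizes $c\mapsto\|u-c\|_{L^{2}(\nu)}$. In particular, for the unweighted mean $u_{\Omega} := |\Omega|^{-1}\int_{\Omega}u\ud x$ we get $\|u-\overline{u}\|_{L^{2}(\nu)} \le \|u-u_{\Omega}\|_{L^{2}(\nu)}$.

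The second step chains the estimates. We have
\begin{equation*}
    \|u-\overline{u}\|_{L^{2}(\nu)}^{2} \le \nu_{1}\|u-u_{\Omega}\|_{L^{2}(\Omega)}^{2} \le \nu_{1}C_{P}(\Omega)\|\nabla u\|_{L^{2}(\Omega)}^{2} \le \frac{\nu_{1}}{\nu_{0}}C_{P}(\Omega)\|\nabla u\|_{L^{2}(\nu)}^{2}.
\end{equation*}
Here $C_{P}(\Omega)$ is the constant of the standard Poincaré--Wirtinger inequality on a bounded connected Lipschitz domain. That inequality holds because $H^{1}(\Omega)\hookrightarrow L^{2}(\Omega)$ is compact (Rellich--Kondrachov), via the usual contradiction argument. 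This gives the claim with $C = \nu_{1}C_{P}(\Omega)/\nu_{0}$, which depends only on $\nu$ and $\Omega$.

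The one delicate point is the hypotheses of the unweighted inequality. It requires $\Omega$ to be connected, which the statement leaves implicit and which I would add. For the paper's application the domain is $\mathbb{T}^{d}$, a compact connected manifold without boundary, and the same compactness argument applies verbatim there (alternatively use Fourier series, with $C_{P}=(4\pi^{2})^{-1}$). The application uses the weight $\nu=\pi_{\varepsilon}/\int\pi_{\varepsilon}$, for which $e^{-\lambda}\le\pi_{\varepsilon}\le 1$ by Assumption~\ref{A4}. So the constant $C$ above is uniform in $\varepsilon$, which is what Lemma~\ref{u-compact} needs.
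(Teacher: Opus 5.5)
Your argument is correct. It differs from the paper, which gives no proof of this lemma and only cites Pechstein--Scheichl. You instead reduce the statement to the unweighted Poincar\'e--Wirtinger inequality. The tools are the identity $\|u-c\|_{L^{2}(\nu)}^{2}=\|u-\overline{u}\|_{L^{2}(\nu)}^{2}+(\overline{u}-c)^{2}$, which uses $\int_{\Omega}\nu\ud x=1$, together with the two-sided bounds on $\nu$.

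What your route buys is an explicit constant $C=C_{P}(\Omega)\,\|\nu\|_{L^{\infty}}/\inf\nu$, so $C$ depends on $\nu$ only through its contrast. This is exactly what the proof of Lemma~\ref{u-compact} silently needs. There the weight $\nu_{\varepsilon}=\pi_{\varepsilon}/Z_{\varepsilon}$ changes with $\varepsilon$, while the lemma as stated only promises a constant ``depending on $\nu$''. Using both bounds $0\le V_{\varepsilon}\le\lambda$ (the upper bound $\pi_{\varepsilon}\le 1$ comes from $V_{\varepsilon}\ge 0$, not from the $\lambda$-bound alone), you get $e^{-\lambda}\le\nu_{\varepsilon}\le e^{\lambda}$. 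With $C_{P}=(4\pi^{2})^{-1}$ on $\mathbb{T}^{d}$ this gives $C\le e^{2\lambda}/(4\pi^{2})$, uniformly in $\varepsilon$.

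Your version also works verbatim on the torus, which is where the paper actually applies the lemma, even though the torus is not a bounded Lipschitz domain in $\mathbb{R}^{d}$. You are also right that connectedness must be assumed: on a disconnected $\Omega$, a locally constant $u$ has $\nabla u=0$ but $u\neq\overline{u}$, so the statement fails as literally written.

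The cited reference is aimed at something much finer: Poincar\'e constants that stay robust when the weight has large contrast. That machinery is unnecessary here because the contrast is uniformly bounded, so your elementary reduction is the more transparent justification for this paper.
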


\begin{proof}[Proof of Lemma~\ref{u-compact}]Let $\varrho_{\varepsilon, \tau}(t) = \mu_{\varepsilon, \tau}(t)/\pi_{\varepsilon}$ where $\pi_{\varepsilon}(x) = e^{-V_{\varepsilon}(x)}$. Consider the following weight,
\begin{equation*}
    \nu_\varepsilon := \frac{1}{Z_{\varepsilon}}\pi_{\varepsilon}(x) \text{ where } Z_{\varepsilon} = \int_{\mathbb{T}^{d}} \pi_{\varepsilon}(x) \ud x.
\end{equation*}
By using the weighted Poincar\'e inequality in Lemma~\ref{wpi} with the weight $\nu_\varepsilon$, there exists a positive constant $C$ such that 
\begin{equation}\label{wp_ineq}
    \begin{aligned}
        \int_{\mathbb{T}^{d}} |\nabla \varrho_{\varepsilon, \tau}(x, t)|^{2} \nu_\varepsilon(x) \ud x &\ge C \int_{\mathbb{T}^{d}} |\varrho_{\varepsilon, \tau}(x, t) - \overline{\varrho}_{\varepsilon, \tau}(t)|^{2} \nu_\varepsilon(x) \ud x\\
        &= C \int_{\mathbb{T}^{d}} |\varrho_{\varepsilon, \tau}(x, t)|^{2} \nu_\varepsilon(x) \ud x - C |\overline{\varrho}_{\varepsilon, \tau}(t)|^{2}.
    \end{aligned}
\end{equation}
Considering the mass conservation of time-discrete solutions in~\eqref{structure-preserving}, we have
\begin{equation*}
    \overline{\varrho}_{\varepsilon, \tau}(t) := \frac{1}{Z_{\varepsilon}}\int_{\mathbb{T}^{d}} \varrho_{\varepsilon, \tau}(x, t) \pi_{\varepsilon}(x) \ud x = \frac{1}{Z_{\varepsilon}}\int_{\mathbb{T}^{d}} \mu_{\varepsilon, t}(x, t) \ud x = \frac{1}{Z_{\varepsilon}}.
\end{equation*}
Since $0 \le V_{\varepsilon}(x) \le \lambda$, we have $Z_{\varepsilon} \ge e^{-\lambda}$. Hence, there exists two constant $C_{0}$ and $C_{1}$ (independent of $\varepsilon$) such that
\begin{equation*}
    \|\varrho_{\varepsilon, \tau}(t)\|_{L^{2}}^{2} \le C_{0} + C_{1} \|\nabla \varrho_{\varepsilon, \tau}(t)\|^{2}_{L^{2}}.
\end{equation*}
By using Lemma~\ref{grad-esti-u}, $\|\varrho_{\varepsilon, \tau}(t)\|_{H^{1}}$ is uniformly bounded from above.

Since $\{V_{\varepsilon}\}$ is uniformly bounded in $C^{1}(\mathbb{T}^{d})$ due to Assumption~\ref{A5}, there exists a constant $C$ such that
\begin{equation*}
    \|\nabla\mu_{\varepsilon, \tau}(t)\|_{L^{2}}^{2} = \int_{\mathbb{T}^{d}} |\nabla \mu_{\varepsilon, \tau}(t)|^{2} \ud x = \int_{\mathbb{T}^{d}} |\nabla (\varrho_{\varepsilon, \tau}(t)\pi_{\varepsilon})|^{2} \ud x \le C\|\varrho_{\varepsilon, \tau}(t)\|_{H^{1}}.
\end{equation*}
Finally, by using classical Poincar\'e inequality and the mass conservation~\eqref{structure-preserving}, there exists a positive constant $C$ such that
\begin{equation*}
    \|\mu_{\varepsilon, \tau}(t)\|_{L^{2}}^{2} - 1 = \int_{\mathbb{T}^{d}} |\mu_{\varepsilon, \tau}(t) - \overline{\mu}_{\varepsilon, \tau}(t)|^{2} \ud x \le C\int_{\mathbb{T}^{d}} |\nabla \mu_{\varepsilon, \tau}(t)|^{2} \ud x.
\end{equation*}
We can conclude the proof because both $\|\mu_{\varepsilon, \tau}(t)\|_{L^{2}}$ and $\|\nabla\mu_{\varepsilon, \tau}(t)\|_{L^{2}}$ are uniformly bounded.
\end{proof}

\section{$\varepsilon$-JKO scheme}\label{sec3}
In this section, we analyze the $\varepsilon$-JKO scheme constructed in Definition~\ref{JKO-scheme-def}, where the solutions are defined according to
\begin{equation}\label{JKO-WW-copy}
    \min_{P(\mathbb{T}^{d})} \mathcal{J}_{\varepsilon, \tau}(\mu, \mu^{n - 1}_{\varepsilon}) := H_{\varepsilon}(\mu) + \frac{1}{2\tau} W_{\varepsilon}(\mu, \mu^{n - 1}_{\varepsilon})^{2}.
\end{equation}
The existence of a unique minimizer was proved in \cite{lisini2009nonlinear, liu2025variational}.
\begin{proposition}[well-posedness]\label{JKO-existence}Given $\varepsilon, \tau > 0$ and $\mu_{0} \in P(\mathbb{T}^{d})$, there exists a unique solution $\mu_{\varepsilon, \tau}$ to the $\varepsilon$-JKO scheme in Definition~\ref{JKO-scheme-def} with initial value $\mu_{0}$.
\end{proposition}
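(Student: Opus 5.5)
The plan is the direct method in the calculus of variations on the compact metric space $(P(\mathbb{T}^{d}), W_{2})$ of Lemma~\ref{comp-1}, followed by a strict convexity argument for uniqueness; the scheme is then built by induction on $n$. Fix $\varepsilon,\tau$ and $\nu := \mu^{n-1}_{\varepsilon}$. The first task is to show that $\mathcal{J}_{\varepsilon,\tau}(\cdot,\nu)$ is not identically $+\infty$ and is bounded below. Taking $\mu = \nu$ gives $\mathcal{J}_{\varepsilon,\tau}(\nu,\nu) = H_{\varepsilon}(\nu)$. This is finite when $S(\nu)<\infty$, because $0 \le V_{\varepsilon} \le \lambda$. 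If one does not want to assume this (for instance at $n=1$ with merely $\mu_0 \in P(\mathbb{T}^{d})$), the uniform measure $\mathrm{d}x$ is a competitor with $S = 0$ and $W_{\varepsilon}^{2} \le \Lambda_{0}^{-1}\,\mathrm{diam}(\mathbb{T}^{d})^{2}$, by Lemma~\ref{w-eqi} applied with the bounds $\Lambda_{1}^{-1}|v|^2 \le \langle A^{-1}v,v\rangle \le \Lambda_{0}^{-1}|v|^2$. For the lower bound, Jensen's inequality on the unit-volume torus gives $S(\mu) \ge 0$, so $\mathcal{J}_{\varepsilon,\tau} \ge 0$.

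Next comes lower semicontinuity with respect to narrow convergence, which by Lemma~\ref{equvial-W-N} is the same as $W_{2}$-convergence. The entropy $S$ is narrowly lower semicontinuous by its standard dual (Legendre) representation. The potential term $\int V_{\varepsilon}\,\mathrm{d}\mu$ is narrowly continuous because $V_{\varepsilon} \in C(\mathbb{T}^{d})$. For the transport term, $c_{\varepsilon}$ is continuous on $\mathbb{T}^{d}\times\mathbb{T}^{d}$: it is the squared distance of a smooth Riemannian metric, and Lemma~\ref{c-eqi} gives its comparability with $d_{\mathrm{Eu}}^2$. Hence $\mu \mapsto W_{\varepsilon}(\mu,\nu)^2$ is narrowly lower semicontinuous. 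To see this, take optimal couplings $\omega_k \in \Pi(\mu_k,\nu)$; these exist by compactness of $\Pi$ and continuity of the cost. Extract a narrowly convergent subsequence, whose limit lies in $\Pi(\mu,\nu)$, and pass to the limit in $\int c_{\varepsilon}\,\mathrm{d}\omega_k$. A minimizing sequence then has a convergent subsequence by Lemma~\ref{comp-1}, and its limit is a minimizer with finite energy, hence absolutely continuous.

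Uniqueness comes from strict convexity along linear (mixture) interpolation $\mu_s = (1-s)\mu^{0} + s\mu^{1}$, not along geodesics. The entropy $S$ is strictly convex under mixtures because $r\log r$ is strictly convex. The potential term is linear. The map $\mu \mapsto W_{\varepsilon}(\mu,\nu)^2$ is convex under mixtures: if $\omega^{0},\omega^{1}$ are optimal couplings for $\mu^{0},\mu^{1}$, then $(1-s)\omega^{0}+s\omega^{1}$ couples $\mu_s$ with $\nu$. Thus two distinct minimizers would produce a mixture with strictly smaller energy, a contradiction. Since the minimizer has finite entropy, the iteration continues with $S(\mu^{n}_{\varepsilon})<\infty$, and induction on $n$ defines $\mu_{\varepsilon,\tau}$ through \eqref{JKO-solution}.

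The step needing the most care is the lower semicontinuity of $W_{\varepsilon}^2$, which rests on the continuity of $c_{\varepsilon}$ on the torus. I would cite standard facts for this: the least action cost \eqref{eps-c} is a squared geodesic distance of the metric $A(x/\varepsilon)^{-1}$, which is smooth by Assumption~\ref{assumption}, and it is therefore continuous and comparable to $d_{\mathrm{Eu}}^2$. The rest is routine.
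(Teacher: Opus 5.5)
Your argument is correct and is essentially the paper's approach. For this proposition the paper only cites \cite{lisini2009nonlinear, liu2025variational}, and the argument it writes out for the homogenized analogue (Proposition~\ref{HJKO-existence}) is exactly yours: the direct method on the compact space $(P(\mathbb{T}^{d}), W_{2})$, followed by strict convexity for uniqueness. Your version is slightly more complete: the uniform-measure competitor covers $\mu_{0}$ with $S(\mu_{0})=\infty$, and you make explicit the mixture-convexity of $W_{\varepsilon}(\cdot,\nu)^{2}$, which the paper's one-line appeal to strict convexity of the entropy leaves implicit.
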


Unlike in the previous section, and in particular unlike Proposition~\ref{IE-time}, the convergence in the limit $\tau \to 0$ is less standard in the literature and requires a more involved analysis. We therefore separate the discussion into two subsections. The first subsection focuses on the homogenization limit, namely $\varepsilon \to 0$ with $\tau$ fixed, while the second subsection addresses the convergence as $\tau \to 0$ for the homogenized limiting scheme.

\subsection{Homogenization}
In this subsection, we prove the homogenization of the $\varepsilon$-JKO scheme in Theorem~\ref{homo-of-jko}. Let us establish the following $\Gamma$-convergence.
\begin{theorem}\label{JKO-e-theorem}
If $\mu^{n - 1}_{\varepsilon} \to \overline{\mu}^{n - 1}$  in $W_{2}$, we have the following $\Gamma$-convergence (as functional over $\mu$) in the $W_{2}$ metric,
\begin{equation}\label{JKO-e}
    \Gamma\text{-}\lim_{\varepsilon \to 0} \mathcal{J}_{\varepsilon, \tau}(\mu, \mu^{n - 1}_{\varepsilon}) = \mathcal{J}_{\ast, \tau}(\mu, \overline{\mu}^{n - 1}) := H(\mu) + \frac{1}{2\tau} W_{\mathrm{GH}}(\mu, \overline{\mu}^{n - 1})^{2}
\end{equation}
where $H(\mu)$ and $W_{\mathrm{GH}}(\mu, \nu)$ are defined in \eqref{limiting_W}.
\end{theorem}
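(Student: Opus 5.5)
Following the template of Theorem~\ref{Gamma-i-lemma}, I will split $\mathcal{J}_{\varepsilon,\tau}$ into the energy $H_{\varepsilon}$ and the transport term $\frac{1}{2\tau}W_{\varepsilon}(\cdot,\mu^{n-1}_{\varepsilon})^{2}$. The difference from the $L^{2}$ case is that the transport term is now the continuous perturbation and the entropy is the $\Gamma$-converging part. The main steps, in order, are these.

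\textbf{Step 1: $\Gamma$-convergence of the energy.} I will show $\Gamma\text{-}\lim H_{\varepsilon}=H$ in $W_{2}$. The functional $S$ is narrowly lower semicontinuous and does not depend on $\varepsilon$. By Lemma~\ref{equvial-W-N}, $W_{2}$ convergence on $\mathbb{T}^{d}$ is the same as narrow convergence. For $\mu_{\varepsilon}\to\mu$ I write
\begin{equation*}
\int V_{\varepsilon}\,\ud\mu_{\varepsilon}-\int V\,\ud\mu=\int (V_{\varepsilon}-V)\,\ud\mu_{\varepsilon}+\int V\,\ud(\mu_{\varepsilon}-\mu).
\end{equation*}
Both terms tend to zero, using the uniform convergence of Assumption~\ref{A6} for the first and the continuity of $V$ for the second. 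So the potential part converges continuously. The $\liminf$ inequality then follows from lower semicontinuity of $S$, and the constant sequence $\mu_{\varepsilon}=\mu$ serves as recovery sequence. Equivalently, $S$ is constant in $\varepsilon$ and the potential energy is a continuous perturbation in the sense of Lemma~\ref{stability-Gamma-thm}.

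\textbf{Step 2: continuous convergence of the transport term.} I need $W_{\varepsilon}(\mu_{\varepsilon},\nu_{\varepsilon})\to W_{\mathrm{GH}}(\mu,\nu)$ whenever $\mu_{\varepsilon}\to\mu$ and $\nu_{\varepsilon}\to\nu$ in $W_{2}$. I split this into two parts.

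\emph{(a) Uniform convergence of the costs.} I will use the pointwise homogenization of Riemannian metrics, \cite[Theorem 2.6]{braides2002riemannian}, which gives $c_{\varepsilon}(x,y)\to c_{\mathrm{hom}}(x,y)$. To upgrade this to uniform convergence on $\mathbb{T}^{d}\times\mathbb{T}^{d}$, I use the equi-Lipschitz bound
\begin{equation*}
|\sqrt{c_{\varepsilon}(x,y)}-\sqrt{c_{\varepsilon}(x',y')}|\le \Lambda_{0}^{-1/2}\big(d_{\mathrm{Eu}}(x,x')+d_{\mathrm{Eu}}(y,y')\big).
\end{equation*}
This follows from the triangle inequality for the length metric $\sqrt{c_{\varepsilon}}$ together with Lemma~\ref{c-eqi}, applied with the constants $\Lambda_{1}^{-1},\Lambda_{0}^{-1}$ for $A^{-1}$. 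On a compact set, Arzel\`a--Ascoli then upgrades pointwise convergence to uniform convergence. A subtlety is that the periodic cell has size $\varepsilon$ on $\mathbb{T}^{d}$, so I will restrict to $\varepsilon=1/k$ or argue on the lift to $\mathbb{R}^{d}$.

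\emph{(b) Stability of the transport cost.} For fixed marginals,
\begin{equation*}
|W_{\varepsilon}(\mu,\nu)^{2}-W_{\mathrm{GH}}(\mu,\nu)^{2}|\le \|c_{\varepsilon}-c_{\mathrm{hom}}\|_{\infty},
\end{equation*}
because both are infima over the same set $\Pi(\mu,\nu)$. To vary the marginals, I use that $W_{\varepsilon}$ is a metric and the triangle inequality, as in Lemma~\ref{c-converge-K}. Combined with Lemma~\ref{w-eqi}, this gives
\begin{equation*}
|W_{\varepsilon}(\mu_{\varepsilon},\nu_{\varepsilon})-W_{\varepsilon}(\mu,\nu)|\le \Lambda_{0}^{-1/2}\big(W_{2}(\mu_{\varepsilon},\mu)+W_{2}(\nu_{\varepsilon},\nu)\big)\to0.
\end{equation*}
Alternatively, I could use stability of optimal plans under narrow convergence together with uniform convergence of continuous costs.

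\textbf{Step 3: conclusion.} Lemma~\ref{stability-Gamma-thm} combines Steps 1 and 2 to give \eqref{JKO-e}.

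\textbf{Main obstacle.} I expect Step 2(a) to be the hard part: obtaining uniform convergence $c_{\varepsilon}\to c_{\mathrm{hom}}$ on the torus, and checking that the $\liminf$ formula \eqref{Bast} defines the true homogenized metric with the periodicity conventions used here. I would first try to quote the Riemannian homogenization result on $\mathbb{R}^{d}$ and pass to the quotient. This uses that the torus cost is the minimum over lattice translates, and only finitely many translates matter because of the bounds in Lemma~\ref{c-eqi}.
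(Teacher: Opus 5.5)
Your proposal is correct and is essentially the paper's proof. The paper likewise reduces \eqref{JKO-e} via Lemma~\ref{stability-Gamma-thm} to two ingredients:
\begin{itemize}
\item $\Gamma$-convergence of $H_\varepsilon$ (Lemma~\ref{p2}). The paper proves this with joint lower semicontinuity of relative entropy against the narrowly converging references $\pi_\varepsilon\ud x$. You instead use lower semicontinuity of $S$ plus continuous convergence of $\int V_\varepsilon\ud\mu_\varepsilon$, which is equivalent and slightly more elementary.
\item Continuous convergence of $W_\varepsilon^2$ (Lemma~\ref{p1-1}). This uses $|W_\varepsilon^2-W_{\mathrm{GH}}^2|\le\sup|c_\varepsilon-c_{\mathrm{hom}}|$ for fixed marginals, and then the triangle inequality with $W_\varepsilon\le\Lambda_0^{-1/2}W_2$.
\end{itemize}

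Your Step 2(a) is actually better than the paper. The paper passes from pointwise to uniform convergence of $c_\varepsilon$ by citing only compactness of $\mathbb{T}^d\times\mathbb{T}^d$, which does not suffice on its own. Your equi-Lipschitz bound on $\sqrt{c_\varepsilon}$ supplies exactly the missing equicontinuity. Your remark that $1/\varepsilon\in\mathbb{Z}$ is needed for $A(x/\varepsilon)$ to be well defined on the torus also addresses a convention the paper leaves implicit.
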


We show the well-posedness of the limiting JKO scheme based on \cite{jordan1998variational, lisini2009nonlinear, liu2025variational}.
\begin{proposition}[well-posedness]\label{HJKO-existence}Given $\tau > 0$ and $\overline{\mu}^{n - 1} \in P(\mathbb{T}^{d})$, then there is a unique $\overline{\mu}^{n} \in P(\mathbb{T}^{d})$ that minimizes the $\mathcal{J}_{\ast, \tau}(\mu, \overline{\mu}^{n - 1})$ . Hence, the homogenized JKO scheme in Theorem~\ref{homo-of-jko} is well-posed.
\end{proposition}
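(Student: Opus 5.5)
Existence will come from the direct method in the narrow (equivalently $W_{2}$) topology, and uniqueness from strict convexity of $H$ together with convexity of $\mu \mapsto W_{\mathrm{GH}}(\mu, \overline{\mu}^{n-1})^{2}$ under linear interpolation.

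\textbf{Lower semicontinuity and finiteness.} First I will check that $\mathcal{J}_{\ast,\tau}(\cdot,\overline{\mu}^{n-1})$ is proper, bounded below, and lower semicontinuous on $(P(\mathbb{T}^{d}), W_{2})$.
\begin{itemize}
\item Properness: the uniform measure $\mathcal{L}^{d}$ gives $H(\mathcal{L}^{d}) \le \lambda$. Also $W_{\mathrm{GH}}^{2}$ is finite everywhere, because the bound \eqref{B-ast-1} and Lemma~\ref{c-eqi} give $\Lambda_{1}^{-1} d_{\mathrm{Eu}}^{2} \le c_{\mathrm{hom}} \le \Lambda_{0}^{-1} d_{\mathrm{Eu}}^{2}$, and $\mathbb{T}^{d}$ is bounded.
\item Lower bound: Jensen's inequality on the unit-volume torus gives $S \ge 0$, and $V \ge 0$ (as a uniform limit of $V_{\varepsilon} \ge 0$). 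Hence $H \ge 0$.
\item Semicontinuity of $H$: the entropy $S$ is narrowly lower semicontinuous, being the relative entropy with respect to Lebesgue measure. The potential term $\int V \ud\mu$ is narrowly continuous, since $V$ is continuous as a uniform limit of continuous functions.
\item Semicontinuity of $W_{\mathrm{GH}}^{2}$: $c_{\mathrm{hom}}$ is continuous and bounded on the compact space $\mathbb{T}^{d}\times\mathbb{T}^{d}$. It is the least action of the convex, positively $2$-homogeneous Lagrangian $A_{\mathrm{hom}}$, which is therefore continuous, so $c_{\mathrm{hom}}(x,y) = \min_{k \in \mathbb{Z}^{d}} A_{\mathrm{hom}}(y-x+k)$ via straight lines by Jensen's inequality. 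By the standard stability of optimal transport with continuous costs (Villani, Thm.~5.20), $\mu \mapsto W_{\mathrm{GH}}(\mu,\overline{\mu}^{n-1})^{2}$ is narrowly continuous.
\end{itemize}
Lemma~\ref{comp-1} gives compactness of $P(\mathbb{T}^{d})$, so a minimizing sequence has a convergent subsequence, and lower semicontinuity makes its limit a minimizer with finite value. In particular the minimizer has $S(\overline{\mu}^{n})<\infty$ and is absolutely continuous.

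\textbf{Uniqueness.} I will use linear (not displacement) convexity. $S$ is strictly convex along $\mu_{s} = (1-s)\mu_{0}+s\mu_{1}$ because $r\mapsto r\log r$ is strictly convex, and $\int V\ud\mu$ is linear. For the transport term, take optimal couplings $\omega_{0}, \omega_{1}$ for $(\mu_{0},\overline{\mu}^{n-1})$ and $(\mu_{1},\overline{\mu}^{n-1})$. Then $(1-s)\omega_{0}+s\omega_{1} \in \Pi(\mu_{s},\overline{\mu}^{n-1})$, so
\[
W_{\mathrm{GH}}(\mu_{s},\overline{\mu}^{n-1})^{2} \le (1-s)W_{\mathrm{GH}}(\mu_{0},\overline{\mu}^{n-1})^{2} + sW_{\mathrm{GH}}(\mu_{1},\overline{\mu}^{n-1})^{2}.
\]
Hence $\mathcal{J}_{\ast,\tau}(\cdot,\overline{\mu}^{n-1})$ is strictly convex on the convex set where it is finite. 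Two distinct minimizers $\mu_{0}\neq\mu_{1}$ would then give $\mathcal{J}_{\ast,\tau}(\mu_{1/2},\overline{\mu}^{n-1}) < \min \mathcal{J}_{\ast,\tau}$, a contradiction.

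\textbf{Well-posedness of the scheme.} Iterating from $\overline{\mu}^{0}=\mu_{0}$ shows the homogenized JKO scheme is well-posed.

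\textbf{Main obstacle.} The only delicate point is the continuity of $c_{\mathrm{hom}}$, specifically that $A_{\mathrm{hom}}$ defined by the $\liminf$ in \eqref{Bast} is a genuine continuous convex function. I would import this from \cite{braides2002riemannian}, where $A_{\mathrm{hom}}$ is the homogenized Finsler (norm squared) density. Once that is granted, everything else is routine.
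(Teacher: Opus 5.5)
Your proposal is correct and follows the paper's route. Existence comes from the direct method, using compactness of $(P(\mathbb{T}^{d}),W_{2})$, lower semicontinuity of $H$ and continuity of the transport term, and uniqueness comes from strict convexity of $H$. The differences are only in the details. The paper gets continuity of $W_{\mathrm{GH}}$ from its equivalence with $W_{2}$ (tacitly using the triangle inequality), whereas you use continuity of $c_{\mathrm{hom}}$ plus stability of optimal transport. You also make explicit the linear convexity of $\mu\mapsto W_{\mathrm{GH}}(\mu,\overline{\mu}^{n-1})^{2}$, which the paper's one-line uniqueness claim needs but does not state. Finally, your ``main obstacle'' is already settled: $A_{\mathrm{hom}}$ is convex (Lemma~\ref{limit-B}) and finite by \eqref{B-ast-1}, hence continuous.
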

\begin{proof}
By~\eqref{B-ast-1} and Lemmas~\ref{c-eqi} and~\ref{w-eqi}, $W_{\mathrm{GH}}(\mu, \nu)$ is equivalent to $W_{2}(\mu, \nu)$, which means that $W_{\mathrm{GH}}(\mu, \nu)$ is continuous, i.e.,
\begin{equation*}
    W_{\mathrm{GH}}(\mu, \overline{\mu}^{n - 1}) = \lim_{\ell \to \infty} W_{\mathrm{GH}}(\mu_{\ell}, \overline{\mu}^{n - 1})
\end{equation*}
for all $\mu_{\ell} \to \mu$ in $W_{2}$ metric.

Let $(\mu_{\ell})_{\ell \in \mathbb{N}} \subseteq P(\mathbb{T}^{d})$ be a minimizing sequence for $\mathcal{J}_{\ast, \tau}(\mu, \overline{\mu}^{n - 1})$. Since $P(\mathbb{T}^{d})$ is compact in $W_{2}$ metric topology, there exists a limit $\overline{\mu}^{n}$ such that $\mu_{\ell} \to \overline{\mu}^{n}$ as $\ell \to \infty$, up to the extraction of a subsequence. Hence, by considering the lower-semi-continuity of $H(\mu)$ in $P(\mathbb{T}^{d})$, we have
\begin{equation*}
    \mathcal{J}_{\ast, \tau}(\overline{\mu}^{n}, \overline{\mu}^{n - 1}) \le \liminf_{\ell \to \infty} \mathcal{J}_{\ast, \tau}(\mu_{\ell}, \overline{\mu}^{n - 1}) = \inf_{\mu}\mathcal{J}_{\ast, \tau}(\mu, \overline{\mu}^{n - 1}),
\end{equation*}
which means that $\overline{\mu}^{n}$ is the desired minimal. The uniqueness of $\overline{\mu}^{n}$ is given by the strict convexity of $H(\mu)$.
\end{proof}

Similar to the mathematical induction in Section~\ref{sec:2-2}, we prove Theorem~\ref{homo-of-jko} based on the $\Gamma$-convergence in Theorem~\ref{JKO-e-theorem}.
\begin{proof}[Proof of Theorem~\ref{homo-of-jko}] 
Since the space $(P(\mathbb{T}^{d}), W_{2})$ is compact (Lemma~\ref{comp-1}), the family of solutions $\{\mu_{\varepsilon}^{n}\}_{\varepsilon > 0}$ is pre-compact, and there exists $\{\overline{\mu}^{n}_{\ast}\}_{n \in \mathbb{N}}$ such that, along a subsequence $\varepsilon_{\ell} \to 0$
\begin{equation*}
    \mu_{\varepsilon_{\ell}}^{n} \to \mu_{\ast}^{n} \text{ in $W_{2}$ as } \ell \to \infty \text{ for all } n \in \mathbb{N},
\end{equation*}
To identify the limit $\overline{\mu}^{n}_{\ast}$, we invoke the $\Gamma$-convergence result in Theorem~\ref{JKO-e-theorem}, together with the fundamental theorem of $\Gamma$-convergence, Theorem~\ref{fund-G}, to get $\mu_{\ast}^{1} = \overline{\mu}^{1}$, where
\begin{equation*}
    \overline{\mu}^{1} = \arg\min_{\mu} \mathcal{J}_{\ast, \tau}(\mu, \mu^{0}) \text{ over } \mu \in P(\mathbb{T}^{d}).
\end{equation*}
In addition, according to the uniqueness of $\overline{\mu}^{1}$, we have $\overline{\mu}_{\varepsilon}^{1} \to \overline{\mu}^{1}$ as $\varepsilon \to 0$, independent of the choice of a subsequence. Apply this argument iteratively at each time step, we have $\mu_{\ast}^{n} = \overline{\mu}^{n}$ with $\overline{\mu}^{n}$ defined in~\eqref{limiting-JKO-def}, and
\begin{equation*}
    \text{$\mu_{\varepsilon, \tau}(t) \to \overline{\mu}_{\tau}(t)$ in $W_{2}$ as $\varepsilon \to 0$ for all $t$}.
\end{equation*}
according to the definition of the time discrete trajecotry $\overline{\mu}_{\tau}(t)$ and $\mu_{\varepsilon, \tau}(t)$ from $\overline{\mu}^{n}$ and $\mu_{\varepsilon}^{n}$, respectively in~\eqref{homo-jko-solution-def} and~\eqref{JKO-solution}.
\end{proof}

To prove the $\Gamma$-convergence in Theorem~\ref{JKO-e-theorem}, it is sufficient to establish the following two lemmas, by virtue of the stability of $\Gamma$-convergence under
continuous perturbations stated in Lemma~\ref{stability-Gamma-thm}. The proofs of these lemmas are provided in the next two subsections.
\begin{lemma}\label{p2}
We have
\begin{equation*}
    \Gamma\textnormal{-}\lim_{\varepsilon \to 0} H_{\varepsilon} = H
\end{equation*}
in the $W_2$-metric, where $H_{\varepsilon}$ and $H$ are the entropy functions give in~\eqref{Hvare} and~\eqref{limiting_W}
\end{lemma}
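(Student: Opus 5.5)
We reduce the statement to the stability of $\Gamma$-convergence under continuous perturbations (Lemma~\ref{stability-Gamma-thm}). Write
\begin{equation*}
    H_{\varepsilon}(\mu) = S(\mu) + G_{\varepsilon}(\mu), \qquad G_{\varepsilon}(\mu) := \int_{\mathbb{T}^{d}} V_{\varepsilon}(x) \ud \mu,
\end{equation*}
and similarly $H = S + G$ with $G(\mu) := \int_{\mathbb{T}^{d}} V \ud \mu$. The entropy $S$ does not depend on $\varepsilon$. So it suffices to show two things: the constant sequence $S$ $\Gamma$-converges to itself in $W_{2}$, and $G_{\varepsilon}$ converges continuously to $G$ in $W_{2}$.

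For the first point, a constant sequence $\Gamma$-converges to its lower semicontinuous envelope. We therefore only need that $S$ is lower semicontinuous with respect to $W_{2}$.
\begin{itemize}
    \item By Lemma~\ref{equvial-W-N}, $W_{2}$ convergence on $\mathbb{T}^{d}$ is equivalent to narrow convergence.
    \item On the compact torus, $S$ is lower semicontinuous for narrow convergence. This follows from the Donsker--Varadhan dual representation
    \begin{equation*}
        S(\mu) = \sup_{\varphi \in C(\mathbb{T}^{d})} \Big\{ \int \varphi \ud \mu - \log \int e^{\varphi} \ud x \Big\},
    \end{equation*}
    since $S$ is the relative entropy with respect to Lebesgue measure, which is a probability measure on $\mathbb{T}^{d}$. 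Each functional inside the supremum is narrowly continuous, so the supremum is narrowly lower semicontinuous. Alternatively, one can cite \cite{ambrosio2008gradient} or \cite{jordan1998variational}.
\end{itemize}
With lower semicontinuity in hand, the liminf inequality is immediate, and the constant sequence $\mu_{\varepsilon} = \mu$ serves as the recovery sequence. Note that the proof of Proposition~\ref{HJKO-existence} already invokes this lower semicontinuity.

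For the second point, take $\mu_{\varepsilon} \to \mu$ in $W_{2}$ and split
\begin{equation*}
    \Big|\int V_{\varepsilon} \ud \mu_{\varepsilon} - \int V \ud \mu\Big| \le \sup_{\mathbb{T}^{d}} |V_{\varepsilon} - V| + \Big|\int V \ud \mu_{\varepsilon} - \int V \ud \mu\Big|.
\end{equation*}
\begin{itemize}
    \item The first term tends to zero by Assumption~\ref{A6}, because each $\mu_{\varepsilon}$ is a probability measure.
    \item The second term tends to zero by narrow convergence (Lemma~\ref{equvial-W-N}), provided $V \in C(\mathbb{T}^{d})$. This holds because $V$ is the uniform limit of the continuous functions $V_{\varepsilon}$ (Assumption~\ref{A4}).
\end{itemize}
Each $G_{\varepsilon}$ is $W_{2}$-continuous for the same reason. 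The stability hypothesis of Lemma~\ref{stability-Gamma-thm} is therefore met, and we conclude $\Gamma\text{-}\lim_{\varepsilon\to 0} H_{\varepsilon} = S + G = H$.

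We expect the only genuinely nontrivial step to be the narrow lower semicontinuity of $S$, including the convention $S = +\infty$ for measures that are not absolutely continuous. The duality formula handles this cleanly: if $\mu$ is not absolutely continuous, the supremum is $+\infty$, so the convention is consistent with the formula. The remaining steps are routine.
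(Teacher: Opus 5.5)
Your proof is correct. The recovery sequence is the same as the paper's; the route to the liminf inequality is different.

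The paper does not split off the potential. It writes $H_{\varepsilon}(\mu) = H(\mu\,|\,\pi_{\varepsilon}\ud x)$ as a relative entropy with respect to the $\varepsilon$-dependent reference measure $\pi_{\varepsilon}\ud x$. It notes that $\pi_{\varepsilon}\ud x \to \pi_{\ast}\ud x$ narrowly, and obtains the liminf inequality from the joint lower semicontinuity of relative entropy in both arguments (Lemma 9.4.3 of \cite{ambrosio2008gradient}). It separates $S$ from $\int V_{\varepsilon}\ud\mu$ only in the recovery step, using the constant sequence and uniform convergence exactly as you do.

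What your route buys:
\begin{itemize}
\item You need lower semicontinuity only for entropy relative to a \emph{fixed} probability measure, namely Lebesgue measure on $\mathbb{T}^{d}$. The Donsker--Varadhan formula gives this in a self-contained way, including $S=+\infty$ off absolutely continuous measures.
\item You otherwise need only continuous convergence of the linear term. You then reuse Lemma~\ref{stability-Gamma-thm}, just as the paper does for $\mathcal{J}_{\varepsilon,\tau}$.
\item You avoid a small point the paper passes over: $\pi_{\varepsilon}\ud x$ has mass $Z_{\varepsilon}\neq 1$. Applying the probability-measure lemma therefore requires the harmless correction $-\log Z_{\varepsilon}\to-\log Z_{\ast}$.
\end{itemize}

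What the paper's route buys is robustness for the lower bound. Joint lower semicontinuity needs only narrow convergence of the reference measures $\pi_{\varepsilon}\ud x$. Your continuous-convergence step, by contrast, relies on the uniform convergence in Assumption~\ref{A6}. Under the paper's standing hypotheses this difference is immaterial.
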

\begin{lemma}\label{p1-1}The distances $W_{\varepsilon}$ and $W_{\mathrm{GH}}$ obey the following:
\begin{equation*}
    \lim_{\varepsilon \to 0}W_{\varepsilon}(\mu_{\varepsilon}, \mu_{\varepsilon}^{n - 1})^{2} = W_{\mathrm{GH}}(\mu, \mu^{n - 1})^{2}
\end{equation*}
for all sequences $\mu_{\varepsilon}$ and $\mu_{\varepsilon}^{n - 1}$ such that $\mu_{\varepsilon} \to \mu$ and $\mu_{\varepsilon}^{n - 1} \to \mu^{n - 1}$ in $W_{2}$.
\end{lemma}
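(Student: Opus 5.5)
The proof has two parts. First, the fixed-measure statement: $W_{\varepsilon}(\mu,\nu)^{2}\to W_{\mathrm{GH}}(\mu,\nu)^{2}$ for fixed $\mu,\nu$. Second, an equi-continuity estimate that lets $\mu$ and $\mu^{n-1}$ vary with $\varepsilon$.

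\textbf{Fixed measures.} The key input is the uniform convergence of the costs,
\begin{equation*}
\sup_{x,y\in\mathbb{T}^{d}}|c_{\varepsilon}(x,y)-c_{\mathrm{hom}}(x,y)|\to 0 .
\end{equation*}
This is the homogenization of Riemannian metrics in \cite{braides2002riemannian}, which gives $\Gamma$-convergence of the length functionals and hence pointwise convergence of $c_{\varepsilon}\to c_{\mathrm{hom}}$.

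To upgrade pointwise to uniform convergence, I would use that the $\sqrt{c_{\varepsilon}}$ are distances with the uniform bound of Lemma~\ref{c-eqi}:
\begin{equation*}
|\sqrt{c_{\varepsilon}(x,y)}-\sqrt{c_{\varepsilon}(x',y')}|\le \Lambda_{0}^{-1/2}\big(d_{\mathrm{Eu}}(x,x')+d_{\mathrm{Eu}}(y,y')\big).
\end{equation*}
Here the relevant bound is $\Lambda_{0}^{-1/2}$, since the Lagrangian is $\langle A^{-1}v,v\rangle\le\Lambda_{0}^{-1}|v|^{2}$. So the family $\{c_{\varepsilon}\}$ is equi-Lipschitz on the compact set $\mathbb{T}^{d}\times\mathbb{T}^{d}$. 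Arzel\`a--Ascoli then turns pointwise convergence into uniform convergence.

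Given uniform convergence, for any coupling $\omega$,
\begin{equation*}
\Big|\int c_{\varepsilon}\,\mathrm{d}\omega-\int c_{\mathrm{hom}}\,\mathrm{d}\omega\Big|\le\|c_{\varepsilon}-c_{\mathrm{hom}}\|_{\infty}.
\end{equation*}
Taking the infimum over $\omega\in\Pi(\mu,\nu)$ gives
\begin{equation*}
|W_{\varepsilon}(\mu,\nu)^{2}-W_{\mathrm{GH}}(\mu,\nu)^{2}|\le\|c_{\varepsilon}-c_{\mathrm{hom}}\|_{\infty},
\end{equation*}
and this bound is uniform in $(\mu,\nu)$.

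\textbf{Varying measures.} Write
\begin{equation*}
\begin{aligned}
|W_{\varepsilon}(\mu_{\varepsilon},\mu^{n-1}_{\varepsilon})^{2}-W_{\mathrm{GH}}(\mu,\mu^{n-1})^{2}|
&\le\|c_{\varepsilon}-c_{\mathrm{hom}}\|_{\infty}\\
&\quad+|W_{\mathrm{GH}}(\mu_{\varepsilon},\mu^{n-1}_{\varepsilon})^{2}-W_{\mathrm{GH}}(\mu,\mu^{n-1})^{2}| .
\end{aligned}
\end{equation*}
For the second term, $W_{\mathrm{GH}}$ is a metric because $\sqrt{c_{\mathrm{hom}}}$ is a length distance. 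By \eqref{B-ast-1} and Lemmas~\ref{c-eqi} and~\ref{w-eqi}, $W_{\mathrm{GH}}\le\Lambda_{0}^{-1/2}W_{2}$. The triangle inequality then gives
\begin{equation*}
|W_{\mathrm{GH}}(\mu_{\varepsilon},\mu^{n-1}_{\varepsilon})-W_{\mathrm{GH}}(\mu,\mu^{n-1})|\le\Lambda_{0}^{-1/2}\big(W_{2}(\mu_{\varepsilon},\mu)+W_{2}(\mu^{n-1}_{\varepsilon},\mu^{n-1})\big)\to0 .
\end{equation*}
All these distances are bounded, since $\mathbb{T}^{d}$ has finite diameter. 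So convergence of the distances implies convergence of their squares, and the second term tends to $0$.

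An alternative for the second term avoids the metric property: $W_{\mathrm{GH}}^{2}$ is an optimal transport cost with a continuous cost, so it is jointly continuous under narrow convergence by stability of optimal couplings \cite{villani2008optimal}. Together with Lemma~\ref{equvial-W-N} this gives the same conclusion.

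\textbf{Main obstacle.} The hard step is the pointwise convergence $c_{\varepsilon}(x,y)\to c_{\mathrm{hom}}(x,y)$ on the torus with the formula \eqref{Bast}. I would first try to cite \cite[Theorem 2.6]{braides2002riemannian} directly, since $c_{\varepsilon}$ is the squared geodesic distance for the metric $A(x/\varepsilon)^{-1}$.

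A minor point is the periodicity in $\mathbb{T}^{d}$. I would handle it by writing $c_{\varepsilon}(x,y)=\min_{k\in\mathbb{Z}^{d}}$ of the corresponding cost on $\mathbb{R}^{d}$ from $x$ to $y+k$. Only finitely many $k$ matter, by the bounds of Lemma~\ref{c-eqi}, so a minimum over finitely many pointwise-convergent terms still converges.
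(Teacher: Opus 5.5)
Your argument is correct and is essentially the paper's. In both, uniform convergence of $c_{\varepsilon}$ to $c_{\mathrm{hom}}$ gives $|W_{\varepsilon}^{2}-W_{\mathrm{GH}}^{2}|\le\|c_{\varepsilon}-c_{\mathrm{hom}}\|_{\infty}$ by comparing costs on couplings, and then a triangle inequality together with the comparison to $W_{2}$ handles the moving endpoints. The only structural difference is that the paper applies the triangle inequality to $W_{\varepsilon}$, uniformly in $\varepsilon$, whereas you apply it to $W_{\mathrm{GH}}$.

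Your equi-Lipschitz and Arzel\`a--Ascoli step is a real improvement. The paper passes from pointwise to uniform convergence of the costs by citing compactness of $\mathbb{T}^{d}\times\mathbb{T}^{d}$ alone, which does not suffice without the equicontinuity you supply.
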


We note that our result on the homogenization of $\varepsilon$-Wasserstein distances largely follows from \cite[Section 5]{gao2023homogenization}, and Gangbo and Tudorascu \cite{gangbo2012homogenization} in which the homogenization was established as the corollary of homogenization of Hamilton-Jacobi equations in Wasserstein spaces in terms of the Benamou-Brenier dynamical formulation of $W_{\varepsilon}$. However, their discussion are mainly focused on the cases where the spatial domain is $\mathbb{R}^{d}$, and the end-points remain fixed. In the following, we provide a rigorous derivation of the homogenization on $\mathbb{T}^{d}$, and consider the perturbation of end-points coming from the solution of JKO schemes.

\subsubsection{Convergence of entropy}
\begin{proof}[Proof of Lemma~\ref{p2}]
 Consider the entropy functionals of the following form; see, for example, \cite[Chapter 9.4]{ambrosio2008gradient}:
\begin{equation*}
    \text{$H_{\varepsilon}(\mu) = H(\mu|\pi_{\varepsilon}\ud x)$ and $H(\mu) = H(\mu|\pi_{\ast} \ud x)$,}
\end{equation*}
where
\begin{equation*}
    H(\mu | \nu) =
    \begin{cases}
        \int_{\mathbb{T}^{d}} \varrho \log(\varrho)\ud \nu & \text{ if } \text{d}\mu = \varrho(x)\text{d}\nu\\
        \infty & \text{ otherwise. }
    \end{cases}
\end{equation*}
By Assumption~\ref{A6}, we have $\pi_{\varepsilon} \to \pi_{\ast}$ uniformly. Hence,
\begin{equation*}
    \lim_{\varepsilon \to 0}\int_{\mathbb{T}^{d}} \varphi(x) \pi_{\varepsilon}(x) \ud x = \int_{\mathbb{T}^{d}} \varphi(x) \pi_{\ast}(x) \ud x \text{ for all } \varphi \in C(\mathbb{T}^{d})
\end{equation*}
Therefore, $\pi_{\varepsilon}\ud x$ converges narrowly to $\pi_{\ast}\ud x$.

By the joint lower semicontinuity of the relative entropy on compact spaces \cite[Lemma 9.4.3]{ambrosio2008gradient}, for any sequence $\{\mu_{\varepsilon}\}$ converging to $\mu$ in $W_{2}$, we obtain
\begin{equation*}
    \liminf_{\varepsilon \to 0} H_{\varepsilon}(\mu_{\varepsilon}) \ge H(\mu).
\end{equation*}

Now, for given $\mu$, we let $\mu_{\varepsilon} = \mu$ for all $\varepsilon > 0$ be the recovery sequence. By using the fact that
\begin{equation*}
    \lim_{\varepsilon \to 0} \bigg|\int_{\mathbb{T}^{d}} \big(V_{\varepsilon}(x) - V(x)\big) \ud \mu\bigg| \le \lim_{\varepsilon \to 0} \sup_{x \in \mathbb{T}^{d}} \big|V_{\varepsilon}(x) - V(x)\big| \mu(\mathbb{T}^{d}) = 0,
\end{equation*}
we have
\begin{equation*}
    \begin{aligned}
        \limsup_{\varepsilon \to 0} H_{\varepsilon}(\mu_{\varepsilon}) &= S(\mu) + \limsup_{\varepsilon \to 0} \int_{\mathbb{T}^{d}} V_{\varepsilon}(x) \ud \mu = H(\mu),
    \end{aligned}
\end{equation*}
which gives $\Gamma\textnormal{-}\lim_{\varepsilon \to 0} H_{\varepsilon} = H$.
\end{proof}

\subsubsection{Homogenization of distance}
To prove the homogenization of distance, we first establish the following point-wise convergence of cost functions. 
\begin{lemma}\label{converge-c-lemma}
Given $(x, y) \in \mathbb{T}^{d} \times \mathbb{T}^{d}$, we have $\lim_{\varepsilon \to 0} c_{\varepsilon}(x,y) = c_{\mathrm{hom}}(x,y)$.
\end{lemma}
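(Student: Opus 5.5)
The plan is to realize the pointwise limit as a consequence of the $\Gamma$-convergence of the one-dimensional action functionals
\begin{equation*}
    \mathcal{A}_{\varepsilon}(z) := \int_{0}^{1}\big<A(z/\varepsilon)^{-1}\dot z,\dot z\big>\ud t
    \quad\text{to}\quad
    \mathcal{A}_{\mathrm{hom}}(z) := \int_{0}^{1}A_{\mathrm{hom}}(\dot z)\ud t .
\end{equation*}
These are considered on curves with fixed endpoints $z(0)=x$, $z(1)=y$, in the $L^{\infty}$ (or $L^{2}$) topology. This is exactly the Riemannian-metric homogenization of \cite[Theorem 2.6]{braides2002riemannian}, with $A_{\mathrm{hom}}$ given by the cell formula \eqref{Bast}. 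The cost is then the minimum value, and the fundamental theorem of $\Gamma$-convergence (Theorem~\ref{fund-G}) yields $\min \mathcal{A}_{\varepsilon}\to\min\mathcal{A}_{\mathrm{hom}}$, i.e.\ $c_{\varepsilon}(x,y)\to c_{\mathrm{hom}}(x,y)$.

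The first step is to lift to $\mathbb{R}^{d}$. A curve on $\mathbb{T}^{d}$ from $x$ to $y$ lifts to a curve in $\mathbb{R}^{d}$ from a fixed lift $\tilde x$ to some lift $\tilde y + k$ with $k\in\mathbb{Z}^{d}$, so that
\begin{equation*}
    c_{\varepsilon}(x,y)=\min_{k\in\mathbb{Z}^{d}} c^{\mathbb{R}^{d}}_{\varepsilon}(\tilde x,\tilde y+k).
\end{equation*}
The same identity holds for $c_{\mathrm{hom}}$. Because of the bounds $\Lambda_1^{-1}|\xi|^2\le\langle A^{-1}\xi,\xi\rangle\le\Lambda_0^{-1}|\xi|^2$, Lemma~\ref{c-eqi} gives $c_{\varepsilon}^{\mathbb{R}^d}(\tilde x,\tilde y+k)\ge\Lambda_1^{-1}|\tilde x-\tilde y-k|^2$, while $c_\varepsilon(x,y)\le\Lambda_0^{-1}d_{\mathrm{Eu}}(x,y)^2\le C$. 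Hence only finitely many $k$ (a set independent of $\varepsilon$) are relevant. It therefore suffices to prove $c^{\mathbb{R}^{d}}_{\varepsilon}(a,b)\to c^{\mathbb{R}^{d}}_{\mathrm{hom}}(a,b)$ for fixed $a,b\in\mathbb{R}^{d}$, since a minimum over a finite set commutes with the limit. One subtlety is that $A(z/\varepsilon)$ must be $1$-periodic in $z$ on the torus. I would take $\varepsilon=1/m$ (or simply note that the problem is posed in $\mathbb{R}^d$ after lifting) so that this is consistent.

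The next step is to prove the $\Gamma$-convergence on $\mathbb{R}^{d}$ with fixed endpoints. Equi-coercivity follows because $\mathcal{A}_{\varepsilon}(z)\ge\Lambda_1^{-1}\|\dot z\|_{L^2}^2$ and the endpoints are fixed, so sublevel sets are bounded in $H^1$ and compact in $C([0,1])$. I would quote \cite[Theorem 2.6]{braides2002riemannian} (see also the $\Gamma$-convergence of periodic one-dimensional integrals with values in $\mathbb{R}^d$, \cite{braides2002gamma}) for $\Gamma$-$\lim\mathcal{A}_\varepsilon=\mathcal{A}_{\mathrm{hom}}$ in $L^\infty$. Boundary conditions are preserved by the standard fact that recovery sequences can be modified near $t=0,1$ at small cost: cut off with a linear interpolation on intervals of length $\delta$ and use the quadratic upper bound. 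The liminf inequality is unaffected by constraints because uniform convergence preserves endpoints.

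I expect the main obstacle to be this boundary-value compatibility. A recovery sequence built from the cell problem \eqref{Bast} on a piecewise-affine approximation of $z$ (with $\varphi$ periodic-in-$T$ correctors rescaled by $\varepsilon$) converges to $z$ uniformly, but its endpoints differ from $a,b$ by $O(\varepsilon)$. I would fix this by concatenating straight segments of length $O(\varepsilon)$ traversed over time $\sqrt{\varepsilon}$. By the bound $\Lambda_0^{-1}|\cdot|^2$ this costs $O(\varepsilon^2/\sqrt{\varepsilon})\to0$, after a reparametrization of the main part to $[\sqrt\varepsilon,1-\sqrt\varepsilon]$ that changes the action by a factor $(1-2\sqrt\varepsilon)^{-1}\to1$. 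Finally, a density argument passes from piecewise-affine $z$ to general $H^1$ curves, using continuity of $\mathcal{A}_{\mathrm{hom}}$ under strong $H^1$ convergence, which follows from the quadratic bounds \eqref{B-ast-1} and convexity of $A_{\mathrm{hom}}$.
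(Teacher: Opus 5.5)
Your proposal is correct and follows essentially the paper's route: lift to the cover $\mathbb{R}^d$, invoke the $\Gamma$-convergence of the action functionals from the homogenization of Riemannian metrics (E; Braides--Buttazzo--Fragal\`a), obtain equi-coercivity from the uniform $H^1$ bound, and conclude with the fundamental theorem of $\Gamma$-convergence. The paper applies $\Gamma$-convergence directly on the class of curves ending in $y+\mathbb{Z}^d$, whereas you reduce to finitely many fixed-endpoint problems and repair the endpoints of the recovery sequence explicitly; these steps, together with your observation that $\varepsilon = 1/m$ is needed for $c_\varepsilon$ to be well defined on $\mathbb{T}^d$, supply details the paper passes over.
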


\begin{proof}
Let $E_{\varepsilon}(z) := \int_0^1 \big<A(z/\varepsilon)^{-1}\dot{z}, \dot{z}\big> \ud t$ and $E_{\ast}(z) := \int_{0}^{1}A_{\mathrm{hom}}(\dot{z})\ud t$ where $A_{\mathrm{hom}}$ is defined in~\eqref{Bast}. According to the definition of least action, we write
\begin{equation}\label{eps-c-copy}
    c_\eps(x,y) = \min_{z \in \mathscr{C}(x, y)} E_{\varepsilon}(z) \text{ and } c_{\mathrm{hom}}(x,y) = \min_{z \in \mathscr{C}(x, y)} E_{\ast}(z)
\end{equation}
where $\mathscr{C}(x, y)$ is the space of curves connecting $x$ and $y$ on $\mathbb{T}^{d}$ defined by
\begin{equation*}
    \mathscr{C}(x, y) := \Big\{z \in H^{1}\big([0, 1]; \mathbb{T}^{d}\big): z(0) = x \textnormal{ and } z(1)=y\Big\}.
\end{equation*}
By periodically extending $\mathbb{T}^{d} = [0, 1]^{d}$ to $\mathbb{R}^{d}$ through the universal Riemannian cover, we can compute $c_\eps(x,y)$ via
\begin{equation}\label{eps-c-copy-extend}
    c_\eps(x,y) = \min_{z \in \mathscr{C}_{\text{cov}}(x, y)}E_{\varepsilon}(z)
\end{equation}
where the minimization happens in the following admissible set,
\begin{equation*}
    \mathscr{C}_{\text{cov}}(x, y) := \Big\{z \in H^{1}([0, 1]; \mathbb{R}^{d}): z(0) = x \text{ and } z(1)\in y + \mathbb{Z}^{d}\Big\}.
\end{equation*}
Since Riemannian cover preserves the local metric, we have
\begin{equation*}
    c_\varepsilon(x,y) = \min_{z \in \mathscr{C}(x, y)}E_{\varepsilon}(z) = \min_{z \in \mathscr{C}_{\mathrm{cov}}(x, y)} E_{\varepsilon}(z).
\end{equation*}
We now apply homogenization of vector-valued functionals \cite{weinan1991class, braides2002riemannian, braides2002gamma} on $\mathscr{C}_{\mathrm{cov}}(x, y)$ to get the following $\Gamma$-convergence,
\begin{equation}\label{G-be}
    \Gamma\text{-}\lim_{\varepsilon \to 0} E_{\varepsilon}(z) = E_{\ast}(z) = \int_{0}^{1}A_{\mathrm{hom}}(\dot{z})\ud t \text{ in $L^{2}([0, 1]; \mathbb{R}^{d})$, }
\end{equation}
for all $z \in \mathscr{C}_{\mathrm{cov}}(x, y)$.

To conclude the proof of Lemma~\ref{converge-c-lemma} based on the $\Gamma$-convergence~\eqref{G-be}, it suffices to establish the compactness of minimizing geodesics in $L^{2}([0, 1]; \mathbb{R}^{d})$. We let $z_{\varepsilon}$ be a minimizer of the action functional~\eqref{eps-c-copy-extend} for given $\varepsilon$. By Assumption~\ref{A3}, and equivalence of distances in Lemma~\ref{c-eqi}, we have
\begin{equation}\label{grad-w}
    \Lambda_{1}^{-1} \|\dot{z}_{\varepsilon}\|_{L^{2}([0, 1]; \mathbb{R}^{d})}^{2} \le E_{\varepsilon}(z_{\varepsilon}) := \int_0^1 \big<A(z_{\varepsilon}/\varepsilon)^{-1}\dot{z}_{\varepsilon}, \dot{z}_{\varepsilon}\big> \ud t \le \Lambda_{0}^{-1} d_{\mathrm{Eu}}(x, y)^{2},
\end{equation}
which means that $\|\dot{z}_{\varepsilon}\|_{L^{2}}^{2}$ is uniformly bounded. By H\"older's inequality \cite[Theorem 4.4]{struwe2000variational}, we have
\begin{equation}\label{Holder-w}
    \begin{aligned}
        |z_{\varepsilon}(t_{0}) - z_{\varepsilon}(t_{1})| &\le \int_{t_{0}}^{t_{1}} |\dot{z}_{\varepsilon}(t)| \ud t \le \bigg(|t_{1} - t_{0}| \int_{t_{0}}^{t_{1}} |\dot{z}_{\varepsilon}(t)|^{2} \ud t\bigg)^{\frac{1}{2}}\\
        &\le |t_{1} - t_{0}|^{\frac{1}{2}} \Lambda_{0}^{-\frac{1}{2}} d_{\mathrm{Eu}}(x, y),
    \end{aligned}
\end{equation}
which means that $(z_{\varepsilon})_{\varepsilon > 0}$ is equi-continuous. Since $z_{\varepsilon}(0) = x$, we also have
\begin{equation*}
    |z_{\varepsilon}(t) - x| \le t^{\frac{1}{2}} \Lambda_{0}^{-\frac{1}{2}} d_{\mathrm{Eu}}(x, y) \text{ for } 0 \le t \le 1 \text{ and } \varepsilon > 0,
\end{equation*}
which means that $(z_{\varepsilon})_{\varepsilon > 0}$ is uniformly bounded.

In summary, $\|z_{\varepsilon}\|_{H^{1}}$ is uniformly bounded from above, which means that $(z_{\varepsilon})_{\varepsilon > 0}$ is precompact in $L^{2}([0, 1]; \mathbb{R}^{d})$. Hence, by further using Arz\'ela-Ascoli's theorem, there exists $z_{\ast} \in H^{1}([0, 1]; \mathbb{R}^{d})$ such that
\begin{equation*}
    \text{$z_{\varepsilon} \to z_{\ast}$ both uniformly and in $L^{2}([0, 1], \mathbb{R}^{d})$ as $\varepsilon \to 0$,} 
\end{equation*}
up to the extraction of a subsequence. The uniform convergence implies $z_{\ast}(1) \in y + \mathbb{Z}^{d}$ which means that $z_{\ast}$ is admissible for~\eqref{eps-c-copy-extend}.

Finally, by fundamental theorem (Theorem~\ref{fund-G}), we have
\begin{equation*}
    \min_{z \in \mathscr{C}_{\mathrm{cov}}(x, y)}E_{\varepsilon}(z) \to \min_{z \in \mathscr{C}_{\mathrm{cov}}(x, y)}E_{\ast}(z) \text{ as } \varepsilon \to 0.
\end{equation*}
Therefore,
\begin{equation*}
    \begin{aligned}
        c_\eps(x,y) = \min_{z \in \mathscr{C}(x, y)}E_{\varepsilon}(z) &= \min_{z \in \mathscr{C}_{\mathrm{cov}}(x, y)}E_{\varepsilon}(z)\\
        &\to \min_{z \in \mathscr{C}_{\mathrm{cov}}(x, y)} E_{\ast}(z) = \min_{z \in \mathscr{C}(x, y)}E_{\ast}(z) = c_\ast(x,y),
    \end{aligned}
\end{equation*}
which is the convergence of costs.
\end{proof}

We now establish the convergence of Wasserstein distances in Lemma~\ref{p1-1} based on the point-wise convergence of cost functions in Lemma~\ref{converge-c-lemma}. The proof is due to the stability of optimal transport presented in \cite[Theorem 28.6]{villani2008optimal} and \cite[Section 5.2]{gao2023homogenization}.
\begin{proof}[Proof of Lemma~\ref{p1-1}]We first show that, for given $\mu$ and $\mu^{n - 1}$,
\begin{equation*}
    \lim_{\varepsilon \to 0} W_{\varepsilon}(\mu, \mu^{n - 1})^{2} = W_{\mathrm{GH}}(\mu, \mu^{n - 1})^{2}.
\end{equation*}
This amounts to prove that, for any $\delta > 0$, there exists $\varepsilon(\delta) > 0$ such that
\begin{equation}\label{p1-1-2}
    \big|W_{\varepsilon}(\mu, \mu^{n - 1})^{2} - W_{\mathrm{GH}}(\mu, \mu^{n - 1})^{2}\big| < \delta \text{ for all } \varepsilon < \varepsilon(\delta),
\end{equation}
In fact, by the point-wise convergence $c_{\varepsilon}(x, y) \to c_{\mathrm{hom}}(x, y)$, for any $\delta > 0$, there exists $\varepsilon(\delta) > 0$ such that
\begin{equation*}
    |c_{\varepsilon}(x, y) - c_{\mathrm{hom}}(x, y)| < \delta \text{ for all } \varepsilon < \varepsilon(\delta).
\end{equation*}
Since $\mathbb{T}^{d} \times \mathbb{T}^{d}$ is compact, $\varepsilon(\delta)$ can be made independent of $(x, y)$.

Let $\omega_{\varepsilon}$ be the optimal coupling in the distance $W_{\varepsilon}(\mu, \mu^{n - 1})$, we have
\begin{equation*}
    \begin{aligned}
        W_{\mathrm{GH}}(\mu, \mu^{n - 1})^{2} \le \int_{\mathbb{T}^{d} \times \mathbb{T}^{d}} c_{\mathrm{hom}}(x, y) \ud \omega_{\varepsilon} &\le \int_{\mathbb{T}^{d} \times \mathbb{T}^{d}} c_{\varepsilon}(x, y) \ud \omega_{\varepsilon} + \delta\\
        &= W_{\varepsilon}(\mu, \mu^{n - 1})^{2} + \delta.
    \end{aligned}
\end{equation*}
By switching the role between $W_{\mathrm{GH}}$ and $W_{\varepsilon}$ in the above argument, we can also get
\begin{equation*}
    W_{\varepsilon}(\mu, \mu^{n - 1})^{2} \le W_{\mathrm{GH}}(\mu, \mu^{n - 1})^{2} + \delta.
\end{equation*}
Hence, we have~\eqref{p1-1-2}.

To conclude the lemma, we need to show that, if $\mu_{\varepsilon} \to \mu$ and $\mu_{\varepsilon}^{n - 1} \to \mu^{n - 1}$ in $W_{2}$, then
\begin{equation}\label{dist-w-convergence}
    \lim_{\varepsilon \to 0} W_{\varepsilon}(\mu_{\varepsilon}, \mu^{n - 1}_{\varepsilon}) - W_{\varepsilon}(\mu, \mu^{n - 1}) = 0.
\end{equation}
Indeed, by using the triangle inequality for Wasserstein distances (see \cite[page 94]{villani2008optimal}), we have
\begin{equation}\label{triangle-1}
    \begin{aligned}
        \big|W_{\varepsilon}(\mu_{\varepsilon}, \mu_{\varepsilon}^{n - 1}) - W_{\varepsilon}(\mu, \mu^{n - 1})\big| &\le \big|W_{\varepsilon}(\mu_{\varepsilon}, \mu_{\varepsilon}^{n - 1}) - W_{\varepsilon}(\mu_{\varepsilon}, \mu^{n - 1})\big|\\
        &\quad + \big|W_{\varepsilon}(\mu_{\varepsilon}, \mu^{n - 1}) - W_{\varepsilon}(\mu, \mu^{n - 1})\big|\\
        &\le W_{\varepsilon}(\mu_{\varepsilon}^{n - 1}, \mu^{n - 1}) + W_{\varepsilon}(\mu_{\varepsilon}, \mu)\\
        &\le \Lambda_{0}^{-\frac{1}{2}} W_{2}(\mu^{n - 1}_{\varepsilon}, \mu^{n - 1}) + \Lambda_{0}^{-\frac{1}{2}} W_{2}(\mu_{\varepsilon}, \mu),
    \end{aligned}
\end{equation}
where the last inequality is due to the fact $c_{\varepsilon}(x, y) \le \Lambda_{0}^{-1} d_{\mathrm{Eu}}(x, y)^{2}$ and the equivalence of distances in Lemmas~\ref{c-eqi} and~\ref{w-eqi}. Since $\mu_{\varepsilon} \to \mu$ and $\mu^{n - 1}_{\varepsilon} \to \mu^{n - 1}$ in $W_{2}$, we have~\eqref{dist-w-convergence}.
\end{proof}

\subsection{Convergence in time}\label{sec:3-2}
In this subsection, we analyze the limit $\tau \to 0$ for both the $\varepsilon$-JKO scheme and the homogenized JKO scheme. We do not distinguish between a probability measure $\mu$ and its density, since all measures under consideration are absolutely continuous due to the presence of the logarithmic entropy.

\subsubsection{$\varepsilon$-JKO scheme}
We show the convergence of $\varepsilon$-JKO scheme in the vanishing step-size limit $\tau \to 0$ as follows.
\begin{theorem}\label{JKO-t}Given $\varepsilon > 0$. Let $\mu_{\varepsilon, \tau}$ be the solution of the $\varepsilon$-JKO scheme with initial value $\mu_{0}$ with step-size $\tau$ given in Definition~\ref{JKO-scheme-def}, we have,
\begin{equation*}
    \mu_{\varepsilon, \tau}(t) \to \mu_{\varepsilon}(t) \text{ in $W_{2}$-metric, ~for all $t \in (0, \infty)$ as $\tau \to 0$,}
\end{equation*}
where $\mu_{\varepsilon}(t)$ is the solution to~\eqref{0} with initial value $\mu_{0}$.
\end{theorem}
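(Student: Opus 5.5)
We follow the classical Jordan--Kinderlehrer--Otto argument, adapted to the Riemannian cost $c_\varepsilon$ with $\varepsilon>0$ fixed. Throughout, $\varepsilon$ is frozen, so the only features we use are that $c_\varepsilon$ is a smooth squared Riemannian distance on the compact manifold $(\mathbb{T}^{d}, A(x/\varepsilon)^{-1})$, that $V_\varepsilon\in C^1$, and that the equivalence of Lemmas~\ref{c-eqi} and~\ref{w-eqi} holds.

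\textbf{Step 1: a priori estimates.} Testing the minimality of $\mu_\varepsilon^n$ in~\eqref{JKO-WW-copy} against $\mu=\mu_\varepsilon^{n-1}$ and summing over $n$ gives
\begin{equation*}
H_\varepsilon(\mu_\varepsilon^N)+\frac{1}{2\tau}\sum_{n=1}^N W_\varepsilon(\mu_\varepsilon^n,\mu_\varepsilon^{n-1})^2\le H_\varepsilon(\mu_0).
\end{equation*}
On $\mathbb{T}^d$ the entropy is bounded below (by $-\log|\mathbb{T}^d|=0$ for $S$, and $V_\varepsilon\ge0$), so the sum is bounded. By Cauchy--Schwarz and Lemma~\ref{w-eqi}, this yields the approximate $\tfrac12$-H\"older bound $W_2(\mu_{\varepsilon,\tau}(t),\mu_{\varepsilon,\tau}(s))\le C(|t-s|^{1/2}+\tau^{1/2})$. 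Together with the compactness of $(P(\mathbb{T}^d),W_2)$ (Lemma~\ref{comp-1}), a refined Arzel\`a--Ascoli argument (as in \cite{ambrosio2008gradient}) extracts a subsequence $\tau_k\to0$ with $\mu_{\varepsilon,\tau_k}(t)\to\mu_\varepsilon(t)$ in $W_2$ for every $t$. In addition, the uniform entropy bound gives weak $L^1$ convergence of the densities.

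\textbf{Step 2: discrete Euler--Lagrange equation.} For $\xi\in C^\infty(\mathbb{T}^d;\mathbb{R}^d)$, perturb $\mu_\varepsilon^n$ by the flow $\Phi_s$ of $\xi$. The derivative of $H_\varepsilon(\Phi_s\#\mu_\varepsilon^n)$ at $s=0$ is $\int(-\nabla\cdot\xi+\xi\cdot\nabla V_\varepsilon)\,d\mu_\varepsilon^n$. For the transport term, Lemma~\ref{monge} provides an optimal map $\mathbf{r}$ from $\mu_\varepsilon^n$ to $\mu_\varepsilon^{n-1}$. Using the first variation of the squared Riemannian distance, $\nabla_x c_\varepsilon(x,y)=-2A(x/\varepsilon)^{-1}\xi_{x\to y}$ (Lemma~\ref{velcoti-c}), we obtain
\begin{equation*}
\int\Big(\nabla\cdot\xi-\xi\cdot\nabla V_\varepsilon\Big)\,d\mu_\varepsilon^n=\frac{1}{\tau}\int\big\langle A(x/\varepsilon)^{-1}\xi_{x\to\mathbf{r}(x)},\xi(x)\big\rangle\,d\mu_\varepsilon^n .
\end{equation*}
Here $c_\varepsilon(\cdot,y)$ is only semiconcave, not differentiable, at the cut locus. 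We handle this with the standard argument that $\mu_\varepsilon^n$-a.e.\ $x$ is not in the cut locus of $\mathbf{r}(x)$ \cite{villani2008optimal}, or alternatively by using one-sided derivatives from the perturbations $\pm\xi$.

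\textbf{Step 3: consistency with the weak form of~\eqref{0}.} Choose $\xi=A(x/\varepsilon)\nabla\varphi$ with $\varphi\in C^\infty(\mathbb{T}^d)$. The right-hand side then becomes $\tau^{-1}\int\langle\xi_{x\to\mathbf{r}(x)},\nabla\varphi(x)\rangle\,d\mu_\varepsilon^n$. A second-order Taylor expansion along the geodesic $t\mapsto\exp_x(t\,\xi_{x\to \mathbf{r}(x)})$, using Lemma~\ref{exp} and uniform $C^2$ bounds on the exponential map, gives
\begin{equation*}
\varphi(\mathbf{r}(x))-\varphi(x)=\langle\nabla\varphi(x),\xi_{x\to\mathbf{r}(x)}\rangle+O\big(c_\varepsilon(x,\mathbf{r}(x))\big).
\end{equation*}
Hence
\begin{equation*}
\int\varphi\,d\mu_\varepsilon^{n-1}-\int\varphi\,d\mu_\varepsilon^n=-\tau\int\big(\nabla\cdot(A\nabla\varphi)-A\nabla\varphi\cdot\nabla V_\varepsilon\big)\,d\mu_\varepsilon^n+O\big(W_\varepsilon(\mu_\varepsilon^n,\mu_\varepsilon^{n-1})^2\big).
\end{equation*}
Summing in $n$ against a time cutoff, the total error is $O(\tau)$ by Step 1. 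Passing to the limit along $\tau_k$ shows that $\mu_\varepsilon$ is a distributional solution of~\eqref{0} with initial datum $\mu_0$. The limit of the linear term only requires narrow convergence, since the test function $\nabla\cdot(A\nabla\varphi)-A\nabla\varphi\cdot\nabla V_\varepsilon$ is continuous given $A\in C^2$ and $V_\varepsilon\in C^1$.

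\textbf{Step 4: uniqueness and full convergence.} Weak solutions of the linear equation~\eqref{0} in $L^\infty(0,T;P)$ are unique, by a duality argument with the smooth adjoint backward equation. The limit is therefore independent of the subsequence, and the whole family $\mu_{\varepsilon,\tau}(t)$ converges. The step I expect to be the main obstacle is Step 2, together with the Taylor estimate of Step 3. These require handling the nonsmoothness of $c_\varepsilon$ at the cut locus and controlling the $O(c_\varepsilon)$ remainder uniformly via the $C^2$-regularity of $A$ on the compact torus. All the remaining steps are routine JKO machinery.
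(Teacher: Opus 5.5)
Your proposal is correct and follows essentially the paper's route: the a priori bound $\sum_n W_\varepsilon(\mu_\varepsilon^n,\mu_\varepsilon^{n-1})^2\le C\tau$ plus compactness of minimizing movements, then the first-order condition of Proposition~\ref{op-ejko} via flow perturbations (using the supergradient of $c_\varepsilon$ and the $\pm\xi$ trick), then the exponential-map Taylor estimate of Lemma~\ref{lemma2} with the choice $\xi=A(x/\varepsilon)\nabla\varphi$, and finally summation in $n$ and uniqueness for the linear limit equation. One slip: the left-hand side of your Step~2 identity has the wrong sign and should read $\int(\xi\cdot\nabla V_\varepsilon-\nabla\cdot\xi)\,d\mu_\varepsilon^n$, but the relation you actually use in Step~3 is the correct one, so the conclusion is unaffected.
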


We note that the convergence analysis in the limit $\tau \to 0$ has been studied extensively for standard Wasserstein gradient flows; see, for example, \cite{ambrosio2008gradient}. By contrast, results for inhomogeneous Fokker--Planck equations with spatially varying diffusion coefficients are relatively scarce. Such convergence was first established by Lisini \cite{lisini2009nonlinear} using the theory of gradient flows in metric spaces developed in \cite{ambrosio2008gradient}, and was more recently proved by Liu and Tzavaras \cite{liu2025variational} through a Nash--Kuiper isometric embedding of the underlying metric.

In this work, we present an alternative proof based on the geometry of the distance $W_{\varepsilon}(\mu,\nu)$, as described in \cite[Chapter 10]{villani2008optimal}. This approach can be adapted to prove the convergence of the homogenized JKO scheme below. We note that the estimates in our proof share similarities with those used for the convergence of JKO schemes on Riemannian manifolds with bounded Ricci curvature \cite{erbar2010heat, rankin2024jko}.

We first show the following first-order optimality condition of the $\varepsilon$-JKO scheme.
For the ease of notation, we omit the subscript $\varepsilon$ in $\mu$ throughout this section.
\begin{proposition}\label{op-ejko}Given $\mu^{n - 1} \in P(\mathbb{T}^{d})$, let $\mu^{n} = \arg\min_{\mu} \mathcal{J}_{\varepsilon, \tau}(\mu, \mu^{n - 1})$ with $\mathcal J$ defined in \eqref{JKO-WW-copy}, 
and $\boldsymbol{\mathrm{r}}_{\ast}$ be the optimal transport map from $\mu^{n}$ to $\mu^{n - 1}$ under the cost $c_{\varepsilon}(x, y)$, we have
\begin{equation*}
    \begin{aligned}
        -\int_{\mathbb{T}^{d}} \nabla \cdot \boldsymbol{\mathrm{u}}(x) \ud \mu^{n} = \int_{\mathbb{T}^{d}} \frac{1}{\tau} \big<\boldsymbol{\mathrm{u}}(x), A(x/\varepsilon)^{-1}\xi_{x \to \boldsymbol{\mathrm{r}}_{\ast}(x)} &- \tau \nabla V_{\varepsilon}(x)\big> \ud \mu^{n}\\
        &\text{ for all } \boldsymbol{\mathrm{u}} \in C^{1}(\mathbb{T}^{d}; \mathbb{R}^{d}),
    \end{aligned}
\end{equation*}
where $\xi_{x \to y} = \dot{z}(0)$ is the initial velocity vector of an action minimizing curve $z(t)$ from $x$ to $y$ under the action $\int_{0}^{1}\la A(z/\varepsilon) \dot{z}, \dot{z}\ra \ud t$.
\end{proposition}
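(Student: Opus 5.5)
The plan is the classical JKO variational argument: perturb the minimizer $\mu^{n}$ by a push-forward along a smooth vector field and differentiate $\mathcal{J}_{\varepsilon,\tau}(\cdot,\mu^{n-1})$ at the perturbation parameter $s=0$. Fix $\boldsymbol{\mathrm{u}}\in C^{1}(\mathbb{T}^{d};\mathbb{R}^{d})$ and set $\Phi_{s}(x):=x+s\boldsymbol{\mathrm{u}}(x)$, which is a $C^{1}$ diffeomorphism of $\mathbb{T}^{d}$ for $|s|$ small. Put $\mu_{s}:=\Phi_{s}\#\mu^{n}$. By minimality, $\mathcal{J}_{\varepsilon,\tau}(\mu_{s},\mu^{n-1})\ge\mathcal{J}_{\varepsilon,\tau}(\mu^{n},\mu^{n-1})$ for all small $s$. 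Dividing by $s>0$ and letting $s\downarrow0$ gives a one-sided inequality. Replacing $\boldsymbol{\mathrm{u}}$ by $-\boldsymbol{\mathrm{u}}$ gives the reverse inequality, and together they yield the stated identity. Throughout, the Lagrangian is $\la A(z/\varepsilon)^{-1}\dot z,\dot z\ra$, consistent with \eqref{eps-c}.

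\textbf{Entropy and potential terms.} Since $H_{\varepsilon}(\mu^{n})<\infty$, $\mu^{n}$ has a density $\varrho$ with $\varrho\log\varrho\in L^{1}$. The change of variables formula gives
\[
S(\mu_{s})=S(\mu^{n})-\int_{\mathbb{T}^{d}}\log\det(I+s\nabla\boldsymbol{\mathrm{u}})\ud\mu^{n}.
\]
Expanding $\log\det(I+s\nabla\boldsymbol{\mathrm{u}})=s\nabla\cdot\boldsymbol{\mathrm{u}}+O(s^{2})$ uniformly in $x$ yields $\frac{d}{ds}S(\mu_{s})|_{s=0}=-\int\nabla\cdot\boldsymbol{\mathrm{u}}\ud\mu^{n}$. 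Since $V_{\varepsilon}\in C^{1}$, dominated convergence gives $\frac{d}{ds}\int V_{\varepsilon}\ud\mu_{s}|_{s=0}=\int\la\nabla V_{\varepsilon},\boldsymbol{\mathrm{u}}\ra\ud\mu^{n}$.

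\textbf{Distance term.} Lemma~\ref{monge} gives the optimal map $\boldsymbol{\mathrm{r}}_{\ast}$ from $\mu^{n}$ to $\mu^{n-1}$. The coupling $(\Phi_{s},\boldsymbol{\mathrm{r}}_{\ast})\#\mu^{n}$ is admissible between $\mu_{s}$ and $\mu^{n-1}$, so
\[
W_{\varepsilon}(\mu_{s},\mu^{n-1})^{2}-W_{\varepsilon}(\mu^{n},\mu^{n-1})^{2}\le\int\big[c_{\varepsilon}(x+s\boldsymbol{\mathrm{u}}(x),\boldsymbol{\mathrm{r}}_{\ast}(x))-c_{\varepsilon}(x,\boldsymbol{\mathrm{r}}_{\ast}(x))\big]\ud\mu^{n}.
\]
The key pointwise estimate is a superdifferential bound:
\[
c_{\varepsilon}(x+h,y)\le c_{\varepsilon}(x,y)-2\la A(x/\varepsilon)^{-1}\xi_{x\to y},h\ra+C|h|^{2},
\]
with $C$ uniform in $x,y$. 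To prove it, take a minimizing curve $z$ from $x$ to $y$ and build a competitor from $x+h$ to $y$, for instance $\tilde z(t)=z(t)+(1-t)h$. Then expand the action to first order in $h$. The linear term is the first variation of the energy, which after integrating by parts with the Euler--Lagrange equation reduces to the boundary term $-2\la A(x/\varepsilon)^{-1}\dot z(0),h\ra$. The quadratic remainder is controlled by $\|A^{-1}\|_{C^{2}}$ (using $A\in C^{2}$) and by the uniform bound on $|\dot z|$ from Lemma~\ref{velcoti-c} and Lemma~\ref{c-eqi}. This is exactly the semiconcavity of $c_{\varepsilon}$. The bound holds even where $c_{\varepsilon}(\cdot,y)$ fails to be differentiable, i.e.\ on the cut locus, for whichever minimizing geodesic defines $\xi_{x\to y}$.

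\textbf{Conclusion and main obstacle.} Inserting the three expansions, dividing by $s>0$ and letting $s\downarrow0$ gives
\[
0\le-\int\nabla\cdot\boldsymbol{\mathrm{u}}\ud\mu^{n}+\int\la\nabla V_{\varepsilon},\boldsymbol{\mathrm{u}}\ra\ud\mu^{n}-\frac{1}{\tau}\int\la A(x/\varepsilon)^{-1}\xi_{x\to\boldsymbol{\mathrm{r}}_{\ast}(x)},\boldsymbol{\mathrm{u}}\ra\ud\mu^{n}.
\]
Applying this to $-\boldsymbol{\mathrm{u}}$ gives equality, which rearranges to the claimed formula. The step I expect to be the real work is the uniform superdifferential estimate for $c_{\varepsilon}$. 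It requires uniform control of the second variation of the geodesic energy and of the speeds of minimizing curves, and it requires that $\xi_{x\to\boldsymbol{\mathrm{r}}_{\ast}(x)}$ be selected measurably. For the latter, either the cut locus has measure zero so $\xi$ is $\mu^{n}$-a.e.\ unique, or a measurable selection is used. The entropy expansion is routine once finiteness of $S(\mu^{n})$ is used to justify dominated convergence.
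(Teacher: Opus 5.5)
Your proposal is correct and follows the paper's argument. Both push $\mu^{n}$ forward along $\boldsymbol{\mathrm{u}}$, differentiate the entropy as in JKO, bound the transport term using the admissible coupling and the superdifferential of $c_{\varepsilon}$ with supergradient $-2A(x/\varepsilon)^{-1}\xi_{x\to y}$, and symmetrize with $-\boldsymbol{\mathrm{u}}$. The differences are cosmetic or in your favour: you use $x+s\boldsymbol{\mathrm{u}}(x)$ instead of the flow of $\boldsymbol{\mathrm{u}}$, and you prove the semiconcavity bound directly via the competitor $z(t)+(1-t)h$ with a uniform $C|h|^{2}$ remainder, which is exactly what is needed to pass to the limit under the integral, whereas the paper only cites Villani's Proposition 10.15 with a non-uniform $o(\cdot)$ term.
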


\begin{proof}[Proof of Proposition~\ref{op-ejko}]Let $\boldsymbol{\mathrm{u}} : \mathbb{T}^{d} \to \mathbb{R}^{d}$ be any $C^{1}$ test vector field. Consider the flow map $\boldsymbol{\mathrm{r}}(x, s)$ generated by $\boldsymbol{\mathrm{u}}$ defined as follows,
\begin{equation}\label{flow-def}
    \frac{\partial \boldsymbol{\mathrm{r}}(x, s)}{\partial s} = \boldsymbol{\mathrm{u}} \circ \boldsymbol{\mathrm{r}}(x, s),\quad \boldsymbol{\mathrm{r}}(\cdot, 0) = \text{id}. 
\end{equation}
Let $\mu(s) := \boldsymbol{\mathrm{r}}(\cdot, s) \# \mu^{n}$. By the optimality of $\mu^{n}$, we have
\begin{equation}\label{J-stability}
    \begin{aligned}
        &\frac{\mathcal{J}_{\varepsilon, \tau}(\mu(s), \mu^{n - 1}) - \mathcal{J}_{\varepsilon, \tau}(\mu^{n}, \mu^{n - 1})}{s}\\
        &= \frac{H_{\varepsilon}(\mu(s), \mu^{n - 1}) - H_{\varepsilon}(\mu^{n}, \mu^{n - 1})}{s} + \frac{W_{\varepsilon}(\mu(s), \mu^{n - 1}) - W_{\varepsilon}(\mu^{n}, \mu^{n - 1})}{2\tau \times s} \ge 0.
    \end{aligned}
\end{equation}
For the entropy functional, the limit $s \to 0$ is given by \cite[equation (37) and (38)]{jordan1998variational} as follows,
\begin{equation}\label{limH}
    \lim_{s \to 0}\frac{H_{\varepsilon}(\mu(s)) - H_{\varepsilon}(\mu^{n})}{s} = \int_{\mathbb{T}^{d}} \la\nabla V_{\varepsilon}, \boldsymbol{\mathrm{u}} \ra - \nabla \cdot \boldsymbol{\mathrm{u}} \ud \mu^{n}.
\end{equation}

To take the limit $s \to 0$ for the transport metric, we let $\omega$ be the optimal coupling between $\mu^{n}$ and $\mu^{n - 1}$ in the distance $W_{\varepsilon}(\mu^{n}, \mu^{n - 1})$. The flow map $\boldsymbol{\mathrm{r}}$ then induces a time dependent coupling $\omega[s]$ by pushing-forward the first marginal measure:
\begin{equation*}
    \int_{\mathbb{T}^{d} \times \mathbb{T}^{d}} \varphi(x, y) \ud \omega[s](x, y) = \int_{\mathbb{T}^{d} \times \mathbb{T}^{d}} \varphi(\boldsymbol{\mathrm{r}}(x, s), y)\ud \omega(x, y), \text{ for all } \varphi \in C(\mathbb{T}^{d}).
\end{equation*}
By using the fact that $W_{\varepsilon}(\mu(s), \mu^{n - 1})^{2} \le \int_{\mathbb{T}^{d} \times \mathbb{T}^{d}} c_{\varepsilon}(x, y) \ud \omega[s]$, we have
\begin{equation}\label{ineqW}
    \begin{aligned}
        \frac{1}{2s} \Big(W_{\varepsilon}(\mu(s), \mu^{n - 1})^{2} &- W_{\varepsilon}(\mu^{n}, \mu^{n - 1})^{2}\Big)\\
        &\le \int_{\mathbb{T}^{d} \times \mathbb{T}^{d}} \frac{1}{2s}\big(c_{\varepsilon}(\boldsymbol{\mathrm{r}}(x, s), y) - c_{\varepsilon}(x, y)\big) \ud \omega(x, y).
    \end{aligned} 
\end{equation}
According to \cite[Proposition 10.15]{villani2008optimal}, the cost $c_{\varepsilon}(x, y)$ given by the Lagrangian action function $\int_{0}^{1}(A(z/\varepsilon)^{-1}\dot{z}, \dot{z})\ud t$ is superdifferentiable:
\begin{equation*}
    \begin{aligned}
        \frac{c_{\varepsilon}(\boldsymbol{\mathrm{r}}(x, s), y) - c_{\varepsilon}(x, y)}{2s} &\le \frac{\big<\boldsymbol{\mathrm{r}}(x, s) - x, p\big>}{2s} + \frac{o(|\boldsymbol{\mathrm{r}}(x, s) - x|)}{2s},\\
        &\text{ with super-gradient } p = -2A(x/\varepsilon)^{-1} \xi_{x \to y},
    \end{aligned}
\end{equation*}
where $\xi_{x \to y}$ is the initial velocity of an action minimizing curve connecting $x$ to $y$ and $o(|\boldsymbol{\mathrm{r}}(x, s) - x|)/s \to 0$ as $s \to 0$. Therefore, considering~\eqref{ineqW}, we have
\begin{equation}\label{limW}
    \begin{aligned}
        \limsup_{s \to 0}\frac{1}{2s} \Big(W_{\varepsilon}(\mu(s), \mu^{n - 1})^{2} &- W_{\varepsilon}(\mu^{n}, \mu^{n - 1})^{2}\Big) \\
        &\le \int_{\mathbb{T}^{d} \times \mathbb{T}^{d}} -\big<\boldsymbol{\mathrm{u}}(x), A(x/\varepsilon)^{-1}\xi_{x \to y}\big> \ud \omega(x, y).
    \end{aligned} 
\end{equation}

Combining~\eqref{limH} and~\eqref{limW}, we can take $\limsup_{s \to 0}$ in~\eqref{J-stability} to get
\begin{equation*}
    \int_{\mathbb{T}^{d}} \la\nabla V_{\varepsilon}, \boldsymbol{\mathrm{u}} \ra - \nabla \cdot \boldsymbol{\mathrm{u}} \ud \mu^{n} - \frac{1}{\tau} \int_{\mathbb{T}^{d} \times \mathbb{T}^{d}} \big<\boldsymbol{\mathrm{u}}(x), A(x/\varepsilon)^{-1}\xi_{x \to y}\big> \ud \omega(x, y) \ge 0.
\end{equation*}
Let $\boldsymbol{\mathrm{r}}_{\ast}$ be the optimal transport map from $\mu^{n}$ to $\mu^{n - 1}$, since $\omega = (\mathrm{id}, \boldsymbol{\mathrm{r}}_{\ast}) \# \mu^{n}$, we can further get
\begin{equation*}
    \begin{aligned}
        \int_{\mathbb{T}^{d}} - \nabla \cdot \boldsymbol{\mathrm{u}} - \frac{1}{\tau}\big<\boldsymbol{\mathrm{u}}(x), A(x/\varepsilon)^{-1}\xi_{x \to \boldsymbol{\mathrm{r}}_{\ast}(x)} - \tau \nabla V_{\varepsilon}(x) &\big> \ud \mu^{n} \ge 0\\
        &\text{ for all $\boldsymbol{\mathrm{u}} \in C^{1}(\mathbb{T}^{d};\mathbb{R}^{d})$.}
    \end{aligned}
\end{equation*}
If one replaces the test vector field $\boldsymbol{\mathrm{u}}$ with $-\boldsymbol{\mathrm{u}}$, the inequality will be reversed (see, for example, \cite[Equation (10.4)]{villani2008optimal}). By this symmetry, we get the first-order optimality condition.
\end{proof}

We next show the following two lemmas for proving the convergence of the $\varepsilon$-JKO scheme.
\begin{lemma}[a priori estimate]\label{p-a-prior}Let $\{\mu^{n}\}$ be the time nodes at $t = n\tau$ defined by the $\varepsilon$-JKO scheme with initial value $\mu_{0}$ such that $S(\mu_{0}) < \infty$. Then there exists a constant $C$ depending only on $\mu_{0}$ such that
\begin{equation*}
    \sum_{n = 1}^{N} W_{\varepsilon}(\mu^{n - 1}, \mu^{n})^{2} \le C\tau.
\end{equation*}
\end{lemma}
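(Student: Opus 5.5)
The estimate follows from the standard JKO energy telescoping argument, combined with a lower bound on $H_{\varepsilon}$ that does not depend on $\varepsilon$.

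\textbf{Step 1 (one-step inequality).} Since $\mu^{n}$ minimizes $\mathcal{J}_{\varepsilon, \tau}(\cdot, \mu^{n - 1})$ in~\eqref{JKO-WW-copy}, we may compare it with the competitor $\mu = \mu^{n - 1}$. Because $W_{\varepsilon}(\mu^{n - 1}, \mu^{n - 1}) = 0$, this gives
\begin{equation*}
    H_{\varepsilon}(\mu^{n}) + \frac{1}{2\tau} W_{\varepsilon}(\mu^{n}, \mu^{n - 1})^{2} \le H_{\varepsilon}(\mu^{n - 1}).
\end{equation*}
This is meaningful because $H_{\varepsilon}(\mu^{0}) = S(\mu_{0}) + \int V_{\varepsilon} \ud \mu_{0} \le S(\mu_{0}) + \lambda < \infty$ by Assumption~\ref{A4}. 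By induction every $H_{\varepsilon}(\mu^{n})$ is then finite.

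\textbf{Step 2 (telescoping).} Summing the one-step inequality over $n = 1, \dots, N$ gives
\begin{equation*}
    \frac{1}{2\tau}\sum_{n = 1}^{N} W_{\varepsilon}(\mu^{n - 1}, \mu^{n})^{2} \le H_{\varepsilon}(\mu_{0}) - H_{\varepsilon}(\mu^{N}).
\end{equation*}
The distance is symmetric because the cost $c_{\varepsilon}$ is symmetric.

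\textbf{Step 3 (uniform bounds on $H_{\varepsilon}$).} For the upper bound we already have $H_{\varepsilon}(\mu_{0}) \le S(\mu_{0}) + \lambda$. For the lower bound, the only point needing care is a bound on $H_{\varepsilon}(\mu^{N})$ that is uniform in $N$ and $\varepsilon$. By Assumption~\ref{A5}, $V_{\varepsilon} \ge 0$. By Jensen's inequality on $\mathbb{T}^{d}$, which has unit Lebesgue measure, any probability density $\varrho$ satisfies $S(\varrho) = \int \varrho \log \varrho \ud x \ge \big(\int \varrho \ud x\big) \log\big(\int \varrho \ud x\big) = 0$. Hence $H_{\varepsilon}(\mu^{N}) \ge 0$.

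\textbf{Conclusion.} Combining the three steps,
\begin{equation*}
    \sum_{n = 1}^{N} W_{\varepsilon}(\mu^{n - 1}, \mu^{n})^{2} \le 2\tau\big(S(\mu_{0}) + \lambda\big),
\end{equation*}
so the claim holds with $C = 2(S(\mu_{0}) + \lambda)$. This constant is independent of $\varepsilon$, $\tau$ and $N$; its dependence on $\lambda$ is a fixed model constant.
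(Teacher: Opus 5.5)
Your proof is correct and follows the paper's argument. You compare $\mu^{n}$ with the competitor $\mu^{n-1}$, telescope, and then bound $H_{\varepsilon}(\mu_{0}) \le S(\mu_{0}) + \lambda$ from above and $H_{\varepsilon}(\mu^{N})$ from below. The only difference is minor: you get $S(\mu^{N}) \ge 0$ from Jensen's inequality on the unit-volume torus, whereas the paper uses the pointwise bound $x\log x \ge -e^{-1}$, so your constant $2(S(\mu_{0}) + \lambda)$ is slightly sharper.
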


\begin{lemma}[consistency]\label{JKO-bound}Given an absolutely continuous probability measure $\mu^{n - 1} \in P(\mathbb{T}^{d})$, and let $\mu^{n} = \arg\min_{\mu} \mathcal{J}_{\varepsilon, \tau}(\mu, \mu^{n - 1})$, we have
\begin{equation*}
    \begin{aligned}
        \bigg|\int_{\bR} \frac{\mu^{n} - \mu^{n - 1}}{\tau} \varphi\ud x + \int_{\bR} \Big<A\Big(\frac{x}{\varepsilon}\Big) \big(\nabla \mu^{n} &+ \nabla V_{\varepsilon}(x) \mu^{n}\big), \nabla \varphi\Big> \ud x\bigg|\\
        &\quad\le \frac{C_{\varepsilon}(\varphi)}{2\tau} W_{\varepsilon}(\mu^{n}, \mu^{n - 1})^{2},
    \end{aligned}
\end{equation*}
for any $\varphi \in C^{2}(\mathbb{T}^{d})$ and a constant $C_{\varepsilon}(\varphi)$ depending on $\varphi$ and $\varepsilon$.
\end{lemma}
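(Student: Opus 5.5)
The plan is the standard JKO consistency argument, carried out with the Riemannian cost $c_{\varepsilon}$. The ingredients are the first-order optimality condition of Proposition~\ref{op-ejko} and a second-order Taylor expansion of $\varphi$ along the action-minimizing curves.

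\textbf{Step 1 (choice of test field).} Given $\varphi\in C^{2}(\mathbb{T}^{d})$, apply Proposition~\ref{op-ejko} with $\boldsymbol{\mathrm{u}} := A(x/\varepsilon)\nabla\varphi$. This field is $C^{1}$ because $A$ is $C^{2}$; its $C^{1}$ norm grows like $1/\varepsilon$, which is why the constant $C_{\varepsilon}(\varphi)$ depends on $\varepsilon$. Since $A$ is symmetric, $\langle \boldsymbol{\mathrm{u}}, A^{-1}\xi\rangle = \langle\nabla\varphi,\xi\rangle$. After one integration by parts, $-\int\nabla\cdot\boldsymbol{\mathrm{u}}\,\ud\mu^{n}$ becomes $\int\langle A(x/\varepsilon)\nabla\mu^{n},\nabla\varphi\rangle\ud x$. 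This is legitimate because $\mu^{n}$ has finite entropy and the optimality condition itself yields a $W^{1,1}$ bound; alternatively, one can keep the term in distributional form throughout. The resulting identity is
\begin{equation*}
\int\big\langle A(x/\varepsilon)(\nabla\mu^{n}+\mu^{n}\nabla V_{\varepsilon}),\nabla\varphi\big\rangle\ud x=\frac{1}{\tau}\int\langle\nabla\varphi(x),\xi_{x\to\boldsymbol{\mathrm{r}}_{\ast}(x)}\rangle\ud\mu^{n}(x).
\end{equation*}

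\textbf{Step 2 (time difference via the transport map).} Since $\boldsymbol{\mathrm{r}}_{\ast}\#\mu^{n}=\mu^{n-1}$,
\begin{equation*}
\int(\mu^{n}-\mu^{n-1})\varphi\,\ud x=\int\big[\varphi(x)-\varphi(\boldsymbol{\mathrm{r}}_{\ast}(x))\big]\ud\mu^{n}.
\end{equation*}
Let $z$ be the minimizing curve from $x$ to $\boldsymbol{\mathrm{r}}_{\ast}(x)$, so that $\dot z(0)=\xi_{x\to\boldsymbol{\mathrm{r}}_{\ast}(x)}$ and $\boldsymbol{\mathrm{r}}_{\ast}(x)=\exp_{x}(\xi)$. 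Expand $g(t):=\varphi(z(t))$ to second order:
\begin{equation*}
\varphi(\boldsymbol{\mathrm{r}}_{\ast}(x))-\varphi(x)=\langle\nabla\varphi(x),\xi\rangle+R(x),\qquad |R(x)|\le\tfrac12\sup_{t}|g''(t)|.
\end{equation*}
Adding the identity of Step 1, the first-order terms cancel exactly. It therefore remains to show
\begin{equation*}
\int |R|\,\ud\mu^{n}\le \tfrac{C_{\varepsilon}(\varphi)}{2}W_{\varepsilon}(\mu^{n},\mu^{n-1})^{2}.
\end{equation*}
Dividing by $\tau$ then gives the claim.

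\textbf{Step 3 (the remainder, the main obstacle).} We have
\begin{equation*}
g''=\langle\nabla^{2}\varphi(z)\dot z,\dot z\rangle+\langle\nabla\varphi(z),\ddot z\rangle.
\end{equation*}
By Lemma~\ref{velcoti-c}, the speed $\langle A(z/\varepsilon)^{-1}\dot z,\dot z\rangle$ is constant along $z$ and equals $c_{\varepsilon}(x,\boldsymbol{\mathrm{r}}_{\ast}(x))$. Hence $|\dot z(t)|^{2}\le\Lambda_{1}c_{\varepsilon}(x,\boldsymbol{\mathrm{r}}_{\ast}(x))$ for all $t$. The delicate term is $\ddot z$, which is controlled through the Euler--Lagrange (geodesic) equation: $\ddot z=-\Gamma_{\varepsilon}(z)(\dot z,\dot z)$. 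The Christoffel symbols of the metric $A(\cdot/\varepsilon)^{-1}$ are bounded by $C\|\nabla A\|_{\infty}/\varepsilon$, so $|\ddot z|\le (C/\varepsilon)\,|\dot z|^{2}$. This is the point where the $\varepsilon$-dependence enters, and it is why the estimate cannot be uniform in $\varepsilon$. Combining the two bounds gives
\begin{equation*}
|R(x)|\le C\big(\|\nabla^{2}\varphi\|_{\infty}+\varepsilon^{-1}\|\nabla\varphi\|_{\infty}\big)\,c_{\varepsilon}(x,\boldsymbol{\mathrm{r}}_{\ast}(x)).
\end{equation*}
Integrating against $\mu^{n}$ and using the optimality of $\omega=(\mathrm{id},\boldsymbol{\mathrm{r}}_{\ast})\#\mu^{n}$ (Lemma~\ref{monge}) turns the right-hand side into $C_{\varepsilon}(\varphi)\,W_{\varepsilon}(\mu^{n},\mu^{n-1})^{2}$.

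One further subtlety concerns the lift to the covering space. The expansion must be performed along the lifted curve in $\mathbb{R}^{d}$, with $\varphi$ extended periodically, so that no issues arise from the torus identification. Minimizing curves may fail to be unique on the cut locus, but this is harmless: the bounds above hold for every minimizer, and the choice of $\xi$ in Proposition~\ref{op-ejko} is consistent with the chosen curve $\mu^{n}$-a.e.
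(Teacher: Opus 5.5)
Your proposal is correct and follows the paper's proof. Both use the test field $\boldsymbol{\mathrm{u}}=A(x/\varepsilon)\nabla\varphi$ in Proposition~\ref{op-ejko} and the push-forward identity, then integrate a pointwise second-order Taylor bound along the minimizing curve against the optimal plan. The only difference is in the Taylor estimate (Lemma~\ref{lemma2}): since $g(t)=\varphi(z(t))=(\varphi\circ\exp_{x})(t\xi)$, your $g''$ is exactly the radial second derivative of $\varphi\circ\exp_{x}$ that the paper bounds abstractly on a compact set, and your geodesic-equation bound makes the $\varepsilon^{-1}$ dependence explicit. That dependence comes from the Christoffel symbols in Step~3, not from the $C^{1}$ norm of $\boldsymbol{\mathrm{u}}$ mentioned in Step~1.
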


Combining these two lemmas, whose proofs will be established later, we prove the convergence of the $\varepsilon$-JKO scheme.

\begin{proof}[Proof of Theorem~\ref{JKO-t}]
Let $\mu_{\tau}(t)$ be the solution of the $\varepsilon$-JKO scheme with step-size $\tau$. According to the compactness of minimizing movement trajectories in general metric spaces \cite[Proposition 2.2.3]{ambrosio2008gradient}, there exists an absolutely continuous curve $\mu(t)$ in $(P(\mathbb{T}^{d}), W_{2})$ such that
\begin{equation*}
    \mu_{\tau}(t) \to \mu(t) \text{ in $W_2$, for all } t \in (0, \infty)
\end{equation*}
as $\tau \to 0$, up to the extraction of a subsequence. To identify the limit $\mu$, we fix a time horizon $T > 0$, and let $N = T/\tau$ and $t^{n} = n\tau$. By Lemmas~\ref{JKO-bound} and~\ref{p-a-prior}, we have
\begin{equation}\label{JKO-bound-ineq}
    \begin{aligned}
        \bigg|\sum_{n = 1}^{N}\bigg[\int_{\bR} \frac{\mu^{n} - \mu^{n - 1}}{\tau} \varphi\ud x + \int_{\bR} \Big<&A\Big(\frac{x}{\varepsilon}\Big) \big(\nabla \mu^{n} + \nabla V_{\varepsilon}(x) \mu^{n}\big), \nabla \varphi\Big> \ud x\bigg]\tau\bigg|\\
        &\le \tau \times \frac{C_{\varepsilon}(\varphi)}{2\tau} \sum_{n = 1}^{N} W_{\varepsilon}(\mu^{n}, \mu^{n - 1})^{2} \le C\tau
    \end{aligned}
\end{equation}
for all $\varphi \in C^{1}([0, \infty), C(\mathbb{T}^{d})) \cap C([0, \infty), C^{2}(\mathbb{T}^{d}))$. Taking the limit as $\tau \to 0$, we have
\begin{equation*}
    \begin{aligned}
        \bigg|\int_{0}^{T}\int_{\mathbb{T}^{d}} -\mu(t)\partial_{t}\varphi + \Big<A\Big(\frac{x}{\varepsilon}\Big) \big(&\nabla \mu(t) + \mu(t)\nabla V_{\varepsilon}\big), \nabla \varphi\Big> \ud x \ud t\\
        &- \int_{\mathbb{T}^{d}} \varphi(0) \mu_{0}\ud x + \int_{\mathbb{T}^{d}} \varphi(T) \mu(T)\ud x\bigg| = 0\,,
    \end{aligned}
\end{equation*}
which means that the limit $\mu(t)$ satisfy~\eqref{0} in the weak sense. Since $\mu(t)$ is uniquely determined by initial value $\mu_{0}$, the whole sequence $\mu_{\tau}$ converges to $\mu$ as $\tau \to 0$.
\end{proof}

The proof of the a priori estimates for minimizing movement schemes is standard; see, for example, \cite[Equation (46)]{jordan1998variational}. We nevertheless include it here for completeness. 
\begin{proof}[Proof of Lemma~\ref{p-a-prior}]
By the construction of minimizing movements, we have
\begin{equation*}
    \sum_{n = 1}^{N} W_{\varepsilon}(\mu^{n - 1}, \mu^{n})^{2} \le 2\tau \big(H_{\varepsilon}(\mu_{0}) - H_{\varepsilon}(\mu^{N})\big).
\end{equation*}
Since $V_{\varepsilon}(x) \ge 0$ by Assumption~\ref{A5} and $x \log(x) \ge -e^{-1}$, we can get
\begin{equation*}
    H_{\varepsilon}(\mu^{N}) = S(\mu^{N}) + \int_{\mathbb{T}^{d}} V_{\varepsilon}(x) \ud \mu^{N} \ge S(\mu^{N}) \ge -e^{-1}.
\end{equation*}
According to Assumption~\ref{A4}, we further have
\begin{equation*}
    H_{\varepsilon}(\mu_{0}) = S(\mu_{0}) + \int_{\mathbb{T}^{d}} V_{\varepsilon}(x) \ud \mu_{0} \le S(\mu_{0}) + \lambda\,,
\end{equation*}
which gives the a priori estimate since $S(\mu_{0}) < \infty$ by assumption.
\end{proof}

The proof of the consistency is based on the following pointwise Taylor expansion estimate of multipliers.
\begin{lemma}\label{lemma2}
For all $\varphi \in C^{2}(\bR)$, there exists a constant $0 < C_{\varepsilon}(\varphi) < \infty$ depending on $\varphi$, such that for all $(x, y) \in \mathbb{T}^{d} \times \mathbb{T}^{d}$, we have
\begin{equation*}
    \Big|\varphi(x) - \varphi(y) + \big<\xi_{x \to y}, \nabla\varphi(x)\big>\Big| \le \frac{C_{\varepsilon}(\varphi)}{2} c_{\varepsilon}(x, y)\,,
\end{equation*}
where $\xi_{x \to y} = \dot{z}(0)$ is the initial velocity of an action minimizing curve $z(t)$ connecting $x$ to $y$.
\end{lemma}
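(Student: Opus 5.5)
The plan is a second-order Taylor expansion of $\varphi$ along an action minimizing curve, with the second derivative controlled through the Euler--Lagrange (geodesic) equation.

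\emph{Setup.} Fix $(x,y)$ and lift everything to the universal cover $\mathbb{R}^{d}$, as in the proof of Lemma~\ref{converge-c-lemma}. There we take a minimizing curve $z\in\mathscr{C}_{\mathrm{cov}}(x,y)$ with $\dot z(0)=\xi_{x\to y}$, and regard $\varphi$ as a $\mathbb{Z}^{d}$-periodic $C^{2}$ function on $\mathbb{R}^{d}$. Set $g(t):=\varphi(z(t))$. Then $g(0)=\varphi(x)$, $g(1)=\varphi(y)$ by periodicity, and $g'(0)=\langle\nabla\varphi(x),\xi_{x\to y}\rangle$. Taylor's formula with integral remainder gives
\begin{equation*}
\varphi(x)-\varphi(y)+\langle \xi_{x\to y},\nabla\varphi(x)\rangle=-\int_{0}^{1}(1-t)\,g''(t)\ud t .
\end{equation*}
So it suffices to show that $|g''(t)|\le C_{\varepsilon}(\varphi)\,c_{\varepsilon}(x,y)$ uniformly in $t$. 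The factor $\int_0^1(1-t)\ud t=\tfrac12$ then produces exactly the $\tfrac12$ in the statement.

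\emph{Bounding $g''$.} We have
\begin{equation*}
g''(t)=\langle \nabla^{2}\varphi(z)\dot z,\dot z\rangle+\langle\nabla\varphi(z),\ddot z\rangle .
\end{equation*}
The Lagrangian $L(z,v)=\langle A(z/\varepsilon)^{-1}v,v\rangle$ is $C^{2}$ because $A\in C^{2}$ and is uniformly elliptic. Hence minimizers are $C^{2}$ solutions of the Euler--Lagrange equation, which can be written in geodesic form $\ddot z^{k}=-\Gamma^{k}_{ij}(z)\dot z^{i}\dot z^{j}$. Here the Christoffel symbols involve $A(\cdot/\varepsilon)$ and its first derivatives, so $|\Gamma|\le C\,\varepsilon^{-1}\Lambda_{0}^{-1}\Lambda_{1}\|\nabla A\|_{\infty}$, a bound independent of $x,y,t$. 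Consequently
\begin{equation*}
|g''(t)|\le\big(\|\nabla^{2}\varphi\|_{\infty}+\|\nabla\varphi\|_{\infty}\sup|\Gamma|\big)\,|\dot z(t)|^{2}.
\end{equation*}

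\emph{Converting speed to cost.} By the constant-speed property of Lemma~\ref{velcoti-c}, applied with the Lagrangian $\langle A(z/\varepsilon)^{-1}v,v\rangle$, the quantity $\langle A(z/\varepsilon)^{-1}\dot z,\dot z\rangle$ is constant in $t$ and equals $c_{\varepsilon}(x,y)$. Since $A^{-1}\succeq\Lambda_{1}^{-1}I$, this gives $|\dot z(t)|^{2}\le\Lambda_{1}c_{\varepsilon}(x,y)$ for all $t$. Combining the three steps yields the claim with
\begin{equation*}
C_{\varepsilon}(\varphi)=\Lambda_{1}\big(\|\nabla^{2}\varphi\|_{\infty}+\|\nabla\varphi\|_{\infty}\sup|\Gamma|\big),
\end{equation*}
which is finite but may blow up like $\varepsilon^{-1}$; this is harmless since $\varepsilon$ is fixed in Lemma~\ref{JKO-bound}.

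\emph{Main obstacle.} The delicate step is justifying the regularity of the minimizer: that it is a genuine $C^{2}$ solution of the geodesic ODE with uniformly bounded Christoffel symbols. I would get this from standard Riemannian theory for the smooth metric $A(\cdot/\varepsilon)^{-1}$ on the compact torus, citing \cite{jost2005riemannian}. A second point is that $\xi_{x\to y}$ may be non-unique at cut points. This causes no difficulty, because the argument works for any minimizing curve and so yields the bound for every choice of $\xi_{x\to y}$.
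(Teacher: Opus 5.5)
Your proof is correct, and at its core it is the same Taylor expansion the paper performs. The paper expands $\overline{\varphi}=\varphi\circ\exp_{x}$ along the ray $s\mapsto s\xi_{x\to y}$. By the rescaling property of geodesics, $\exp_{x}(s\xi_{x\to y})=z(s)$, so $\overline{\varphi}(s\xi_{x\to y})$ is exactly your $g(s)$. Both arguments then convert $|\xi_{x\to y}|^{2}$ into $c_{\varepsilon}(x,y)$ through the constant-speed property and $A^{-1}\succeq\Lambda_{1}^{-1}I$.

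Where you differ is in how the remainder is controlled. The paper bounds the full Hessian $\nabla^{2}\overline{\varphi}$ on a compact set by appealing to smoothness of $\exp_{x}$, and it uses $\nabla\exp_{x}(0)=I$ from Lemma~\ref{exp} for the linear term. This is brief, but the resulting constant $C_{\varepsilon}(\overline{\varphi})$ depends a priori on the base point $x$. The uniformity over $(x,y)$ that the statement asserts therefore rests on an unstated joint-continuity argument for $\nabla_{\xi}^{2}\exp_{x}(\xi)$ in $(x,\xi)$. That argument in turn presupposes $C^{2}$-regularity of the whole map $\xi\mapsto\exp_{x}(\xi)$.

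Your version needs only the radial second derivative, which the geodesic equation computes explicitly. This buys three things: a constant that is manifestly independent of $(x,y)$, a requirement of only $z\in C^{2}$, and a quantitative form of the $\varepsilon^{-1}$ growth described in Remark~\ref{remark-5}.

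One harmless slip: since $\partial(A^{-1})=-A^{-1}(\partial A)A^{-1}$, the Christoffel bound is of order $\varepsilon^{-1}\Lambda_{1}\Lambda_{0}^{-2}\|\nabla A\|_{\infty}$ rather than $\varepsilon^{-1}\Lambda_{1}\Lambda_{0}^{-1}\|\nabla A\|_{\infty}$. This does not affect the argument.
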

\begin{proof}[Proof of Lemma~\ref{lemma2}]
Consider the representing $\varphi$ using exponential map
\begin{equation*}
    \overline{\varphi} = \varphi \circ \exp_{x} : \mathbb{R}^{d} \to \mathbb{R}.
\end{equation*}
where $\exp_{x}$ is the exponential map of the Lagrangian action $L(x, v) = \la A(x/\varepsilon)v, v\ra$. Since $\exp_{x}$ is smooth, we have $\overline{\varphi} \in C^{2}(\mathbb{R}^{d})$. As $\mathbb{T}^{d}$ is compact, we can find a compact subset $K$ of $\mathbb{R}^{d}$ such that $\exp_{x}(K) = \mathbb{T}^{d}$. Therefore, we only consider the values of $\overline{\varphi}$ defined on $K$. In this regard, we assume that $\overline{\varphi} \in C^{2}(K)$ without loss of generality.

Given any $\xi \in \mathbb{R}^{d}$, we have
\begin{equation*}
    \overline{\varphi}(\xi) - \overline{\varphi}(0) - \big<\xi, \nabla \overline{\varphi}(0)\big> = \int_{0}^{1}(1 - s)\big<\xi, \nabla^{2}\overline{\varphi}(s \xi)\xi\big>\ud s.
\end{equation*}
Since $\overline{\varphi} \in C^{2}(K)$, we can find a constant $C_{\varepsilon}(\overline{\varphi})$ such that $\nabla^{2}\overline{\varphi}(s \xi) \preceq C_{\varepsilon}(\overline{\varphi})I_{d \times d}$, and therefore, by using the fact that $\Lambda_{1}^{-1}I_{d \times d} \preceq A(x)^{-1}$
\begin{equation}\label{lemma2-2}
    \Big|\overline{\varphi}(\xi) - \overline{\varphi}(0) - \big<\xi, \nabla \overline{\varphi}(0)\big>\Big| \le \frac{C_{\varepsilon}(\overline{\varphi})}{2}|\xi|^{2} \le \frac{C_{\varepsilon}(\varphi)}{2}\big<\xi, A(x/\varepsilon)^{-1}\xi\big>.
\end{equation}
for another constant $C_{\varepsilon}(\varphi) = \Lambda_{1} C_{\varepsilon}(\overline{\varphi})$. Finally, let $\xi = \xi_{x \to y}$ be the initial velocity of an action minimizing curve connecting $x$ to $y$. By Lemmas~\ref{velcoti-c} and~\ref{exp}, we have
\begin{equation*}
    \nabla \overline{\varphi}(0) = \nabla \varphi(x) \text{ and } \big<\xi_{x \to y}, A(x/\varepsilon)^{-1}\xi_{x \to y}\big> = c_{\varepsilon}(x, y).
\end{equation*}
By taking these two facts into~\eqref{lemma2-2}, we get the estimate.
\end{proof}

\begin{proof}[Proof of Lemma~\ref{JKO-bound}]
For any $\varphi \in C^{2}(\mathbb{T}^{d})$, we take $\boldsymbol{\mathrm{u}}(x) = A(x/\varepsilon) \nabla \varphi(x)$ in the first-order optimality condition (Proposition~\ref{op-ejko}) to get
\begin{equation}\label{single-weak}
    \int_{\mathbb{T}^{d}} \frac{1}{\tau}\big<\xi_{x \to \boldsymbol{\mathrm{r}}_{\ast}(x)}\mu^{n}, \nabla\varphi\big> \ud x = \int_{\bR} \Big<A\Big(\frac{x}{\varepsilon}\Big) \big(\nabla \mu^{n} + \nabla V_{\varepsilon}(x) \mu^{n}\big), \nabla \varphi\Big> \ud x.
\end{equation}
Since $\mu^{n - 1} = \boldsymbol{\mathrm{r}}_{\ast} \# \mu^{n}$, we have $\int_{\mathbb{T}^{d}} \varphi \ud \mu^{n - 1} = \int_{\mathbb{T}^{d}} \varphi \circ \boldsymbol{\mathrm{r}}_{\ast} \ud \mu^{n}$. Therefore,
\begin{equation*}
    \begin{aligned}
        \int_{\bR} \frac{\mu^{n} - \mu^{n - 1}}{\tau} \varphi\ud x &+ \int_{\bR} \Big<A\Big(\frac{x}{\varepsilon}\Big) \big(\nabla \mu^{n} + \nabla V_{\varepsilon}(x) \mu^{n}\big), \nabla \varphi\Big> \ud x\\
        &\quad = \int_{\bR} \frac{\varphi(x) - \varphi(\boldsymbol{\mathrm{r}}_{\ast}(x)) + \big<\xi_{x \to \boldsymbol{\mathrm{r}}_{\ast}(x)}, \nabla\varphi(x)\big>}{\tau} \ud \mu^{n}.
    \end{aligned}
\end{equation*}
By using the Taylor expansion type estimate in Lemma~\ref{lemma2}, one has
\begin{equation*}
    \big|\varphi(x) - \varphi(y) + \big<\xi_{x \to y}, \nabla\varphi(x)\big>\big| \le \frac{C_{\varepsilon}(\varphi)}{2} c_{\varepsilon}(x, y),
\end{equation*}
with some constant $C_{\varepsilon}(\varphi)$. Therefore,
\begin{equation*}
    \bigg|\int_{\bR} \frac{\varphi(x) - \varphi(\boldsymbol{\mathrm{r}}_{\ast}(x)) + \big<\xi_{x \to \boldsymbol{\mathrm{r}}_{\ast}(x)}, \nabla\varphi(x)\big>}{\tau} \ud \mu^{n}\bigg| \le \frac{C_{\varepsilon}(\varphi)}{2\tau} W_{\varepsilon}(\mu^{n}, \mu^{n - 1})^{2}.
\end{equation*}
\end{proof}

\begin{remark}[non-commutativity between $\tau$ and $\varepsilon$]\label{remark-5}Lemma~\ref{JKO-bound} suggests that the $\varepsilon$-JKO scheme remains comparable to the weighted $L^{2}$ scheme as long as $C_{\varepsilon}(\varphi)$ stays uniformly bounded for every $\varphi$. However, we will show below that, for certain choices of $\varphi$, one has $C_{\varepsilon}(\varphi) \to \infty$.
This divergence appears to be the main mechanism underlying the different homogenization limits of the $\varepsilon$-JKO and weighted $L^{2}$ schemes. It also explains the noncommutativity of the limits $\tau\to 0$ and $\varepsilon\to 0$, as illustrated in the diagram in Section~\ref{sec:dicuss}.

Indeed, let $\overline{\varphi} := \varphi \circ \exp_{x}$ from~\eqref{lemma2-2}, on one-dimension domains, we have
\begin{equation*}
    C_{\varepsilon}(\varphi) = \Lambda_{1} \times \sup_{\xi} \big|\mathrm{d}^{2} \overline{\varphi}(\xi)/\mathrm{d} \xi^{2}\big|
\end{equation*}
where
\begin{equation*}
    \frac{\mathrm{d}^{2} \overline{\varphi}(\xi)}{\mathrm{d} \xi^{2}} = \frac{\mathrm{d}^{2} \exp_{x}(\xi)}{\mathrm{d} \xi^{2}} \times \varphi^{\prime}(\exp_{x}(\xi)) + \Big(\frac{\mathrm{d} \exp_{x}(\xi)}{\mathrm{d} \xi}\Big)^{2} \times \varphi^{\prime\prime}(\exp_{x}(\xi)).
\end{equation*}
Here the exponential map of the action $\int_0^1 \langle A(z/\varepsilon)^{-1}\dot{z}, \dot{z} \rangle \ud t$ can be explicitly calculated as follows (when $|\xi|$ is small enough):
\begin{equation*}
    \exp_{x}(\xi) = \sigma^{-1}\big(A(x/\varepsilon)^{-\frac{1}{2}}\xi + \sigma(x)\big) \text{ where } \sigma(z) := \int_{0}^{z} A(t/\varepsilon)^{-\frac{1}{2}} \ud t.
\end{equation*}
In this case, $\mathrm{d}_{\xi}\exp_{x}(\xi)$ remains bounded but $\mathrm{d}_{\xi}^{2}\exp_{x}(\xi) \to \infty$ as $\varepsilon \to 0$, which implies
\begin{equation*}
    C_{\varepsilon}(\varphi) \to \infty \text{ as } \varepsilon \to 0 \text{ for non-constant multipliers $\varphi$.}
\end{equation*}
This blow-up also holds true for high-dimensional cases, which involves the computation of the Ricci curvature of the metric tensor $g_{\varepsilon, x}(\xi, \eta) := \la A(x/\varepsilon)\xi, \eta\ra$.
\end{remark}

\subsubsection{Homogenized JKO scheme}\label{sec:homo-jko}
In this subsection, we identify the time-continuous limit of the homogenized JKO solution $\overline{\mu}_{\tau}$ for $\tau \to 0$. While the main framework for proving the convergence remains the same as in the previous section, the main technical difficulty is that one needs to deal with the nonlinearity introduced by $A_{\mathrm{hom}}$ in the transport distance. In general, one would need to investigate the Finsler structure of the transport metric as explained by \cite[Equation (5.19)]{gao2023homogenization}. To address this problem, we leverage the convexity and homogeneity of $A_{\mathrm{hom}}$ given in Lemma~\ref{limit-B}, to reduce the minimizing geodesics of $\int_{0}^{1} A_{\mathrm{hom}}(\dot{z}) \ud t$ into Euclidean geodesics~\eqref{Jensen}, which substantially simplifies the geometric structure.

\begin{lemma}\label{limit-B}We have the following properties of the action function $A_{\mathrm{hom}}$
\begin{enumerate}
    \item $A_{\mathrm{hom}}$ is convex
    \item $A_{\mathrm{hom}}$ is homogeneous of degree $2$, i.e., $A_{\mathrm{hom}}(\alpha\xi) = \alpha^{2}A_{\mathrm{hom}}(\xi)$.
\end{enumerate}
\end{lemma}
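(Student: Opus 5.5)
Both properties follow from the cell-problem formula~\eqref{Bast}: I will first show that the liminf in $T$ is actually a limit (by subadditivity), and then read off homogeneity and convexity from rescaling and gluing of competitors. Throughout, for $\xi\in\mathbb{R}^d$ and $T>0$ write
\begin{equation*}
m_T(\xi) := \inf_{\varphi \in H^{1}_{0}([0,T];\mathbb{R}^d)} \frac{1}{T}\int_0^T \big<A(t\xi+\varphi)^{-1}(\xi+\dot\varphi),\xi+\dot\varphi\big>\ud t .
\end{equation*}
Setting $z(t)=t\xi+\varphi(t)$, this is the minimal normalized action over curves with $z(0)=0$ and $z(T)=T\xi$.

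\textbf{Limit.} Because $A$ is $\mathbb{Z}^d$-periodic, concatenating near-optimal curves on $[0,T_1]$ and $[0,T_2]$ gives $(T_1+T_2)m_{T_1+T_2}(\xi)\le T_1m_{T_1}(\xi)+T_2m_{T_2}(\xi)$. This works exactly when $T_1\xi\in\mathbb{Z}^d$, since the second piece can then be translated by $T_1\xi$. For general $\xi$ I will instead add a short corrector segment of length $O(1)$ that reaches the nearest lattice translate. Its cost is bounded by $\Lambda_0^{-1}\cdot O(1)$, which is negligible after dividing by $T$. Fekete's lemma, or an approximate version of it, then gives that $\lim_{T\to\infty}m_T(\xi)$ exists and equals $A_{\mathrm{hom}}(\xi)$. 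The same bookkeeping together with~\eqref{B-ast-1} shows that $A_{\mathrm{hom}}$ is continuous.

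\textbf{Homogeneity.} For $\alpha>0$, take a competitor $z$ on $[0,T]$ for $\alpha\xi$ and reparametrize it as $w(s)=z(s/\alpha)$ on $[0,\alpha T]$. This is a competitor for $\xi$ with $\dot w=\alpha^{-1}\dot z$, so
\begin{equation*}
\frac{1}{\alpha T}\int_0^{\alpha T}\big<A(w)^{-1}\dot w,\dot w\big>\ud s=\alpha^{-2}\,\frac{1}{T}\int_0^T\big<A(z)^{-1}\dot z,\dot z\big>\ud t .
\end{equation*}
Hence $m_{\alpha T}(\xi)=\alpha^{-2}m_T(\alpha\xi)$. Letting $T\to\infty$ and using that the limit exists gives $A_{\mathrm{hom}}(\alpha\xi)=\alpha^2A_{\mathrm{hom}}(\xi)$. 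For $\alpha<0$ I will use time reversal: $z(T-t)-T\xi$ reverses the direction and, by periodicity, the action is unchanged. For $\alpha=0$, (B-ast-1) gives $A_{\mathrm{hom}}(0)=0$.

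\textbf{Convexity.} By homogeneity it suffices to show midpoint convexity, $A_{\mathrm{hom}}(\tfrac{\xi+\eta}{2})\le\tfrac12 A_{\mathrm{hom}}(\xi)+\tfrac12A_{\mathrm{hom}}(\eta)$, and then use continuity. Equivalently, by 2-homogeneity, I must show $A_{\mathrm{hom}}(\xi+\eta)\le 2A_{\mathrm{hom}}(\xi)+2A_{\mathrm{hom}}(\eta)$.
\begin{enumerate}
\item Take a near-optimal curve $z_1$ on $[0,T]$ from $0$ to $T\xi$, and follow it by a lattice-translated near-optimal curve $z_2$ on $[0,T]$ from $0$ to $T\eta$. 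The corrector segments are $O(1)$, as in the limit step.
\item The concatenation runs over time $2T$ and reaches $T(\xi+\eta)$, so it is a competitor for the direction $\tfrac{\xi+\eta}{2}$ on $[0,2T]$.
\item This gives $2T\,m_{2T}(\tfrac{\xi+\eta}{2})\le T m_T(\xi)+T m_T(\eta)+O(1)$.
\item Dividing by $2T$ and letting $T\to\infty$ yields midpoint convexity.
\end{enumerate}

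\textbf{Main obstacle.} The delicate point is the corrector segments for non-rational $\xi$, which is where the lattice mismatch has to be handled. I would control them with the upper bound $\Lambda_0^{-1}|\cdot|^2$ on the action of a straight segment: a segment of duration $1$ and length at most $\sqrt d$ costs at most $\Lambda_0^{-1}d$, and this is $o(T)$. Alternatively, I could simply cite \cite{braides2002riemannian}, where $A_{\mathrm{hom}}$ is shown to be the square of a norm.
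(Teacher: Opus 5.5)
Your proposal is correct and takes a genuinely different route from the paper. The paper proves nothing directly here: it cites \cite[Theorem 5.1]{weinan1991class} for convexity and \cite[Section 5.2]{gao2023homogenization} for 2-homogeneity. You instead derive both from the cell formula \eqref{Bast} by elementary curve surgery:
\begin{itemize}
\item approximate subadditivity plus Fekete's lemma, which upgrades the $\liminf$ to a limit;
\item time rescaling $w(s)=z(s/\alpha)$, which gives homogeneity;
\item concatenation of lattice-translated competitors, which gives convexity.
\end{itemize}
Your route gives more than the citations. It shows that the limit in \eqref{Bast} exists, it yields evenness of $A_{\mathrm{hom}}$, and it exhibits $\sqrt{A_{\mathrm{hom}}}$ as a stable-norm-type object. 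It also makes clear that convexity is a gluing phenomenon rather than something imported from general homogenization theory. The citation route is shorter, and it keeps the lemma tied to the same external framework the paper already uses for \eqref{G-be}. Note also that for $\alpha>0$ you do not even need the limit, since $T\mapsto\alpha T$ is a bijection of $(0,\infty)$ and therefore preserves the $\liminf$.

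Two small repairs are needed.

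\textbf{Time reversal for $\alpha<0$.} The curve $z(T-t)-T\xi$ is not a lattice translate of the reversed curve unless $T\xi\in\mathbb{Z}^d$. So periodicity does not leave the action unchanged. Instead, reverse $z$, translate it by the lattice point nearest $T\xi$, and add two corrector segments of length $O(1)$, exactly as in your limit step. The extra cost is $O(1)$, which vanishes after dividing by $T$.

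\textbf{Continuity.} The \emph{same bookkeeping} does not obviously give continuity of $A_{\mathrm{hom}}$. Replacing $\xi$ by $\xi'$ displaces the curve by $t(\xi'-\xi)$, which is not $O(1)$. You do not need continuity, though. Midpoint convexity together with the local upper bound from \eqref{B-ast-1} already implies convexity (Bernstein--Doetsch). More simply, concatenate pieces of durations $\lambda T$ and $(1-\lambda)T$. This gives $A_{\mathrm{hom}}(\lambda\xi+(1-\lambda)\eta)\le\lambda A_{\mathrm{hom}}(\xi)+(1-\lambda)A_{\mathrm{hom}}(\eta)$ directly, with no midpoint reduction and no appeal to homogeneity.
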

\begin{proof}
The convexity of $A_{\mathrm{hom}}$ is given in \cite[Theorem 5.1]{weinan1991class}. The homogeneity of degree $2$ is given in \cite[Section 5.2]{gao2023homogenization}.
\end{proof}

In the following, we assume $A_{\mathrm{hom}}$ to be uniformly strongly convex.

As $A_{\mathrm{hom}}$ is homogeneous of degree $2$, we have
\begin{equation}\label{homo-1}
    \nabla A_{\mathrm{hom}}(\alpha\xi) = \alpha \nabla A_{\mathrm{hom}}(\xi).
\end{equation}
Furthermore, by the uniform convexity, its gradient map $\nabla A_{\mathrm{hom}}$ satisfies
\begin{equation*}
    \underline{\Lambda}_{0} \|\eta\| \le \|\tfrac{1}{2}\nabla A_{\mathrm{hom}}(\xi + \eta) - \tfrac{1}{2}\nabla A_{\mathrm{hom}}(\xi)\| \le \underline{\Lambda}_{1} \|\eta\|
\end{equation*}
with positive constants $\underline{\Lambda}_{0}, \underline{\Lambda}_{1}$. Let $\tfrac{1}{2}A_{\mathrm{hom}}^{\ast}$ be the Legendre transform of $\tfrac{1}{2}A_{\mathrm{hom}}$, one has $A_{\mathrm{hom}}^{\ast}$ is uniformly strongly convex. By combining the homogeneity of $A_{\mathrm{hom}}$, one can get $\tfrac{1}{2}\nabla A_{\mathrm{hom}}^{\ast} = (\tfrac{1}{2} \nabla A_{\mathrm{hom}})^{-1}$, which means $(\nabla A_{\mathrm{hom}})^{-1}$ is Lipschitz continuous:
\begin{equation}\label{homo-2}
    \big\|(\tfrac{1}{2}\nabla A_{\mathrm{hom}})^{-1}(\xi + \eta) - (\tfrac{1}{2}\nabla A_{\mathrm{hom}})^{-1}(\xi)\big\| \le \underline{\Lambda}_{0}^{-1} \|\eta\|
\end{equation}

Based on the above discussions, we show the following optimality condition of the homogenized JKO scheme.
\begin{theorem}\label{op-hjko}Given $\overline{\mu}^{n - 1} \in P(\mathbb{T}^{d})$, let $\overline{\mu}^{n} = \arg\min_{\mu} \mathcal{J}_{\ast, \tau}(\mu, \overline{\mu}^{n - 1})$, 
and $\boldsymbol{\mathrm{r}}_{\ast}$ be the optimal transport map from $\overline{\mu}^{n}$ to $\overline{\mu}^{n - 1}$ under the transport cost $c_{\mathrm{hom}}(x, y) := \min_{\mathscr{C}(x, y)} \int_{0}^{1} A_{\mathrm{hom}}(\dot{z}) \ud t$ in~\eqref{eps-c-copy}, we have
\begin{equation*}
    \begin{aligned}
        -\int_{\mathbb{T}^{d}} \nabla \cdot \boldsymbol{\mathrm{u}} \ud \overline{\mu}^{n} = \int_{\mathbb{T}^{d}} \frac{1}{\tau} \big<\boldsymbol{\mathrm{u}}(x), \tfrac{1}{2}\nabla A_{\mathrm{hom}}(\xi_{x \to \boldsymbol{\mathrm{r}}_{\ast}(x)}) &- \tau \nabla V(x)\big> \ud \overline{\mu}^{n}\\
        &\text{ for all } \boldsymbol{\mathrm{u}} \in C^{1}(\mathbb{T}^{d}; \mathbb{R}^{d}),
    \end{aligned}
\end{equation*}
where $\xi_{x \to y} = \dot{z}(0)$ is the initial velocity of an action minimizing curve $z(t)$ from $x$ to $y$ under the action $\int_{0}^{1}A_{\mathrm{hom}}(\dot{z}) \ud t$.
\end{theorem}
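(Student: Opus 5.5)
I will follow the proof of Proposition~\ref{op-ejko} almost verbatim: perturb the minimizer along the flow of a test field and differentiate the entropy and the transport cost separately. The transport term is handled by the Euclidean structure of the homogenized geodesics rather than by Riemannian superdifferentiability. Fix $\boldsymbol{\mathrm{u}} \in C^{1}(\mathbb{T}^{d};\mathbb{R}^{d})$ and let $\boldsymbol{\mathrm{r}}(x,s)$ be its flow as in~\eqref{flow-def}. Set $\mu(s) := \boldsymbol{\mathrm{r}}(\cdot,s)\#\overline{\mu}^{n}$. Minimality gives
\[
0 \le \frac{H(\mu(s)) - H(\overline{\mu}^{n})}{s} + \frac{W_{\mathrm{GH}}(\mu(s),\overline{\mu}^{n-1})^{2} - W_{\mathrm{GH}}(\overline{\mu}^{n},\overline{\mu}^{n-1})^{2}}{2\tau s}.
\]
The entropy quotient converges to $\int \langle \nabla V, \boldsymbol{\mathrm{u}}\rangle - \nabla\cdot\boldsymbol{\mathrm{u}} \ud \overline{\mu}^{n}$ exactly as in~\eqref{limH}, since $V$ is $C^{1}$ by Assumption~\ref{A4} and Assumption~\ref{A6}. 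Here I use that $\overline{\mu}^{n}$ has finite entropy and hence a density.

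For the transport term I first identify $c_{\mathrm{hom}}$ explicitly. By the convexity and $2$-homogeneity of $A_{\mathrm{hom}}$ (Lemma~\ref{limit-B}) and Jensen's inequality, for any curve $z$ in the universal cover from $x$ to $y+k$ we have $\int_{0}^{1} A_{\mathrm{hom}}(\dot z)\ud t \ge A_{\mathrm{hom}}(y+k-x)$. Equality holds for the straight line. Hence
\[
c_{\mathrm{hom}}(x,y) = \min_{k\in\mathbb{Z}^{d}} A_{\mathrm{hom}}(y - x + k),
\]
and $\xi_{x\to y} = y - x + k^{\ast}$ for a minimizing $k^{\ast}$.

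Next I use the induced coupling $\omega[s]$ obtained by pushing forward the first marginal of the optimal coupling $\omega = (\mathrm{id},\boldsymbol{\mathrm{r}}_{\ast})\#\overline{\mu}^{n}$, which exists by Lemma~\ref{monge}. Fixing the same lattice vector $k^{\ast}$ at the perturbed point gives
\[
c_{\mathrm{hom}}(\boldsymbol{\mathrm{r}}(x,s),y) - c_{\mathrm{hom}}(x,y) \le A_{\mathrm{hom}}\big(\xi_{x\to y} - (\boldsymbol{\mathrm{r}}(x,s)-x)\big) - A_{\mathrm{hom}}(\xi_{x\to y}).
\]
Since $A_{\mathrm{hom}}$ is uniformly strongly convex, and hence $C^{1}$ with Lipschitz gradient by the bound preceding~\eqref{homo-2}, the right-hand side equals $-s\langle \nabla A_{\mathrm{hom}}(\xi_{x\to y}),\boldsymbol{\mathrm{u}}(x)\rangle + O(s^{2})$. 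This bound is uniform in $(x,y)$, because $|\boldsymbol{\mathrm{r}}(x,s)-x| \le s\|\boldsymbol{\mathrm{u}}\|_{\infty}$ and $\xi$ ranges over a bounded set. Dividing by $2\tau s$ and taking $\limsup_{s\to0}$ then yields
\[
0 \le \int \langle \nabla V,\boldsymbol{\mathrm{u}}\rangle - \nabla\cdot\boldsymbol{\mathrm{u}} - \frac{1}{\tau}\big\langle \boldsymbol{\mathrm{u}}, \tfrac12\nabla A_{\mathrm{hom}}(\xi_{x\to\boldsymbol{\mathrm{r}}_{\ast}(x)})\big\rangle \ud\overline{\mu}^{n}.
\]
Replacing $\boldsymbol{\mathrm{u}}$ by $-\boldsymbol{\mathrm{u}}$ reverses the inequality, which gives the stated identity.

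The main obstacle is the measurability and well-definedness of $x\mapsto \xi_{x\to\boldsymbol{\mathrm{r}}_{\ast}(x)}$. The minimizing $k^{\ast}$ may be non-unique on the cut locus, and the superdifferential argument above only requires some measurable selection. I would handle this by choosing a Borel selection of the minimizing $k^{\ast}$, which is possible since $\mathbb{Z}^{d}$ is countable and the minimum is taken over finitely many relevant $k$. I would then argue as in~\cite[Chapter 10]{villani2008optimal} that for $\overline{\mu}^{n}$-a.e. $x$ the minimizer is unique. This holds because $x\mapsto c_{\mathrm{hom}}(x,\boldsymbol{\mathrm{r}}_{\ast}(x))$ is differentiable a.e. along the optimal plan via the $c$-concave potential, so the selection is immaterial. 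Applying the $-\boldsymbol{\mathrm{u}}$ symmetry with the same selection keeps the argument consistent.
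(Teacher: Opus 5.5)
Your proposal is correct and follows the paper's proof step for step: the same flow perturbation, the same entropy derivative, the same induced coupling $\omega[s]$, the $\limsup$, and the $\boldsymbol{\mathrm{u}}\mapsto-\boldsymbol{\mathrm{u}}$ symmetry. The one difference is that you get the superdifferential bound for $c_{\mathrm{hom}}$ directly, by freezing the lattice vector in $c_{\mathrm{hom}}(x,y)=\min_{k\in\mathbb{Z}^{d}}A_{\mathrm{hom}}(y-x+k)$ (the Jensen identity the paper uses only later, in Lemma~\ref{limiting-taylor}), rather than citing \cite[Proposition 10.15]{villani2008optimal}, and this also gives a uniform $O(s^{2})$ remainder that justifies moving the $\limsup$ inside the integral, a step the paper leaves implicit; moreover, your cut-locus concern is not a real obstacle, since for any fixed Borel selection of $k^{\ast}$ the resulting inequality is linear in $\boldsymbol{\mathrm{u}}$, so the $-\boldsymbol{\mathrm{u}}$ trick already yields the identity and the a.e.\ uniqueness argument via the $c$-concave potential can be dropped.
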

\begin{corollary}[Regularity]Let $\overline{\mu}^{n} = \arg\min_{\mu} \mathcal{J}_{\ast, \tau}(\mu, \overline{\mu}^{n - 1})$, we have $\overline{\mu}^{n} \in P(\mathbb{T}^{d}) \cap W^{1, 1}(\mathbb{T}^{d})$.
\end{corollary}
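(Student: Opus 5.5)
The plan is to read the optimality condition in Theorem~\ref{op-hjko} as the statement that the distributional gradient of $\overline{\mu}^{n}$ is an integrable vector field. Define
\begin{equation*}
    \boldsymbol{\mathrm{w}}(x) := \tfrac{1}{\tau}\,\tfrac{1}{2}\nabla A_{\mathrm{hom}}\big(\xi_{x \to \boldsymbol{\mathrm{r}}_{\ast}(x)}\big) - \nabla V(x).
\end{equation*}
Theorem~\ref{op-hjko} then reads $-\int \nabla\cdot\boldsymbol{\mathrm{u}}\,\overline{\mu}^{n}\ud x = \int \la \boldsymbol{\mathrm{u}}, \boldsymbol{\mathrm{w}}\ra \overline{\mu}^{n}\ud x$ for all $\boldsymbol{\mathrm{u}} \in C^{1}(\mathbb{T}^{d};\mathbb{R}^{d})$. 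This identity says precisely that $\nabla \overline{\mu}^{n} = \boldsymbol{\mathrm{w}}\,\overline{\mu}^{n}$ in the sense of distributions. Since $\overline{\mu}^{n}$ has finite entropy, it is absolutely continuous with density in $L^{1}$. It therefore suffices to show $\boldsymbol{\mathrm{w}}\,\overline{\mu}^{n} \in L^{1}(\mathbb{T}^{d};\mathbb{R}^{d})$, i.e.\ $\int |\boldsymbol{\mathrm{w}}|\ud \overline{\mu}^{n} < \infty$.

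\textbf{Step 1: the $\nabla V$ term.} By Assumption~\ref{A4}, $|\nabla V_{\varepsilon}| \le \lambda$ uniformly. If $V$ is $C^{1}$, this gives $|\nabla V| \le \lambda$ directly. Otherwise I would argue that $V$ is Lipschitz with constant $\lambda$ as a uniform limit of $\lambda$-Lipschitz functions, so $\nabla V \in L^{\infty}$. In either case $\int |\nabla V|\ud\overline{\mu}^{n} \le \lambda$.

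\textbf{Step 2: the transport term.} By \eqref{homo-1}, $\nabla A_{\mathrm{hom}}$ is positively $1$-homogeneous; together with the Lipschitz bound and $\nabla A_{\mathrm{hom}}(0)=0$ this gives $|\tfrac{1}{2}\nabla A_{\mathrm{hom}}(\xi)| \le \underline{\Lambda}_{1}|\xi|$. Next I would use convexity and homogeneity (Lemma~\ref{limit-B}) and Jensen's inequality: minimizing curves for $\int_{0}^{1} A_{\mathrm{hom}}(\dot z)\ud t$ on the cover are straight lines. Hence $\xi_{x\to y}$ is the shortest lift $y - x + k$, and $c_{\mathrm{hom}}(x,y) = A_{\mathrm{hom}}(\xi_{x\to y})$. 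By \eqref{B-ast-1} this gives $|\xi_{x\to y}|^{2} \le \Lambda_{1} c_{\mathrm{hom}}(x,y)$. Cauchy--Schwarz then yields
\begin{equation*}
    \int |\xi_{x\to\boldsymbol{\mathrm{r}}_{\ast}(x)}|\ud\overline{\mu}^{n} \le \Big(\Lambda_{1}\int c_{\mathrm{hom}}(x,\boldsymbol{\mathrm{r}}_{\ast}(x))\ud\overline{\mu}^{n}\Big)^{1/2} = \Lambda_{1}^{1/2} W_{\mathrm{GH}}(\overline{\mu}^{n},\overline{\mu}^{n-1}).
\end{equation*}
By Lemmas~\ref{c-eqi} and \ref{w-eqi}, the right-hand side is finite (bounded by a multiple of $\operatorname{diam}\mathbb{T}^{d}$). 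Combining, $\|\nabla\overline{\mu}^{n}\|_{L^{1}} \le \tau^{-1}\underline{\Lambda}_{1}\Lambda_{1}^{1/2}W_{\mathrm{GH}}(\overline{\mu}^{n},\overline{\mu}^{n-1}) + \lambda$, so $\overline{\mu}^{n}\in W^{1,1}$.

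\textbf{Main obstacle.} The delicate point is the measurability and selection of $\xi_{x\to\boldsymbol{\mathrm{r}}_{\ast}(x)}$ when the minimizing lift is not unique, together with justifying $|\xi_{x\to y}|^{2}\lesssim c_{\mathrm{hom}}(x,y)$. I would handle the first issue by noting that $\boldsymbol{\mathrm{r}}_{\ast}$ is defined $\overline{\mu}^{n}$-a.e.\ via Lemma~\ref{monge}, and that the cut locus has measure zero. Any measurable selection of $\xi_{x\to\boldsymbol{\mathrm{r}}_{\ast}(x)}$ is then fine, since the bound in Step 2 is uniform over all minimizing lifts. The second issue follows from the straight-line (Jensen) reduction used in Step 2.
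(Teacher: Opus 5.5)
Your proposal is correct and is essentially the paper's argument: read Theorem~\ref{op-hjko} as $\nabla\overline{\mu}^{n} = \boldsymbol{\mathrm{w}}\,\overline{\mu}^{n}$ in the sense of distributions, control $\xi_{x\to\boldsymbol{\mathrm{r}}_{\ast}(x)}$ through $c_{\mathrm{hom}}(x,y) = A_{\mathrm{hom}}(\xi_{x\to y}) \ge \Lambda_{1}^{-1}|\xi_{x\to y}|^{2}$, and use the boundedness of $\nabla V$. The only difference is that the paper bounds $\xi$ uniformly (the cost is bounded on the compact torus). Your Cauchy--Schwarz step instead gives the sharper estimate $\|\nabla\overline{\mu}^{n}\|_{L^{1}} \le C\tau^{-1}W_{\mathrm{GH}}(\overline{\mu}^{n},\overline{\mu}^{n-1}) + \lambda$, and it makes explicit the linear growth of $\nabla A_{\mathrm{hom}}$ that the paper leaves implicit (your ``shortest lift'' should be read as the $A_{\mathrm{hom}}$-minimal lift, which does not affect the bound).
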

\begin{proof}
Since $\boldsymbol{\mathrm{r}}_{\ast} : \mathbb{T}^{d} \to \mathbb{T}^{d}$ is an optimal transport map with value in compact sets and $\xi$ is the initial velocity vector of action minimizing curves, by Lemma~\ref{velcoti-c}, we have
\begin{equation*}
    \Lambda_{0} |\xi_{x \to y}|^{2} \le A_{\mathrm{hom}}(\xi_{x \to \boldsymbol{\mathrm{r}}_{\ast}(x)}) = c_{\mathrm{hom}}(x, \boldsymbol{\mathrm{r}}_{\ast}(x)),
\end{equation*}
which means that $\xi_{x \to \boldsymbol{\mathrm{r}}_{\ast}(x)}$ is bounded. Due to Assumption~\ref{A4}, $\nabla V(x)$ is also bounded. Since $H(\overline{\mu}^{n}) < \infty$, $\overline{\mu}^{n}$ is absolutely continuous with density in $L^{1}(\mathbb{T}^{d})$, and $\overline{\mu}^{n}$ has a distributional gradient in $L^{1}(\mathbb{T}^{d})$ by Theorem~\ref{op-hjko}.
\end{proof}

\begin{proof}[Proof of Theorem~\ref{op-hjko}]
The proof of the first-order optimality condition is essentially the same as that of Proposition~\ref{op-ejko}. We let $\boldsymbol{\mathrm{u}} : \mathbb{T}^{d} \to \mathbb{R}^{d}$ be a $C^{1}$ vector field, and $\boldsymbol{\mathrm{r}}(\cdot, s)$ be the flow induced by $\boldsymbol{\mathrm{u}}$ as that in~\eqref{flow-def}.

Let $\mu(s) = \boldsymbol{\mathrm{r}}(\cdot, s) \# \overline{\mu}^{n}$. By the optimality of $\overline{\mu}^{n}$, we have
\begin{equation}\label{limit-j-hom}
    \begin{aligned}
        &\frac{\mathcal{J}_{\ast, \tau}(\mu(s), \overline{\mu}^{n - 1}) - \mathcal{J}_{\ast, \tau}(\overline{\mu}^{n}, \overline{\mu}^{n - 1})}{s}\\
        &= \frac{H(\mu(s), \overline{\mu}^{n - 1}) - H(\overline{\mu}^{n}, \overline{\mu}^{n - 1})}{s} + \frac{W_{\mathrm{GH}}(\mu(s), \overline{\mu}^{n - 1}) - W_{\mathrm{GH}}(\overline{\mu}^{n}, \overline{\mu}^{n - 1})}{2\tau \times s} \ge 0.
    \end{aligned}
\end{equation}
For the entropy functional, we have
\begin{equation}\label{limit-h-hom}
    \lim_{s \to 0}\frac{H(\mu(s)) - H(\overline{\mu}^{n})}{s} = \int_{\mathbb{T}^{d}} \la\nabla V, \boldsymbol{\mathrm{u}} \ra - \nabla \cdot \boldsymbol{\mathrm{u}} \ud \overline{\mu}^{n}.
\end{equation}
To differentiate the distance $W_{\mathrm{GH}}$, we let $\omega \in \Pi(\overline{\mu}^{n}, \overline{\mu}^{n - 1})$ be the optimal coupling of the distance $W_{\mathrm{GH}}(\overline{\mu}^{n}, \overline{\mu}^{n - 1})$. Here $\boldsymbol{\mathrm{r}}(\cdot, s)$ induces a time dependent coupling $\omega[s] \in \Pi(\mu(s), \overline{\mu}^{n - 1})$ by pushing-forward the first marginal measure:
\begin{equation*}
    \int_{\mathbb{T}^{d} \times \mathbb{T}^{d}} \varphi(x, y) \ud \omega[s](x, y) = \int_{\mathbb{T}^{d} \times \mathbb{T}^{d}} \varphi(\boldsymbol{\mathrm{r}}(x, s), y)\ud \omega(x, y), \text{ for all } \varphi \in C(\mathbb{T}^{d}).
\end{equation*}
By using the fact that $W_{\mathrm{GH}}(\mu(s), \overline{\mu}^{n - 1})^{2} \le \int_{\mathbb{T}^{d} \times \mathbb{T}^{d}} c_{\mathrm{hom}}(x, y) \ud \omega[s]$, we have
\begin{equation*}
    \begin{aligned}
        \frac{1}{2s} \Big(W_{\mathrm{GH}}(\mu(s), \overline{\mu}^{n - 1})^{2} &- W_{\mathrm{GH}}(\overline{\mu}^{n}, \overline{\mu}^{n - 1})^{2}\Big)\\
        &\le \int_{\mathbb{T}^{d} \times \mathbb{T}^{d}} \frac{1}{2s}\big(c_{\mathrm{hom}}(\boldsymbol{\mathrm{r}}(x, s), y) - c_{\mathrm{hom}}(x, y)\big) \ud \omega(x, y).
    \end{aligned} 
\end{equation*}
Furthermore, by \cite[Proposition 10.15]{villani2008optimal}, the cost $c_{\mathrm{hom}}(x, y)$ given by the Lagrangian action $\int_{0}^{1}A_{\mathrm{hom}}(\dot{z}) \ud t$ is super-differentiable:
\begin{equation*}
    \begin{aligned}
        \frac{c_{\mathrm{hom}}(\boldsymbol{\mathrm{r}}(x, s), y) - c_{\mathrm{hom}}(x, y)}{2s} &\le \frac{\big<\boldsymbol{\mathrm{r}}(x, s) - x, p\big>}{2s} + \frac{o(|\boldsymbol{\mathrm{r}}(x, s) - x|)}{2s}\\
        & \text{ with super-gradient } p = -\nabla A_{\mathrm{hom}}(\xi_{x \to y})
    \end{aligned}
\end{equation*}
where $\xi_{x \to y}$ is initial velocity vector of an action minimizing curve of $\int_{0}^{1} A_{\mathrm{hom}}(\dot{z}(t))\ud t$ connecting $x$ to $y$. Therefore,
\begin{equation}\label{limit-w-hom}
    \begin{aligned}
        \limsup_{s \to 0} \frac{1}{2s}\Big(W_{\mathrm{GH}}(\mu(s), \overline{\mu}^{n - 1})^{2} &- W_{\mathrm{GH}}(\overline{\mu}^{n}, \overline{\mu}^{n - 1})^{2}\Big)\\
        &\le \int_{\mathbb{T}^{d} \times \mathbb{T}^{d}} -\big<\boldsymbol{\mathrm{u}}(x), \tfrac{1}{2}\nabla A_{\mathrm{hom}}(\xi_{x \to y})\big> \ud \omega(x, y).
    \end{aligned}
\end{equation}

Combining~\eqref{limit-h-hom} and~\eqref{limit-w-hom}, we take the $\limsup_{s \to 0}$ in~\eqref{limit-j-hom} to get,
\begin{equation*}
    \int_{\mathbb{T}^{d}} \la\nabla V, \boldsymbol{\mathrm{u}}\ra - \nabla \cdot \boldsymbol{\mathrm{u}} \ud \overline{\mu}^{n} - \frac{1}{\tau} \int_{\mathbb{T}^{d} \times \mathbb{T}^{d}}\big<\boldsymbol{\mathrm{u}}(x), \tfrac{1}{2}\nabla A_{\mathrm{hom}}(\xi_{x \to y})\big> \ud \omega(x, y) \ge 0.
\end{equation*}
Let $\boldsymbol{\mathrm{r}}_{\ast}$ be the optimal transport map from $\overline{\mu}^{n}$ to $\overline{\mu}^{n - 1}$, since $\omega = (\mathrm{id}, \boldsymbol{\mathrm{r}}_{\ast}) \# \overline{\mu}^{n}$, we can further get
\begin{equation*}
    \begin{aligned}
        \int_{\mathbb{T}^{d}} -\nabla \cdot \boldsymbol{\mathrm{u}} - \frac{1}{\tau} \big<\boldsymbol{\mathrm{u}}(x), \tfrac{1}{2}\nabla A_{\mathrm{hom}}(\xi_{x \to \boldsymbol{\mathrm{r}}_{\ast}(x)}) - \tau \nabla V(x) &\big> \ud \overline{\mu}^{n} \ge 0.\\
        &\text{ for all } \boldsymbol{\mathrm{u}} \in C^{1}(\mathbb{T}^{d}, \mathbb{R}^{d}).
    \end{aligned}
\end{equation*}
Finally, if we replace $\boldsymbol{\mathrm{u}}$ with $- \boldsymbol{\mathrm{u}}$, the above inequality will be reversed. By using this symmetry, we get the first-order optimality condition.
\end{proof}

Similar to the convergence of $\varepsilon$-JKO scheme in the above subsection, we establish the following transport estimate of consistency.
\begin{lemma}[transport estimate of consistency]\label{homo-convergence-t}Given an absolutely continuous probability measure $\overline{\mu}^{n - 1} \in P(\mathbb{T}^{d})$, and let $\overline{\mu}^{n} = \arg\min_{\mu} \mathcal{J}_{\ast, \tau}(\mu, \overline{\mu}^{n - 1})$, we have
\begin{equation}
    \begin{aligned}
        \bigg|\int_{\mathbb{T}^{d}} \frac{\overline{\mu}^{n} - \overline{\mu}^{n - 1}}{\tau} \varphi \ud x + \int_{\mathbb{T}^{d}} \nabla \varphi \cdot \big(\tfrac{1}{2}\nabla A_{\mathrm{hom}}\big)^{-1}\big(\nabla\overline{\mu}^{n} &+ \overline{\mu}^{n} \nabla V\big) \ud x\bigg|\\
        &\le \frac{C(\varphi)}{2\tau} W_{\mathrm{GH}}(\overline{\mu}^{n}, \overline{\mu}^{n - 1})^{2},
    \end{aligned}
\end{equation}
for all $\varphi \in C^{2}(\mathbb{T}^{d})$, where $C(\varphi)$ is a constant depending on $\varphi$.
\end{lemma}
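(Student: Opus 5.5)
The plan mirrors the proof of Lemma~\ref{JKO-bound}, with two changes: inverting the nonlinear first-order condition of Theorem~\ref{op-hjko}, and a Euclidean Taylor estimate replacing Lemma~\ref{lemma2}.

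\textbf{Step 1: identify the geodesics.} Since $A_{\mathrm{hom}}$ is convex and $2$-homogeneous (Lemma~\ref{limit-B}), Jensen's inequality gives $\int_0^1 A_{\mathrm{hom}}(\dot z)\ud t \ge A_{\mathrm{hom}}(z(1)-z(0))$ for curves in the universal cover. Hence action-minimizing curves of $\int_0^1 A_{\mathrm{hom}}(\dot z)\ud t$ are straight lines $z(t)=x+t\xi$. Here $\xi=\xi_{x\to y}=\tilde y - x$ for a lift $\tilde y\in y+\mathbb{Z}^d$ minimizing $A_{\mathrm{hom}}(\tilde y - x)$, so
\[
c_{\mathrm{hom}}(x,y)=A_{\mathrm{hom}}(\xi_{x\to y}) \ge \Lambda_1^{-1}|\xi_{x\to y}|^2
\]
by \eqref{B-ast-1}.

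\textbf{Step 2: invert the optimality condition.} Theorem~\ref{op-hjko} says, in the distributional sense,
\[
\nabla\overline{\mu}^n + \overline{\mu}^n\nabla V = \tfrac{1}{\tau}\,\overline{\mu}^n\,\tfrac12\nabla A_{\mathrm{hom}}(\xi_{x\to \boldsymbol{\mathrm{r}}_\ast(x)}),
\]
as an identity of $L^1$ vector fields, using the Regularity corollary. I want to apply $(\tfrac12\nabla A_{\mathrm{hom}})^{-1}$ to both sides to get
\[
(\tfrac12\nabla A_{\mathrm{hom}})^{-1}\big(\nabla\overline{\mu}^n+\overline{\mu}^n\nabla V\big)=\tfrac1\tau\,\overline{\mu}^n\,\xi_{x\to\boldsymbol{\mathrm{r}}_\ast(x)}.
\]
The difficulty is that the map is nonlinear, so the scalar $\overline{\mu}^n/\tau\ge 0$ must be pulled out. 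Here I use \eqref{homo-1}: $\nabla A_{\mathrm{hom}}(\alpha\xi)=\alpha\nabla A_{\mathrm{hom}}(\xi)$ for $\alpha\ge0$, which gives $(\tfrac12\nabla A_{\mathrm{hom}})^{-1}(\alpha\eta)=\alpha(\tfrac12\nabla A_{\mathrm{hom}})^{-1}(\eta)$. The inverse is well defined and Lipschitz by \eqref{homo-2}. This pointwise commutation, valid a.e. including where $\overline{\mu}^n=0$, is the step I expect to be the main obstacle. It is exactly where uniform strong convexity is needed, and where the nonlinear term is legitimately interpreted.

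\textbf{Step 3: test and Taylor-expand.} Test the inverted identity against $\nabla\varphi$ and use $\int\varphi\,\ud\overline{\mu}^{n-1}=\int\varphi\circ\boldsymbol{\mathrm{r}}_\ast\,\ud\overline{\mu}^n$. The left side of the lemma becomes
\[
\frac1\tau\int_{\mathbb{T}^d}\Big[\varphi(x)-\varphi(\boldsymbol{\mathrm{r}}_\ast(x))+\langle \xi_{x\to\boldsymbol{\mathrm{r}}_\ast(x)},\nabla\varphi(x)\rangle\Big]\ud\overline{\mu}^n.
\]
Since the geodesics are straight, $\boldsymbol{\mathrm{r}}_\ast(x)=x+\xi$ modulo $\mathbb{Z}^d$. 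Lifting $\varphi$ periodically and applying Taylor's theorem gives
\[
|\varphi(x)-\varphi(x+\xi)+\langle\xi,\nabla\varphi(x)\rangle|\le\tfrac12\|\nabla^2\varphi\|_\infty|\xi|^2.
\]
Here $\tfrac12\|\nabla^2\varphi\|_\infty|\xi|^2 \le \tfrac12\Lambda_1\|\nabla^2\varphi\|_\infty\,c_{\mathrm{hom}}(x,\boldsymbol{\mathrm{r}}_\ast(x))$ by Step 1. This replaces Lemma~\ref{lemma2} with $\varepsilon$-independent constant $C(\varphi)=\Lambda_1\|\nabla^2\varphi\|_\infty$.

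\textbf{Step 4: integrate.} Integrate against $\overline{\mu}^n$. Since $\omega=(\mathrm{id},\boldsymbol{\mathrm{r}}_\ast)\#\overline{\mu}^n$ is optimal, $\int c_{\mathrm{hom}}(x,\boldsymbol{\mathrm{r}}_\ast(x))\ud\overline{\mu}^n=W_{\mathrm{GH}}(\overline{\mu}^n,\overline{\mu}^{n-1})^2$, which yields the stated bound.
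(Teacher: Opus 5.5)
Your proposal is correct and follows essentially the same route as the paper. It inverts the first-order condition of Theorem~\ref{op-hjko} via the one-homogeneity \eqref{homo-1} to get $\xi_{x\to\boldsymbol{\mathrm{r}}_\ast(x)}=\tau\,(\tfrac12\nabla A_{\mathrm{hom}})^{-1}\big(\nabla\log\overline{\mu}^{n}+\nabla V\big)$ $\overline{\mu}^{n}$-a.e., rewrites the left-hand side through $\overline{\mu}^{n-1}=\boldsymbol{\mathrm{r}}_\ast\#\overline{\mu}^{n}$, and controls the remainder with the straight-line (Jensen) Taylor estimate of Lemma~\ref{limiting-taylor}. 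Your bookkeeping is slightly cleaner than the paper's on two points: you pull out the scalar $\overline{\mu}^{n}/\tau$ without dividing by $\overline{\mu}^{n}$, and you use the correct bound $\Lambda_1^{-1}|\xi|^2\le A_{\mathrm{hom}}(\xi)$ from \eqref{B-ast-1}, where the paper writes $\Lambda_0|\xi|^2$.
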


Combining with the a-prior estimate $\sum_{n = 1}^{N}W_{\mathrm{GH}}(\overline{\mu}^{n}, \overline{\mu}^{n - 1})^{2} \le C\tau$ the same as that in Lemma~\ref{p-a-prior}, we can derive the time continuous limit of the homogenized JKO scheme.
\begin{proof}[Proof of Theorem~\ref{prop-homo-jko}]
Let $\overline{\mu}_{\tau}(t)$ be the solution of the homogenized JKO scheme with step-size $\tau$. According to the compactness of minimizing movement trajectories in general metric spaces \cite[Proposition 2.2.3]{ambrosio2008gradient}, there exists an absolutely continuous curve $\overline{\mu}(t)$ in $(P(\mathbb{T}^{d}), W_{2})$ such that
\begin{equation*}
    \overline{\mu}_{\tau}(t) \to \overline{\mu}(t) \text{ in $W_2$, for all } t \in (0, \infty)
\end{equation*}
as $\tau \to 0$, up to the extraction of a subsequence. To identify the limit $\overline{\mu}$, we fix a time horizon $T > 0$, and let $N = T/\tau$ and $t^{n} = n\tau$ According to Lemma~\ref{homo-convergence-t}, we have
\begin{equation}\label{homo-3}
    \begin{aligned}
        \bigg|\sum_{n = 1}^{N}\bigg[\int_{\bR} \frac{\overline{\mu}^{n} - \overline{\mu}^{n - 1}}{\tau} \varphi\ud x + \int_{\bR} \nabla \varphi \cdot \big(&\tfrac{1}{2}\nabla A_{\mathrm{hom}}\big)^{-1}\big(\nabla\overline{\mu}^{n} + \overline{\mu}^{n} \nabla V\big) \ud x\bigg]\tau\bigg|\\
        &\quad \le \tau \times \frac{C(\varphi)}{2\tau} \sum_{n = 1}^{N} W_{\varepsilon}(\overline{\mu}^{n}, \overline{\mu}^{n - 1})^{2} \le C\tau,
    \end{aligned}
\end{equation}
for all $\varphi \in C^{1}([0, \infty), C(\mathbb{T}^{d})) \cap C([0, \infty), C^{2}(\mathbb{T}^{d}))$.

To get the limit $\tau \to 0$, one needs to pass to the limit of $\nabla \overline{\mu}_{\tau}$ inside the nonlinear function $(\tfrac{1}{2}\nabla A_{\mathrm{hom}})^{-1}$. However, the convergence $\overline{\mu}_{\tau}(t) \to \overline{\mu}(t)$ in $W_2$, obtained from compactness of minimizing movement trajectories, is not sufficient for this purpose. This is precisely where we use the additional regularity assumption that $\overline{\mu}_{\tau}(t) \to \overline{\mu}(t)$ in $W^{1, 1}(\mathbb{T}^{d})$, together with the uniform convexity of $\nabla A_{\mathrm{hom}}$. In particular, from~\eqref{homo-2},
\begin{equation*}
    \begin{aligned}
        \Big|(\nabla A_{\mathrm{hom}})^{-1}\big(\nabla \overline{\mu}_{\tau} + \overline{\mu}_{\tau} \nabla V(x)\big) &- (\nabla A_{\mathrm{hom}})^{-1}\big(\nabla \overline{\mu} + \overline{\mu}\nabla V(x)\big)\Big|\\
        &\le \underline{\Lambda}_{0}^{-1} \Big|\big(\nabla \overline{\mu}_{\tau} + \overline{\mu}_{\tau} \nabla V(x)\big) - \big(\nabla \overline{\mu} + \overline{\mu}\nabla V(x)\big)\Big|,
    \end{aligned}
\end{equation*}
and we can therefore take the limit $\tau \to 0$ in~\eqref{homo-3} to get:
\begin{equation*}
    \begin{aligned}
        \bigg|\int_{0}^{T}\int_{\mathbb{T}^{d}} -\overline{\mu}(t)\partial_{t}\varphi + \nabla \varphi \cdot \big(&\tfrac{1}{2}\nabla A_{\mathrm{hom}}\big)^{-1}\big(\nabla \overline{\mu} + \overline{\mu}\nabla V(x)\big) \ud x \ud t\\
        &\quad - \int_{\mathbb{T}^{d}} \varphi(0) \mu_{0}\ud x + \int_{\mathbb{T}^{d}} \varphi(T) \overline{\mu}(T)\ud x\bigg| = 0.
    \end{aligned}
\end{equation*}
Hence $\overline{\mu}(t)$ solves the following quasilinear parabolic equations in divergence form.
\begin{equation*}
    \frac{\partial \overline{\mu}}{\partial t} - \nabla \cdot \Big[\big(\tfrac{1}{2}\nabla A_{\mathrm{hom}}\big)^{-1}\big(\nabla \overline{\mu} + \overline{\mu}\nabla V(x)\big)\Big] = 0,
\end{equation*}
with initial value $\mu_{0}$.
\end{proof}

To prove Lemma~\ref{homo-convergence-t}, we establish the following point-wise Taylor-expansion type estimate.
\begin{lemma}\label{limiting-taylor} Let $\xi_{x \to y} = \dot{z}(0)$ be the initial velocity vector of an action minimizing curve $z(t)$ connecting $x$ to $y$ under the action $\int_{0}^{1} A_{\mathrm{hom}}(\dot{z}(t))\ud t$, we have
\begin{equation*}
    \Big|\varphi(y) - \varphi(x) - \big<\xi_{x \to y}, \nabla \varphi(x)\big>\Big| \le \frac{C(\varphi)}{2} c_{\mathrm{hom}}(x, y),
\end{equation*}
for all $\varphi \in C^{2}(\mathbb{T}^{d})$, with some constant $C(\varphi)$ depending on $\varphi$, and the transport cost $c_{\mathrm{hom}}(x, y) := \min_{\mathscr{C}(x, y)} \int_{0}^{1} A_{\mathrm{hom}}(\dot{z}) \ud t$ in~\eqref{eps-c-copy}.
\end{lemma}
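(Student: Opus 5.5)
The plan is to exploit the fact, announced before Lemma~\ref{limit-B}, that minimizing curves of the homogenized action are straight lines in the universal cover. First, on the cover $\mathbb{R}^{d}$, take any admissible $z\in\mathscr{C}_{\mathrm{cov}}(x,y)$. By convexity of $A_{\mathrm{hom}}$ (Lemma~\ref{limit-B}) and Jensen's inequality,
\begin{equation*}
    \int_{0}^{1}A_{\mathrm{hom}}(\dot z)\ud t \ge A_{\mathrm{hom}}\Big(\int_{0}^{1}\dot z\ud t\Big) = A_{\mathrm{hom}}(z(1)-x).
\end{equation*}
Hence $c_{\mathrm{hom}}(x,y)=\min_{k\in\mathbb{Z}^{d}}A_{\mathrm{hom}}(y+k-x)$, and the straight segment $z(t)=x+t(y+k_{\ast}-x)$ attains it. In particular the initial velocity $\xi_{x\to y}=y+k_{\ast}-x$ is a lift of $y-x$ with $y=x+\xi_{x\to y}$ modulo $\mathbb{Z}^{d}$, and $c_{\mathrm{hom}}(x,y)=A_{\mathrm{hom}}(\xi_{x\to y})$. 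This is the analogue of Lemma~\ref{velcoti-c}; degree-$2$ homogeneity makes the constant-speed parametrization consistent.

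Second, I lift $\varphi$ to a $\mathbb{Z}^{d}$-periodic $C^{2}$ function on $\mathbb{R}^{d}$. Then $\varphi(y)=\varphi(x+\xi_{x\to y})$, and Taylor's formula with integral remainder gives
\begin{equation*}
    \varphi(y)-\varphi(x)-\langle \xi_{x\to y},\nabla\varphi(x)\rangle = \int_{0}^{1}(1-s)\langle \xi_{x\to y},\nabla^{2}\varphi(x+s\xi_{x\to y})\xi_{x\to y}\rangle\ud s .
\end{equation*}
In absolute value this is at most $\tfrac12\|\nabla^{2}\varphi\|_{L^{\infty}}|\xi_{x\to y}|^{2}$. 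Unlike Lemma~\ref{lemma2}, the exponential map here is just $\xi\mapsto x+\xi$, so no curvature-type term appears. The constant is therefore genuinely independent of any small scale, which is the point of contrast with Remark~\ref{remark-5}.

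Third, I convert $|\xi_{x\to y}|^{2}$ into the cost using the lower bound~\eqref{B-ast-1}: $|\xi_{x\to y}|^{2}\le\Lambda_{1}A_{\mathrm{hom}}(\xi_{x\to y})=\Lambda_{1}c_{\mathrm{hom}}(x,y)$. This yields the claim with $C(\varphi)=\Lambda_{1}\|\nabla^{2}\varphi\|_{L^{\infty}}$.

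The main obstacle is the first step: justifying that the minimizer over the cover is a straight segment, and that its length vector serves as $\xi_{x\to y}$ in the super-differential formula used in Theorem~\ref{op-hjko}. The Jensen argument settles the minimization but requires $A_{\mathrm{hom}}$ to be a convex, finite, continuous function, as given by Lemma~\ref{limit-B} and~\eqref{B-ast-1}. When several $k_{\ast}$ are minimal, I would simply fix one measurable selection. The estimate holds for each choice, since only $y\equiv x+\xi$ and $c_{\mathrm{hom}}=A_{\mathrm{hom}}(\xi)$ are used.
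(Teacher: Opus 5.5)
Your argument is essentially the paper's: Jensen's inequality on the universal cover yields straight minimizers with $c_{\mathrm{hom}}(x,y)=A_{\mathrm{hom}}(\xi_{x\to y})$ and $y\equiv x+\xi_{x\to y}$ modulo $\mathbb{Z}^d$. The Euclidean Taylor formula then bounds the left-hand side by $\tfrac12\|\nabla^2\varphi\|_{L^\infty}|\xi_{x\to y}|^2$, and the lower bound in~\eqref{B-ast-1} converts this into the cost. Your constant $C(\varphi)=\Lambda_1\|\nabla^2\varphi\|_{L^\infty}$ is the right one. The paper's ``$\Lambda_0|\xi|^2\le A_{\mathrm{hom}}(\xi)$'' is a slip for $\Lambda_1^{-1}|\xi|^2\le A_{\mathrm{hom}}(\xi)$.

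One step needs tightening: the passage from ``a straight segment is minimizing'' to ``$\xi_{x\to y}$ is the displacement''. You say mere convexity suffices, but under mere convexity Jensen only shows that \emph{some} minimizer is straight. The statement, and the $\xi_{x\to y}$ appearing in Theorem~\ref{op-hjko}, refer to the initial velocity of an \emph{arbitrary} minimizing curve. You need the equality case of Jensen, which the paper invokes. Under the standing assumption of this subsection that $A_{\mathrm{hom}}$ is uniformly strongly convex, equality forces $\dot z$ to be a.e.\ constant. So every minimizer is a segment $t\mapsto x+t(y+k-x)$, and your estimate applies to all of them.

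This is not cosmetic. Suppose the level set $\{A_{\mathrm{hom}}=r^2\}$ contained a flat piece through two vectors $\eta_1\neq\eta_2$. Then for small $r$ the curve with $\dot z=\eta_1$ on $[0,\tfrac12)$ and $\dot z=\eta_2$ on $[\tfrac12,1]$ would also be minimizing. Its $\dot z(0)=\eta_1$ differs from the displacement $\tfrac12(\eta_1+\eta_2)$ by $O(r)$. The left-hand side would then contain the first-order term $\tfrac12\langle \eta_2-\eta_1,\nabla\varphi(x)\rangle$, which cannot be controlled by $c_{\mathrm{hom}}(x,y)=r^2$.
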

\begin{proof}[Proof of Lemma~\ref{limiting-taylor}]
By leveraging the convexity of $A_{\mathrm{hom}}$ via Jensen's inequality \cite[Example 10.21]{villani2008optimal}, one has
\begin{equation}\label{Jensen}
    A_{\mathrm{hom}}\bigg(\int_{0}^{1} \dot{z}(t) \ud t\bigg) \le \int_{0}^{1} A_{\mathrm{hom}}(\dot{z}(t))\ud t \text{ for } z \in H^{1}([0, 1], \mathbb{T}^{d}),
\end{equation}
with $z(0) = x$ and $z(1) = y$. The equality in~\eqref{Jensen} holds only if the velocity $\dot{z}(t)$ is a constant vector. Hence, by taking minimal on the RHS of~\eqref{Jensen}, there exists $\xi_{x \to y}$ such that
\begin{equation}\label{char-c-ast}
    c_{\mathrm{hom}}(x, y) = A_{\mathrm{hom}}(\xi_{x \to y}).
\end{equation}
Let $\exp$ be the exponential map of the action $L(x, v) = |v|^{2}$, we have $\exp_{x}(\xi_{x \to y}) = y$. Similar to Lemma~\ref{lemma2}, we composite test function $\varphi$ with this exponential map $\exp_{x}$ to get the following point-wise Taylor-expansion type estimate
\begin{equation*}
    \Big|\varphi(y) - \varphi(x) - \big<\xi_{x \to y}, \nabla \varphi(x)\big>\Big| \le \sup_{z \in \mathbb{T}^{d}}\frac{|\nabla^{2}\varphi(z)|}{2} |\xi_{x \to y}|^{2}.
\end{equation*}
Since $\varphi \in C^{2}(\mathbb{T}^{d})$ and $\Lambda_{0}|\xi|^{2} \le A_{\mathrm{hom}}(\xi)$, we have $\sup_{z \in \mathbb{T}^{d}}|\nabla^{2}\varphi(z)| < \infty$ and we can find a constant $C(\varphi)$ such that
\begin{equation*}
    \Big|\varphi(y) - \varphi(x) - \big<\xi_{x \to y}, \nabla \varphi(x)\big>\Big| \le \frac{C(\varphi)}{2} A_{\mathrm{hom}}(\xi_{x \to y}).
\end{equation*}
Since $c_{\mathrm{hom}}(x, y) = A_{\mathrm{hom}}(\xi_{x \to y})$ by~\eqref{char-c-ast}, we get the estimate.
\end{proof}

\begin{proof}[Proof of Lemma~\ref{homo-convergence-t}]
By the optimality condition (Theorem~\ref{op-hjko}), we have
\begin{equation*}
    \tau\nabla \psi[\overline{\mu}^{n}](x) = \big(\tfrac{1}{2}\nabla A_{\mathrm{hom}}\big)(\xi_{x \to \boldsymbol{\mathrm{r}}_{\ast}(x)}) \text{ for $\overline{\mu}^{n}$ a.e., with $\psi[\mu] = \log(\mu) + V$ }
\end{equation*}
which is well-defined on $\{x : \overline{\mu}^{n}(x) > 0\}$. Taking into account of the one-homogeneity of $\nabla A_{\mathrm{hom}}$ in~\eqref{homo-1}, we have
\begin{equation*}
    \xi_{x \to \boldsymbol{\mathrm{r}}_{\ast}(x)} = \tau \big(\tfrac{1}{2}\nabla A_{\mathrm{hom}}\big)^{-1}\big(\nabla \psi[\overline{\mu}^{n}](x)\big), \text{ for $\overline{\mu}^{n}$ a.e.}
\end{equation*}
Because $\nabla A_{\mathrm{hom}}$ is one-homogeneous, we can get
\begin{equation*}
    \begin{aligned}
        \int_{\mathbb{T}^{d}} \frac{\overline{\mu}^{n} - \overline{\mu}^{n - 1}}{\tau} \varphi \ud x &+ \int_{\mathbb{T}^{d}} \nabla \varphi \cdot \big(\tfrac{1}{2}\nabla A_{\mathrm{hom}}\big)^{-1}\big(\nabla\overline{\mu}^{n} + \overline{\mu}^{n} \nabla V\big) \ud x\\
        &= \int_{\mathbb{T}^{d}} \frac{\overline{\mu}^{n} - \overline{\mu}^{n - 1}}{\tau} \varphi \ud x + \int_{\mathbb{T}^{d}} \nabla \varphi \cdot \big(\tfrac{1}{2}\nabla A_{\mathrm{hom}}\big)^{-1}\big(\nabla \psi[\overline{\mu}^{n}](x)\big) \ud \overline{\mu}^{n}\\
        &= \int_{\bR} \frac{\varphi(x) - \varphi(\boldsymbol{\mathrm{r}}_{\ast}(x)) + \big<\xi_{x \to \boldsymbol{\mathrm{r}}_{\ast}(x)}, \nabla\varphi(x)\big>}{\tau} \ud \overline{\mu}^{n}.
    \end{aligned}
\end{equation*}
By using the Taylor expansion type estimation in Lemma~\ref{limiting-taylor},
\begin{equation*}
    \big|\varphi(x) - \varphi(\boldsymbol{\mathrm{r}}_{\ast}(x)) + \big<\xi_{x \to \boldsymbol{\mathrm{r}}_{\ast}(x)}, \nabla\varphi(x)\big>\big| \le \frac{C(\varphi)}{2} c_{\mathrm{hom}}(x, \boldsymbol{\mathrm{r}}_{\ast}(x)),
\end{equation*}
with some constant $C(\varphi)$. Hence, we have
\begin{equation*}
    \begin{aligned}
        \bigg|\int_{\mathbb{T}^{d}} \frac{\overline{\mu}^{n} - \overline{\mu}^{n - 1}}{\tau} \varphi \ud x + \int_{\mathbb{T}^{d}} \nabla \varphi \cdot \big(\tfrac{1}{2}\nabla A_{\mathrm{hom}}\big)^{-1}\big(\nabla\overline{\mu}^{n} &+ \overline{\mu}^{n} \nabla V\big) \ud x\bigg|\\
        &\le \frac{C(\varphi)}{2\tau} W_{\mathrm{GH}}(\overline{\mu}^{n}, \overline{\mu}^{n - 1})^{2}
    \end{aligned}
\end{equation*}
which is the transport estimate of consistency.
\end{proof}

\section{Conclusion}\label{sec4}
In this work, we analyzed two minimizing movement schemes for $\varepsilon$-Fokker-Planck equations with rapidly changing diffusion constants. We studied their homogenization limit, and derived the time-continuous limit of the homogenized minimizing movement schemes. From our analysis, we have shown that the weighted $L^{2}$ scheme captures the correct time-continuous homogenization limit constructed in \cite{gao2023homogenization}, while the JKO scheme fails to be asymptotic-preserving.

Deriving the asymptotic homogenization of aggregation-diffusion equations would be important to understand the dynamics of general Wasserstein gradient flows on inhomogeneous media, where aggregation-diffusion equations on inhomogeneous media were considered by \cite{bedrossian2011inhomogeneous, bedrossian2013global}. From our results, it is direct to prove that JKO scheme fails to capture the homogenization limit in general. It remains to exam if the backward Euler scheme still preserves the time-continuous asymptotic homogenization.

\section*{Acknowledgments}
The first author would like to thank Professor Jos\'e A. Carrillo for his insightful discussion.

\appendix
\section{Numerical Results}\label{appendix a}We explain the model equation in Figure~\ref{fig:1} in detail. In particular,
\begin{equation*}
    \partial_{t} \mu - \nabla \cdot\Big[A\Big(\frac{x}{\varepsilon}\Big)\nabla \mu\Big] = 0 \text{ where } A(x) = (1 + \tfrac{1}{2}\sin(2\pi x))^{2}
\end{equation*}
on $\mathbb{R}$, with initial value 
\begin{equation*}
    \mu_{0}(x) = (2\pi)^{-\frac{1}{2}} \exp(-x^{2}/2).
\end{equation*}
Since the solution decays exponentially fast as $x \to \infty$, we consider the computational domain to be $\Omega = [-6, 6]$. We set the scale parameter to $\varepsilon = 10^{-5}$, and choose time step size $\tau = 0.025$ and spatial grid $h = 0.0025$.

\subsection{Numerical methods}
In the weighted $L^{2}$ scheme, instead of solving the minimizing movement formulation, we obtain $\mu^n$ directly from the backward Euler scheme, namely,
\begin{equation*}
\frac{\mu^{n} - \mu^{n - 1}}{\tau} - \nabla \cdot\Big[A\Big(\frac{x}{\varepsilon}\Big)\nabla \mu^{n}\Big] = 0.
\end{equation*}
This equation is the optimality condition of the weighted $L^2$ scheme, discretized in space by central differences.

In the $\varepsilon$-JKO scheme, we use the back-and-forth method \cite{jacobs2020fast, jacobs2021back} and adopt the Nash-Kuiper isometric embedding \cite{liu2025variational} to compute
\begin{equation*}
    c_{\varepsilon}(x, y) = |\sigma(x) - \sigma(y)|^{2} \text{ where } \sigma(x) := \int_{0}^{x} A(s/\varepsilon)^{-\frac{1}{2}} \ud s,
\end{equation*}
where $\sigma$ can be computed analytically for the considered $A(x)$. In order to get $\mu^{n}$, we iteratively update the potential pair $(\phi, \psi)$ via the following steps until they converge:
\begin{equation*}
    \begin{cases}
        \phi_{(\ell + \frac{1}{2})} &= \phi_{(\ell)} + \nabla_{H} J(\phi_{(\ell)})\\
        \psi_{(\ell + \frac{1}{2})} &= \text{ct}(\phi_{(\ell + \frac{1}{2})})\\
        \psi_{(\ell + 1)} &= \psi_{(\ell + \frac{1}{2})} + \nabla_{H} I(\psi_{(\ell + \frac{1}{2})})\\
        \phi_{(\ell + 1)} &= -\text{ct}(-\psi_{(\ell + 1)})
    \end{cases}
\end{equation*}
and then set $\mu^{n} := \exp(\phi_{\infty})$.  
More particularly, 
\begin{itemize}
    \item[1.]$\text{ct}(\phi)$ denotes the $c$-transform defined by
\begin{equation*}
    \text{ct}(\phi)(x) := \min_{y} \phi(y) + \frac{1}{2\tau} |\sigma(x) - \sigma(y)|^{2},
\end{equation*}
which can be evaluated point-wisely on the grid of the discretization.
\item[2.]
 $\nabla_{H}$ denotes the $H^{1}$ gradient operator
\begin{equation*}
    \begin{aligned}
        \nabla_{H} J(\phi) &:= (\Theta_{0}I - \Theta_{1} \Delta)^{-1} \Big[S_{\phi}\# \mu^{n - 1} - \delta U^{\ast}(\phi)\Big]\,;\\
        \nabla_{H} I(\psi) &:= (\Theta_{0}I - \Theta_{1} \Delta)^{-1} \Big[\mu^{n - 1} - S_{-\psi}\#\big(\delta U^{\ast}(-\text{ct}(-\psi))\big)\Big]\,,
    \end{aligned}
\end{equation*}
where $U^{\ast}(\phi) = \int_{\mathbb{T}^{d}}\exp(\phi) \ud x$, and
\begin{equation*}
    \delta U^{\ast}(\phi) = \exp(\phi).
\end{equation*}
Here $\Theta_{0}$ and $\Theta_{1}$ are two parameters of the algorithm. A criteria for parameter tuning is provided in \cite[Theorem 3.3]{jacobs2021back}.

\item[3.]
$S_{\phi} \# \mu$ is the push-forward under the potential $\phi$ given by
\begin{equation*}
    (S_{\phi}\# \mu)(x) := \mu\big(S_{\phi}^{-1}(x)\big) \big|\det\big(DS_{\phi}^{-1}(x)\big)\big|,
\end{equation*}
where $DS$ is the Jacobian of $S$ (and we compute $D$ using a center difference scheme), and the inverse map can be computed as follows,
\begin{equation*}
    S_{\phi}^{-1}(x) := \sigma^{-1}\big(\sigma(x) + \tau \sigma^{\prime}(x)^{-1} D \phi(x)\big).
\end{equation*}
Here $\sigma^{-1}$ can be evaluated using a few Newton iterates since $\sigma$ is monotone.
\end{itemize}

\bibliographystyle{siamplain}

\begin{thebibliography}{10}

\bibitem{allaire2007homogenization}
{\sc G.~Allaire and R.~Orive}, {\em Homogenization of periodic non self-adjoint problems with large drift and potential}, ESAIM Control Optim. Calc. Var., 13 (2007), pp.~735--749.

\bibitem{alouges2016introduction}
{\sc F.~Alouges}, {\em Introduction to periodic homogenization}, Interdiscip. Inf. Sci., 22 (2016), pp.~147--186.

\bibitem{ambrosio2008gradient}
{\sc L.~Ambrosio, N.~Gigli, and G.~Savar{\'e}}, {\em Gradient flows: in metric spaces and in the space of probability measures}, Springer Science \& Business Media, 2008.

\bibitem{bedrossian2013global}
{\sc J.~Bedrossian and I.~C. Kim}, {\em Global existence and finite time blow-up for critical {P}atlak--{K}eller--{S}egel models with inhomogeneous diffusion}, SIAM J. Math. Anal., 45 (2013), pp.~934--964.

\bibitem{bedrossian2011inhomogeneous}
{\sc J.~Bedrossian and N.~Rodr{\'\i}guez}, {\em Inhomogeneous {P}atlak-{K}eller-{S}egel models and aggregation equations with nonlinear diffusion in {R{d}}}, arXiv preprint arXiv:1108.5167,  (2011).

\bibitem{bensoussan2011asymptotic}
{\sc A.~Bensoussan}, {\em Asymptotic analysis for periodic structures, vol. 374}, American Mathematical Soc.,  (2011).

\bibitem{braides2002gamma}
{\sc A.~Braides}, {\em Gamma-convergence for Beginners}, vol.~22, Clarendon Press, 2002.

\bibitem{braides2014local}
{\sc A.~Braides}, {\em Local minimization, variational evolution and $\Gamma$-convergence}, vol.~2094, Springer, 2014.

\bibitem{braides2002riemannian}
{\sc A.~Braides, G.~Buttazzo, and I.~Fragala}, {\em Riemannian approximation of finsler metrics}, Asymptot. Anal., 31 (2002), pp.~177--187.

\bibitem{carrillo2011global}
{\sc J.~A. Carrillo, M.~DiFrancesco, A.~Figalli, T.~Laurent, and D.~Slep{\v{c}}ev}, {\em Global-in-time weak measure solutions and finite-time aggregation for nonlocal interaction equations},  (2011).

\bibitem{carrillo2025positivity}
{\sc J.~A. Carrillo, H.~Liu, and H.~Yu}, {\em Positivity-preserving and energy-dissipating discontinuous galerkin methods for nonlinear nonlocal {F}okker--{P}lanck equations}, Commun. Appl. Ind. Math., 16 (2025).

\bibitem{carrillo2003kinetic}
{\sc J.~A. Carrillo, R.~J. McCann, and C.~Villani}, {\em Kinetic equilibration rates for granular media and related equations: entropy dissipation and mass transportation estimates}, Rev. Mat. Iberoam., 19 (2003), pp.~971--1018.

\bibitem{carrillo2006contractions}
{\sc J.~A. Carrillo, R.~J. McCann, and C.~Villani}, {\em Contractions in the 2-{W}asserstein length space and thermalization of granular media}, Arch. Ration. Mech. Anal., 179 (2006), pp.~217--263.

\bibitem{coudreuse2025li}
{\sc F.~Coudreuse}, {\em Li-{Y}au-{H}amilton inequality on the {JKO} scheme for the granular-medium equation}, arXiv preprint arXiv:2510.09231,  (2025).

\bibitem{dacorogna2008direct}
{\sc B.~Dacorogna}, {\em Direct methods in the calculus of variations}, Springer, 2008.

\bibitem{dal2012introduction}
{\sc G.~Dal~Maso}, {\em An introduction to $\Gamma$-convergence}, vol.~8, Springer Science \& Business Media, 2012.

\bibitem{erbar2010heat}
{\sc M.~Erbar}, {\em The heat equation on manifolds as a gradient flow in the {W}asserstein space}, Annales de l'IHP Probabilit{\'e}s et statistiques, 46 (2010), pp.~1--23.

\bibitem{forkert2022evolutionary}
{\sc D.~Forkert, J.~Maas, and L.~Portinale}, {\em Evolutionary {$\Gamma$}-convergence of entropic gradient flow structures for {F}okker--{P}lanck equations in multiple dimensions}, SIAM J. Math. Anal., 54 (2022), pp.~4297--4333.

\bibitem{gangbo2012homogenization}
{\sc W.~Gangbo and A.~Tudorascu}, {\em Homogenization for a class of integral functionals in spaces of probability measures}, Adv. Math., 230 (2012), pp.~1124--1173.

\bibitem{gao2023homogenization}
{\sc Y.~Gao and N.~K. Yip}, {\em Homogenisation of {Wasserstein} gradient flows}, European J. Appl. Math., 37 (2026), p.~586–613.

\bibitem{gladbach2020scaling}
{\sc P.~Gladbach, E.~Kopfer, and J.~Maas}, {\em Scaling limits of discrete optimal transport}, SIAM J. Math. Anal., 52 (2020), pp.~2759--2802.

\bibitem{jacobs2021back}
{\sc M.~Jacobs, W.~Lee, and F.~L{\'e}ger}, {\em The back-and-forth method for {W}asserstein gradient flows}, ESAIM Control Optim. Calc. Var., 27 (2021), p.~28.

\bibitem{jacobs2020fast}
{\sc M.~Jacobs and F.~L{\'e}ger}, {\em A fast approach to optimal transport: The back-and-forth method}, Numer. Math., 146 (2020), pp.~513--544.

\bibitem{jordan1998variational}
{\sc R.~Jordan, D.~Kinderlehrer, and F.~Otto}, {\em The variational formulation of the {F}okker--{P}lanck equation}, SIAM J. Math. Anal., 29 (1998), pp.~1--17.

\bibitem{jost2005riemannian}
{\sc J.~Jost}, {\em Riemannian geometry and geometric analysis}, Springer, 2005.

\bibitem{lee2015jordan}
{\sc P.~W. Lee}, {\em On the {J}ordan--{K}inderlehrer--{O}tto scheme}, J. Math. Anal. Appl., 429 (2015), pp.~131--142.

\bibitem{lisini2009nonlinear}
{\sc S.~Lisini}, {\em Nonlinear diffusion equations with variable coefficients as gradient flows in {W}asserstein spaces}, ESAIM Control Optim. Calc. Var., 15 (2009), pp.~712--740.

\bibitem{liu2025variational}
{\sc H.~Liu and A.~E. Tzavaras}, {\em Variational structure of {F}okker-{P}lanck equations with variable mobility}, arXiv preprint arXiv:2505.10676,  (2025).

\bibitem{liu2018positivity}
{\sc J.-G. Liu, L.~Wang, and Z.~Zhou}, {\em Positivity-preserving and asymptotic preserving method for 2d {K}eller-{S}egal equations}, Math. Comput., 87 (2018), pp.~1165--1189.

\bibitem{marcellini1978periodic}
{\sc P.~Marcellini}, {\em Periodic solutions and homogenization of nonlinear variational problems}, Ann. Mat. Pura Appl., 117 (1978), pp.~139--152.

\bibitem{otto2001geometry}
{\sc F.~Otto}, {\em The geometry of dissipative evolution equations: the porous medium equation},  (2001).

\bibitem{pavliotis2008multiscale}
{\sc G.~A. Pavliotis and A.~Stuart}, {\em Multiscale methods: averaging and homogenization}, vol.~53, Springer Science \& Business Media, 2008.

\bibitem{pechstein2013weighted}
{\sc C.~Pechstein and R.~Scheichl}, {\em Weighted {P}oincar{\'e} inequalities}, IMA J. Numer. Anal., 33 (2013), pp.~652--686.

\bibitem{rankin2024jko}
{\sc C.~Rankin and T.-K.~L. Wong}, {\em {JKO} schemes with general transport costs}, arXiv preprint arXiv:2402.17681,  (2024).

\bibitem{sandier2004gamma}
{\sc E.~Sandier and S.~Serfaty}, {\em Gamma-convergence of gradient flows with applications to {G}inzburg-{L}andau}, Commun. Pure Appl. Math., 57 (2004), pp.~1627--1672.

\bibitem{santambrogio2024strong}
{\sc F.~Santambrogio and G.~Toshpulatov}, {\em Strong {$L^2 H^2$} convergence of the {JKO} scheme for the {F}okker--{P}lanck equation}, Arch. Ration. Mech. Anal., 248 (2024), p.~99.

\bibitem{serfaty2011gamma}
{\sc S.~Serfaty}, {\em Gamma-convergence of gradient flows on {H}ilbert and metric spaces and applications}, Discrete Contin. Dyn. Syst, 31 (2011), pp.~1427--1451.

\bibitem{strikwerda2004finite}
{\sc J.~C. Strikwerda}, {\em Finite difference schemes and partial differential equations}, SIAM, 2004.

\bibitem{struwe2000variational}
{\sc M.~Struwe}, {\em Variational methods: applications to nonlinear partial differential equations and {H}amiltonian systems}, Springer, 2008.

\bibitem{villani2008optimal}
{\sc C.~Villani}, {\em Optimal transport: old and new}, vol.~338, Springer, 2008.

\bibitem{weinan1991class}
{\sc E.~Weinan}, {\em A class of homogenization problems in the calculus of variations}, Commun. Pure Appl. Math., 44 (1991), pp.~733--759.

\end{thebibliography}

\end{document}